\newcommand{\N}{\mathbb{N}}                     
\newcommand{\Z}{\mathbb{Z}}                     
\newcommand{\R}{\mathbb{R}}                     
\newcommand{\C}{\mathbb{C}}                     
\newcommand{\D}{\mathbb{D}}                     
\newcommand{\re}{\mathrm{Re\,}}                 
\newcommand{\ran}{\mathrm{ran\,}}               
\newtheorem{mainthm}{\sc Theorem}           
\newtheorem{maincor}{\sc Corollary}           
\newtheorem{thm}{\sc Theorem}[section]               
\newtheorem*{thm*}{\sc Theorem}               
\newtheorem*{cor*}{\sc Corollary}        
\newtheorem{lem}[thm]{\sc Lemma}            
\newtheorem{prop}[thm]{\sc Proposition}     
\newtheorem{defn}[thm]{\sc Definition}      
\newtheorem{rem}[thm]{\sc Remark}           
\title{Sharp systolic inequalities for Reeb flows on the three-sphere} 
\author{Alberto Abbondandolo, Barney Bramham, \\ Umberto L.~Hryniewicz, Pedro A.~S.~Salom\~ao}
\date{}
\begin{document}

\setstcolor{red}

\maketitle

\begin{abstract}
The systolic ratio of a contact form $\alpha$ on the three-sphere is the quantity
\[
\rho_{\mathrm{sys}}(\alpha) = \frac{T_{\min}(\alpha)^2}{\mathrm{vol}(S^3,\alpha\wedge d\alpha)},
\]
where $T_{\min}(\alpha)$ is the minimal period of closed Reeb orbits on $(S^3,\alpha)$. 
A Zoll contact form is a contact form such that all the orbits of the corresponding Reeb flow are closed and have the same period. 
Our first main result is that $\rho_{\mathrm{sys}}\leq 1$ in a neighbourhood of the space of Zoll contact forms on $S^3$, with equality holding precisely at Zoll contact forms.  This implies a particular case of a conjecture of Viterbo, a local middle-dimensional non-squeezing theorem, and a sharp systolic inequality for Finsler metrics on the two-sphere which are close to Zoll ones. Our second main result is that $\rho_{\mathrm{sys}}$ is unbounded from above on the space of tight contact forms on $S^3$. 
\end{abstract}

\tableofcontents

\section{Introduction}

\subsection{The main theorems}

Let $\alpha$ be a contact form on the three-sphere $S^3$, that is, a smooth 1-form such that $\alpha \wedge d\alpha$ is a volume form. The contact form $\alpha$ defines a vector field $R_{\alpha}$ on $S^3$, which is called the Reeb vector field of $\alpha$, by the conditions
\[
\imath_{R_{\alpha}} d\alpha = 0, \qquad \alpha (R_{\alpha}) =1.
\]
The flow of $R_{\alpha}$ is called the Reeb flow of $\alpha$. It is well known that this flow always has periodic orbits, and we denote by $T_{\min}(\alpha)$ the minimum over all their periods. For arbitrary closed contact three-manifolds, the existence of a periodic orbit was proved by Taubes in \cite{tau07}, but in the case of $S^3$ we are considering here this follows from previous results of Rabinowitz \cite{rab78} and Hofer \cite{hof93}.

We denote by $\alpha_0$ the standard contact form on $S^3$, which is given by the restriction of the Liouville 1-form of $\R^4$
\[
\lambda_0 = \frac{1}{2} \sum_{j=1}^2 (x_j \, dy_j - y_j\, dx_j), \qquad (x_1,y_1,x_2,y_2)\in \R^4,
\]
to the unit sphere $S^3 = \{z\in \R^4 \mid |z|=1\}$. All the orbits of the Reeb flow of $\alpha_0$ are periodic and have period $\pi$. Actually, the orbits of $R_{\alpha_0}$ are the leaves of the Hopf fibration $S^3 \rightarrow S^2=\mathbb{CP}^1$. In particular, $T_{\min}(\alpha_0) = \pi$ and, since the volume of $S^3$ with respect to the volume form $\alpha_0\wedge d\alpha_0$ is $\pi^2$, we have the identity 
\begin{equation}
\label{equality}
T_{\min}(\alpha)^2 = \mathrm{vol}(S^3,\alpha\wedge d\alpha)
\end{equation}
when $\alpha=\alpha_0$. Since $T_{\min}(c\, \alpha)=|c| T_{\min}(\alpha)$, this identity holds also for $\alpha = c\, \alpha_0$, for every non-zero number $c$.

We say that the contact form $\alpha$ is {\em Zoll} if all the orbits of $R_{\alpha}$ are closed and have the same minimal period. Zoll contact forms are also called {\em regular} in the literature, see e.g.\ \cite{bw58} and \cite[Section 7.2]{gei08}, but we find the term ``Zoll'' more self-explanatory, due to the analogy with Riemannian geometry: indeed, a Riemannian metric on a manifold is called Zoll if all its geodesics are closed and have the same length. One can show that an arbitrary Zoll contact form $\alpha$ on $S^3$ is strictly contactomorphic to some multiple of $\alpha_0$, that is, there exists a diffeomorphism $\varphi:S^3 \rightarrow S^3$ such that $\varphi^* \alpha = c \,\alpha_0$ (see Proposition \ref{zoll} below). Here $c$ is a real number such that $|c|=T/\pi$, where $T$ is the common period of the orbits of $R_{\alpha}$. In particular, the identity (\ref{equality}) holds for every Zoll contact form $\alpha$.

The first aim of this article is to prove the following sharp bound on $T_{\min}(\alpha)$ in terms of the contact volume of $(S^3,\alpha)$:

\begin{mainthm}
\label{main1}
There exists a $C^3$-neighborhood $\mathscr{A}$ of the space of Zoll contact forms on $S^3$ such that 
\begin{equation}
\label{ineq}
T_{\min}(\alpha)^2 \leq \mathrm{vol}(S^3,\alpha\wedge d\alpha) \qquad \forall \alpha\in \mathscr{A},
\end{equation}
with equality holding if and only if $\alpha$ is Zoll.
\end{mainthm}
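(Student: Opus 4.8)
The plan is to prove Theorem~\ref{main1} by a perturbative analysis around the Zoll locus, reducing the question to a statement about the first-return map of the Reeb flow on a global surface of section. Near a Zoll contact form $\alpha$, after applying the strict contactomorphism from Proposition~\ref{zoll} we may normalize so that $\alpha$ is $C^3$-close to the standard $\alpha_0$; the Hopf fibration then provides an honest fibration $S^3 \to S^2$, and for $\alpha$ close enough to $\alpha_0$ each Hopf fibre remains positively transverse to $R_\alpha$, so every Hopf disc is a global surface of section. The first-return map $\psi_\alpha$ is then a diffeomorphism of the closed two-disc $D$ which, by the standard correspondence, preserves an area form whose total mass equals $\mathrm{vol}(S^3,\alpha\wedge d\alpha)$ divided by the return time, and whose periodic points of period $k$ correspond to closed Reeb orbits. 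The first step is therefore to set up this surface-of-section picture carefully and to record the dictionary: $T_{\min}(\alpha)$ is controlled by the minimal period of periodic points of $\psi_\alpha$ together with the minimal return time, and $\mathrm{vol}(S^3,\alpha\wedge d\alpha) = \int_S \tau\, \omega$ where $\tau$ is the return time and $\omega$ the invariant area form.

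Second, I would pass to normalized coordinates in which the comparison inequality becomes a statement purely about area-preserving disc maps. Writing the action of $\psi_\alpha$ — that is, a primitive of $\psi_\alpha^*\lambda - \lambda$ on the section, calibrated by the Reeb periods — one sees that closed Reeb orbits correspond to critical points of the action functional on the set of periodic orbits of $\psi_\alpha$, and their periods are the critical values. The key structural input is that for $\alpha = c\,\alpha_0$ the return map is a rigid rotation by a full turn, so the Zoll case is the totally degenerate situation where the action functional is constant. The perturbative claim is then: an area-preserving disc map close to such a rotation, with total weighted area $A$, must have a periodic point whose action is at most $\sqrt{A}$ in the appropriate normalization, with equality forcing rigidity. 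This is exactly the regime where one expects to invoke a normal-form / generating-function argument: conjugate $\psi_\alpha$ to a map close to the identity after unwrapping the rotation, express it via a generating Hamiltonian $h$ on the disc (the ``Calabi-type'' quasi-state), and relate the minimal period to $\min h$ or $\max h$ while the volume relates to $\int h$.

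Third, I would carry out the sharp estimate itself. The cleanest route is an isoperimetric-type inequality: for the normalized return map one shows $T_{\min}(\alpha)^2 \le \mathrm{vol}$, i.e.\ after normalization the minimal action squared is bounded by the total area, by comparing the minimal value of the generating Hamiltonian against its integral, using that the domain has fixed area $\pi$ (or whatever the Hopf normalization gives) and that $h$ is $C^2$-small. Concretely, if $m = \min h$ over the disc of area $\pi$, then $\int h \ge \pi m$ only gives one direction, so instead one must use the variational characterization of $T_{\min}$ as a min-max or as the smallest action of a periodic point and exploit that near the Zoll form the relevant min-max value coincides with an average; a Taylor expansion to second order in the perturbation, combined with the fact that the first-order term integrates to zero (the perturbation is exact) and the second-order term has a sign, should yield the inequality with the equality case characterized by vanishing of the perturbation, i.e.\ $\alpha$ Zoll.

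The main obstacle I anticipate is the sharpness and the equality case: getting a clean inequality $T_{\min}^2 \le \mathrm{vol}$ that is tight at \emph{exactly} the Zoll forms requires controlling the min-max value of the action functional to second order and showing the quadratic form that appears is negative semidefinite with kernel precisely the Zoll directions. This is delicate because $T_{\min}$ is only the \emph{minimum} of periods, not a smooth function of $\alpha$, so one cannot naively Taylor-expand it; one needs a robust selector (a Calabi-type homogeneous quasimorphism or the min-max over a suitable family of periodic orbits) that is smooth enough, lies below $T_{\min}$, and still captures equality. Establishing the existence and regularity of such a selector near the Zoll locus, and proving its square is dominated by the contact volume with the right rigidity statement, is the technical heart of the argument.
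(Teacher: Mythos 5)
Your setup is on target: passing to a disk-like global surface of section, reading the dynamics through the first-return map $\varphi$ and return time $\tau$, and recording that $\mathrm{vol}(S^3,\alpha\wedge d\alpha)=\int_{\D}\tau\,\omega$ while periodic orbits correspond to periodic points of $\varphi$ with period recovered from the action. You also correctly identify the central obstruction: $T_{\min}$ is only a minimum, hence not differentiable in $\alpha$, so a naive Taylor expansion cannot give a sharp inequality with the right equality case.

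Where the proposal has a genuine gap is in the third step. You propose an ``isoperimetric'' comparison between $\min h$ and $\int h$ for a generating Hamiltonian, quickly observe that this gives the inequality in the wrong direction, and then fall back on a second-order Taylor expansion together with an unspecified ``robust selector'' (a quasimorphism or min-max value) that is smooth, dominated by $T_{\min}$, and detects Zoll. No such selector is produced, and the Taylor-expansion strategy, if carried out, would at best reproduce the \'Alvarez Paiva--Balacheff result quoted in the introduction (local maximality along paths, up to infinite-order tangency to the Zoll stratum), not the open $C^3$-neighborhood statement of Theorem~\ref{main1}. The paper avoids the smoothness problem entirely by arguing by contradiction: after normalizing $T_{\min}(\alpha)=\pi$ one picks the section bounded by a closed Reeb orbit of \emph{minimal} period, so that the negation $T_{\min}^2\geq\mathrm{vol}$ becomes the inequality $\mathrm{CAL}(\tilde\varphi,\omega)\leq 0$ via the exact identity $\mathrm{vol}=\pi^2+\mathrm{CAL}(\tilde\varphi,\omega)$. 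The heart of the matter is then a genuinely finite (not infinitesimal) fixed-point theorem, Theorem~\ref{fixpoint}: any $\omega$-preserving diffeomorphism $C^1$-close to the identity with nonpositive Calabi invariant (and not equal to the identity) has an interior fixed point of negative action. This immediately yields a Reeb orbit of period $<\pi$, contradicting minimality. Theorem~\ref{fixpoint} itself is proved by first conjugating with a M\"obius map to place an interior fixed point at the origin (Proposition~\ref{moebius}) — the $C^1$-closeness is used precisely to guarantee radial monotonicity after this conjugation — and then introducing a generalised generating function $W$ adapted to the possibly degenerate invariant area form, with nonnegative boundary values; the formula $\mathrm{CAL}=\int_Q W(R,\theta)\,\Omega+\int_Q W(r,\theta)\,\Omega$ (Proposition~\ref{forcal}) then forces $\min W<0$, and the minimizer is the desired fixed point. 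None of this is a second-order expansion around the Zoll locus, and the proposal does not contain a substitute for it.

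There are also two smaller inaccuracies worth flagging. First, you state that every Hopf disc is a global surface of section once the Hopf fibres stay transverse; in fact a global surface of section must be \emph{bounded by a closed Reeb orbit}, which is a nontrivial condition and is arranged in the paper by conjugating $\alpha$ (Proposition~\ref{reduction}) so that the fixed great circle $\Gamma$ is a $\pi$-periodic orbit of $R_\alpha$. Second, the invariant 2-form on the disc has total mass equal to the period of the boundary orbit ($\pi$ after normalization), not $\mathrm{vol}$ divided by return time; the identity you want is $\mathrm{vol}=\int_{\D}\tau\,\omega$, which you do write down correctly a line later.
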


Another way to state this theorem is that the space of Zoll contact forms is a set of strict local maximisers in the $C^3$-topology for the {\em systolic ratio}
\[
\rho_{\mathrm{sys}}(\alpha) = \frac{T_{\min}(\alpha)^2}{\mathrm{vol}(S^3,\alpha\wedge d\alpha)}.
\]
Evidence in favour of this fact has been recently given by \'Alvarez Paiva and Balacheff in 
\cite{apb14}. Here, the authors prove that any smooth path of contact forms $t\mapsto \alpha(t)$ such that $\alpha(0)$ is Zoll satisfies the following alternative: either the systolic ratio of $\alpha(t)$ has a strict local maximum at $t=0$, or $t\mapsto \alpha(t)$ is tangent up to every order to the space of Zoll contact forms. This result holds for contact forms on manifolds of arbitrary dimension, and its proof is based on a reduction to normal forms.

A natural question is whether the inequality (\ref{ineq}) holds also for contact forms which are far from Zoll ones (see also \cite[Remark 8.5]{hut11}, \cite{cghr15} and \cite[Section 2.3]{apb14} for related questions and results about the relationship between the period of closed Reeb orbits and the contact volume).  The second aim of this paper is to give a negative answer to this question.

\begin{mainthm}
\label{main2}
For every neighborhood $\mathscr{R}$ of the flow of $R_{\alpha_0}$ in the $C^0_{\mathrm{loc}}(\R \times S^3,S^3)$-topology there exists a contact form $\alpha$ on $S^3$ which is smoothly isotopic to $\alpha_0$ such that the flow of $R_{\alpha}$ belongs to $\mathscr{R}$ and
\[
T_{\min}(\alpha)^2 > \mathrm{vol}(S^3,\alpha\wedge d\alpha).
\]
Moreover, for every $c>0$ there exists a contact form $\alpha$ on $S^3$ which is smoothly isotopic to $\alpha_0$ and such that
\[
T_{\min}(\alpha)^2 \geq c\,  \mathrm{vol}(S^3,\alpha\wedge d\alpha).
\]
\end{mainthm}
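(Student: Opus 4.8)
My plan is to construct the desired contact forms by a local plug-type perturbation of $\alpha_0$ supported near a single Hopf circle, designed so that \emph{all} short Reeb orbits are destroyed while the contact volume is kept under control. The starting point is the observation that on $(S^3,\alpha_0)$ every Reeb orbit is a Hopf fiber of period $\pi$. Fix one such fiber $\gamma_0$ and trivialize a tubular neighborhood $U$ of it as $(\R/\pi\Z) \times D^2_\delta$ with coordinates $(\theta,x,y)$, in which $\alpha_0$ takes a standard form (a suspension of a rotation, i.e.\ $\alpha_0 = d\theta + \frac{1}{2}(x\,dy - y\,dx)$ up to a conformal factor). Inside $U$ I would look for a new contact form $\alpha = f\,\alpha_0$, or more flexibly a form agreeing with $\alpha_0$ outside $U$, whose Reeb flow has a return map to the disk $\{\theta = 0\}$ that is a rotation by an angle which is \emph{irrational and stays bounded away from $0$} on a large portion of the disk — so that there is no short closed orbit passing through $U$ — while orbits that avoid $U$ are unchanged from $\alpha_0$ and hence have period exactly $\pi$. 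The key mechanism, already present in \cite{apb14} and in constructions of fast/slow Reeb flows, is that one can slow down the Reeb flow in a ``core'' sub-tube (making $T_{\min}$ large there) at the cost of speeding it up in a surrounding shell, and the contact volume $\int_{S^3}\alpha\wedge d\alpha$ can be computed explicitly and made as small as one wishes relative to $T_{\min}^2$ by choosing the profile functions appropriately.

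Concretely, for the first (qualitative) statement I would take a one-parameter family $\alpha_\epsilon$ equal to $\alpha_0$ outside $U$, with the property that as $\epsilon \to 0$ the Reeb flow of $\alpha_\epsilon$ converges in $C^0_{\mathrm{loc}}(\R\times S^3,S^3)$ to the Reeb flow of $\alpha_0$ (this forces the support $U$ and the size of the perturbation to shrink suitably, but \emph{not} the ratio we are after), while for each $\epsilon>0$ the return map inside $U$ has been arranged so that $T_{\min}(\alpha_\epsilon) > \pi$ \emph{and} $\mathrm{vol}(S^3,\alpha_\epsilon\wedge d\alpha_\epsilon) < T_{\min}(\alpha_\epsilon)^2$. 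Since any orbit not meeting $U$ still has period $\pi$, the delicate point is to guarantee that there is genuinely \emph{no} closed orbit of period $\le \pi$ through $U$: this is a normal-form/KAM-free computation about the twist map of an explicit integrable model, so it should reduce to choosing the rotation profile to be a monotone function taking no value in $\frac{1}{\pi}\mathbb{Z}$ on a set of full measure, together with an openness argument to absorb the error terms. That $\alpha_\epsilon$ is smoothly isotopic to $\alpha_0$ is automatic because the construction is through contact forms, all inducing the standard tight contact structure (Gray stability), and the perturbation is supported in a ball.

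For the second (quantitative) statement — that $\rho_{\mathrm{sys}}$ is unbounded — I would iterate or amplify the same idea: rather than a small plug, use a large reparametrization concentrated near $\gamma_0$ that makes the Reeb flow extremely slow on a set of small contact volume, so that $T_{\min}$ is forced up by a factor $\sqrt{c}$ while the volume is forced down (or held fixed). One clean way to organize this is to work with a contact form of the form $\alpha = g\,\alpha_0$ where $g$ is $S^1$-invariant (invariant under the Hopf action), reducing the Reeb dynamics to a flow on $S^2$ and the volume to a weighted area; then one is choosing a positive function on $S^2$ with prescribed (large) ``minimal period'' of the induced flow and (small) weighted area, which is an elementary optimization. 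The main obstacle I anticipate is the first statement's requirement that the Reeb flow stay $C^0_{\mathrm{loc}}$-close to the Hopf flow while the systolic ratio jumps across $1$: this means the perturbation cannot be globally small in a way that controls orbits, so one must carefully localize and verify that \emph{every} periodic orbit either misses the plug (period $\pi$, fine) or is trapped long enough inside it (period $>\pi$), with no intermediate ``grazing'' orbits of period slightly above $\pi$ but with the wrong volume bookkeeping — handling these grazing orbits, and making the estimates on $\mathrm{vol}$ precise enough to cross the threshold, is where the real work lies.
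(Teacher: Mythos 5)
Your proposed strategy runs into two concrete obstructions, one for each half of the theorem, and as a result the approach is essentially different from the paper's and does not close.

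For the unboundedness statement, the $S^1$-invariant reduction cannot work. If $\alpha=g\,\alpha_0$ with $g>0$ invariant under the Hopf flow, then $\mathrm{vol}(S^3,\alpha\wedge d\alpha)=\int_{S^3}g^2\,\alpha_0\wedge d\alpha_0$, and the two Hopf fibres lying over the critical points of the induced function on $S^2$ are closed Reeb orbits of period $\pi\,g$, so $T_{\min}(\alpha)\le\pi\min g$. Since $\int_{S^3}\alpha_0\wedge d\alpha_0=\pi^2$, Cauchy--Schwarz gives $T_{\min}(\alpha)^2\le\pi^2(\min g)^2\le\int_{S^3}g^2\,\alpha_0\wedge d\alpha_0=\mathrm{vol}(\alpha)$, i.e.\ $\rho_{\mathrm{sys}}(g\,\alpha_0)\le 1$ always. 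The local-plug version has the same ceiling for a different reason: if the perturbation is supported in a proper tubular neighbourhood $U$ of one Hopf fibre, every Hopf orbit avoiding $U$ still has period $\pi$, so $T_{\min}(\alpha)\le\pi$, while $\mathrm{vol}(\alpha)\ge\mathrm{vol}(S^3\setminus U,\alpha_0\wedge d\alpha_0)>0$; hence the systolic ratio is bounded by a constant depending only on $U$, and making the plug ``larger'' only trades one bound for another. To make $\rho_{\mathrm{sys}}$ arbitrarily large one must drive the total contact volume to $0$ while keeping a period-$\pi$ orbit, and that requires changing the return time on (essentially) all of a global surface of section, not a tube. This is precisely what the paper does: it fixes a disk-like global section bounded by a period-$\pi$ orbit, prescribes the first return map as an area-preserving disk diffeomorphism (a slowed-down $2\pi/n$-rotation composed with opposing twists packed into the sectors), and controls both the Calabi invariant (which equals $\mathrm{vol}(\alpha)-\pi^2$) and the action of every periodic point directly; the return time is then $\tau=\pi+\sigma$, made uniformly close to $0$, so the volume $\int\tau\,\omega$ collapses while every periodic orbit takes at least $n$ returns.

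Even for the qualitative statement your localized plug has an unaddressed gap, which you yourself flag. Since $T_{\min}(\alpha)\le\pi$ for any such plug, the target becomes $\mathrm{vol}(\alpha)<\pi^2$ together with ``every orbit that enters $U$ has period $\ge\pi$''. You only discuss the model twist in the core of $U$; you do not rule out orbits that graze $U$, enter it once briefly, or spiral near $\partial U$ where your profile must interpolate back to $\alpha_0$, and you give no estimate showing the volume actually drops. It is also not a rationality/irrationality phenomenon: since $T_{\min}=\pi$ must be attained, what must be excluded is a closed orbit of period strictly less than $\pi$, which forces quantitative period estimates, not a measure-zero avoidance of rotation numbers. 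The paper sidesteps all of this by working on a global surface of section where the period of every closed orbit is literally the sum of return times $\sum\tau(\varphi^j(z))$, each $\ge\pi/n$, and by choosing the disk map so that every periodic point other than a fixed point has period at least $n$; this is a combinatorial fact about the sector structure, and the estimate is exact rather than perturbative. Your ``plug'' picture could in principle be repackaged into the surface-of-section language (the return map is the right invariant to control), but as written the verification that no short orbit survives is absent, and the scheme provably cannot be pushed to unbounded ratio.
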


By Gray's stability theorem, the contact forms $\alpha$ which appear in the above theorem are tight. Another way of stating the second conclusion of the above theorem is that the systolic ratio is unbounded from above on the space of tight contact forms on $S^3$. 
It is worth noticing that these tight contact forms with arbitrarily  high systolic ratio can be chosen to be close to the standard one in the following weak sense: after a time reparametrization, their Reeb flow becomes $C^0_{\mathrm{loc}}$-close to the standard Reeb flow (see Remark \ref{howclose} below).

The proofs of the above two theorems are based on reading the dynamics of a Reeb flow by means of a disk-like global surface of section. Theorem \ref{main1} makes use of a fixed point theorem for area-preserving diffeomorphisms of the disk which are $C^1$-close to the identity, while the construction of the contact forms of Theorem \ref{main2} exploits the fact that this fixed point theorem fails for maps which are $C^1$-far from the identity. These results are presented in Section \ref{fixpointsec}, but since they might have some independent interest they are also informally discussed in Section \ref{intro-proof}, which concludes the Introduction.

Theorem \ref{main1} has several consequences, which we now discuss. Details about the precise derivation of these consequences from Theorem \ref{main1} are given in Section \ref{proofs}.

\subsection{A conjecture of Viterbo}

Let $\omega_0$ be the standard symplectic form on $\R^{2n}$:
\[
\omega_0 = \sum_{j=1}^n dx_j \wedge dy_j, \qquad (x_1,y_1,\dots,x_n,y_n)\in \R^{2n}.
\]
A symplectic capacity on $\R^{2n}$ is a function 
\[
c: \mathscr{P}(\R^{2n}) \rightarrow [0,+\infty]
\]
on the set of all subsets of $\R^{2n}$ which is monotone with respect to inclusion, invariant under symplectomorphisms, homogeneous of degree 2 with respect to homotheties, i.e. $c(rA) = r^2 c(A)$ for all $A\in \mathscr{P}(\R^{2n})$ and $r>0$, and such that
\[
c(B) = c(Z) = \pi,
\]
where $B$ is the unit ball and $Z$ is the cylinder consisting of all $(x_1,y_1,\dots,x_n,y_n)\in \R^{2n}$ such that $x_1^2 + y_1^2 < 1$.
The mere existence of a symplectic capacity is a non-trivial fact, as it immediately implies Gromov's non-squeezing theorem from \cite{gro85}: the ball $rB$ cannot be symplectically embedded into the cylinder $sZ$ if $r>s$. Many non-equivalent symplectic capacities have been constructed so far, such as the Ekeland-Hofer capacity $c_{\mathrm{EH}}$ and the Hofer-Zehnder capacity $c_{\mathrm{HZ}}$ (see \cite{eh89,hz90,hz94}). See also the survey \cite{chls07} for more information on symplectic capacities.

Let $c$ be a symplectic capacity on $(\R^{2n},\omega_0)$. In \cite[Section 5]{vit00}, Viterbo stated the following conjecture: for every bounded convex domain $C\subset \R^{2n}$ there holds
\begin{equation}
\label{vit}
c(C)^n \leq n! \, \mathrm{vol}(C),
\end{equation}
with equality holding if and only if $C$ is symplectomorphic to a ball. Here $\mathrm{vol} (C)$ is the volume of $C$ with respect to the standard volume form $dx_1\wedge dy_1 \wedge \dots \wedge dx_n \wedge dy_n$ on $\R^{2n}$. Partial positive results have been obtained in \cite{amo08} and \cite{apb14}, 
but the conjecture is still open. The validity of this conjecture would have far reaching consequences, as it would imply for instance the Mahler conjecture in convex geometry, see \cite{ako14}.

Let $C\subset \R^{2n}$ be a convex bounded domain with smooth boundary. Up to a translation, we may assume that $C$ is a neighborhood of the origin. Then the Liouville 1-form
\begin{equation}
\label{liouville-n}
\lambda_0 = \frac{1}{2} \sum_{j=1}^n ( x_j\, dy_j - y_j \, dx_j)
\end{equation}
restricts to a contact form on $\partial C$. It is well known that the Ekeland-Hofer and the Hofer-Zehnder capacities of $C$ coincide and equal the minimal period of periodic Reeb orbits on $(\partial C, \lambda_0|_{\partial C})$:
\[
c_{\mathrm{EH}}(C) = c_{\mathrm{HZ}}(C) = T_{\min}(\lambda_0|_{\partial C}).
\]
See \cite[Proposition 3.10]{vit89} and \cite[Proposition 4]{hz90}.
Actually, it is conjectured that all symplectic capacities agree on convex domains, but this conjecture is even stronger than Viterbo's one, since the smallest capacity
\[
c_B(C)= \sup \{ \pi r^2 \mid rB \mbox{ embeds symplectically into } C\}
\]
trivially satisfies (\ref{vit}). Theorem \ref{main1} allows us to prove the following special case of Viterbo's conjecture, where $\mathscr{B}$ denotes the set of all convex domains in $\R^4$ whose closure is symplectomorphic to a closed 4-ball:

\begin{maincor}
\label{cor1}
Let $c$ be either the Ekeland-Hofer or the Hofer-Zehnder capacity on $(\R^4,\omega_0)$. There exists a $C^3$-neighborhood $\mathscr{C}$ of $\mathscr{B}$ within the set of all convex smooth domains in $\R^4$ such that
\[
c(C)^2 \leq 2 \, \mathrm{vol} (C) \qquad \forall C\in \mathscr{C},
\]
with equality holding if and only if $C$ belongs to $\mathscr{B}$.
\end{maincor}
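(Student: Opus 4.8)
The plan is to deduce Corollary \ref{cor1} from Theorem \ref{main1} by translating capacities, volumes and the extremal configurations into the contact geometry of $S^3$. After a translation we may assume the origin lies in $C$; since $C$ is convex, radial projection $z\mapsto z/|z|$ is a diffeomorphism $\partial C\to S^3$ whose inverse has the form $\iota_C(u)=f_C(u)\,u$ for a smooth positive function $f_C$ on $S^3$, and $C^3$-closeness of convex domains corresponds to $C^3$-closeness of the radial functions $f_C$. A direct computation, using that $\lambda_0|_z$ annihilates $z$ for every $z\in\R^4$, shows that the induced contact form is
\[
\alpha_C:=\iota_C^*(\lambda_0|_{\partial C})=f_C^2\,\alpha_0 ,
\]
so that no derivative is lost and $C\mapsto\alpha_C$ is continuous from the $C^3$-topology on convex domains to the $C^3$-topology on contact forms on $S^3$. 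Two further identities complete the dictionary: by \cite[Proposition 3.10]{vit89} and \cite[Proposition 4]{hz90} one has $c_{\mathrm{EH}}(C)=c_{\mathrm{HZ}}(C)=T_{\min}(\lambda_0|_{\partial C})=T_{\min}(\alpha_C)$, and by Stokes' theorem, using $d(\lambda_0\wedge d\lambda_0)=\omega_0\wedge\omega_0=2\,dx_1\wedge dy_1\wedge dx_2\wedge dy_2$,
\[
\mathrm{vol}(S^3,\alpha_C\wedge d\alpha_C)=\int_{\partial C}\lambda_0\wedge d\lambda_0=\int_{C}\omega_0\wedge\omega_0=2\,\mathrm{vol}(C).
\]
Hence the inequality $c(C)^2\le 2\,\mathrm{vol}(C)$ is precisely the inequality (\ref{ineq}) applied to $\alpha_C$.

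I would then set $\mathscr{C}$ to be the collection of convex smooth domains $C$ with $\alpha_C\in\mathscr{A}$, where $\mathscr{A}$ is the neighbourhood furnished by Theorem \ref{main1}; by continuity of $C\mapsto\alpha_C$ and openness of $\mathscr{A}$, this $\mathscr{C}$ is $C^3$-open among convex smooth domains. To see that it contains $\mathscr{B}$, so that it is indeed a $C^3$-neighbourhood of $\mathscr{B}$, it suffices to show that $\alpha_C$ is Zoll whenever $\overline C$ is symplectomorphic to a closed ball $\overline{B(r)}$. Given a symplectomorphism $\Phi:\overline{B(r)}\to\overline C$, it carries boundary to boundary, and, since symplectomorphisms preserve characteristic foliations of hypersurfaces, $\Phi$ maps the Hopf foliation of $\partial B(r)$ onto the characteristic foliation of $\partial C$; in particular every closed characteristic of $\partial C$ is a circle and these circles foliate $\partial C$. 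Moreover, writing $\phi:=\Phi|_{\partial B(r)}$ and using $\Phi^*\omega_0=\omega_0$, the $1$-form $\phi^*(\lambda_0|_{\partial C})-\lambda_0|_{\partial B(r)}$ is closed on $\partial B(r)\cong S^3$, hence exact, so it integrates to zero over every closed characteristic; since every Hopf circle has $\lambda_0$-action $\pi r^2$, the same holds for every closed characteristic of $\partial C$, and as the action of a closed Reeb orbit equals its period, $\alpha_C$ is Zoll. Applying Theorem \ref{main1} to $\alpha_C$ for each $C\in\mathscr{C}$ and reading the dictionary backwards gives $c(C)^2\le 2\,\mathrm{vol}(C)$ for all $C\in\mathscr{C}$.

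For the equality statement, one implication is now immediate: if $C\in\mathscr{B}$ then $\alpha_C$ is Zoll, so the identity (\ref{equality}) gives $T_{\min}(\alpha_C)^2=\mathrm{vol}(S^3,\alpha_C\wedge d\alpha_C)$, i.e.\ $c(C)^2=2\,\mathrm{vol}(C)$. Conversely, suppose $C\in\mathscr{C}$ satisfies $c(C)^2=2\,\mathrm{vol}(C)$; then equality holds in (\ref{ineq}) for the form $\alpha_C\in\mathscr{A}$, so by Theorem \ref{main1} it is Zoll, and by Proposition \ref{zoll} there is a diffeomorphism $\varphi$ of $S^3$ with $\varphi^*\alpha_C=c\,\alpha_0$ for some $c$, which is positive since $\alpha_C=f_C^2\alpha_0$. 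Composing $\varphi$ with the radial identifications of $S^3$ with $\partial C$ and with $\partial B(\sqrt{c})$ produces a diffeomorphism $\partial B(\sqrt{c})\to\partial C$ intertwining the restrictions of $\lambda_0$; extending it radially along the Liouville flow of $\lambda_0$ produces a symplectomorphism $\overline{B(\sqrt{c})}\to\overline C$, so $C\in\mathscr{B}$.

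Given Theorem \ref{main1} and the quoted identities $c_{\mathrm{EH}}=c_{\mathrm{HZ}}=T_{\min}$, the corollary is essentially a careful translation, and I do not anticipate a serious conceptual obstacle. The points deserving attention are: the clean identity $\iota_C^*\lambda_0=f_C^2\alpha_0$, which, by avoiding any loss of a derivative, is what makes the $C^3$ (rather than $C^4$) statement correct; the verification that convex domains symplectomorphic to a ball have Zoll boundary, which rests on the invariance of characteristic foliations together with $H^1(S^3)=0$; and the extension of a strict contactomorphism of the boundaries to a symplectomorphism of the filling domains, including its smoothness across the centre --- this last, classical but slightly delicate, is the step I would expect to demand the most care.
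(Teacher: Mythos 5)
Your approach is essentially the paper's: pass from $C$ (translated so that $0$ is interior) to the contact form on $S^3$ obtained by pulling back $\lambda_0|_{\partial C}$ along the radial parametrisation, use the capacity$=T_{\min}$ identities and the Stokes computation $\int_{\partial C}\lambda_0\wedge d\lambda_0 = 2\,\mathrm{vol}(C)$ to match up the two inequalities, and then invoke Theorem~\ref{main1}. Incidentally, your formula $\iota_C^*(\lambda_0|_{\partial C})=f_C^2\,\alpha_0$ is the correct one; the exponent in the paper's displayed identity~(\ref{radice}) is a misprint, as one can see already from the computation $c(rB)=T_{\min}(r^2\lambda_0|_{S^3})=\pi r^2$ a few lines later.

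There is, however, a genuine gap in your treatment of the ``equality $\Rightarrow$ $C\in\mathscr{B}$'' direction. Having obtained, via Proposition~\ref{zoll}, a diffeomorphism of $S^3$ intertwining $\lambda_0|_{\partial B(\sqrt c)}$ and $\lambda_0|_{\partial C}$, you write that ``extending it radially along the Liouville flow of $\lambda_0$ produces a symplectomorphism $\overline{B(\sqrt c)}\to\overline C$.'' That is not correct as stated: the positively $1$-homogeneous extension is a symplectomorphism of the punctured cones and extends continuously to $0$, but it is \emph{not} differentiable at $0$ unless the boundary diffeomorphism is the restriction of a linear map. You do flag this as the ``delicate'' point, but the way around it is not an exercise in careful bookkeeping; it requires a substantial input. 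The paper handles it via Proposition~\ref{isaball}, whose proof invokes a theorem of Gromov and McDuff (removal of a starshaped compact singularity for symplectomorphisms into $(\R^4,\omega_0)$ with no exceptional spheres), or alternatively, as noted in the remark after that proposition, Eliashberg's theorem that the group of co-orientation-preserving contactomorphisms of the tight $S^3$ is connected, which lets one build the symplectomorphism as a time-one Hamiltonian map after cutting off near the origin. Both routes rest on pseudoholomorphic curve theory. Without such an ingredient your argument establishes only that $\overline C\setminus\{0\}$ is symplectomorphic to $\overline{B(\sqrt c)}\setminus\{0\}$, which is strictly weaker than $C\in\mathscr{B}$.

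Apart from this, your re-derivation of the easy implication (symplectomorphic to a ball $\Rightarrow$ Zoll boundary form) via invariance of characteristic foliations and $H^1(S^3)=0$ is correct and matches the paper's reasoning, and the rest of the translation is sound.
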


\subsection{Middle dimensional non-squeezing}

The already mentioned non-squeezing theorem of Gromov can be formulated in the following equivalent way: let $V$ be a 2-dimensional symplectic subspace of $(\R^{2n},\omega_0)$ and let $P$ be the symplectic projector onto it, that is, the linear projector onto $V$ along the symplectic orthogonal of $V$. Then
\[
\mathrm{area}(P \varphi(B),\omega_0|_V) \geq \pi,
\]
for every symplectic embedding $\varphi$ of the $2n$-dimensional unit ball $B$ into $\R^{2n}$ (see e.g.\ \cite{eg91}). A natural question is whether this 2-dimensional rigidity phenomenon has the following middle dimensional generalization: if $P$ is the symplectic projector onto a $2k$-dimensional symplectic subspace $V$ of $(\R^{2n},\omega_0)$, is it true that
\begin{equation}
\label{midnonsque}
\mathrm{vol}(P \varphi(B),\omega_0^k|_V) \geq \pi^k,
\end{equation}
for every symplectomorphism $\varphi$ of $B$ into $\R^{2n}$? The constant $\pi^k$ on the right-hand side of the inequality is the volume with respect to $\omega^k_0$ of the $2k$-dimensional unit ball. This question has been studied in \cite{am13}, where it is shown that the answer is positive for linear symplectomorphism, but that it is in general negative for nonlinear symplectictomorphisms: if $P$ is a symplectic projector onto a symplectic subspace $V$ of dimension $2k$, with $1<k<n$, then for every $\epsilon>0$ there is a symplectomorphism $\varphi: B \rightarrow \R^{2n}$ such that
\[
\mathrm{vol}(P \varphi(B),\omega_0^k|_V) < \epsilon.
\]
In \cite{am13} it is conjectured that the sharp inequality (\ref{midnonsque}) holds for symplectomorphisms which are close enough to a linear one, and some partial results are given in order to support this conjecture. By using Theorem \ref{main1}, we can give a positive answer to this conjecture in the case $k=2$:

\begin{maincor}
\label{cor2}
Let $P$ be a symplectic projector onto a 4-dimensional symplectic subspace $V$ of $(\R^{2n},\omega_0)$.
Then there exists a $C^3$-neighborhood $\mathscr{S}$ of the set of linear symplectomorphisms within the space of all symplectomorphisms from $\overline{B}$ into $\R^{2n}$ such that
\[
\mathrm{vol}(P \varphi(B),\omega_0^2|_V) \geq \pi^2 \qquad \forall \varphi\in \mathscr{S}.
\]
\end{maincor}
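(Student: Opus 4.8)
The plan is to reinterpret the left‑hand side as the contact volume of a three‑sphere and then invoke Theorem~\ref{main1}. Write $W:=V^{\omega_0}$ for the symplectic orthogonal, so that $\mathbb{R}^{2n}=V\oplus W$ is a symplectic direct sum and $P$ is the projection along $W$. When $n=2$ the map $P$ is the identity, $P\varphi(B)=\varphi(B)$, and a symplectomorphism preserves the Liouville volume $\omega_0^2$, so $\mathrm{vol}(P\varphi(B),\omega_0^2)=\mathrm{vol}(B,\omega_0^2)=\pi^2$; assume therefore $n\geq 3$. It suffices to produce, for every linear symplectomorphism $L_0$ of $\mathbb{R}^{2n}$, a $C^3$‑neighbourhood of $L_0|_{\overline B}$ on which the inequality holds, and take $\mathscr S$ to be the union of these neighbourhoods. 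For $\varphi$ in a small $C^3$‑neighbourhood of $L_0$ the shadow $U:=P\varphi(\overline B)$ is a compact convex body in $V\cong\mathbb{R}^4$ with smooth boundary, $C^3$‑close to the ellipsoid $E_0:=PL_0(\overline B)$; hence $U$ is star‑shaped about each interior point, $\alpha:=\lambda_0|_{\partial U}$ is a contact form on $\partial U$, and $(\partial U,\alpha)$ is strictly contactomorphic, via radial projection onto a centred sphere, to a contact form on $S^3$. Stokes' theorem inside $V$ then gives
\[
\mathrm{vol}(P\varphi(B),\omega_0^2|_V)=\int_U(\omega_0|_V)^2=\int_{\partial U}\lambda_0|_V\wedge d(\lambda_0|_V)=\mathrm{vol}(\partial U,\alpha\wedge d\alpha),
\]
so the assertion is equivalent to $\mathrm{vol}(\partial U,\alpha\wedge d\alpha)\geq\pi^2$.

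The key input is the lower bound $T_{\min}(\alpha)\geq\pi$. Since $U$ is convex with smooth boundary, $T_{\min}(\alpha)$ equals the minimal action of closed characteristics on $\partial U$, which equals $c_{\mathrm{EH}}(U)=c_{\mathrm{HZ}}(U)$ by the identities recalled before Corollary~\ref{cor1}. Moreover $\varphi(\overline B)\subseteq P^{-1}(U)=U\times W$, so by monotonicity of the capacity, by its invariance under the symplectomorphism $\varphi$ onto its image, and by its stability under products with Euclidean factors, $c_{\mathrm{EH}}(U\times W)=c_{\mathrm{EH}}(U)$, we obtain
\[
\pi=c_{\mathrm{EH}}(B)=c_{\mathrm{EH}}(\varphi(\overline B))\leq c_{\mathrm{EH}}(U\times W)=c_{\mathrm{EH}}(U)=T_{\min}(\alpha).
\]
Specialising this to $\varphi=L_0$, and writing $b_1\leq b_2$ for the symplectic eigenvalues of $E_0$ (normalised so that $\partial E_0$ has a closed characteristic of minimal action $\pi b_1$ and $\mathrm{vol}(E_0,\omega_0^2|_V)=\pi^2 b_1b_2$), we get $b_1\geq 1$; in particular $b_1b_2\geq 1$, which also re‑derives the linear case of the inequality of \cite{am13}.

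Now separate two cases. If $b_1b_2>1$, then $\mathrm{vol}(E_0,\omega_0^2|_V)>\pi^2$, and since $\varphi\mapsto\mathrm{vol}(P\varphi(B),\omega_0^2|_V)$ is continuous in the $C^0$‑topology the strict inequality $\mathrm{vol}(P\varphi(B),\omega_0^2|_V)>\pi^2$ persists on a $C^0$‑neighbourhood of $L_0$, and Theorem~\ref{main1} is not needed. If $b_1b_2=1$, then $b_1\geq 1$ together with $b_1\leq b_2$ forces $b_1=b_2=1$, so $E_0$ is symplectomorphic to the round ball $B\cap V$; choosing $M\in\mathrm{Sp}(V)$ with $M(E_0)=B\cap V$, the linear symplectomorphism $\widetilde M:=M\oplus\mathrm{id}_W$ of $\mathbb{R}^{2n}$ commutes with $P$ and preserves $\omega_0^2|_V$, so replacing $\varphi$ by $\widetilde M\varphi$ and $L_0$ by $\widetilde M L_0$ we may assume $E_0=B\cap V$. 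After shrinking the neighbourhood, $U$ is then $C^3$‑close to $B\cap V$, hence $\alpha$ is $C^3$‑close to $\lambda_0|_{\partial(B\cap V)}$, which under the radial identification is the Zoll contact form $\alpha_0$; so $\alpha$ lies in the neighbourhood $\mathscr A$ of Theorem~\ref{main1}, and
\[
\mathrm{vol}(\partial U,\alpha\wedge d\alpha)\ \geq\ T_{\min}(\alpha)^2\ \geq\ \pi^2,
\]
the last step by the previous paragraph. This proves the corollary.

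The point I expect to demand the most care is the stability identity $c_{\mathrm{EH}}(U\times W)=c_{\mathrm{EH}}(U)$: in fact only the inequality ``$\leq$'' is used, and it should be either quoted from the theory of Ekeland–Hofer capacities or verified directly for our convex $U$ close to the ball (for instance by a cut‑off argument reducing an admissible test object on $U\times W$ to one on $U$). The remaining ingredients — the smoothness and convexity of the shadow of a convex body close to a ball, the Stokes identity, the continuity of $\varphi\mapsto\mathrm{vol}(P\varphi(B),\omega_0^2|_V)$, and the bookkeeping with $\widetilde M$ — are routine.
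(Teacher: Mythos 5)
Your argument follows essentially the same route as the paper's: fix a linear $L_0$, distinguish the cases $\mathrm{vol}(PL_0(B),\omega_0^2|_V)>\pi^2$ (handled by $C^0$-continuity) and $=\pi^2$, and in the latter case use Proposition~\ref{linear} to reduce $PL_0(B)$ to the round ball of $V$ and then feed the nearby contact form on $\partial U$ into the $C^3$-neighbourhood of Theorem~\ref{main1}. The one substantive divergence — which you yourself flag — is how the pivotal lower bound $c_{\mathrm{EH}}(P\varphi(B))\geq\pi$ is justified. You wedge $\varphi(\overline B)\subset U\times W$ and invoke the ``cylinder stability'' $c_{\mathrm{EH}}(U\times W)\leq c_{\mathrm{EH}}(U)$. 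This is indeed the point that demands proof: $U\times W$ is unbounded, so the minimal-action characterisation of $c_{\mathrm{EH}}$ on smooth convex boundaries does not apply to it directly, and for a four-dimensional convex base $U$ this cylinder estimate does not follow from the axiomatic monotonicity and conformality of capacities and needs its own argument (when $\dim U=2$ it reduces to Gromov non-squeezing, but the higher-dimensional base is not covered by that). The paper avoids the unbounded set entirely by citing Proposition~\ref{projection}: for the Ekeland--Hofer/Hofer--Zehnder capacity and any bounded smooth convex $C$ one has $c(P(C))\geq c(C)$, a consequence of Clarke's dual action principle \cite{ama15}. Applied with $C=\varphi(\overline B)$ this gives $c(P\varphi(B))\geq c(\varphi(B))=\pi$ in one step, with no detour through $U\times W$. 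Substituting this projection inequality for your cylinder step closes the gap; the rest of your argument — convexity and smoothness of the shadow $U$ for $\varphi$ $C^3$-close to $L_0$, the Stokes identification of $\mathrm{vol}(U,\omega_0^2|_V)$ with the contact volume of $\partial U$, the dichotomy on the symplectic eigenvalues $b_1,b_2$ of $E_0$, and the final appeal to Theorem~\ref{main1} — matches the paper's reasoning (the paper phrases the endgame through Corollary~\ref{cor1} rather than Theorem~\ref{main1} directly, but the two are interchangeable here).
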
 

An immediate consequence of the above result is the following 4-dimensional non-squeezing result for small balls:

\begin{maincor}
\label{cor3}
Let $P$ be a symplectic projector onto a 4-dimensional symplectic subspace $V$ of $(\R^{2n},\omega_0)$. Let $\varphi: A \rightarrow \R^{2n}$ be a smooth symplectomorphism of a domain $A\subset \R^{2n}$ into $\R^{2n}$. Then for every $z\in A$ there exists a positive number $\rho(z)$ such that
\[
\mathrm{vol}(P \varphi(z+rB),\omega^2_0|_V) \geq \pi^2 r^4 \qquad \forall z\in A, \; \forall r\in [0,\rho(z)].
\]
Moreover, the function $\rho$ is bounded away from zero on compact subsets of $A$.
\end{maincor}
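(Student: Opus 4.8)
The plan is to reduce Corollary \ref{cor3} to Corollary \ref{cor2} by a blow-up (rescaling) argument at each point of $A$, exploiting the fact that, after rescaling around a point, a smooth symplectomorphism becomes arbitrarily $C^3$-close to its differential, which is a linear symplectomorphism and hence lies in the $C^3$-neighbourhood $\mathscr{S}$ furnished by Corollary \ref{cor2}. We may and do assume that $\mathscr{S}$ is open, replacing it by a smaller neighbourhood of the linear symplectomorphisms if necessary.

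First I would fix $z\in A$ and, for $0<r<\dist(z,\partial A)$, introduce the rescaled maps
\[
\psi_r^z:\overline{B}\to\R^{2n},\qquad \psi_r^z(w):=\frac{1}{r}\bigl(\varphi(z+rw)-\varphi(z)\bigr).
\]
Since the dilation $w\mapsto z+rw$ pulls $\omega_0$ back to $r^2\omega_0$ and $\varphi^*\omega_0=\omega_0$, a one-line pullback computation gives $(\psi_r^z)^*\omega_0=\omega_0$, so each $\psi_r^z$ is a symplectomorphism of $\overline{B}$ into $\R^{2n}$; moreover, by Taylor's theorem applied to $\varphi$ we have $\psi_r^z\to d\varphi(z)$ in $C^\infty(\overline{B},\R^{2n})$ as $r\to 0^+$, with $\|\psi_r^z-d\varphi(z)\|_{C^3(\overline{B})}=O(r)$. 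As $d\varphi(z)$ is a linear symplectomorphism it belongs to the open set $\mathscr{S}$, hence there is $\rho(z)>0$ with $\psi_r^z\in\mathscr{S}$ for all $r\in(0,\rho(z)]$.

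Next I would apply Corollary \ref{cor2} to $\psi_r^z$, obtaining $\mathrm{vol}(P\psi_r^z(B),\omega_0^2|_V)\ge\pi^2$ for $r\le\rho(z)$. Since $P$ is linear, the identity $\varphi(z+rw)=\varphi(z)+r\psi_r^z(w)$ yields $P\varphi(z+rB)=P\varphi(z)+r\,P\psi_r^z(B)$, and the $4$-dimensional symplectic volume $\int(\cdot)\,\omega_0^2|_V$ on $V$ is invariant under translations and scales by the factor $r^4$ under dilation by $r$; hence
\[
\mathrm{vol}(P\varphi(z+rB),\omega_0^2|_V)=r^4\,\mathrm{vol}(P\psi_r^z(B),\omega_0^2|_V)\ge\pi^2 r^4 ,
\]
which is the asserted inequality (the case $r=0$ being trivial).

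Finally, for the uniformity statement, let $K\subset A$ be compact and let $K'\subset A$ be a compact neighbourhood of $K$. The $C^4$-norm of $\varphi$ is bounded on $K'$, so the Taylor estimate is uniform: $\|\psi_r^z-d\varphi(z)\|_{C^3(\overline{B})}\le C_K\,r$ for all $z\in K$ and all $r\le\dist(K,\partial K')$, with $C_K$ independent of $z$. On the other hand, $z\mapsto d\varphi(z)$ is a continuous map from $A$ into the space of symplectomorphisms of $\overline{B}$ into $\R^{2n}$ (with the $C^3$-topology) taking values in the open set $\mathscr{S}$, so the $C^3$-distance from $d\varphi(z)$ to the complement of $\mathscr{S}$ is continuous and strictly positive on $A$ and therefore has a positive minimum $2\delta_K$ on $K$. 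Thus $\psi_r^z\in\mathscr{S}$ whenever $C_K\,r\le\delta_K$, and one may take $\rho$ constantly equal to $\min\{\delta_K/C_K,\ \dist(K,\partial K')\}>0$ on $K$. I do not expect a genuine obstacle here: the content of the statement is entirely in Corollary \ref{cor2}, and the only points that require some care are the pullback computation showing the rescaled maps are symplectic embeddings of $\overline{B}$ and the uniformity of $\mathscr{S}$ around the compact family $\{d\varphi(z):z\in K\}$, both of which are routine.
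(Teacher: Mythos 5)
Your proof is correct and follows essentially the same blow-up argument as the paper, which rescales $\varphi$ around each point $z$ to approximate the linear symplectomorphism $D\varphi(z)$ and then invokes Corollary \ref{cor2}. You are slightly more careful than the paper in subtracting $\varphi(z)$ before dividing by $r$ (which is needed for the $C^k$-convergence to $d\varphi(z)$ to hold literally, though the affine constant is immaterial for the volume estimate), and you spell out the uniformity on compacta that the paper states in one clause.
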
 

Since in this paper we work in the smooth category, we have stated the above result for smooth symplectomorphisms. But by density it extends readily to symplectomorphisms of class $C^3$. In the case of analytic symplectomorphisms, the above form of middle dimensional non-squeezing has been recently proved by Rigolli \cite{rig15}, with $V$ a symplectic subspace of any dimension, by using methods from the already mentioned \cite{apb14}.

\subsection{A systolic inequality for Finsler metrics on the two-sphere}

Given a Finsler metric $F$ on $S^2$, we denote by $\ell_{\min}(F)$ the length of the shortest closed geodesic on $(S^2,F)$. By $\mathrm{area}(S^2,F)$ we denote the Holmes-Thompson area of $(S^2,F)$, that is, the volume of the unit cotangent disk bundle in $T^* S^2$ with respect to the volume form $\omega\wedge \omega$ divided by $2\pi$, where $\omega$ is the standard symplectic form on $T^* S^2$ (see Section \ref{proofcor4} for more details). The Holmes-Thompson area coincides with the standard Riemannian area in the case of Riemannian metrics.

Since the work of Croke \cite{cro88}, a classical question in systolic geometry has been to find upper bounds for the systolic ratio
\[
\rho_{\mathrm{sys}}(F) = \frac{\ell_{\min}(F)^2}{\mathrm{area}(S^2,F)}.
\]
Indeed, this quantity is bounded from above on the space of Finsler metrics on $S^2$, as proved by Croke \cite{cro88} in the Riemannian case and by \'{A}lvarez Paiva, Balacheff and Tzanev \cite[Theorem VI]{apbt16} in the general Finsler case.

If the Finsler metric $F$ is Zoll, meaning that all its geodesics are closed and have the same length, then $\rho_{\mathrm{sys}}(F)=\pi$. This fact was proved by Weinstein in \cite{wei74} in the Riemannian case, but the proof goes through also in the Finsler setting.  
It is well known that Zoll metrics on $S^2$ do not maximise $\rho_{\mathrm{sys}}$, even if one restricts the attention to Riemannian metrics. Indeed, the supremum of $\rho_{\mathrm{sys}}(g)$ among all Riemannian metrics $g$ is at least $2\sqrt{3} > \pi$, as one can show by smoothing a singular metric known as the Calabi-Croke metric, see \cite{cro88}.  

In \cite{abhs17}, we proved that the round metric is nevertheless a local maximiser of $\rho_{\mathrm{sys}}$ among Riemannian metrics, as conjectured by Balacheff in \cite{bal06}. More precisely, if the curvature of a Riemannian metric $g$ is positive and sufficiently pinched, then $\rho_{\mathrm{sys}}(g) \leq \pi$, with equality holding if and only if $g$ is Zoll. By using Theorem \ref{main1} we can generalise this positive answer to Finsler metrics, possibly far from the round one:

\begin{maincor}
\label{cor4}
There exists a $C^3$-neighborhood $\mathscr{F}$ of the space of all Zoll Finsler metrics within the space of all Finsler metrics on $S^2$ such that
\[
\rho_{\mathrm{sys}}(F) \leq \pi \qquad \forall F\in \mathscr{F},
\]
with equality holding if and only if $F$ is Zoll.
\end{maincor}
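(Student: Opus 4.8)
The plan is to reduce Corollary~\ref{cor4} to Theorem~\ref{main1} via the classical correspondence between Finsler geodesic flows on $S^2$ and Reeb flows on the double cover of the unit cotangent bundle. For a Finsler metric $F$ on $S^2$, its unit cotangent bundle $S^*_FS^2\subset T^*S^2$ is diffeomorphic to $\RP^3$, and the restriction $\lambda_F$ of the tautological $1$-form $\lambda$ of $T^*S^2$ is a contact form whose Reeb flow, parametrised by $F$-arclength, is the cogeodesic flow. Thus the closed Reeb orbits of $\lambda_F$ are exactly the oriented closed geodesics of $F$, with periods equal to lengths, so $T_{\min}(\lambda_F)=\ell_{\min}(F)$; and by Stokes' theorem $\mathrm{vol}(S^*_FS^2,\lambda_F\wedge d\lambda_F)=\int_{D^*_FS^2}(d\lambda)^2=2\pi\,\mathrm{area}(S^2,F)$, where $D^*_FS^2$ is the unit codisk bundle. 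Pulling $\lambda_F$ back by the universal double covering $p\colon S^3\to S^*_FS^2$ produces a contact form $\alpha_F:=p^*\lambda_F$ on $S^3$ whose Reeb flow lifts that of $\lambda_F$ and which satisfies
\[
\mathrm{vol}(S^3,\alpha_F\wedge d\alpha_F)=2\,\mathrm{vol}(S^*_FS^2,\lambda_F\wedge d\lambda_F)=4\pi\,\mathrm{area}(S^2,F).
\]
After fixing models for $\RP^3$ and its double cover, the assignment $F\mapsto\alpha_F$ is continuous for the relevant topologies and carries Zoll Finsler metrics to Zoll contact forms on $S^3$ (see below); in particular the round metric goes to a form which, by Proposition~\ref{zoll}, is strictly contactomorphic to a multiple of $\alpha_0$.

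The quantitative input I need is the identity $T_{\min}(\alpha_F)=2\,\ell_{\min}(F)$ for $F$ in a suitable $C^3$-neighborhood of the space of Zoll metrics. A prime closed Reeb orbit of $\alpha_F$ covers a prime closed Reeb orbit $\gamma$ of $\lambda_F$ either once --- in which case $\gamma$ is null-homotopic in $S^*_FS^2$ and the two periods coincide --- or twice --- in which case $\gamma$ generates $\pi_1(S^*_FS^2)\cong\Z/2$ and the period on $S^3$ is twice that on $\RP^3$. Now for a Zoll Finsler metric $F_0$ the geodesic flow is a free circle action on $S^*_{F_0}S^2\cong\RP^3$ whose quotient is $S^2$, i.e.\ a circle bundle of Euler number $\pm 2$; hence its orbits --- the lifts of the closed geodesics of $F_0$ --- generate $\pi_1(\RP^3)$, so every closed geodesic of $F_0$ is non-contractible in $S^*_{F_0}S^2$ and its lift to $S^3$ has doubled period. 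In particular $\alpha_{F_0}$ is Zoll with $T_{\min}(\alpha_{F_0})=2\ell_{\min}(F_0)$, and since for $F$ near Zoll the systole is realised by a closed geodesic running ``once around'' (geodesics running several times around have length close to a multiple at least $2$ of the common period of $F_0$), whose lift is again non-contractible, one gets $T_{\min}(\alpha_F)\le 2\ell_{\min}(F)$. The reverse bound is equivalent to the absence, for $F$ near Zoll, of a closed geodesic of length $<2\ell_{\min}(F)$ whose lift is null-homotopic; a $\pi_1$-computation shows that such a geodesic would be, asymptotically as $F\to$ Zoll, a closed geodesic running twice around a shorter one, so what must be shown is that such ``period-two'' closed geodesics are at least twice as long as the systole. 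Establishing this --- which I would do by a compactness argument combined with the analysis of the disk-like global surface of section of Section~\ref{fixpointsec}, on which the systole equals the least action of a fixed point of the return map while a genuine period-two orbit has action at least twice that least value --- is the step I expect to be the main obstacle.

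Granting this, let $\mathscr{F}$ be the intersection of $\{F:\alpha_F\in\mathscr{A}\}$ with the neighborhood just described; it is a $C^3$-neighborhood of the space of Zoll Finsler metrics, since Zoll metrics are carried into $\mathscr{A}$. For $F\in\mathscr{F}$, Theorem~\ref{main1} gives $T_{\min}(\alpha_F)^2\le\mathrm{vol}(S^3,\alpha_F\wedge d\alpha_F)$, which by the identities above reads $4\,\ell_{\min}(F)^2\le 4\pi\,\mathrm{area}(S^2,F)$, i.e.\ $\rho_{\mathrm{sys}}(F)\le\pi$. If equality holds then $T_{\min}(\alpha_F)^2=\mathrm{vol}(S^3,\alpha_F\wedge d\alpha_F)$, so by the equality case of Theorem~\ref{main1} the form $\alpha_F$ is Zoll. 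To deduce that $F$ is Zoll, note that the non-trivial deck transformation $\tau$ of $p$ fixes $\alpha_F$, hence commutes with its Reeb flow and permutes the closed, equi-periodic Reeb orbits of $\alpha_F$; since $\alpha_F$ is $C^3$-close to a Zoll form $\alpha_{F_0}$, each of whose orbits is preserved by $\tau$ (recall every closed geodesic of $F_0$ is non-contractible), $\tau$ induces on the orbit space of $\alpha_F$ a map $C^0$-close to the identity, and an involution of $S^2$ near the identity is the identity, so $\tau$ preserves each orbit of $\alpha_F$ and acts on it by a half-period rotation; therefore every orbit of $\lambda_F$ is closed of period $\tfrac12T_{\min}(\alpha_F)$, all closed geodesics of $F$ have the same length, and $F$ is Zoll. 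Conversely, if $F$ is Zoll then $\alpha_F$ is Zoll with $T_{\min}(\alpha_F)=2\ell_{\min}(F)$, and the equality case of Theorem~\ref{main1} forces $\rho_{\mathrm{sys}}(F)=\pi$.
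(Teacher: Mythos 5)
Your setup of the Finsler--Reeb correspondence, the double-cover pullback $\alpha_F=p^*\lambda_F$, and the volume identity $\mathrm{vol}(S^3,\alpha_F\wedge d\alpha_F)=4\pi\,\mathrm{area}(S^2,F)$ all match the paper. The genuine gap --- which you honestly flag yourself --- is the inequality $T_{\min}(\alpha_F)\geq 2\,\ell_{\min}(F)$ for $F$ in the claimed neighborhood, equivalently the absence of a closed geodesic of length $<2\,\ell_{\min}(F)$ whose lift to $S^3$ closes up after one turn. Without it the chain $\ell_{\min}(F)^2\leq T_{\min}(\alpha_F)^2/4\leq \pi\,\mathrm{area}(S^2,F)$ does not close, and this is exactly where your argument stops. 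The route you sketch to fill it --- tracking contractibility via $\pi_1$ and estimating actions on a disk-like global surface of section --- is not carried out and would meet real obstacles: the surface-of-section machinery of Section~\ref{fixpointsec} lives upstairs on $S^3$, not on $S^*(S^2,F)\cong SO(3)$, and the quotient of a disk-like surface by the deck involution is not disk-like, so the action and Calabi formulas do not apply to it directly.

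The paper fills this gap by a different and much shorter mechanism: Bangert's theorem on flows near a Zoll flow (Proposition~\ref{bangert}), which gives a ``spectral gap'' for periods and sidesteps any homotopy discussion. Normalizing $\ell_{\min}(F_0)=2\pi$, so $T_{\min}(\alpha_{F_0})=4\pi$, one chooses the neighborhood so that (a) $\alpha_F\in\mathscr{A}$, (b) $3\pi<T_{\min}(\alpha_F)<5\pi$ (continuity of $T_{\min}$, Proposition~\ref{contTmin}), and (c) every closed Reeb orbit of $\lambda_F$ has period in $(\pi,3\pi)$ or larger than $5\pi$ (Bangert). A minimal-period Reeb orbit of $\alpha_F$ projects to a closed $\lambda_F$-orbit of period $T_{\min}(\alpha_F)$ or $T_{\min}(\alpha_F)/2$; (b) and (c) exclude the former, so the period is $T_{\min}(\alpha_F)/2$ and $\ell_{\min}(F)=T_{\min}(\lambda_F)\leq T_{\min}(\alpha_F)/2$, with no fundamental group argument needed. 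The same period dichotomy also settles the equality case cleanly (if $\alpha_F$ is Zoll, every $\lambda_F$-orbit is forced to have period $T_{\min}(\alpha_F)/2$, so $\lambda_F$ and hence $F$ is Zoll), replacing your more delicate argument via the deck involution acting on the orbit space. Bangert's result is exactly the quantitative tool you are missing; the rest of your reduction is essentially the paper's.
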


Unlike in the main result of \cite{abhs17}, here we do not have an explicit description of the neighborhood $\mathscr{F}$ in terms of geometric quantities, and we have maximality of the Zoll metrics in a $C^3$-neighborhood instead of a $C^2$-neighborhood.

The proof of this corollary uses the fact that a Finsler geodesic flow on a two-sphere can be seen as a Reeb flow on $SO(3)$, and hence lifted to a Reeb flow on $S^3$. It should be also remarked that the fact that the round metric does not maximise the systolic ratio is compatible with the conjecture that the sharp inequality of Theorem \ref{main1} holds for all contact forms which are obtained as lifts to $S^3$ of the contact forms on $SO(3)$ induced by Finsler (or Riemannian) geodesic flows. This conjecture is not confuted by Theorem \ref{main2}, and its validity would have the following consequence, which was explained to us some years ago by \'Alvarez Paiva and is nicely discussed in Hutchings' blog \cite{hut13}: Since the unit tangent bundle $T^1 S^2$ of $S^2$ is diffeomorphic to $SO(3)$, closed geodesics might be non-contractible when seen as closed curves in $T^1 S^2$. Define $\tilde{\ell}_{\min}(F)$ to be the shortest length of a closed geodesic of the Finsler metric $F$ which is contractible in $T^1 S^2$. It is easy to see that a primitive closed geodesic is contractible in $T^1 S^2$ if and only if it has an odd number of self-intersections. In particular, simple closed geodesics are never contractible in $T^1 S^2$. Since the fundamental group of $SO(3)$ is $\Z_2$, closed geodesics become contractible  in $T^1 S^2$ once they are iterated twice. Therefore, closed geodesics which are not contractible in $T^1 S^2$ contribute to $\tilde{\ell}_{\min}(F)$ twice their length. One defines $\tilde{\rho}_{\mathrm{sys}}(F)$ as the ratio between the square of $\tilde{\ell}_{\min}(F)$ and the Holmes-Thompson area of $(S^2,F)$. With this definition, the above considerations show that $\tilde{\rho}_{\mathrm{sys}}$ takes the value $4\pi$ at the round metric, and more generally at every Zoll Finsler metric on $S^2$. If the inequality of Theorem \ref{main1} is true for every contact form on $S^3$ which is a lift of a contact form on $SO(3)$ induced by a Riemannian (resp.\ Finsler) metric on $S^2$, then Zoll metrics are maximizers of $\tilde{\rho}_{\mathrm{sys}}$ on the space of Riemannian (resp.\ Finsler) metrics on $S^2$.

\subsection{Sketch of the proof of the main theorems}  
\label{intro-proof}

A common ingredient in the proofs of Theorems \ref{main1} and \ref{main2} is the use of  a disk-like global surface of section for studying Reeb flows on $S^3$. This is a smoothly embedded closed disk $\Sigma$ in $S^3$ which is bounded by a periodic Reeb orbit and such that the Reeb flow is transverse to its interior, which is crossed infintely many times in the future and in the past by the orbit of any point which is not on the boundary of $\Sigma$. The existence of a disk-like global surface of section can be proved under quite general assumptions using $J$-holomorphic curves, see \cite{hwz98}, \cite{hwz03}, \cite{hs11}, \cite{hry14}, and \cite{hls15}. Here we need only construct it for Reeb flows which are close enough to the standard one, and this can be done by using more elementary perturbative arguments (see Section \ref{surfsecsec} below), which have the advantage of giving us some useful quantitative information.

Denote by $\phi_t$ the flow of the Reeb vector field $R_{\alpha}$ which is associated to a contact form $\alpha$ close enough to $\alpha_0$. Let $\Sigma$ be a disk-like global surface of section for $R_{\alpha}$ and let
\[
h: \D \rightarrow S^3
\]
be a smooth embedding of the closed disk $\D= \{z\in \C \mid |z|\leq 1\}$ with image $\Sigma$.  Then we have a first return time 
\[
\tau : \mathrm{int}(\D) \rightarrow \R, \qquad \tau(z) = \inf\{ t>0 \mid \phi_t(h(z)) \in \Sigma\},
\]
and a first return map
\[
\varphi: \mathrm{int}(\D) \rightarrow \mathrm{int}(\D), \qquad \varphi(z) = h^{-1}\bigl(\phi_{\tau(z)}(h(z))\bigr).
\]
Both $\tau$ and $\varphi$ extend smoothly to the closed disk and satisfy
\begin{equation}
\label{eqtau}
\varphi^* \lambda - \lambda = d\tau,
\end{equation}
where $\lambda$ is the smooth 1-form on $\D$ which is obtained as the pull-back by $h$ of the restriction of the contact form $\alpha$ to $\Sigma$. In particular, $\varphi$ preserves the smooth 2-form $\omega= d\lambda$, which is positive on the interior of $\D$ and vanishes on its boundary. When $\alpha$ is $C^3$-close enough to $\alpha_0$, the first return map $\varphi$ is $C^1$-close to the identity. Indeed, if  $\alpha$ is $C^3$-close to $\alpha_0$ then $R_{\alpha}$ is $C^2$-close to $R_{\alpha_0}$, and hence the flow of $R_{\alpha}$ is $C^2_{\mathrm{loc}}$-close to the flow of $R_{\alpha_0}$. This gives the $C^2$-closeness of $\varphi$ in the interior of $\mathbb{D}$ to the first return map of $\alpha_0$, which is the identity, but since the boundary behaviour of $\varphi$ is determined by the linearized flow along the bounding periodic orbit, in the end one gets only $C^1$-closeness of $\varphi$ to the identity on the whole of $\D$.
Moreover, $\varphi$ has a preferred lift $\tilde{\varphi}$ to $\widetilde{\mathrm{Diff}}(\D,\omega)$, the universal cover of the group $\mathrm{Diff}(\D,\omega)$ of $\omega$-preserving smooth diffeomorphisms of the disk $\D$, which is $C^1$-close to the identity of the group $\widetilde{\mathrm{Diff}}(\D,\omega)$.

Denote by $\pi: \widetilde{\mathrm{Diff}}(\D,\omega) \rightarrow \mathrm{Diff}(\D,\omega)$ the standard projection and let $\tilde{\varphi}$ be an element of $\widetilde{\mathrm{Diff}}(\D,\omega)$ with projection $\varphi=\pi(\tilde{\varphi})\in \mathrm{Diff}(\D,\omega)$. The element $\tilde{\varphi}$
has a well-defined Calabi invariant $\mathrm{CAL}(\tilde{\varphi},\omega)$, which is defined as follows. First, one defines a smooth function $\sigma: \D \rightarrow \R$, which is called the action of $\tilde{\varphi}$ with respect to the primitive $\lambda$, by requiring that
\begin{equation}
\label{eqsigma}
d\sigma = \varphi^* \lambda - \lambda,
\end{equation}
and that the value of $\sigma$ at any point $z\in \partial \D$ is the integral of $\lambda$ on the boundary path connecting $z$ and $\varphi(z)$ which is determined, up to homotopy, by the choice of the lift $\tilde{\varphi}$ of $\varphi$. The function $\sigma$ depends on the choice of the primitive $\lambda$ of $\omega$, but its value at fixed points and its integral over $\D$ do not. The latter quantity is the Calabi invariant of $\tilde{\varphi}$,
\[
\mathrm{CAL}(\tilde{\varphi},\omega) = \int_{\D} \sigma\, \omega,
\]
and defines a group homomorphism from $\widetilde{\mathrm{Diff}}(\D,\omega)$ onto $\R$. These notions are well-known for compactly supported area-preserving diffeomorphism of the disk (see e.g.\ \cite{gg95} and references therein). Their extension to diffeomorphisms which are not the identity on the boundary is presented in Sections \ref{calsec1} and \ref{calsec2} below. 

Now assume that the element $\tilde{\varphi}$ of $\widetilde{\mathrm{Diff}}(\D,\omega)$ is obtained from a Reeb flow $R_{\alpha}$ and a disk-like global surface of section $\Sigma$, as explained above. By comparing (\ref{eqtau}) and (\ref{eqsigma}), one sees that the action $\sigma$ of $\tilde{\varphi}$ with respect to $\lambda$ and the first return time $\tau$ differ by a constant. By examining the boundary behaviour, we will show that
\begin{equation}
\label{tautau}
\tau = T + \sigma,
\end{equation}
where $T$ is the period of the periodic orbit which bounds $\Sigma$, which agrees with the $\omega$-area of $\D$. By integrating (\ref{tautau}) over $\D$, we deduce that the contact volume of $(S^3,\alpha)$ and the Calabi invariant of $\tilde{\varphi}$ are related by the identity
\begin{equation}
\label{CALCAL}
\mathrm{vol}(S^3,\alpha\wedge d\alpha) = T^2 + \mathrm{CAL}(\tilde{\varphi},\omega).
\end{equation}
Since the closed orbits of $R_{\alpha}$ different from the boundary of $\Sigma$ are in one-to-one correspondence with the periodic points of $\varphi$, the identities (\ref{tautau}) and (\ref{CALCAL}) allow us to translate statements about the period of closed orbits of $R_{\alpha}$ and the contact volume of $(S^3,\alpha)$ into statements about the action of periodic points of $\varphi$ and the Calabi invariant of $\tilde{\varphi}$. 

Theorem \ref{main1} will be deduced from a fixed point theorem for radially monotone area-preserving diffeomorphisms of the disk, or more precisely from its corollary for area-preserving diffeomorphisms of the disk $C^1$-close to the identity. These results might be of independent interest and are proved in Section \ref{fixpointsec} below. Here a diffeomorphism $\varphi:\D \rightarrow \D$ fixing the origin is said to be radially monotone if the image by $\varphi$ of the foliation by rays emanating from the origin is transverse to the foliation by circles centred at the origin. Diffeomorphisms which fix the origin and are $C^1$-close to the identity are radially monotone. Our fixed point theorem is the following:

\begin{mainthm}
\label{fixpointmon}
Let $\omega$ be a smooth 2-form on $\mathbb{D}$, which is positive on $\mathrm{int}(\mathbb{D})$. Let $\tilde{\varphi}$ be an element of $\widetilde{\mathrm{Diff}}(\mathbb{D},\omega)$ such that:
\begin{enumerate}[(i)] 
\item $\varphi= \pi(\tilde{\varphi})$ fixes the origin and is radially monotone;
\item $\tilde{\varphi} \neq \mathrm{id}$;
\item $\mathrm{CAL}(\tilde{\varphi},\omega) \leq 0$.
\end{enumerate}
Then $\varphi$ has an interior fixed point $z_0$ with negative action. More precisely,
\[
\sigma(z_0) < \frac{1}{2} \frac{\mathrm{CAL}(\tilde{\varphi},\omega)}{\int_{\D} \omega}.
\]
\end{mainthm}

The coefficient $1/2$ in the above upper bound for the action is optimal (see Remark \ref{optimality} below).
If $\omega$ is the standard area form, the above theorem can be proved by using generating functions in polar coordinates. See Theorem 1.12 in \cite{abhs17} for a related result. In what follows, it is important to have the above result for arbitrary area forms (and notice that conjugation to obtain a standard area form might destroy the radial monotonicity of the map). We will treat the general case by showing that radially monotone diffeomorphisms preserving an arbitrary area form admit suitable generalised generating functions. Expressing the action and the Calabi invariant in terms of these generalised generating functions is considerably more complicated than in the case of standard ones, but the resulting formulas are quite neat and lead to the proof of Theorem \ref{fixpointmon}. 

The first return map $\varphi$ introduced above is $C^1$-close to the identity, but need not fix the origin. Being $\omega$-preserving, it has an interior fixed point, but since this fixed point might be arbitrarily close to the boundary, conjugacy by a diffeomorphism which brings the fixed point into the origin produces a map which may not be $C^1$-close to the identity anymore. Nevertheless, by using M\"obius transformations as conjugacies, we can show that if the original map is $C^1$-close to the identity, then the new one fixing the origin is radially monotone (see Proposition \ref{moebius} below). This new map preserves the pull-back of $\omega$ by the M\"obius transformation, and in general we do not have much control on how this two-form looks like. But since it holds for maps preserving an arbitrary two-form, Theorem \ref{fixpointmon}  has the following consequence:

\begin{maincor}
\label{fixpoint}
There exists a $C^1$-neighborhood $\mathscr{U}$ of the identity in the group $\mathrm{Diff}^+(\mathbb{D})$ of orientation preserving diffeomorphisms of the disk with the following property. Let $\omega$ be a smooth 2-form on $\mathbb{D}$, which is positive on $\mathrm{int}(\mathbb{D})$. Let $\tilde{\varphi}$ be an element of $\widetilde{\mathrm{Diff}}(\mathbb{D},\omega)$ such that:
\begin{enumerate}[(i)] 
\item $\varphi= \pi(\tilde{\varphi})$ belongs to $\mathscr{U}$;
\item $\tilde{\varphi} \neq \mathrm{id}$;
\item $\mathrm{CAL}(\tilde{\varphi},\omega) \leq 0$.
\end{enumerate}
Then $\varphi$ has an interior fixed point $z_0$ with negative action. More precisely,
\[
\sigma(z_0) < \frac{1}{2} \frac{\mathrm{CAL}(\tilde{\varphi},\omega)}{\int_{\D} \omega}.
\]

\end{maincor}

The above considerations about the first return map $\varphi$ and Corollary \ref{fixpoint} allow us to prove Theorem \ref{main1}. Indeed, since all Zoll contact forms are equivalent up to rescaling and strict contactomorphisms, see Proposition \ref{zoll} below, Theorem \ref{main1} will be proved if we can exhibit a $C^3$-neighbourhood $\mathscr{A}_0$ of the standard contact form $\alpha_0$ such that for every $\alpha\in \mathscr{A}_0$ which is not Zoll the strict inequality
\[
T_{\min}(\alpha)^2 < \mathrm{vol}(S^3,\alpha\wedge d\alpha)
\]
holds. Assume by contradiction that in any $C^3$-neighborhood of $\alpha_0$ we have non-Zoll contact forms $\alpha$ for which the opposite inequality
\[
T_{\min}(\alpha)^2 \geq \mathrm{vol}(S^3,\alpha\wedge d\alpha)
\]
holds. We can consider a disk-like global surface of section $\Sigma$ which is bounded by a closed orbit of $R_{\alpha}$ of minimal period $T_{\min}(\alpha)$. Then the above inequality and the identity (\ref{CALCAL}) imply that the Calabi invariant of the corresponding $\tilde{\varphi}$ is non-positive. Moreover, $\tilde{\varphi}$ is not the identity and, when $\alpha$ is $C^3$-close enough to $\alpha_0$, the corresponding first return map $\varphi\in \mathrm{Diff}(\D,\omega)$ is $C^1$-close to the identity. Therefore, Corollary \ref{fixpoint} can be applied and implies the existence  of an interior fixed point of $\varphi$ with negative action. By the identity (\ref{tautau}), this fixed point corresponds to a closed orbit of period less than $T_{\min}(\alpha)$, which contradicts the fact that $T_{\min}(\alpha)$ is the minimal period. 

It is worth noticing that this argument proves the sharp bound of Theorem \ref{main1} for any contact form on $S^3$ such that the closed orbit of minimal period bounds a global surface of section whose associated first return map is conjugated to a radially monotone map (the conjugacy need not be area-preserving). 

The conclusion of Theorem \ref{fixpointmon} is false if one does not assume $\varphi$ to be monotone, and similarly the conclusion of Corollary \ref{fixpoint} fails if $\varphi$ is not $C^1$-close to the identity. Indeed, in Section \ref{proofsec}  below we shall construct an element $\tilde{\varphi}$ of $\widetilde{\mathrm{Diff}}(\mathbb{D},\omega)$ whose projection $\varphi\in \mathrm{Diff}(\D,\omega)$ is $C^0$-close but $C^1$-far from the identity and not radially monotone, having negative Calabi invariant and a unique fixed point at the origin with positive action. Such an example can be refined in order to have a good control on the lower bound of the action of all periodic points, see Section \ref{controsec} below. The self-diffeomorphisms of the disk which are constructed in this way can be lifted to Reeb flows of $S^3$ by a general procedure which is described in Section \ref{liftsec}, and this leads to the proof of Theorem \ref{main2}.

\paragraph{Acknowledgments.} We are grateful to Patrice Le Calvez for suggesting the use of M\"obius transformations in Proposition \ref{moebius}, and to Kai Zehmisch for explaining to us how Proposition \ref{isaball} can be deduced from a theorem of Gromov and McDuff. We are also grateful to Michael Hutchings for a conversation which lead us to add the precise upper bound on the action of the fixed point to the statement of Theorem 3 and Corollary 5. Finally, we would like to thank Gabriele Benedetti and Jungsoo Kang for carefully reading our manuscript and correcting a mistake in the proof of Proposition 3.8.
A.\ Abbondandolo is partially supported by the DFG grant AB 360/1-1. 
B.\ Bramham is partially supported by the DFG grant BB 5251/1-1. 
P.\ Salom\~ao is partially supported by the FAPESP grant 2011/16265-8 and by the CNPq grant 306106/2016-7.

\section{The fixed point theorem}
\label{fixpointsec}

\subsection[The action and Calabi homomorphism for area-preserving disk maps]{The action and the Calabi homomorphism for area-preserving diffeomorphisms of the disk}
\label{calsec1}

Let $\mathbb{D}:= \{ z\in \C \mid |z|\leq 1\}$ be the closed unit disk in $\C\cong \R^2$. We denote by $\mathrm{Diff}^+(\mathbb{D})$ the group of smooth orientation preserving diffeomorphisms of $\mathbb{D}$. It is well-known that $\mathrm{Diff}^+(\mathbb{D})$ is connected. The universal cover of $\mathrm{Diff}^+(\mathbb{D})$ is denoted  by
$\widetilde{\mathrm{Diff}}(\mathbb{D})$
and is identified with the space of homotopy classes $[\{\varphi_t\}]$ of paths in $\mathrm{Diff}^+(\mathbb{D})$ starting at the identity. The pointwise composition of these paths defines a structure of topological group on $\widetilde{\mathrm{Diff}}(\mathbb{D})$ and the covering map
\begin{equation}
\label{unicov}
\pi: \widetilde{\mathrm{Diff}}(\mathbb{D}) \rightarrow \mathrm{Diff}^+(\mathbb{D}), \qquad [\{\varphi_t\}] \mapsto \varphi_1,
\end{equation}
is a homomorphism.

Let $\omega$ be a smooth 2-form on $\mathbb{D}$, which is strictly positive on $\mathrm{int}(\mathbb{D})$. We denote by  $\mathrm{Diff}(\mathbb{D},\omega)$ the subgroup of $\mathrm{Diff}^+(\mathbb{D})$ consisting of those diffeomorphisms which preserve $\omega$. The subgroup
\[
\widetilde{\mathrm{Diff}}(\mathbb{D},\omega) := \pi^{-1} \bigl( \mathrm{Diff}(\mathbb{D},\omega) \bigr)
\]
is the total space of a cover of $\mathrm{Diff}(\mathbb{D},\omega)$, which is obtained by restriction from (\ref{unicov}), and which we still denote by $\pi$:
\begin{equation}
\label{unicovomega}
\pi:  \widetilde{\mathrm{Diff}}(\mathbb{D},\omega) \rightarrow \mathrm{Diff}(\mathbb{D},\omega).
\end{equation}
Therefore, the elements of $\widetilde{\mathrm{Diff}}(\mathbb{D},\omega)$ are homotopy classes of isotopies of $\mathbb{D}$ starting at the identity and ending at some $\omega$-preserving diffeomorphism, and $\pi$ maps such a homotopy class of isotopies to its second end-point.

\begin{rem}
If $\omega$ does not vanish on the boundary of the disk, then one can use a theorem of Dacorogna and Moser \cite{dm90} in order to show that $\mathrm{Diff}(\mathbb{D},\omega)$ is connected and (\ref{unicovomega}) is its universal cover. For a general $\omega$ these facts might not be true. For instance, if $\omega$ vanishes only at one boundary point $z_0$, then every $\varphi\in \mathrm{Diff}(\mathbb{D},\omega)$ fixes $z_0$ and one can show that $\mathrm{Diff}(\mathbb{D},\omega)$ is simply connected and (\ref{unicovomega}) is the trivial cover from the disjoint union of a countable number of copies of  $\mathrm{Diff}(\mathbb{D},\omega)$. By considering $\omega$'s which vanish on a disconnected subset of $\partial \mathbb{D}$, one can build examples in which $\mathrm{Diff}(\mathbb{D},\omega)$ is not connected. The 2-form which is  obtained by restricting the differential of a contact form to a disk-like global surface of section vanishes with order one on the boundary, and one could show that this case is similar to the case of an everywhere positive 2-form. However, since the boundary behaviour of $\omega$ does not play any role as far as the results of this section are concerned, we do not make any assumption about it.
\end{rem}

Let $\tilde{\varphi} = [\{\varphi_t\}]$ be an element of  $\widetilde{\mathrm{Diff}}(\mathbb{D},\omega)$ and set $\varphi:= \pi (\tilde{\varphi}) = \varphi_1$. Let $\lambda$ be a smooth primitive of $\omega$ on $\mathbb{D}$. Since $\varphi$ preserves $\omega$, the 1-form $\varphi^* \lambda - \lambda$ is closed and hence exact on $\mathbb{D}$.
The {\em action} of $\tilde{\varphi}$ with respect to $\lambda$ is the unique smooth function  
\[
\sigma = \sigma_{\tilde{\varphi},\lambda} : \mathbb{D} \rightarrow \R
\]
such that
\begin{equation}
\label{uno}
d\sigma = \varphi^* \lambda - \lambda, 
\end{equation}
and 
\begin{equation}
\label{due}
\sigma(z) = \int_{\{ t \mapsto \varphi_t(z)\}} \lambda,
\end{equation}
for every $z\in \partial \mathbb{D}$. 

A function with these properties is clearly unique and, in order to show that it exists, we must check that if a function $\sigma$ satisfies (\ref{uno}) and (\ref{due}) for some $z\in \partial \mathbb{D}$, then the identity (\ref{due}) holds for every boundary point. Indeed, if $z'\in \partial \mathbb{D}$ is another boundary point, we can choose a path $\alpha:[0,1] \rightarrow \partial \mathbb{D}$  such that $\alpha(0)=z$, $\alpha(1) = z'$ and compute
\[
\begin{split}
\sigma(z') &= \sigma(z) + \int_{\alpha} d\sigma = \int_{\{ t \mapsto \varphi_t(z)\}} \lambda + \int_{\alpha} (\varphi^* \lambda - \lambda) \\ &= \int_{\{ t \mapsto \varphi_t(z)\}} \lambda + \int_{\varphi\circ \alpha} \lambda - \int_{\alpha} \lambda = \int_{\alpha^{-1} \# \{ t \mapsto \varphi_t(z)\} \# \varphi\circ \alpha} \lambda,
\end{split}
\]
where the symbol $\#$ denotes concatenation. Since the path $\alpha^{-1} \# \{ t \mapsto \varphi_t(z)\} \# \varphi\circ \alpha$ is homotopic to $\{ t \mapsto \varphi_t(z')\}$ within $\partial \mathbb{D}$ by the homotopy
\[
\bigl( \alpha|_{[s,1]} \bigr)^{-1} \# \{t\mapsto \varphi_t(\alpha(s))\} \# (\varphi\circ \alpha)|_{[s,1]}, \qquad s\in [0,1],
\]
the latter integral coincides with the integral of $\lambda$ along the curve $\{ t \mapsto \varphi_t(z')\}$. This shows that the action $\sigma$ is well defined.

It is also easy to check that the action $\sigma$ does not depend on the path $\{\varphi_t\}$ which represents the homotopy class $\tilde{\varphi}$: if we replace $\{\varphi_t\}$ by a homotopic path, then the path $\{ t \mapsto \varphi_t(z)\}$ which appears in (\ref{due}) is replaced by a homotopic path in $\partial \mathbb{D}$, so the integral of the 1-form $\lambda$ is not affected by this change.

In the next lemma, we investigate how the action $\sigma_{\tilde{\varphi},\lambda}$ is affected by the change of some of its defining data.

\begin{lem}
\label{formule}
Let $\tilde{\varphi}$ and $\tilde{\psi}$ be elements of $\widetilde{\mathrm{Diff}}(\mathbb{D},\omega)$. Let $\lambda$ be a smooth primitive of $\omega$ and let $u$ be a smooth real function on $\mathbb{D}$. Then:
\begin{enumerate}[(i)]
\item $\sigma_{\tilde{\varphi},\lambda+du} = \sigma_{\tilde{\varphi},\lambda} + u\circ \varphi - u$.
\item $\sigma_{\tilde{\psi}\circ \tilde{\varphi},\lambda} = \sigma_{\tilde{\psi},\lambda} \circ \varphi + \sigma_{\tilde{\varphi}, \lambda} = \sigma_{\tilde{\psi},\lambda} + \sigma_{\tilde{\varphi},\psi^* \lambda}$.
\item $\sigma_{\tilde{\varphi}^{-1},\lambda} = - \sigma_{\tilde{\varphi},\lambda} \circ \varphi^{-1} = - \sigma_{\tilde{\varphi},(\varphi^{-1})^*\lambda}$.
\end{enumerate}
\end{lem}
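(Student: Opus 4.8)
\emph{Proof strategy.} The plan is to exploit the uniqueness built into the definition of the action: a smooth function on $\mathbb{D}$ equals $\sigma_{\tilde{\varphi},\lambda}$ as soon as its differential is $\varphi^*\lambda-\lambda$ and its restriction to $\partial\mathbb{D}$ is the boundary integral (\ref{due}). So for each of the three formulas I would write down the claimed right-hand side, check that its exterior derivative is the correct exact $1$-form by a one-line chain-rule computation, and check the boundary condition by decomposing the relevant path of the isotopy into pieces; uniqueness then finishes the job.

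\emph{Part (i).} Put $F:=\sigma_{\tilde{\varphi},\lambda}+u\circ\varphi-u$. Then $dF=(\varphi^*\lambda-\lambda)+\varphi^*(du)-du=\varphi^*(\lambda+du)-(\lambda+du)$, which is the right $1$-form for the primitive $\lambda+du$ of $\omega$. For $z\in\partial\mathbb{D}$ one has $\int_{\{t\mapsto\varphi_t(z)\}}(\lambda+du)=\int_{\{t\mapsto\varphi_t(z)\}}\lambda+\bigl(u(\varphi_1(z))-u(\varphi_0(z))\bigr)=\sigma_{\tilde{\varphi},\lambda}(z)+u(\varphi(z))-u(z)=F(z)$, using $\varphi_0=\mathrm{id}$ and $\varphi_1=\varphi$. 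Hence $F=\sigma_{\tilde{\varphi},\lambda+du}$.

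\emph{Part (ii).} First note that $\psi^*\lambda$ is again a smooth primitive of $\omega$, since $d(\psi^*\lambda)=\psi^*\omega=\omega$. Write $\tilde{\psi}=[\{\psi_t\}]$ and $\tilde{\varphi}=[\{\varphi_t\}]$, so that $\tilde{\psi}\circ\tilde{\varphi}=[\{\psi_t\circ\varphi_t\}]$ projects to $\psi\circ\varphi$. For the first equality set $G:=\sigma_{\tilde{\psi},\lambda}\circ\varphi+\sigma_{\tilde{\varphi},\lambda}$; then $dG=\varphi^*(\psi^*\lambda-\lambda)+(\varphi^*\lambda-\lambda)=(\psi\circ\varphi)^*\lambda-\lambda$, as required. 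For the boundary condition I would use the standard fact that in the topological group $\mathrm{Diff}^+(\mathbb{D})$ the path $\{t\mapsto\psi_t\circ\varphi_t\}$ is homotopic with fixed endpoints to the concatenation $\{t\mapsto\varphi_t\}\#\{t\mapsto\psi_t\circ\varphi\}$, and also to the concatenation $\{t\mapsto\psi_t\}\#\{t\mapsto\psi\circ\varphi_t\}$ — the usual interchange argument for products of paths in a topological group, obtained by applying the composition map to the two staircase homotopies of $(\psi_t,\varphi_t)$ in $\mathrm{Diff}^+(\mathbb{D})\times\mathrm{Diff}^+(\mathbb{D})$. Evaluating the first concatenation at $z\in\partial\mathbb{D}$ gives a path in $\partial\mathbb{D}$ homotopic rel endpoints to $\{t\mapsto\varphi_t(z)\}\#\{t\mapsto\psi_t(\varphi(z))\}$, so that $\int_{\{t\mapsto(\psi\circ\varphi)_t(z)\}}\lambda=\sigma_{\tilde{\varphi},\lambda}(z)+\sigma_{\tilde{\psi},\lambda}(\varphi(z))=G(z)$, and uniqueness yields $\sigma_{\tilde{\psi}\circ\tilde{\varphi},\lambda}=G$. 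For the second equality set $G':=\sigma_{\tilde{\psi},\lambda}+\sigma_{\tilde{\varphi},\psi^*\lambda}$; then $dG'=(\psi^*\lambda-\lambda)+\varphi^*(\psi^*\lambda)-\psi^*\lambda=(\psi\circ\varphi)^*\lambda-\lambda$, and, using the second concatenation together with $\int_{\{t\mapsto\psi\circ\varphi_t(z)\}}\lambda=\int_{\{t\mapsto\varphi_t(z)\}}\psi^*\lambda$, one gets $G'(z)=\int_{\{t\mapsto\psi_t(z)\}}\lambda+\int_{\{t\mapsto\psi\circ\varphi_t(z)\}}\lambda=\int_{\{t\mapsto(\psi\circ\varphi)_t(z)\}}\lambda$ for $z\in\partial\mathbb{D}$, so $G'=\sigma_{\tilde{\psi}\circ\tilde{\varphi},\lambda}$ as well.

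\emph{Part (iii) and the main point.} Part (iii) follows formally from (ii) by taking $\tilde{\psi}=\tilde{\varphi}^{-1}$: then $\tilde{\psi}\circ\tilde{\varphi}=\mathrm{id}$ and $\sigma_{\mathrm{id},\lambda}\equiv0$ (its defining path is constant), so the first identity of (ii) gives $0=\sigma_{\tilde{\varphi}^{-1},\lambda}\circ\varphi+\sigma_{\tilde{\varphi},\lambda}$, i.e.\ $\sigma_{\tilde{\varphi}^{-1},\lambda}=-\sigma_{\tilde{\varphi},\lambda}\circ\varphi^{-1}$, and the second gives $0=\sigma_{\tilde{\varphi}^{-1},\lambda}+\sigma_{\tilde{\varphi},(\varphi^{-1})^*\lambda}$, i.e.\ $\sigma_{\tilde{\varphi}^{-1},\lambda}=-\sigma_{\tilde{\varphi},(\varphi^{-1})^*\lambda}$. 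I expect the only step that is not pure bookkeeping with the chain rule and pullbacks to be the boundary computation in (ii): one must make sure that the loop traced on $\partial\mathbb{D}$ by the product isotopy is, up to homotopy with fixed endpoints \emph{inside} $\partial\mathbb{D}$, the concatenation of the two individual traces — which is exactly where the description of the group law on $\widetilde{\mathrm{Diff}}(\mathbb{D},\omega)$ as homotopy classes of isotopies, and the independence of $\sigma$ from the chosen representative established above, enter.
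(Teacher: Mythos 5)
Your proof is correct and follows essentially the same route as the paper's: for each formula you verify the two defining properties of the action (the differential identity and the boundary integral condition) and invoke uniqueness, with the same path-decomposition homotopies used on $\partial\mathbb{D}$ to handle (ii), and (iii) deduced from (ii) via $\tilde{\psi}=\tilde{\varphi}^{-1}$ and $\sigma_{\mathrm{id},\lambda}=0$. The only difference is that you spell out the staircase/interchange homotopy in the topological group $\mathrm{Diff}^+(\mathbb{D})$ more explicitly, whereas the paper just states that the diagonal path is homotopic rel endpoints to the two edge concatenations — a harmless elaboration.
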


\begin{proof}
Assume that $\tilde{\varphi}$ and $\tilde{\psi}$ are represented by the paths $\{\varphi_t\}$ and $\{\psi_t\}$, respectively, and set $\varphi:=\varphi_1$, $\psi:=\psi_1$. Let $z$ be a point on $\partial \mathbb{D}$.

The first claim follows from the identities
\[
\varphi^* (\lambda + du) - (\lambda + du) = d \sigma_{\tilde{\varphi},\lambda} + \varphi^* (du) - du = d(\sigma_{\tilde{\varphi},\lambda}+u\circ \varphi - u),
\]
and
\[
\sigma_{\tilde{\varphi},\lambda+du}(z) = \int_{\{ t \mapsto \varphi_t(z)\}} (\lambda + du) = \sigma_{\tilde{\varphi},\lambda}(z) + u(\varphi(z)) - u(z).
\]
The first identity in (ii) follows from
\[
(\psi\circ \varphi)^* \lambda - \lambda = \varphi^*(\psi^* \lambda - \lambda) + \varphi^* \lambda - \lambda = \varphi^*(d\sigma_{\tilde{\psi},\lambda}) + d \sigma_{\tilde{\varphi},\lambda} = d ( \sigma_{\tilde{\psi},\lambda} \circ \varphi + \sigma_{\tilde{\varphi},\lambda} ),
\]
and
\[
\sigma_{\tilde{\psi}\circ \tilde{\varphi},\lambda}(z) = \int_{\{t\mapsto \{\psi_t\circ \varphi_t(z) \}} \lambda = \int_{\{t\mapsto \varphi_t(z)\}} \lambda + \int_{\{ t\mapsto \psi_t(\varphi(z))\}} \lambda = \sigma_{\tilde{\varphi},\lambda} (z) + \sigma_{\tilde{\psi},\lambda} (\varphi(z)),
\]
where we have used the fact that the path in the first integral is homotopic to the concatenation of the two paths in the last two integrals. The second identity in (ii) follows from
\[
(\psi\circ \varphi)^* \lambda - \lambda = \varphi^*(\psi^* \lambda) - \psi^* \lambda + \psi^* \lambda - \lambda = d( \sigma_{\tilde{\varphi},\psi^* \lambda} + \sigma_{\tilde{\psi},\lambda}),
\]
and
\[
\begin{split}
\sigma_{\tilde{\psi}\circ \tilde{\varphi},\lambda}(z)  &= \int_{\{t\mapsto \{\psi_t\circ \varphi_t(z) \}} \lambda = \int_{\{t\mapsto \psi_t(z)\}} \lambda + \int_{\{t\mapsto \psi(\varphi_t(z))\}} \lambda \\ &= \sigma_{\tilde{\psi},\lambda}(z) + \int_{\{t\mapsto \varphi_t(z)\}} \psi^* \lambda = \sigma_{\tilde{\psi},\lambda}(z) + \sigma_{\tilde{\varphi},\psi^* \lambda}(z).
\end{split}
\]
The two formulas in (iii) follows from those in (ii) applied to the case $\tilde{\psi}=\tilde{\varphi}^{-1}$, because $\sigma_{\mathrm{id},\lambda}=0$ for every $\lambda$.
\end{proof}

Let $\tilde{\varphi} = [\{\varphi_t\}]$ be an element of $\widetilde{\mathrm{Diff}}(\mathbb{D},\omega)$.  If $z_0$ is a boundary fixed point of $\varphi=\varphi_1$, then the path
\[
[0,1] \rightarrow \mathbb{D}, \quad t\mapsto \varphi_t(z_0), 
\]
is closed and by Stokes theorem the integral of $\lambda$ on it is the winding number of the above loop times the integral of $\omega$ on $\mathbb{D}$. In particular, the value of $\sigma_{\tilde{\varphi},\lambda}$ at $z_0$ does not depend on the primitive $\lambda$ of $\omega$. 

By Lemma \ref{formule} (i), the value of $\sigma_{\tilde{\varphi},\lambda}$ at an interior fixed point $z_0$ does not depend on $\lambda$ either. 

Thus, we can denote the action of a fixed point $z_0$ of $\varphi$ simply by
\[
\sigma_{\tilde{\varphi}}(z_0).
\]
We are now ready to define the Calabi invariant of an element of $\widetilde{\mathrm{Diff}}(\mathbb{D},\omega)$:

\begin{defn}
The Calabi invariant of $\tilde{\varphi}\in \widetilde{\mathrm{Diff}}(\mathbb{D},\omega)$ is the real number
\[
\mathrm{CAL}(\tilde{\varphi},\omega) := \int_{\mathbb{D}} \sigma\, \omega,
\]
where $\sigma=\sigma_{\tilde{\varphi},\lambda}$ is the action of $\tilde{\varphi}$ with respect to any smooth primitive $\lambda$ of $\omega$.
\end{defn}

The above integral does not depend on the choice of the primitive $\lambda$ because of Lemma \ref{formule} (i) and the fact that $\varphi:= \pi(\tilde{\varphi})$ preserves $\omega$.  Thus the Calabi invariant is well defined, giving us a real function
\[
\mathrm{CAL}(\cdot,\omega): \widetilde{\mathrm{Diff}}(\mathbb{D},\omega) \rightarrow \R.
\]
The action $\sigma_{\tilde{\varphi},\lambda}$ is zero when $\tilde{\varphi}$ is the identity of the group $\widetilde{\mathrm{Diff}}(\mathbb{D},\omega)$, and so is $\mathrm{CAL}(\tilde{\varphi},\omega)$. Moreover, Lemma \ref{formule} (ii) implies that if $\tilde{\varphi}$ and $\tilde{\psi}$ are two elements in $\widetilde{\mathrm{Diff}}(\mathbb{D},\omega)$ then
\[
\mathrm{CAL}(\tilde{\psi}\circ \tilde{\varphi},\omega) = \int_{\mathbb{D}} ( \sigma_{\tilde{\psi},\lambda} \circ \varphi + \sigma_{\tilde{\varphi},\lambda})\, \omega = \mathrm{CAL}(\tilde{\psi},\omega) +  \mathrm{CAL}(\tilde{\varphi},\omega),
\]
where $\varphi=\pi(\tilde{\varphi})$. We conclude that $\mathrm{CAL}(\cdot,\omega)$ is a group homomorphism. It is called the {\em Calabi homomorphism}.

We conclude this section by discussing the naturality of the concepts which we have introduced so far. Conjugacy by a diffeomorphism $h\in \mathrm{Diff}^+(\mathbb{D})$ defines a homomorphism
\[
\mathrm{Diff}(\mathbb{D},\omega) \rightarrow  \mathrm{Diff}(\mathbb{D},h^*\omega), \qquad \varphi \mapsto h^{-1} \circ \varphi \circ h.
\]
This homomorphism has a canonical lift to the respective covers:
\[
\widetilde{\mathrm{Diff}}(\mathbb{D},\omega) \rightarrow  \widetilde{\mathrm{Diff}}(\mathbb{D},h^*\omega), \qquad \tilde{\varphi} \mapsto h^{-1} \circ \tilde{\varphi} \circ h,
\]
where $h^{-1} \circ \tilde{\varphi} \circ h$ denotes the homotopy class $[\{h^{-1} \circ \varphi_t \circ h\}]$, for $\tilde{\varphi} = [\{\varphi_t\}]$.

\begin{prop}
\label{naturality}
For every $h\in \mathrm{Diff}^+(\mathbb{D})$, $\tilde{\varphi}\in \widetilde{\mathrm{Diff}}(\mathbb{D},\omega)$ and $\lambda$ smooth primitive of $\omega$, we have
\[
\sigma_{h^{-1} \circ \tilde{\varphi} \circ h,h^* \lambda} = \sigma_{\tilde{\varphi},\lambda}\circ h, \qquad \mathrm{CAL}(h^{-1} \circ \tilde{\varphi} \circ h, h^* \omega) = \mathrm{CAL}(\tilde{\varphi},\omega).
\]
\end{prop}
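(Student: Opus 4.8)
The plan is to exploit the uniqueness characterisation of the action function recalled right after its definition: $\sigma_{\tilde{\psi},\mu}$ is the \emph{unique} smooth function on $\mathbb{D}$ whose differential equals $\psi^*\mu-\mu$ and whose restriction to $\partial\mathbb{D}$ is the map $z\mapsto\int_{\{t\mapsto\psi_t(z)\}}\mu$. Hence it suffices to verify that the candidate $\sigma_{\tilde{\varphi},\lambda}\circ h$ satisfies both requirements for the conjugated data $\tilde{\psi}:=h^{-1}\circ\tilde{\varphi}\circ h$ and $\mu:=h^*\lambda$. One first notes that $h^*\lambda$ is a legitimate primitive of $h^*\omega$, since $d(h^*\lambda)=h^*d\lambda=h^*\omega$.

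First I would check the differential condition. Writing $\varphi:=\pi(\tilde{\varphi})$ and $\psi:=h^{-1}\circ\varphi\circ h=\pi(\tilde{\psi})$, one has $\psi^*=h^*\circ\varphi^*\circ(h^{-1})^*$, so that, using the cancellation $(h^{-1})^*\circ h^*=\mathrm{id}$,
\[
\psi^*(h^*\lambda)-h^*\lambda = h^*\varphi^*(h^{-1})^*h^*\lambda - h^*\lambda = h^*(\varphi^*\lambda-\lambda) = h^*\bigl(d\sigma_{\tilde{\varphi},\lambda}\bigr) = d\bigl(\sigma_{\tilde{\varphi},\lambda}\circ h\bigr),
\]
as required. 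Next I would check the boundary normalisation. The canonical lift provides the representative $\{\psi_t\}=\{h^{-1}\circ\varphi_t\circ h\}$ of $\tilde{\psi}$, so for $z\in\partial\mathbb{D}$ the relevant path is $t\mapsto h^{-1}\bigl(\varphi_t(h(z))\bigr)$; since $h$ maps $\partial\mathbb{D}$ onto itself, $h(z)\in\partial\mathbb{D}$. Applying the change-of-variables formula for the line integral of a pulled-back $1$-form along a curve,
\[
\int_{\{t\mapsto h^{-1}(\varphi_t(h(z)))\}} h^*\lambda = \int_{\{t\mapsto\varphi_t(h(z))\}}\lambda = \sigma_{\tilde{\varphi},\lambda}\bigl(h(z)\bigr) = \bigl(\sigma_{\tilde{\varphi},\lambda}\circ h\bigr)(z).
\]
Together with the previous step, uniqueness of the action gives the first identity $\sigma_{h^{-1}\circ\tilde{\varphi}\circ h,\,h^*\lambda}=\sigma_{\tilde{\varphi},\lambda}\circ h$.

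Finally, for the Calabi invariant I would simply integrate over the disk: since $h$ is an orientation-preserving diffeomorphism of $\mathbb{D}$ (so $h(\mathbb{D})=\mathbb{D}$ and $\int_{\mathbb{D}}h^*\eta=\int_{\mathbb{D}}\eta$ for any $2$-form $\eta$), and using the identity just proved,
\[
\mathrm{CAL}\bigl(h^{-1}\circ\tilde{\varphi}\circ h,\,h^*\omega\bigr) = \int_{\mathbb{D}}\sigma_{h^{-1}\circ\tilde{\varphi}\circ h,\,h^*\lambda}\; h^*\omega = \int_{\mathbb{D}} h^*\!\bigl(\sigma_{\tilde{\varphi},\lambda}\,\omega\bigr) = \int_{\mathbb{D}}\sigma_{\tilde{\varphi},\lambda}\,\omega = \mathrm{CAL}(\tilde{\varphi},\omega).
\]

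I do not expect a genuine obstacle here; the statement is pure naturality. The only points demanding care are the order of the pull-backs and the cancellation $(h^{-1})^*\circ h^*=\mathrm{id}$ in the differential step, the fact that $h$ restricts to a self-diffeomorphism of $\partial\mathbb{D}$ so that the boundary normalisation is transported correctly, and the orientation-preservation of $h$ in the final integral.
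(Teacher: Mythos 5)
Your proof is correct and follows essentially the same approach as the paper: verify that $\sigma_{\tilde{\varphi},\lambda}\circ h$ satisfies the two defining properties of the action (the differential identity and the boundary normalisation), then integrate to get the Calabi statement. The only cosmetic difference is that you arrive at the differential identity by expanding $\psi^*=h^*\circ\varphi^*\circ(h^{-1})^*$ and cancelling, whereas the paper rewrites $h^*(\varphi^*\lambda-\lambda)$ directly as $(h^{-1}\circ\varphi\circ h)^*h^*\lambda-h^*\lambda$; these are the same computation read in opposite directions.
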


\begin{proof}
Set $\varphi:= \pi(\tilde{\varphi})$. The function $\sigma_{\tilde{\varphi},\lambda}\circ h$ satisfies
\[
d ( \sigma_{\tilde{\varphi},\lambda}\circ h ) = d ( h^* \sigma_{\tilde{\varphi},\lambda}) = h^*(d \sigma_{\tilde{\varphi},\lambda}) =h^* (\varphi^* \lambda - \lambda) = (h^{-1} \circ \varphi \circ h)^* h^* \lambda - h^* \lambda,
\]
and
\[
\sigma_{\tilde{\varphi},\lambda}\circ h(z) = \int_{\{ t \mapsto \varphi_t (h(z))\}} \lambda =
\int_{\{t\mapsto h^{-1} \circ \varphi_t \circ h(z)\}} h^* \lambda,
\]
for every $z\in \partial \mathbb{D}$. Therefore, it coincides with the action of $h^{-1} \circ \tilde{\varphi} \circ h$ with respect to $h^* \lambda$. This proves the first identity. The second one follows by integration:
\[
\mathrm{CAL}(h^{-1} \circ \tilde{\varphi} \circ h, h^* \omega) = \int_{\mathbb{D}} \sigma_{h^{-1} \circ \tilde{\varphi} \circ h,h^* \lambda} \, h^* \omega = \int_{\mathbb{D}}   \sigma_{\tilde{\varphi},\lambda}\circ h \, h^* \omega =  \int_{\mathbb{D}}  h^* ( \sigma_{\tilde{\varphi},\lambda}\, \omega) = \int_{\mathbb{D}} \sigma_{\tilde{\varphi},\lambda}\, \omega.
\]
\end{proof}

\begin{rem}
\label{compsuppdiff}
Denote by $\mathrm{Diff}_c(\D,\omega)$ the subgroup of $\mathrm{Diff}(\D,\omega)$ consisting of those $\omega$-preserving diffeomorphisms which have compact support in $\mathrm{int}(\D)$. It is well known that this group is simply connected, and that the action and the Calabi invariant descend to it. Indeed, the action of a compactly supported diffeomorphism can be normalised by asking it to be zero on $\partial \D$, and the Calabi invariant is defined as the integral of the action. This is actually the setting in which these notions are usually introduced: see for instance \cite{gg95}. When dealing with an element $\varphi$ of $\mathrm{Diff}_c(\D,\omega)$, we can therefore write $\sigma_{\varphi,\lambda}$ and $\mathrm{CAL}(\varphi,\omega)$
for the action with respect to the primitive $\lambda$ and for the Calabi invariant of $\varphi$.
\end{rem}

\subsection{Hamiltonian formalism}
\label{calsec2}

In this section we compute the action and the Calabi invariant of an element $\tilde{\varphi} $ of $\widetilde{\mathrm{Diff}}(\D,\omega)$ in terms of a generating Hamiltonian. In other words, we assume that there is a smooth function $H:[0,1]\times \D \rightarrow \R$, $H_t(z)=H(t,z)$, such that $\tilde{\varphi}$ is the equivalence class of the isotopy  $\{\varphi_t\}$ which solves the Cauchy problem
\[
\frac{d}{dt} \varphi_t = X_{H_t}\circ \varphi_t, \qquad \varphi_0 = \mathrm{id},
\]
where the Hamiltonian vector field $X_{H_t}$ is such that
\begin{equation}
\label{hamilt}
\imath_{X_{H_t}} \omega = dH_t.
\end{equation}
The fact that $\varphi_t$ leaves the boundary of $\D$ invariant implies that $X_{H_t}$ is tangent to the boundary. Then (\ref{hamilt})  implies that $dH_t$ vanishes on the tangent lines to the boundary of $\D$, and hence $H_t$ is constant on $\partial \D$. Up to the addition of a function of $t$, which does not affect the vector field $X_{H_t}$, we may assume that
\begin{equation}
\label{H_bc}
H_t = 0 \qquad \mbox{on } \partial \D, \qquad \forall t\in [0,1].
\end{equation}

\begin{prop}
\label{actham}
If $H$ satisfies (\ref{H_bc}) then for any smooth primitive $\lambda$ of $\omega$ there holds
\[
\sigma_{\tilde{\varphi},\lambda}(z) = \int_{\{t\mapsto \varphi_t(z)\}} \lambda + \int_0^1 H_t(\varphi_t(z))\, dt,
\]
for every $z\in \D$.
\end{prop}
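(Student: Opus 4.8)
The plan is to show that the function on the right-hand side satisfies the two defining properties (\ref{uno}) and (\ref{due}) of the action $\sigma_{\tilde{\varphi},\lambda}$, and then to conclude by the uniqueness of such a function, which was established right after the definition of the action. Write
\[
F(z) := \int_{\{t\mapsto\varphi_t(z)\}}\lambda + \int_0^1 H_t(\varphi_t(z))\,dt .
\]
Parametrising the first integral by $t$ and using $\tfrac{d}{dt}\varphi_t = X_{H_t}\circ\varphi_t$, one first rewrites $F$ in the compact form $F = \int_0^1 \varphi_t^*\bigl(\imath_{X_{H_t}}\lambda + H_t\bigr)\,dt$, which is the expression I would actually work with.

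For the boundary condition (\ref{due}): if $z\in\partial\mathbb{D}$ then $\varphi_t(z)\in\partial\mathbb{D}$ for all $t$, since each $\varphi_t$ leaves $\partial\mathbb{D}$ invariant; as $H_t$ vanishes on $\partial\mathbb{D}$ by the chosen normalisation, the second term of $F$ is zero and $F(z)=\int_{\{t\mapsto\varphi_t(z)\}}\lambda$, exactly as required. For (\ref{uno}), I would differentiate under the integral sign and commute $d$ with $\varphi_t^*$ to get $dF = \int_0^1 \varphi_t^*\,d\bigl(\imath_{X_{H_t}}\lambda + H_t\bigr)\,dt$. The Hamilton equation (\ref{hamilt}) gives $dH_t = \imath_{X_{H_t}}\omega = \imath_{X_{H_t}}d\lambda$, so by Cartan's formula
\[
d\bigl(\imath_{X_{H_t}}\lambda\bigr) + dH_t = d\,\imath_{X_{H_t}}\lambda + \imath_{X_{H_t}}d\lambda = \mathcal{L}_{X_{H_t}}\lambda .
\]
Then the standard identity $\tfrac{d}{dt}\varphi_t^*\lambda = \varphi_t^*\bigl(\mathcal{L}_{X_{H_t}}\lambda\bigr)$ for the time-dependent flow of $X_{H_t}$, together with the fundamental theorem of calculus, yields
\[
dF = \int_0^1 \varphi_t^*\bigl(\mathcal{L}_{X_{H_t}}\lambda\bigr)\,dt = \int_0^1 \frac{d}{dt}\bigl(\varphi_t^*\lambda\bigr)\,dt = \varphi_1^*\lambda - \varphi_0^*\lambda = \varphi^*\lambda - \lambda .
\]
Hence $F$ satisfies both (\ref{uno}) and (\ref{due}), so $F = \sigma_{\tilde{\varphi},\lambda}$.

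I do not expect a genuine obstacle here: the only points requiring a word of care are the legitimacy of differentiating under the integral and of interchanging $d$ with $\varphi_t^*$ (routine for smooth families on the compact manifold $\mathbb{D}$), and the use of the normalisation $H_t|_{\partial\mathbb{D}}=0$ to kill the boundary term in (\ref{due}). One might also remark that the right-hand side of the claimed formula is manifestly unchanged if $\{\varphi_t\}$ is replaced by a homotopic isotopy generated by a possibly different Hamiltonian, consistently with the fact that $\sigma_{\tilde{\varphi},\lambda}$ depends only on $\tilde{\varphi}$; but this is not needed for the argument, since the identification with $\sigma_{\tilde{\varphi},\lambda}$ is already forced by uniqueness.
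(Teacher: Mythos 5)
Your proof is correct, but it follows a genuinely different route from the paper's. You verify directly that the right-hand side satisfies the two defining conditions (\ref{uno}) and (\ref{due}) of the action, using Cartan's formula $\mathcal{L}_{X_{H_t}}\lambda = d\,\imath_{X_{H_t}}\lambda + \imath_{X_{H_t}}d\lambda$ together with the Hamilton relation $\imath_{X_{H_t}}\omega = dH_t$ and the identity $\tfrac{d}{dt}\varphi_t^*\lambda = \varphi_t^*\mathcal{L}_{X_{H_t}}\lambda$ for the time-dependent flow, then invoke uniqueness. The paper instead fixes a path $\gamma$ from $z$ to the boundary and integrates $\psi^*\omega$ over the square $[0,1]^2$, where $\psi(s,t)=\varphi_t(\gamma(s))$: computing this in two ways --- once by contracting $\omega$ with $X_{H_t}$ and once by Stokes' theorem applied to $d(\psi^*\lambda)$ --- produces both sides of the asserted identity at once, with no need for a separate uniqueness step. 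Your argument is more Lie-derivative flavoured and arguably more self-contained at the infinitesimal level; the paper's argument is more homotopy/Stokes flavoured and has the small advantage of not needing to justify differentiation under the integral sign or the commutation of $d$ with $\varphi_t^*$. Both are standard and correct, and the length is comparable. The only point you glossed over --- the legitimacy of $dF = \int_0^1 \varphi_t^*\,d(\cdots)\,dt$ --- is indeed routine for a smooth family on the compact manifold $\D$, as you note.
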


\begin{proof}
Let $\gamma:[0,1]\rightarrow \D$ be a smooth path such that $\gamma(0)=z$ and $\gamma(1)\in \partial \D$. Define the smooth map
\[
\psi: [0,1]^2 \rightarrow \D, \qquad \psi(s,t) := \varphi_t(\gamma(s)).
\]
We compute the integral of $\psi^*\omega$ on $[0,1]^2$ in two different ways. The first computation gives us:
\[
\begin{split}
\int_{[0,1]^2} \psi^* \omega &= \int_{[0,1]^2} \omega(\varphi_t(\gamma(s)))\left[ \frac{\partial}{\partial s} \varphi_t(\gamma(s)), X_{H_t} (\varphi_t(\gamma(s))) \right] \, ds \, dt \\ &= - \int_{[0,1]^2} dH_t(\varphi_t(\gamma(s)))\left[ \frac{\partial}{\partial s} \varphi_t(\gamma(s)) \right]\, ds\, dt = - \int_0^1 \left( \int_0^1 \frac{\partial}{\partial s} H_t(\varphi_t(\gamma(s)))\, ds \right)\, dt \\ &= - \int_0^1 \left( H_t(\varphi_t(\gamma(1))) - H_t(\varphi_t(z)) \right)\, dt = \int_0^1 H_t(\varphi_t(z))\, dt.
\end{split}
\]
The second computation uses Stokes' theorem:
\[
\begin{split}
\int_{[0,1]^2} \psi^* \omega &= \int_{[0,1]^2} \psi^* d\lambda = \int_{[0,1]^2} d\psi^* \lambda = \int_{\partial [0,1]^2} \psi^* \lambda \\ &= \int_{\gamma} \lambda + \int_{\{t\mapsto \varphi_t(\gamma(1))\}} \lambda - \int_{\varphi\circ \gamma} \lambda - \int_{\{t\mapsto \varphi_t(z)\}}\lambda  \\ &= \sigma_{\tilde{\varphi},\lambda}(\gamma(1)) - \int_{\gamma} ( \varphi^* \lambda - \lambda) - \int_{\{t\mapsto \varphi_t(z)\}} \lambda\\ &= \sigma_{\tilde{\varphi},\lambda}(\gamma(1)) - \int_{\gamma} d\sigma_{\tilde{\varphi},\lambda} - \int_{\{t\mapsto \varphi_t(z)\}} \lambda\\ &= \sigma_{\tilde{\varphi},\lambda}(z) - \int_{\{t\mapsto \varphi_t(z)\}}\lambda.
\end{split}
\]
The desired formula for $\sigma_{\tilde{\varphi},\lambda}(z)$ follows by comparing the above two identities.
\end{proof}

We conclude this section by expressing the Calabi invariant in terms of the generating Hamiltonian. 

\begin{prop}
\label{calham}
If $H$ vanishes on the boundary as in (\ref{H_bc}), then the Calabi invariant of $\tilde{\varphi}$ is given by the formula
\[
\mathrm{CAL}(\tilde{\varphi},\omega) = 2 \int_{[0,1] \times \D} H(t,z) \, dt\wedge \omega.
\]
\end{prop}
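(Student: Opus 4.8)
The plan is to substitute the formula of Proposition~\ref{actham} into the definition $\mathrm{CAL}(\tilde{\varphi},\omega)=\int_{\D}\sigma_{\tilde{\varphi},\lambda}\,\omega$ and to check that the two summands appearing there integrate separately, each of them to $\int_{[0,1]\times\D}H\,dt\wedge\omega$; the factor $2$ in the statement is exactly the sum of these two equal contributions. The two tools used repeatedly are Fubini's theorem, to interchange the integral over $\D$ with the integral over $[0,1]$, and the invariance $\int_{\D}g(\varphi_t(z))\,\omega(z)=\int_{\D}\varphi_t^*(g\,\omega)=\int_{\D}g\,\omega$, valid for any continuous function $g$ because each $\varphi_t$ is an orientation-preserving diffeomorphism of $\D$ onto itself which preserves $\omega$.

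First I would handle the ``Hamiltonian term'' $\int_{\D}\bigl(\int_0^1 H_t(\varphi_t(z))\,dt\bigr)\,\omega$. By Fubini it equals $\int_0^1\bigl(\int_{\D}H_t(\varphi_t(z))\,\omega(z)\bigr)\,dt$, and the change-of-variables identity above, applied at each fixed $t$, turns this into $\int_0^1\bigl(\int_{\D}H_t\,\omega\bigr)\,dt=\int_{[0,1]\times\D}H\,dt\wedge\omega$. For the ``Liouville term'' $\int_{\D}\bigl(\int_{\{t\mapsto\varphi_t(z)\}}\lambda\bigr)\,\omega$, differentiating the path $t\mapsto\varphi_t(z)$ and using $\tfrac{d}{dt}\varphi_t=X_{H_t}\circ\varphi_t$ rewrites the inner line integral as $\int_0^1(\imath_{X_{H_t}}\lambda)(\varphi_t(z))\,dt$; Fubini and the change of variables reduce the whole term to $\int_0^1\bigl(\int_{\D}\imath_{X_{H_t}}\lambda\;\omega\bigr)\,dt$. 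Here I would use the pointwise identity $(\imath_{X_{H_t}}\lambda)\,\omega=\lambda\wedge dH_t$, obtained by contracting $X_{H_t}$ into the $3$-form $\lambda\wedge\omega$, which vanishes on the surface $\D$, together with $\imath_{X_{H_t}}\omega=dH_t$; and then $\lambda\wedge dH_t=H_t\,\omega-d(H_t\lambda)$, which follows from $d(H_t\lambda)=dH_t\wedge\lambda+H_t\,d\lambda$. Stokes' theorem and the normalisation $H_t=0$ on $\partial\D$ kill the exact term, so $\int_{\D}\imath_{X_{H_t}}\lambda\;\omega=\int_{\D}H_t\,\omega$, whence the Liouville term also equals $\int_{[0,1]\times\D}H\,dt\wedge\omega$. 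Adding the two contributions yields the claimed formula.

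The only slightly delicate point is the Liouville term: one must keep track of the fact that the line integral in Proposition~\ref{actham} is taken along the genuine isotopy $t\mapsto\varphi_t(z)$, so that its velocity is precisely $X_{H_t}\circ\varphi_t$, and that the boundary normalisation $H_t|_{\partial\D}=0$ is exactly what makes the integration-by-parts boundary term disappear. No assumption on the boundary behaviour of $\omega$ (positivity, non-degeneracy) is needed, in keeping with the conventions of this section.
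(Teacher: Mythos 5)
Your argument is correct and coincides, step for step, with the paper's proof: both decompose $\int_{\D}\sigma_{\tilde{\varphi},\lambda}\,\omega$ via Proposition~\ref{actham} into the same two terms, handle each with Fubini and $\omega$-invariance, and reduce the Liouville term through the same contraction identity $\imath_{X_{H_t}}(\lambda\wedge d\lambda)=0$ followed by Stokes and the boundary normalisation of $H_t$. No gaps.
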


\begin{proof}
We use the formula of Proposition \ref{actham} and compute the two integrals on the right-hand side of
\[
\mathrm{CAL}(\tilde{\varphi},\omega) = \int_{\D} \sigma_{\tilde{\varphi},\lambda} \, \omega = \int_{\D} \left(  \int_0^1 \lambda(\varphi_t(z))\Bigl[\frac{d}{dt} \varphi_t(z)\Bigr]\, dt\right)  \, \omega +
\int_{\D} \left(   \int_0^1 H_t(\varphi_t(z))\, dt \right) \, \omega
\]
separately. Since the map $\varphi_t$ is $\omega$-preserving for every $t\in [0,1]$, the second integrals equals
\[
\int_{\D} \left(   \int_0^1 H_t(\varphi_t(z))\, dt \right) \, \omega = \int_{[0,1] \times \mathbb{D}} H \, dt\wedge \omega
\]
by Fubini's theorem. Using again Fubini's theorem and the fact that $\varphi_t$ is area-preserving, the first integral can be rewritten as
\[
\begin{split}
\int_{\D} \left(  \int_0^1 \lambda(\varphi_t(z))\Bigl[\frac{d}{dt} \varphi_t(z)\Bigr]\, dt\right) \, \omega &= \int_{\D} \left(  \int_0^1 \lambda(\varphi_t(z))\bigl[X_{H_t}(\varphi_t(z))\bigr]\, dt\right)  \, \omega \\ &= \int_0^1 \left( \int_{\D} \lambda(X_{H_t}) \, \omega \right) dt 
 \end{split}
 \]
From the identity
\[
0 = \imath_{X_{H_t}} ( \lambda \wedge d\lambda) = (\imath_{X_{H_t}} \lambda) d\lambda - \lambda \wedge \imath_{X_{H_t}} d\lambda = \lambda(X_{H_t}) \, \omega - \lambda \wedge dH_t,
\]
we find
\[
\begin{split}
\int_0^1 \left( \int_{\D} \lambda(X_{H_t}) \, \omega \right) dt  &= \int_0^1 \left( \int_{\D} \lambda \wedge dH_t \right) dt = \int_0^1 \left( \int_{\D} \bigl( H_t \, d\lambda - d(H_t \lambda) \bigr) \right) dt \\&= \int_{[0,1] \times \mathbb{D}} H \, dt\wedge \omega - \int_0^1 \left( \int_{\partial \D} H_t \lambda \right)\, dt \\ &=  \int_{[0,1] \times \mathbb{D}} H \, dt\wedge \omega,
\end{split}
\]
where we have used the fact that $H_t$ vanishes on the boundary of $\D$. The desired formula for $\mathrm{CAL}(\tilde{\varphi},\omega)$ follows.
\end{proof}

\begin{rem} The formulas of Propositions \ref{actham} and \ref{calham} show that the conclusion of Theorem \ref{fixpointmon} from the Introduction holds for an element $\tilde{\varphi}$ which satisfies assumption (ii) and (iii) of that theorem and is generated by an autonomous Hamiltonian $H:\D \rightarrow \R$. Indeed, the fact that $\tilde{\varphi}$ is not the identity implies that $H$, which is normalized to be zero on the boundary, is not identically zero, so by Proposition \ref{calham} this Hamiltonian is somewhere negative and hence achieves its minimum at some interior critical point $z_0$. This point is a fixed point of $\varphi=\pi(\tilde{\varphi})$ of action
\[
\sigma_{\tilde{\varphi}}(z_0) = H(z_0) < \fint_{\D} H \omega = \frac{1}{2} \frac{\mathrm{CAL}(\tilde{\varphi},\omega)}{\int_{\D} \omega} \leq 0.
\]
The same argument works for elements $\tilde{\varphi}$ which are generated  by a Hamiltonian $H:[0,1]\times \D \rightarrow \R$ for which there exists $z_0\in \D$ such that
\[
H(t,z_0) = \min_{z\in \D} H(t,z) \qquad \forall t\in [0,1].
\]
In the case of the standard area form $\omega_0$ on $\D$, any $\varphi\in \mathrm{Diff}_c(\D,\omega_0)$ which is $C^1$-close enough to the identity can be generated by a Hamiltonian with the above property (see \cite{bp94}). This fact leads to an alternative proof of a special case of Corollary \ref{fixpoint} for compactly supported maps.
\end{rem}

We conclude this section by computing the action and the Calabi invariant of diffeomorphisms which are produced by a radial autonomous Hamiltonian and by the standard area form $dx\wedge dy$ on $\D$. Here it is convenient to work with the standard primitive
\[
\lambda_0 := \frac{1}{2} \bigl( x\, dy - y \, dx \bigr)
\]
of $dx\wedge dy$, which is radially symmetric and whose expression in polar coordinates is
\[
\lambda_0 = \frac{r^2}{2}\, d\theta.
\]

\begin{lem}
\label{radialham}
Consider the Hamiltonian
\[
H: \D \rightarrow \R, \qquad H(z) = h(|z|^2),
\]
where $h:[0,1]\rightarrow \R$ is a smooth function with $h(1)=0$. Let $\tilde{\varphi}$ be the element of $\widetilde{\mathrm{Diff}}(\D,dx\wedge dy)$ which is defined by the flow  of the Hamiltonian vector field $X_H$ and let $\varphi:= \pi(\tilde{\varphi})$ be the corresponding time-1 map. Then
\[
\varphi(z) = e^{-2 h'(|z|^2)i} z, \qquad \sigma_{\tilde{\varphi},\lambda_0}(z) = h(|z|^2) - |z|^2 h'(|z|^2) \qquad \forall z\in \D,
\]
and
\[
\mathrm{CAL}(\tilde{\varphi},dx\wedge dy) = 4\pi \int_0^1 r\, h(r^2) \, dr.
\]
\end{lem}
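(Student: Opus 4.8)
The plan is to compute the three quantities directly from the explicit Hamiltonian flow, using the formulas of Propositions \ref{actham} and \ref{calham}.

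First I would identify the Hamiltonian vector field. Writing $z=x+iy$, one has $dH = 2h'(|z|^2)(x\,dx+y\,dy)$, and solving $\imath_{X_H}(dx\wedge dy)=dH$ gives $X_H = 2h'(|z|^2)\bigl(y\,\frac{\partial}{\partial x} - x\,\frac{\partial}{\partial y}\bigr)$, i.e.\ in complex notation $X_H(z) = -2i\,h'(|z|^2)\,z$. Since $H$ is autonomous, $|z|^2$ is a first integral of the flow, so along the orbit through a point $z$ the scalar $h'(|z|^2)$ is constant and the Cauchy problem $\dot{\varphi}_t = X_H\circ\varphi_t$, $\varphi_0=\mathrm{id}$, integrates explicitly to $\varphi_t(z)=e^{-2i h'(|z|^2)t}z$; taking $t=1$ gives the stated formula for $\varphi$. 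Note that $H$ vanishes on $\partial\mathbb{D}$ because $h(1)=0$, so the flow indeed preserves $\mathbb{D}$ and Proposition \ref{actham} applies with the normalisation $H|_{\partial\mathbb{D}}=0$.

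For the action I would apply Proposition \ref{actham} with $\lambda=\lambda_0=\frac{r^2}{2}\,d\theta$. The orbit $t\mapsto\varphi_t(z)$ lies on the circle of radius $|z|$ with angular coordinate $\theta(t)=\theta_0-2h'(|z|^2)t$, hence $\int_{\{t\mapsto\varphi_t(z)\}}\lambda_0 = \frac{|z|^2}{2}\cdot(-2h'(|z|^2)) = -|z|^2 h'(|z|^2)$; and since $|\varphi_t(z)|=|z|$ for all $t$ one gets $\int_0^1 H(\varphi_t(z))\,dt = h(|z|^2)$. Summing the two contributions yields $\sigma_{\tilde{\varphi},\lambda_0}(z) = h(|z|^2) - |z|^2 h'(|z|^2)$.

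Finally, for the Calabi invariant I would invoke Proposition \ref{calham}: since $H$ is time-independent, $\mathrm{CAL}(\tilde{\varphi},dx\wedge dy) = 2\int_{[0,1]\times\mathbb{D}} H\,dt\wedge\omega = 2\int_{\mathbb{D}} h(|z|^2)\,dx\,dy$, and passing to polar coordinates gives $2\int_0^{2\pi}\!\!\int_0^1 h(r^2)\,r\,dr\,d\theta = 4\pi\int_0^1 r\,h(r^2)\,dr$. As a consistency check one could instead integrate the action just found over $\mathbb{D}$ and recover the same value after one integration by parts, using $h(1)=0$. There is essentially no serious obstacle in this lemma: the only steps requiring a little care are the orientation and sign bookkeeping in identifying $X_H$ and the orbit, and verifying the boundary normalisation $H|_{\partial\mathbb{D}}=0$ needed to apply Proposition \ref{actham}, which holds precisely because of the hypothesis $h(1)=0$.
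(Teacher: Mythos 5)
Your proposal is correct and follows essentially the same route as the paper: compute $X_H$, integrate the flow explicitly, invoke $h(1)=0$ to have $H|_{\partial\D}=0$, and then read off the action and Calabi invariant. The only cosmetic difference is that you obtain the action formula by applying Proposition \ref{actham} directly, whereas the paper writes down the candidate function $z\mapsto h(|z|^2)-|z|^2h'(|z|^2)$ and verifies it satisfies the two defining conditions (\ref{uno}) and (\ref{due}); for the Calabi invariant the paper itself notes that Proposition \ref{calham} gives the same value, as you used.
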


\begin{proof}
The Hamiltonian vector field of $H$ is easily seen to vanish at $0$ and to be given by
\[
X_H(z) = - 2 h'(|z|^2) \frac{\partial}{\partial \theta}
\]
for $z\in \mathbb{D}\setminus \{0\}$. Its flow is therefore
\[
\varphi_t(z) = e^{-2 h'(|z|^2)it} z, \qquad \forall z\in \D, \; \forall t\in \R,
\]
and in particular
\[
\varphi(z) = \pi(\tilde{\varphi})(z) = \varphi_1(z) = e^{-2 h'(|z|^2)i} z,
\]
as claimed.
Let $z\in \partial \D$. The integral of $\lambda_0$ over the boundary path 
\[
[0,1] \rightarrow \partial \D, \qquad t\mapsto \varphi_t(z) = e^{-2 h'(1)it} z,
\]
is $-h'(1)$. The function
\[
\D \rightarrow \R, \qquad z\mapsto h(|z|^2) - |z|^2 h'(|z|^2)
\]
takes the value $-h'(1)$ on $\partial \D$ and its differential is easily seen to agree with
\[
\varphi^* \lambda_0 - \lambda_0 = - 2 r^3 h''(r^2) \, dr.
\]
Therefore, this function is the action $\sigma_{\tilde{\varphi},\lambda_0}$, as claimed. The formula for the Calabi invariant of $\tilde{\varphi}$ follows by integrating the action, or directly from  Proposition \ref{calham}.
\end{proof}

\subsection{Polar coordinates and lift to the strip}

When dealing with disk maps fixing the origin, it is convenient to work with polar coordinates: in other words, we consider the smooth map
\[
p: S \rightarrow \mathbb{D}, \quad S := [0,1]\times \R, \qquad p(r,\theta) := r e^{i\theta},
\]
which restricts to a smooth covering map from $(0,1] \times \R$ to the punctured disk $\mathbb{D}\setminus \{0\}$. The map
\begin{equation}
\label{deckT}
T: S \rightarrow S, \qquad T(r,\theta) := (r,\theta+2\pi),
\end{equation}
restricts to a deck transformation of this cover.

\begin{lem}\label{L:extn_of_lifts}
Let $\varphi:\mathbb{D}\rightarrow\mathbb{D}$ be an orientation preserving diffeomorphism of class $C^k$ which fixes the origin, with $k\geq 1$.  Let 
\[
\ring{\Phi}: (0,1]\times \R \rightarrow(0,1] \times \R 
\]
be a lift of the restriction $\varphi|_{\mathbb{D}\backslash\{0\}}$ 
via the covering map $p$.   Then:
\begin{enumerate}[(i)]
\item $\ring{\Phi}$ extends uniquely to a homeomorphism $\Phi:S\rightarrow S$, which in the case $k\geq 2$ is a diffeomorphism of class $C^{k-1}$.  
\item If we set $\Phi(r,\theta) = (R(r,\theta),\Theta(r,\theta))$ we have for $k\geq 2$
\[
D_1 R(0,\theta) = \bigl|D\varphi(0)[e^{i\theta}]\bigr| \qquad \forall \theta\in \R. 
\]
\end{enumerate}
\end{lem}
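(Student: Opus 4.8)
The plan is to analyze $\varphi$ near its fixed point by blowing it up in polar coordinates. Since $\varphi(0)=0$, Hadamard's lemma gives the factorization $\varphi(z)=M(z)\,z$ with $M(z):=\int_0^1 D\varphi(tz)\,dt$, a matrix-valued map of class $C^{k-1}$ with $M(0)=D\varphi(0)=:A$, an orientation-preserving invertible linear map. By continuity there is $\epsilon\in(0,1)$ with $M(z)$ invertible for $|z|\le\epsilon$, so the map
\[
F(r,\theta):=M(re^{i\theta})\,[e^{i\theta}],\qquad (r,\theta)\in[0,\epsilon]\times\R,
\]
is of class $C^{k-1}$, takes values in $\R^2\setminus\{0\}$, and satisfies $\varphi(re^{i\theta})=r\,F(r,\theta)$ for $r\le\epsilon$; in particular $F(0,\theta)=A\,[e^{i\theta}]$.

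First I would pin down $\ring\Phi$ near the boundary circle. Writing $\ring\Phi=(\ring R,\ring\Theta)$, the lifting relation $p\circ\ring\Phi=\varphi\circ p$ forces, for $0<r\le\epsilon$,
\[
\ring R(r,\theta)=|\varphi(re^{i\theta})|=r\,|F(r,\theta)|,\qquad e^{i\ring\Theta(r,\theta)}=\frac{F(r,\theta)}{|F(r,\theta)|},
\]
so $\ring\Theta$ is a continuous branch of the argument of $F$ on $(0,\epsilon]\times\R$. Now $(r,\theta)\mapsto r\,|F(r,\theta)|$ visibly extends to a $C^{k-1}$ function on $[0,\epsilon]\times\R$ vanishing on $\{0\}\times\R$; and since the half-strip $[0,\epsilon]\times\R$ is simply connected and $F/|F|$ is a $C^{k-1}$ map into the unit circle, there is a global $C^{k-1}$ function $\tilde\Theta$ on $[0,\epsilon]\times\R$ with $e^{i\tilde\Theta}=F/|F|$. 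On the connected set $(0,\epsilon]\times\R$ the difference $\ring\Theta-\tilde\Theta$ is continuous and $2\pi\Z$-valued, hence a constant $2\pi n$; after replacing $\tilde\Theta$ by $\tilde\Theta+2\pi n$ we obtain a $C^{k-1}$ map $[0,\epsilon]\times\R\to S$ agreeing with $\ring\Phi$ wherever $r>0$. On $(0,1]\times\R$ the lift $\ring\Phi$ is itself a $C^k$ diffeomorphism (the lift through $p$ of the $C^k$ diffeomorphism $\varphi|_{\mathbb{D}\setminus\{0\}}$), so patching the two descriptions over the overlap $[\epsilon/2,\epsilon]\times\R$ produces a map $\Phi:S\to S$ of class $C^{k-1}$ (continuous when $k=1$) extending $\ring\Phi$; the extension is unique because $(0,1]\times\R$ is dense in $S$.

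It remains to show $\Phi$ is a homeomorphism, and a $C^{k-1}$ diffeomorphism when $k\ge2$. Since $\varphi^{-1}$ is again a $C^k$ diffeomorphism of $\mathbb{D}$ fixing the origin and $\ring\Phi^{-1}$ is a lift of $\varphi^{-1}|_{\mathbb{D}\setminus\{0\}}$ through $p$, the construction above applies to $\varphi^{-1}$ and $\ring\Phi^{-1}$ and produces a $C^{k-1}$ (continuous when $k=1$) extension $\Psi:S\to S$ of $\ring\Phi^{-1}$. Then $\Phi\circ\Psi$ and $\Psi\circ\Phi$ are continuous self-maps of $S$ equal to $\mathrm{id}_S$ on the dense subset $(0,1]\times\R$, hence equal to $\mathrm{id}_S$; so $\Phi$ is a homeomorphism with inverse $\Psi$, and when $k\ge2$ both $\Phi$ and $\Phi^{-1}=\Psi$ are of class $C^{k-1}$ (with $k-1\ge1$), so $\Phi$ is a $C^{k-1}$ diffeomorphism. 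This gives (i). For (ii), with $k\ge2$ we have $R(r,\theta)=r\,|F(r,\theta)|$ near $r=0$, whence
\[
D_1R(0,\theta)=|F(0,\theta)|=|A\,[e^{i\theta}]|=|D\varphi(0)[e^{i\theta}]|.
\]

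The routine inputs — Hadamard's lemma, the $C^{k-1}$ regularity of the relevant compositions, and the existence of a $C^{k-1}$ argument function over a simply connected domain — I would not spell out. The step deserving care, and which I regard as the main obstacle, is reconciling the \emph{given} lift $\ring\Phi$, whose angular component is an a priori uncontrolled branch of $\arg F$, with a branch that manifestly extends to $r=0$; this is precisely where the connectedness of $(0,\epsilon]\times\R$ and the $2\pi\Z$-rigidity of differences of branches enter. A related subtlety — that a continuous extension to $\{0\}\times\R$ of a homeomorphism of $(0,1]\times\R$ need not obviously be injective along the added line — is bypassed by extending $\varphi^{-1}$ as well; alternatively one could check directly that $D\Phi(0,\theta)$ is invertible, using $D_1R(0,\theta)=|A\,e^{i\theta}|>0$, $D_2R(0,\theta)=0$ and $\partial_\theta\tilde\Theta(0,\theta)=\det A/|A\,e^{i\theta}|^{2}>0$.
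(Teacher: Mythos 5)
Your proof is correct and follows essentially the same route as the paper: both rest on the Hadamard-type factorization $\varphi(z)=M(z)z$ with $M$ of class $C^{k-1}$ and $M(0)=D\varphi(0)$ invertible, both recognize the radial coordinate as $R(r,\theta)=r\,|F(r,\theta)|$ with $F$ nowhere zero, and both apply the same construction to $\varphi^{-1}$ to obtain the inverse. The only organizational difference is that the paper factors $p$ through the intermediate cylinder $[0,1]\times\partial\mathbb{D}$, so that the angular branch issue is absorbed into the statement that lifting via $q$ loses no regularity, whereas you work directly on the strip and handle the branch explicitly via simple connectedness of $[0,\epsilon]\times\R$ and the $2\pi\Z$-rigidity of differences of branches.
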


\begin{proof}
In order to prove (i), it is enough to show that $\ring{\Phi}$ has an extension $\Phi: S \rightarrow S$ of class $C^{k-1}$: once this is proved, the same is true for $\ring{\Phi}^{-1}$ and hence both $\Phi$ and $\Phi^{-1}$ are maps of class $C^{k-1}$.

Observe that $p$ factors through maps $q$ and $h$ as in the diagram 
\[
 \xymatrix{(0,1] \times \R \ar[dr]^{p} \ar[d]_{q} &   	\\
		(0,1] \times \partial \mathbb{D}  \ar[r]_{\ \ \ h} & \mathbb{D}\backslash\{0\}  }
\]
where 
\[
		q(r,\theta):=\big(r,e^{i\theta}\big) \qquad\mbox{and}\qquad h(r,z):=rz.  
\]
Since $h$ is a homeomorphism and $q$ is a covering map, the lift $\ring{\Phi}$ of the restriction of $\varphi$ to $\mathbb{D}\setminus \{0\}$ can also be obtained by first conjugating $\varphi|_{\mathbb{D}\setminus \{0\}}$ via $h$ and then lifting 
via $q$.   That is, $\ring{\Phi}$ is a lift of the map 
\[
\ring{\phi}:(0,1] \times \partial \mathbb{D} \rightarrow(0,1]\times \partial \mathbb{D},
\qquad 	\ring{\phi} :=h^{-1}\circ \varphi|_{\mathbb{D}\setminus \{0\}} \circ h,
\]
via $p$. Now we claim that $\ring{\phi}$ extends to a $C^{k-1}$ map
\[
\phi:[0,1] \times \partial \mathbb{D}\rightarrow [0,1] \times \partial \mathbb{D}.  
\]
Before we prove this claim, observe that statement (i) will follow immediately, because $q$ extends to a smooth covering map from $[0,1]\times \R$ to $[0,1]\times \partial \mathbb{D}$,
and so lifting via $q$ one loses no regularity.   

Now we prove the claim. It is straightforward to verify that $\ring{\phi}$ has the form
\[
\ring{\phi}(r,z)=\left(|\varphi(rz)|,\frac{\varphi(rz)}{|\varphi(rz)|}\right)
\]
for all $r\in(0,1]$ and all $z\in\partial \mathbb{D}$.   Now, since $k\geq1$, we have for all $(x,y)\in\mathbb{D}$, where $x,y$ are the standard Euclidean coordinates, 
\begin{align*} 
					\varphi(x,y)&=\varphi(0)+\int_0^1D\varphi(tx,ty)[(x,y)]\,dt\\
							&=x \int_0^1 D_1 \varphi (tx,ty)\,dt\ +\ y \int_0^1D_2 \varphi (tx,ty)\,dt\\
							&=x \, \varphi_1(x,y)\ + y\, \varphi_2(x,y),
\end{align*}
where $\varphi_1,\varphi_2:\mathbb{D}\rightarrow\R$ are $C^{k-1}$ functions.  We observe that for $r\in (0,1]$ and $z=(x,y)\in \partial \mathbb{D}$ we have
\[
\frac{\varphi(rz)}{r}=x\, \varphi_1(rz)\ + y\, \varphi_2(rz),
\]
and the map
\[
\psi: [0,1] \times \partial \mathbb{D} \rightarrow [0,1] \times \partial \mathbb{D}, \qquad \psi(r,z) := x\, \varphi_1(rz)\ + y\, \varphi_2(rz) \quad \mbox{for } z = (x,y),
\]
is of class $C^{k-1}$ on the whole $[0,1]\times \partial \mathbb{D}$. We can express $\ring{\phi}$ in terms of $\psi$ as
\begin{equation}\label{E:extension}
\ring{\phi}(r,z)=\left(|\varphi(rz)|,\frac{\varphi(rz)}{r}\frac{r}{|\varphi(rz)|}\right)=\left(r |\psi(r,z)|, \frac{\psi(r,z)}{|\psi(r,z)|}\right) \qquad \forall (r,z)\in (0,1]\times \partial \D.
\end{equation}
But $\psi$ nowhere vanishes: it does not vanish for $0<r\leq1$ because in this case
\[
\psi(r,z)=\frac{\varphi(rz)}{r} \neq 0,
\] 
and it does not vanish for $r=0$ because
\begin{equation}
\label{for0}
\psi(0,z)=x\, \varphi_1(0)+y\, \varphi_2(0)=D\varphi(0)[z],
\end{equation}
and $D\varphi(0)$ is non-singular.   Thus the $C^{k-1}$ map $\psi$ nowhere vanishes, and the right-hand side of (\ref{E:extension}) defines a map of class $C^{k-1}$ on $[0,1] \times \partial \mathbb{D}$ which extends $\ring{\phi}$.   
This proves the claim and statement (i).
 
Now assume that $k\geq 2$ and let $(R,\Theta)$ be the two components of the $C^{k-1}$ map $\Phi$. From (\ref{E:extension}) we see that
\[
R(r,\theta) = |\varphi(re^{i\theta})| = r |\psi(r,e^{i\theta})|.
\]
Since $\psi$ is differentiable and does not vanish, we find by (\ref{for0})
\[
D_1 R(0,\theta) = |\psi(0,e^{i\theta})| =  \bigl| D\varphi(0)[e^{i\theta}] \bigr|,
\]
which proves (ii).
\end{proof} 

Now let $\varphi\in \mathrm{Diff}(\mathbb{D},\omega)$ be a diffeomorphism which fixes the origin. By the above lemma, any lift of $\varphi|_{\mathbb{D}\setminus \{0\}}$ via the covering map $p:(0,1]\times \R \rightarrow \mathbb{D}\setminus \{0\}$ extends to a smooth diffeomorphism $\Phi$ of $S = [0,1]\times \R$. With a little abuse of terminology, we call also this extension $\Phi$ a {\em lift} of $\varphi$. 

On $S$ we consider the 2-form 
\[
\Omega := p^* \omega,
\]
which is positive on the interior of $S$, because $p$ is orientation preserving, and satisfies the periodicity condition
\begin{equation}
\label{perT}
T^* \Omega = \Omega,
\end{equation}
where $T$ is the map which is defined in (\ref{deckT}). By construction, any lift $\Phi: S \rightarrow S$ of the diffeomorphism $\varphi\in \mathrm{Diff}(\mathbb{D},\omega)$  fixing the origin is a smooth diffeomorphism which satisfies the following conditions:
\begin{enumerate}[(a)]
\item $\Phi \circ T = T \circ \Phi$;
\item $\Phi$ maps each connected component of $\partial S$ into itself;
\item $\Phi^* \Omega = \Omega$.
\end{enumerate}

The punctured disk $\D\setminus \{0\}$ is foliated by circles centred at $0$ and rays emanating from $0$. These two foliations are orthogonal to each other.

\begin{defn}
\label{radmono}
A diffeomorphism $\varphi\in \mathrm{Diff}(\mathbb{D})$ fixing the origin is said to be {\em radially monotone} if the image through $\varphi$ of the foliation by rays  is transverse to the foliation by circles. 
\end{defn}

It is easy to translate the condition of radial monotonicity for $\varphi$ into a condition for the lift $\Phi$.

\begin{defn}
\label{mono}
The diffeomorphism $\Phi: S \rightarrow S$, $\Phi=(R,\Theta)$, is said to be monotone if $D_1 R(r,\theta)>0$ for every $(r,\theta)\in S$.
\end{defn}

Notice that two different lifts of $\varphi$ have the same first component $R$, so monotonicity does not depend on the choice of the lift.

\begin{lem}
\label{monlem}
Assume that $\varphi\in \mathrm{Diff}(\mathbb{D},\omega)$ fixes the origin. Then $\varphi$ is  radially monotone in the sense of Definition \ref{radmono} if and only if a lift $\Phi: S \rightarrow S$ of $\varphi$ (and hence every lift of $\varphi$) is monotone in the sense of Definition \ref{mono}.
\end{lem}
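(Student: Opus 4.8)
The plan is to work directly in polar coordinates and relate the angle appearing in Definition \ref{radmono} to the Jacobian-type quantity $D_1 R$ appearing in Definition \ref{mono}. First I would fix an interior point $z = re^{i\theta}$ with $r\in(0,1]$ and a lift $\Phi = (R,\Theta)$ of $\varphi$ near $(r,\theta)$, so that in complex notation $\varphi(re^{i\theta}) = R(r,\theta)\,e^{i\Theta(r,\theta)}$. Differentiating with respect to $r$ gives a formula for $D\varphi(z)[\partial_r p] = D\varphi(z)[e^{i\theta}]$ in the moving frame $\{e^{i\Theta}, i e^{i\Theta}\}$ at the point $\varphi(z)$, namely $D\varphi(z)[e^{i\theta}] = \bigl(D_1 R + i\,R\, D_1\Theta\bigr) e^{i\Theta}$. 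The tangent line to the ray $R_z$ at $z$ is $\R z = \R e^{i\theta}$, so $D\varphi(z)T_z R_z$ is the real span of the vector $(D_1 R + i R D_1\Theta)e^{i\Theta}$. On the other hand the tangent line $T_{\varphi(z)}C_{\varphi(z)}$ to the concentric circle through $\varphi(z)$ is $\R\, i\,\varphi(z) = \R\, i e^{i\Theta}$.

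Next I would compute the angle between these two lines. Writing $v := D_1 R + i R\,D_1\Theta \in \C$, we need $\mathrm{ang}(\R v e^{i\Theta}, \R i e^{i\Theta}) = \mathrm{ang}(\R v, \R i)$, which is strictly positive precisely when $v$ is not purely imaginary, i.e.\ precisely when $\re(v) = D_1 R(r,\theta) \neq 0$. Since $\varphi$ is orientation preserving and fixes the origin with $\Phi$ a diffeomorphism of $S$, continuity forces $D_1 R$ to have constant sign on all of $S$; and this sign must be positive, which one sees by evaluating at the boundary $r=1$ (where $\Phi$ preserves $\{1\}\times\R$ and orientation) or at $r=0$ via Lemma \ref{L:extn_of_lifts}(ii), where $D_1 R(0,\theta) = |D\varphi(0)[e^{i\theta}]| > 0$. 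Hence $D_1 R(r,\theta) > 0$ for all $(r,\theta)\in S$ is equivalent to $D_1 R(r,\theta) \neq 0$ for $r\in(0,1]$, which in turn is exactly the condition $\mathrm{ang}(D\varphi(z)\R z, i\R\varphi(z)) > 0$ for all $z\in\mathbb{D}\setminus\{0\}$, i.e.\ radial monotonicity.

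I would then assemble the two implications. If $\Phi$ is monotone, then for every $z = re^{i\theta} \in \mathbb{D}\setminus\{0\}$ we have $\re(v) = D_1 R(r,\theta) > 0$, so $\R v \neq \R i$ and the angle in Definition \ref{radmono} is positive; thus $\varphi$ is radially monotone. Conversely, if $\varphi$ is radially monotone, then $\re(v) = D_1 R(r,\theta) \neq 0$ for all $r\in(0,1]$, $\theta\in\R$; combined with $D_1 R(0,\theta) > 0$ from Lemma \ref{L:extn_of_lifts}(ii) and connectedness of $S$, we get $D_1 R > 0$ everywhere, so $\Phi$ is monotone. The final remark is that two lifts of $\varphi$ differ by composition with a power of $T$, which does not change the first component $R$, so the statement ``a lift is monotone'' is equivalent to ``every lift is monotone.''

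The main obstacle, such as it is, is purely bookkeeping: one must be careful that the identification $\C\cong\R^2$ and the chain rule through $p$ are applied consistently, and that the angle between lines (an unoriented quantity in $[0,\pi/2]$) is read off correctly from the complex number $v = D_1 R + i R D_1\Theta$ — in particular noting that multiplication by the unimodular factor $e^{i\Theta}$ rotates both lines by the same amount and hence leaves the angle unchanged, so that everything reduces to deciding whether $\re(v)$ vanishes. There is no genuine analytic difficulty; the substance is entirely in the clean correspondence $\mathrm{ang}(D\varphi(z)\R z,\, i\R\varphi(z)) > 0 \iff D_1 R \neq 0$, together with the sign-fixing argument via the already-established boundary value formula of Lemma \ref{L:extn_of_lifts}.
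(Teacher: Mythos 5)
Your proof is correct and follows essentially the same route as the paper's: translate the angle condition into the statement that $D_1 R\neq 0$ on $(0,1]\times\R$ via the polar-coordinate formula $D\varphi(re^{i\theta})[e^{i\theta}] = (D_1R + iRD_1\Theta)e^{i\Theta}$, then fix the sign with Lemma~\ref{L:extn_of_lifts}(ii) and continuity. The only difference is that you spell out the chain-rule computation that the paper compresses into the one-line assertion ``By definition, $\varphi$ is radially monotone if and only if $D_1R\neq 0$.''
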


\begin{proof}
If we set $\Phi=(R,\Theta)$ we have
\[
\varphi(r e^{i\theta}) = R(r,\theta) e^{i\Theta(r,\theta)} \qquad \forall (r,\theta)\in S.
\]
By definition, the diffeomorphism $\varphi$ is radially monotone if and only if
\[
D_1 R(r,\theta) \neq 0 \qquad \forall (r,\theta)\in (0,1] \times \R.
\]
By Lemma \ref{L:extn_of_lifts} (ii) we always have
\[
D_1 R(0,\theta) > 0 \qquad \forall \theta\in \R,
\]
and the thesis follows from the continuity of $D_1 R$ on $S$.
\end{proof}

\subsection{Generalised generating functions}
\label{gensec}

Let $\Omega$ be a smooth 2-form on $S$ which is positive on $\mathrm{int}(S)$ and satisfies the periodicity condition (\ref{perT}). Let $\Phi: S \rightarrow S$ be a diffeomorphism of the strip which satisfies conditions (a), (b), (c) of the previous section and is monotone in the sense of Definition \ref{mono}. The aim of this section is to show that $\Phi$ can be represented in terms of a suitable generating function. This would be a standard generating function of the form $W=W(R,\theta)$ if $\Omega$ is the standard form $\omega = dr\wedge d\theta$. Indeed, in this case the assumptions on $\Phi$ imply that there exists a smooth function $W: S \rightarrow \R$ which is $2\pi$-periodic in the variable $\theta$ and unique up to the addition of a constant such that if $(R,\Theta)$ are the two components of $\Phi(r,\theta)$ then $\Phi$ is determined by the identities
\begin{equation}
\label{genefunc}
R = r + D_2 W(R,\theta), \qquad \Theta = \theta - D_1 W(R,\theta).
\end{equation}
See e.g.\ \cite[Chapter 9]{ms98}. Identities of this sort are quite desirable, for instance because they reduce the problem of finding fixed points of $\Phi$ to that of finding critical points of $W$.
The fact that the 2-form $\Omega$ is allowed to be more general will make the dependence of $\Phi$ on the generating function $W$ somehow more implicit, but the correspondence between fixed points of $\Phi$ and critical points of $W$ will be preserved.

The 2-form $\Omega$ can be written as
\[
\Omega(r,\theta) = F(r,\theta) \, dr\wedge d\theta,
\]
where $F$ is a smooth function on $S$, which is positive on $\mathrm{int}(S)$ and satisfies
\[
F(r,\theta+2\pi) = F(r,\theta) \qquad \forall (r,\theta)\in S.
\]
The form $\Omega$ has the following two privileged primitives
\begin{equation}
\label{ab}
\begin{array}{rclrl}
\Lambda_A(r,\theta) & := & A(r,\theta) \, d\theta, & \qquad \mbox{where} & \quad \displaystyle{A(r,\theta):= \int_0^r F(s,\theta)\, ds,} \\ \\
\Lambda_B(r,\theta) & := & -B(r,\theta) \, dr, & \qquad \mbox{where} & \quad \displaystyle{B(r,\theta):= \int_0^{\theta} F(r,\vartheta)\, d\vartheta.}
\end{array}
\end{equation}
The primitive $\Lambda_A$ is also periodic, while $\Lambda_B$ is not. Indeed, we have
\begin{equation}
\label{perio}
A(r,\theta+2\pi) = A(r,\theta), \qquad B(r,\theta+2\pi) = B(r,\theta) + B(r,2\pi) \qquad \forall (r,\theta)\in S.
\end{equation}
Actually, if $\Omega$ is the lift via $p$ of a smooth 2-form $\omega$ on $\mathbb{D}$ then  $\Lambda_A$ is the lift of a smooth primitive $\lambda_a$ of $\omega$:

\begin{lem}
\label{lambda_a}
Assume that $\Omega$ is the lift via $p$ of the smooth 2-form
\[
\omega(x,y)  = f(x,y)\, dx \wedge dy,
\]
where $f$ is a smooth function on $\D$. Then $\Lambda_A = p^*\lambda_a$, where $\lambda_a$ is the following smooth primitive of $\omega$ on $\mathbb{D}$:
\[
\lambda_a(x,y) := a(x,y) (x\, dy - y \, dx) , \qquad   a(x,y) := \int_0^1t\, f(tx,ty)\, dt.
\]
\end{lem}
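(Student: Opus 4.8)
The plan is to compute everything explicitly in polar coordinates and then invoke an integration-by-parts identity to confirm that $\lambda_a$ really is a primitive of $\omega$.

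First I would record the coordinate expression of $\Omega$. Since $p(r,\theta)=(r\cos\theta,r\sin\theta)$ has Jacobian $r$, we have $p^*(dx\wedge dy)=r\,dr\wedge d\theta$, hence $\Omega=p^*\omega=F(r,\theta)\,dr\wedge d\theta$ with $F(r,\theta)=r\,f(r\cos\theta,r\sin\theta)$; in particular $F$ is smooth, positive on $\mathrm{int}(S)$ and $2\pi$-periodic in $\theta$, consistently with the set-up preceding (\ref{ab}). Then by the definition in (\ref{ab}) we get $A(r,\theta)=\int_0^r F(s,\theta)\,ds=\int_0^r s\,f(s\cos\theta,s\sin\theta)\,ds$, and the substitution $s=rt$ turns this into $A(r,\theta)=r^2\int_0^1 t\,f(rt\cos\theta,rt\sin\theta)\,dt=r^2\,a(r\cos\theta,r\sin\theta)$. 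The point of this substitution is that it exhibits the factor $r^2$, which is precisely what makes both sides of the claimed identity extend smoothly across $r=0$.

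Next I would pull back the $1$-form $x\,dy-y\,dx$ by $p$. Using $dx=\cos\theta\,dr-r\sin\theta\,d\theta$ and $dy=\sin\theta\,dr+r\cos\theta\,d\theta$, a one-line computation gives $p^*(x\,dy-y\,dx)=r^2\,d\theta$. Combining with the previous paragraph, $p^*\lambda_a=(a\circ p)\cdot p^*(x\,dy-y\,dx)=a(r\cos\theta,r\sin\theta)\,r^2\,d\theta=A(r,\theta)\,d\theta=\Lambda_A$, which is the asserted identity.

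Finally I would verify the remaining assertions in the statement, namely that $\lambda_a$ is a smooth primitive of $\omega$ on $\D$. Smoothness of $a$, and hence of $\lambda_a$, follows from differentiation under the integral sign. For the primitive property one can either argue on $\D\setminus\{0\}$ -- there $p$ restricts to a local diffeomorphism onto $\D\setminus\{0\}$ and $p^*(d\lambda_a)=d(p^*\lambda_a)=d\Lambda_A=\Omega=p^*\omega$, so $d\lambda_a=\omega$ off the origin and hence on all of $\D$ by continuity -- or compute directly: $d\lambda_a=(x\,\partial_x a+y\,\partial_y a+2a)\,dx\wedge dy$, and since $\partial_x a=\int_0^1 t^2 f_x(tx,ty)\,dt$ and similarly for $\partial_y a$, one gets $x\,\partial_x a+y\,\partial_y a=\int_0^1 t^2\frac{d}{dt}\bigl(f(tx,ty)\bigr)\,dt=f(x,y)-2a(x,y)$ by integrating by parts (the $t^2$ weight is what makes the boundary term come out to $f$), so the coefficient equals $f$. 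There is no genuine obstacle here; the only steps requiring a moment's care are the change of variables $s=rt$ and the $t^2$-weighted integration by parts.
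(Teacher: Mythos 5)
Your proof is correct and follows essentially the same route as the paper: both compute $F(r,\theta)=r\,f(r\cos\theta,r\sin\theta)$ and $p^*(x\,dy-y\,dx)=r^2\,d\theta$, then relate $A$ and $a$ by the change of variables $s=rt$ (the paper runs the substitution from $a$ to $A$, you run it from $A$ to $a$ — the same computation). Your closing verification that $d\lambda_a=\omega$ via the $t^2$-weighted integration by parts is a correct small bonus that the paper leaves implicit.
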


\begin{proof}
Since
\[
\Omega = p^* \omega = f(r\cos \theta,r\sin\theta) p^* (dx\wedge dy) = f(r\cos \theta,r\sin\theta) r \, dr\wedge d\theta,
\]
the functions $F$ and $f$ are related by the identity
\begin{equation}
\label{Ff}
F(r,\theta) = r \, f(r\cos \theta,r\sin\theta).
\end{equation}
From the identity
\[
p^*(x\, dy - y\, dx ) = r^2 \, d\theta,
\]
we find
\[
p^* \lambda_a =  r^2 a(r\cos \theta,r\sin \theta) \, d\theta.
\]
By using the definition of $a$ and (\ref{Ff}), the coefficient of $d\theta$ in the above expression can be manipulated as follows:
\[
\begin{split}
r^2 a(r\cos \theta,r\sin \theta) &= r^2 \int_0^1 t \, f(tr \cos \theta,tr\sin\theta) \, dt = r \int_0^1 tr \, f(tr \cos \theta,tr\sin\theta) \, dt \\ &= r \int_0^1 F(tr,\theta)\, dt = \int_0^r F(s,\theta)\, ds = A(r,\theta).
\end{split}
\]
Therefore, 
\[
p^* \lambda_a = A(r,\theta)\, d\theta = \Lambda_A,
\]
as claimed.
\end{proof}

Set $\Phi=(R,\Theta)$. The fact that $\Phi$ is monotone implies that the function $r \mapsto R(r,\theta)$ is a diffeomorphism of $[0,1]$ onto itself, for every $\theta\in \R$, and this implies that the map
\begin{equation}
\label{lapsi}
\Psi: S \rightarrow S, \qquad \Psi(r,\theta) := (R(r,\theta),\theta),
\end{equation}
is a smooth diffeomorphism. This fact allows us to work with coordinates $(R,\theta)$ on $S$: in the following, we shall see $r=r(R,\theta)$ and $\Theta=\Theta(R,\theta)$ as smooth functions of $(R,\theta)$. Consider the 1-form
\[
\Xi (R,\theta) = \bigl( A(R,\theta) - A(r,\theta) \bigr) \, d\theta + \bigl( B(R,\theta) - B(R,\Theta) \bigr) \, dR.
\]
By (\ref{perio}), $\Xi$ is $2\pi$-periodic:
\[
\begin{split}
\Xi(R,\theta+2\pi) &= \bigl( A(R,\theta+2\pi) - A(r,\theta+2\pi) \bigr) \, d\theta + \bigl( B(R,\theta+2\pi) - B(R,\Theta+2\pi) \bigr) \, dR \\ &= \bigl( A(R,\theta) - A(r,\theta) \bigr) \, d\theta + \bigl( B(R,\theta) + B(R,2\pi) - B(R,\Theta) - B(R,2\pi) \bigr) \, dR \\ &= \Xi(R,\theta).
\end{split}
\]
Using the fact that $\Phi$ preserves $\Omega$, we compute
\[
\begin{split}
d\Xi &= D_1 A (R,\theta) \, dR\wedge d\theta - D_1 A(r,\theta)\, dr\wedge d\theta - D_2 B(R,\theta)\, dR\wedge d\theta + D_2 B(R,\Theta) \, dR \wedge d\Theta \\ &= F(R,\theta)  \, dR\wedge d\theta - F(r,\theta)\, dr\wedge d\theta -F(R,\theta)\, dR\wedge d\theta +F(R,\Theta) \, dR \wedge d\Theta \\ &= F(R,\Theta)\, dR\wedge d\Theta - F(r,\theta)\, dr\wedge d\theta = \Phi^*\Omega -  \Omega = 0.
\end{split}
\]
Therefore, the 1-form $\Xi$ is closed and hence exact on $S$. Let $W=W(R,\theta)$ be a primitive of $\Xi$. We claim that $W$ is periodic. Indeed, this follows from the fact that the integral of $\Xi$ on the path $\alpha$ on $\partial S$ from $(0,0)$ to $(0,2\pi)$ vanishes:
\[
\int_{\alpha} \Xi = \int_0^{2\pi} \bigl( A(0,\theta) - A(0,\theta) \bigr)\, d\theta = 0.
\]
The periodic function $W$ is called a {\em generating function} for the monotone diffeomorphism $\Phi$ with respect to the invariant 2-form $\Omega$. It is defined up to the addition of a constant and satisfies
\begin{eqnarray}
\label{W1}
D_1 W(R,\theta) &=& B(R,\theta) - B(R,\Theta), \\
\label{W2}
D_2 W(R,\theta) &=& A(R,\theta) - A(r,\theta),
\end{eqnarray}
where $\Phi(r,\theta) = (R,\Theta)$. Notice that (\ref{W2}) implies that $W$ is constant on each component of the boundary of $S$. Notice also that (\ref{W1}) and (\ref{W2}) reduce to (\ref{genefunc}) in the case $\Omega = dr\wedge d\theta$.

The function $r \mapsto A(r,\theta)$ is strictly increasing on $[0,1]$ for every $\theta\in \R$, because $D_1 A(r,\theta) = F(r,\theta) >0$ for $r\in (0,1)$. Similarly, the function $\theta \mapsto B(R,\theta)$ is strictly increasing  on $\R$ for every $R\in (0,1)$, because $D_1 B(R,\theta) = F(R,\theta) >0$ for $R\in (0,1)$. These two facts imply that $(R,\theta)$ is an interior critical point of $W$ if and only if $(R,\theta)$ is an interior fixed point of $\Phi$.
We summarise the above discussion into the following:

\begin{prop}
Let $\Phi: S \rightarrow S$ be a diffeomorphism satisfying conditions (a), (b) and (c). If $\Phi$ is monotone then there is a smooth periodic function $W: S \rightarrow \R$, which is unique up to the addition of a real constant, such that (\ref{W1}) and (\ref{W2}) hold. Moreover, $W$ is constant on each of the two boundary components of $S$. Finally, the interior critical points of $W$ are precisely the fixed points of $\Phi$.
\end{prop}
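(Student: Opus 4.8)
The plan is to carry out exactly the construction sketched above, turning each informal step into a precise argument; the heart of the matter is producing the explicit closed $1$-form whose components are the right-hand sides of (\ref{W1}) and (\ref{W2}), and then taking $W$ to be a primitive of it. The first step is to record that monotonicity makes $\Psi(r,\theta)=(R(r,\theta),\theta)$ a diffeomorphism of $S$: for each fixed $\theta$ the map $r\mapsto R(r,\theta)$ is a bijection of $[0,1]$ onto itself by condition (b) and an immersion because $D_1R>0$, hence a diffeomorphism of $[0,1]$, so $\Psi$ is a smooth bijection with everywhere invertible differential. Thus $(R,\theta)$ are global smooth coordinates on $S$, and I write $r=r(R,\theta)$ and $\Theta=\Theta(R,\theta)$ for the resulting smooth functions. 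In these coordinates, condition (b) says precisely that $r(0,\theta)=0$ and $r(1,\theta)=1$ for all $\theta$ (since $R(0,\theta)=0$, $R(1,\theta)=1$ and $r\mapsto R(r,\theta)$ is increasing), and condition (a) gives that $R$ is $2\pi$-periodic in $\theta$, so $T$ acts in the new coordinates again by $(R,\theta)\mapsto(R,\theta+2\pi)$. These boundary and periodicity facts get used repeatedly.

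Next I would introduce the $1$-form
\[
\Xi:=\bigl(A(R,\theta)-A(r,\theta)\bigr)\,d\theta+\bigl(B(R,\theta)-B(R,\Theta)\bigr)\,dR
\]
on $S$, with $A,B$ as in (\ref{ab}). Using (\ref{perio}) one checks $T^*\Xi=\Xi$: the non-periodic primitive $B$ enters only through the difference $B(R,\theta)-B(R,\Theta)$, and since both $\theta$ and $\Theta$ increase by $2\pi$ under $T$, the anomaly $B(R,2\pi)$ cancels. Then a direct computation — exactly the one displayed above — using $D_1A=F$, $\partial_\theta B=F$ and $\Phi^*\Omega=\Omega$ gives $d\Xi=\Phi^*\Omega-\Omega=0$. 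Since $S=[0,1]\times\R$ is contractible, the closed form $\Xi$ is exact; I let $W$ be a primitive (obtained, say, by integrating $\Xi$ from $(0,0)$ along paths in $S$, which is legitimate since $S$ is simply connected, and smooth up to the boundary by differentiation under the integral sign). It is unique up to an additive constant because $S$ is connected. Relations (\ref{W1}) and (\ref{W2}) are then just the two coordinate components of $dW=\Xi$.

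For periodicity: since $T^*\Xi=\Xi$, the function $W\circ T-W$ has vanishing differential, hence equals a constant $c$; evaluating $c$ by integrating $\Xi=dW$ along the segment $\{0\}\times[0,2\pi]$, where $R\equiv 0$ (so $dR$ contributes nothing) and $A(R,\theta)-A(r,\theta)=A(0,\theta)-A(0,\theta)=0$ by the boundary identities, yields $c=0$, so $W$ is $2\pi$-periodic. For constancy on $\partial S$: on each of $R\equiv 0$ and $R\equiv 1$ one has $dR=0$ along the component and $A(R,\theta)=A(r(R,\theta),\theta)$ by the same boundary identities, so $D_2W=0$ there by (\ref{W2}), whence $W$ is constant on each boundary component. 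Finally, for an interior point $(R,\theta)$ we have $R\in(0,1)$ and $r(R,\theta)\in(0,1)$; there $r'\mapsto A(r',\theta)$ is strictly increasing (derivative $F>0$ on the interior), so $D_2W=0$ forces $R=r$, and $\theta'\mapsto B(R,\theta')$ is strictly increasing (derivative $F(R,\cdot)>0$), so $D_1W=0$ forces $\Theta=\theta$; together these say $\Phi(r,\theta)=(R,\Theta)=(r,\theta)$, and the converse is immediate from (\ref{W1})--(\ref{W2}).

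The only place where care is genuinely needed is the interface with $\partial S$: the identities $r(0,\theta)=0$ and $r(1,\theta)=1$, which come from condition (b) combined with monotonicity, are exactly what force the periodization constant in the previous paragraph to vanish and make $W$ constant on the boundary, and they are the reason the proof of Lemma \ref{monlem}'s boundary estimate (via Lemma \ref{L:extn_of_lifts}(ii)) was needed. The other conceivable difficulty — that $\Omega$ is an arbitrary positive $2$-form rather than the standard area form, so the classical generating function $W=W(R,\theta)$ built from $\Lambda=R\,d\theta$ is not available — is precisely what the ``mixed'' choice of the privileged primitives $\Lambda_A$ in the $\theta$-direction and $\Lambda_B$ in the $R$-direction resolves; once the form $\Xi$ is written down, closedness and all the remaining assertions are routine bookkeeping.
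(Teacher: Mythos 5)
Your proof is correct and follows essentially the same route as the paper: you introduce exactly the paper's $1$-form $\Xi$ in the coordinates $(R,\theta)$, verify $T^*\Xi=\Xi$ and $d\Xi=0$ using $\Phi^*\Omega=\Omega$, take $W$ to be a primitive, establish $2\pi$-periodicity by evaluating along $\{0\}\times[0,2\pi]$, and deduce the boundary and critical-point statements from the boundary identities $r(0,\theta)=0$, $r(1,\theta)=1$ and the strict monotonicity of $A(\cdot,\theta)$ and $B(R,\cdot)$. The only differences are cosmetic — for instance, you phrase periodicity via the locally constant function $W\circ T-W$ and you spell out the boundary identities more explicitly than the paper does — but the argument is the same.
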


\subsection{The action in terms of the generating function}
\label{actionsec}

Throughout this section $\tilde{\varphi}=[\{\varphi_t\}]$ is an element of $\widetilde{\mathrm{Diff}}(\mathbb{D},\omega)$ which satisfies assumption (i) of Theorem \ref{fixpointmon} from the Introduction: $\varphi := \pi(\tilde{\varphi})$ fixes the origin and is radially monotone. 

The isotopy $\{\varphi_t\}$ lifts uniquely to a path of diffeomorphisms $\Phi_t: S \rightarrow S$ such that $\Phi_0 = \mathrm{id}$. We denote by $\Phi:= \Phi_1$ the resulting lift of $\varphi$, which does not depend on the choice of the representative $\{\varphi_t\}$ of $\tilde{\varphi}$. 

The diffeomorphism $\Phi$ satisfies conditions (a), (b), (c) of the previous section, where $\Omega= p^* \omega$, and is monotone. Therefore, $\Phi$ admits a generating function $W: S \rightarrow \R$, which we normalise by requiring that
\begin{equation}
\label{norm}
W(1,\theta) = 0 \qquad \forall \theta\in \R.
\end{equation}
The aim of this section is to relate the action of $\tilde{\varphi}$ to the generating function $W$. We begin by determining the value of $W$ on the boundary component $\{0\} \times \R$:

\begin{lem}
\label{Wsotto}
The value of $W$ on the boundary component $\{0\} \times \R$ coincides with the action of the fixed point $0\in \mathbb{D}$ with respect to $\tilde{\varphi}$: 
\[
W(0,\theta) = \sigma_{\tilde{\varphi}} (0) \qquad \forall \theta\in \R.
\]
\end{lem}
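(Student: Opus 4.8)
The plan is to evaluate both sides and check that they agree. Since the value of the action at a fixed point is independent of the chosen primitive of $\omega$, I compute $\sigma_{\tilde{\varphi}}(0)$ using the primitive $\lambda_a$ of Lemma \ref{lambda_a}, whose lift to $S$ via $p$ is the periodic primitive $\Lambda_A = A(r,\theta)\,d\theta$ of $\Omega$. Fix $\theta_0\in\R$ and apply formula (\ref{utile}) at the interior fixed point $0$ with the radial path $\alpha(s)=s\,e^{i\theta_0}$, $s\in[0,1]$. Its lift to $S$ is $\hat\alpha(s)=(s,\theta_0)$; the lift of $\varphi\circ\alpha$ is $s\mapsto\Phi(s,\theta_0)$; and the lift of $t\mapsto\varphi_t(e^{i\theta_0})$ is $t\mapsto\Phi_t(1,\theta_0)$, a path contained in $\{1\}\times\R$ from $(1,\theta_0)$ to $(1,\Theta(1,\theta_0))$ (here $R(1,\cdot)\equiv1$ because $\varphi$ preserves $\partial\mathbb{D}$, and $R(0,\cdot)\equiv0$ because $\varphi(0)=0$). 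Since $\Lambda_A$ pulls back to zero along $\hat\alpha$ and restricts to the exact form $A(1,\theta)\,d\theta$ on $\{1\}\times\R$, writing $\rho(s):=R(s,\theta_0)$ and $\vartheta(s):=\Theta(s,\theta_0)$, formula (\ref{utile}) yields
\[
\sigma_{\tilde{\varphi}}(0)=\int_{\theta_0}^{\Theta(1,\theta_0)}A(1,\theta)\,d\theta-\int_0^1 A\bigl(\rho(s),\vartheta(s)\bigr)\,\vartheta'(s)\,ds .
\]

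I then introduce the potential $G(r,\theta):=\int_0^r\!\!\int_0^{\theta}F(s,\vartheta)\,d\vartheta\,ds$, which by (\ref{ab}) satisfies $\partial_r G=B$, $\partial_{\theta}G=A$ and $G(0,\cdot)\equiv G(\cdot,0)\equiv0$. Differentiating $s\mapsto G(\rho(s),\vartheta(s))$ and integrating over $[0,1]$, an integration by parts turns the second integral above into $G\bigl(1,\Theta(1,\theta_0)\bigr)-\int_0^1 B(\rho(s),\vartheta(s))\,\rho'(s)\,ds$ (using $\rho(0)=0$, $\rho(1)=1$ and $G(0,\cdot)=0$), while the first integral equals $G\bigl(1,\Theta(1,\theta_0)\bigr)-G(1,\theta_0)$ since $\partial_{\theta}G(1,\cdot)=A(1,\cdot)$. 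The terms $G(1,\Theta(1,\theta_0))$ cancel and one is left with
\[
\sigma_{\tilde{\varphi}}(0)=\int_0^1 B\bigl(\rho(s),\vartheta(s)\bigr)\,\rho'(s)\,ds-G(1,\theta_0).
\]

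For the left-hand side: by (\ref{W2}) together with $A(0,\cdot)\equiv0$ and $r(0,\theta)=0$ one gets $D_2W(0,\theta)=0$, so $W$ is constant on $\{0\}\times\R$ and $W(0,\theta)=W(0,\theta_0)$ for every $\theta$. Using the normalisation (\ref{norm}) and integrating $D_1W$ from (\ref{W1}) along $R\mapsto(R,\theta_0)$ from $(1,\theta_0)$ to $(0,\theta_0)$ gives $W(0,\theta_0)=\int_0^1\bigl[B(R,\Theta(R,\theta_0))-B(R,\theta_0)\bigr]\,dR$. Substituting $R=\rho(s)$ in the first summand ($\rho$ being an increasing diffeomorphism of $[0,1]$ by monotonicity, so $r(R,\theta_0)=s$ and $\Theta(R,\theta_0)=\vartheta(s)$) and noting $\int_0^1 B(R,\theta_0)\,dR=G(1,\theta_0)$ in the second, this becomes exactly the expression for $\sigma_{\tilde{\varphi}}(0)$ found above, which proves the lemma.

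Geometrically, both sides are the signed $\Omega$-area of the region swept between the radial segment $\{\theta=\theta_0\}$ and its image under $\Phi$, and the computation merely makes this precise. The one genuine source of friction is keeping the two coordinate systems apart: the standard $(r,\theta)$ on $S$, and the ``generating'' coordinates $(R,\theta)$ in which $R$ denotes the radial coordinate of the \emph{image} point, with the attendant care about which incarnation of $\Theta$ (a function of $(r,\theta)$, or of $(R,\theta)$) appears in each identity. This is where essentially all of the bookkeeping effort goes.
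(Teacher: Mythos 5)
Your proof is correct. Both arguments share the same starting observation — normalize $W(1,\cdot)=0$ and integrate $D_1W$ from $R=1$ down to $R=0$ via (\ref{W1}) — but you then take a noticeably more computational route. The paper recognizes that integral as $\int_{\gamma}\Lambda_B-\int_{\Phi\circ\gamma}\Lambda_B$, closes the loop with boundary arcs along which $\Lambda_B$ integrates to zero, and, since the resulting integral is over a closed loop, replaces $\Lambda_B$ with $p^*\lambda$ for an \emph{arbitrary} primitive $\lambda$ of $\omega$, landing directly on the defining expression for $\sigma_{\tilde\varphi,\lambda}(0)$. You, instead, compute both sides separately: you evaluate $\sigma_{\tilde\varphi}(0)$ from formula (\ref{utile}) with the \emph{specific} primitive $\lambda_a$ (lifting to $\Lambda_A$), introduce the potential $G$ to push that expression into $B$-form via integration by parts, then compute $W(0,\theta_0)$ and match the two using the substitution $R=\rho(s)$ (justified by monotonicity, so $\rho$ is an increasing diffeomorphism of $[0,1]$). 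What the paper's route buys is economy and conceptual clarity: one never needs to pick a primitive or introduce $G$, because the closed-loop integral is primitive-independent. What your route buys is explicitness and a reusable technique: the potential $G$ and the $(r,\theta)\leftrightarrow(R,\theta)$ bookkeeping are exactly the tools the paper itself deploys in the subsequent Proposition \ref{forac} and Lemmas \ref{L1}--\ref{L2}, so in a sense you have proved Lemma \ref{Wsotto} by a method that foreshadows, and is consistent with, the paper's treatment of the interior action formula. A minor stylistic note: the constancy of $W$ on $\{0\}\times\R$, which you rederive from (\ref{W2}), is already recorded in the proposition at the end of Section \ref{gensec}, so you could have simply cited it.
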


\begin{proof}
Fix an arbitrary $\theta\in \R$. Then we have by (\ref{W1})
\[
\begin{split}
W(0,\theta) &= W(1,\theta) - \int_0^1 D_1 W(R,\theta)\, dR = - \int_0^1 \bigl( B(R,\theta) - B(R,\Theta) \bigr)\, dR \\ &= - \int_0^1 B(r,\theta) \, dr + \int_0^1 B(R,\Theta)\, dR =  \int_{\gamma} \Lambda_B - \int_{\gamma} \Phi^*(\Lambda_B)  = \int_{\gamma} \Lambda_B - \int_{\Phi\circ \gamma} \Lambda_B,
\end{split}
\]
where $\gamma: [0,1] \rightarrow S$ is the path $\gamma(r) = (r,\theta)$. Denote by $\alpha_0:[0,1]\rightarrow \partial S$, respectively $\alpha_1 : [0,1] \rightarrow \partial S$, the affine path going from $\gamma(0)$, resp.\ $\gamma(1)$, to $\Phi(\gamma(0))$, resp.\ $\Phi(\gamma(1))$. Since the integral of $\Lambda_B$ on paths contained in $\partial S$ vanishes, we have
\[
W(0,\theta) = \int_{\gamma \# \alpha_1 \# (\Phi\circ \gamma)^{-1} \# \alpha_0^{-1}} \Lambda_B.
\]
Let $\lambda$ be any smooth primitive of $\omega$ on $\mathbb{D}$. Then $p^* \lambda$ is a primitive of $\Omega$ on $S$, and hence it differs from $\Lambda_B$ by an exact 1-form. Since the path $\gamma \# \alpha_1 \# (\Phi\circ \gamma)^{-1} \# \alpha_0^{-1}$ is closed, we can continue the computation by replacing $\Lambda_B$ by $p^* \lambda$: 
\[
\begin{split}
W(0,\theta) &= \int_{\gamma \# \alpha_1 \# (\Phi\circ \gamma)^{-1} \# \alpha_0^{-1}} p^*\lambda = \int_{p\circ \gamma} \lambda + \int_{p\circ \alpha_1} \lambda- \int_{p\circ \Phi\circ \gamma}\lambda - \int_{p\circ \alpha_0} \lambda \\ &= \int_{p\circ \gamma} \lambda + \int_{p\circ \alpha_1} \lambda- \int_{\varphi \circ p\circ \gamma}\lambda - 0 = \int_{p\circ \alpha_1} \lambda - \int_{p\circ \gamma} \bigl( \varphi^* \lambda - \lambda \bigr) .
\end{split}
\]
Using the definition of the action $\sigma_{\tilde{\varphi},\lambda}$ and the fact that the path $p\circ \alpha_1$ is homotopic to the path $t\mapsto \varphi_t(e^{i\theta})$ within $\partial \mathbb{D}$, because $\alpha_1$ is homotopic to $t\mapsto \Phi_t(1,\theta)$ within $\partial S$, we finally obtain
\[
W(0,\theta) = \sigma_{\tilde{\varphi},\lambda}(e^{i\theta}) - \int_{p\circ \gamma} d \sigma_{\tilde{\varphi},\lambda} = \sigma_{\tilde{\varphi},\lambda}(e^{i\theta}) - \sigma_{\tilde{\varphi},\lambda}(e^{i\theta}) + \sigma_{\tilde{\varphi},\lambda}(0)  = \sigma_{\tilde{\varphi},\lambda}(0),
\]
as we wished to prove.
\end{proof}

As we have seen in Lemma \ref{lambda_a}, $\Lambda_A$ is the lift via $p$ of a smooth primitive $\lambda_a$ of $\omega$ on $\mathbb{D}$. We shall express the action of $\tilde{\varphi}$ with respect to $\lambda_a$ in terms of the generating function $W$. 

Let $\Sigma:= p^* \sigma_{\tilde{\varphi},\lambda_a}: S \rightarrow \R$ be the lift of the action of $\tilde{\varphi}$ with respect to $\lambda_a$. This function is uniquely determined by the conditions
\begin{equation}
\label{detSigma}
d \Sigma = \Phi^* \Lambda_A - \Lambda_A, \qquad \Sigma(1,\theta) = \sigma_{\tilde{\varphi},\lambda_a} (e^{i\theta}) \quad \forall \theta\in \R.
\end{equation}
In order to express the function $\Sigma$ in terms of $W$,  we introduce the smooth function
\[
G: S \rightarrow \R, \qquad G(r,\theta) := \int_0^r \left( \int_0^{\theta} F(s,\vartheta)\, d\vartheta \right) \, ds,
\]
which satisfies
\begin{equation}
\label{derh}
D_1 G (r,\theta) = \int_0^\theta F(r,\vartheta)\, d\vartheta = B(r,\theta), \qquad D_2 G (r,\theta) = \int_0^r F(s,\theta)\, ds = A(r,\theta),
\end{equation}
and
\begin{equation}
\label{translH}
G\circ T(r,\theta) - G(r,\theta) = G(r,2\pi) \qquad \forall (r,\theta)\in S.
\end{equation}

\begin{prop}
\label{forac}
The lifted action $\Sigma$ has the following form
\begin{equation}
\label{forSigma}
\Sigma(r,\theta) = W(R,\theta) + G(\Phi(r,\theta)) - G(R,\theta),
\end{equation}
where $\Phi(r,\theta) = (R,\Theta)$.  In particular,
\[
\Sigma(r,\theta) = W(r,\theta)
\]
for every fixed point $(r,\theta)$ of $\Phi$.
\end{prop}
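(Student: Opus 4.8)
The plan is to verify directly that the function on the right-hand side of (\ref{forSigma}) satisfies the two conditions (\ref{detSigma}) that characterise the lifted action $\Sigma$ uniquely. Write
$V(r,\theta) := W(R,\theta) + G(\Phi(r,\theta)) - G(R,\theta)$
for the candidate function, where $\Phi(r,\theta) = (R,\Theta)$. Since $S$ is connected, it then suffices to check that $dV = \Phi^*\Lambda_A - \Lambda_A$ and that $V$ agrees with $\Sigma$ at a single point.

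For the differential I would work in the coordinates $(R,\theta)$ on $S$, which are legitimate global coordinates because the map $\Psi$ of (\ref{lapsi}) is a diffeomorphism; thus $r = r(R,\theta)$ and $\Theta = \Theta(R,\theta)$ become smooth functions of $(R,\theta)$ and $V = W(R,\theta) + G(R,\Theta) - G(R,\theta)$. Differentiating and using the defining relations (\ref{W1})--(\ref{W2}) of the generating function together with $D_1 G = B$, $D_2 G = A$ from (\ref{derh}), the $B$-terms all cancel and one finds $\partial_R V = A(R,\Theta)\,\partial_R\Theta$ and $\partial_\theta V = A(R,\Theta)\,\partial_\theta\Theta - A(r,\theta)$, hence $dV = A(R,\Theta)\,d\Theta - A(r,\theta)\,d\theta$. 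Since $\Phi^*\Lambda_A = A(R,\Theta)\,d\Theta$ and $\Lambda_A = A(r,\theta)\,d\theta$, this is exactly $\Phi^*\Lambda_A - \Lambda_A$. As $\Sigma$ obeys the same equation by (\ref{detSigma}), the function $V - \Sigma$ is constant on $S$.

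To pin down this constant I would evaluate on the boundary component $\{0\}\times\R$. Because $\varphi$ fixes the origin, $R(0,\theta) = 0$ and $\Phi(0,\theta) = (0,\Theta(0,\theta))$, while $G(0,\cdot)\equiv 0$ straight from the definition $G(r,\theta) = \int_0^r\bigl(\int_0^\theta F(s,\vartheta)\,d\vartheta\bigr)\,ds$. Hence $V(0,\theta) = W(0,\theta)$, which by Lemma \ref{Wsotto} equals $\sigma_{\tilde{\varphi}}(0)$. On the other hand $\Sigma = p^*\sigma_{\tilde{\varphi},\lambda_a}$ gives $\Sigma(0,\theta) = \sigma_{\tilde{\varphi},\lambda_a}(0) = \sigma_{\tilde{\varphi}}(0)$, the action at an interior fixed point being independent of the chosen primitive. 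Therefore $V \equiv \Sigma$, which is (\ref{forSigma}). The last assertion is then immediate: at a fixed point $(r,\theta)$ of $\Phi$ one has $(R,\Theta) = (r,\theta)$, so $G(\Phi(r,\theta)) = G(R,\theta)$ and $\Sigma(r,\theta) = W(r,\theta)$.

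The computation is essentially mechanical, so the only place demanding care is the differentiation in the second step: one must consistently distinguish the coordinate functions $(R,\theta)$ from the composite functions $r(R,\theta)$ and $\Theta(R,\theta)$, and keep in mind that $d\Theta$ denotes one and the same $1$-form on $S$ regardless of which chart is used, so that the identity $\Phi^*\Lambda_A = A(R,\Theta)\,d\Theta$ holds. Normalising the integration constant at $r = 0$ rather than at $r = 1$ is what lets us avoid tracking the winding of the boundary isotopy $t\mapsto\varphi_t(e^{i\theta})$, which would otherwise be required and which is where assumption (ii) and its consequence (\ref{loca}) would enter.
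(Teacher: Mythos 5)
Your proof is correct and follows the paper's overall strategy: compute the differential of the candidate function and verify it matches $\Phi^*\Lambda_A - \Lambda_A$, then fix the constant by a boundary evaluation. The one genuine point of divergence is where you pin down the constant. The paper evaluates the candidate at $(1,\theta)$, using the normalisation $W(1,\theta)=0$ to reduce the value to $\int_\theta^\Theta A(1,\vartheta)\,d\vartheta = \int_{\alpha_1}\Lambda_A$, and then re-invokes assumption (ii) and (\ref{loca}) directly to identify this with $\sigma_{\tilde{\varphi},\lambda_a}(e^{i\theta})$, thereby verifying the second condition in (\ref{detSigma}) head-on. You instead evaluate at $(0,\theta)$, where $R(0,\theta)=0$ and $G(0,\cdot)\equiv 0$ make the $G$-terms vanish, and then quote Lemma \ref{Wsotto} to get $W(0,\theta)=\sigma_{\tilde{\varphi}}(0)=\Sigma(0,\theta)$. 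This is slightly slicker in that the boundary-winding argument appears only once (inside Lemma \ref{Wsotto}) rather than being redone, though to be precise you have not removed the dependence on assumption (ii) and (\ref{loca}) from the chain of reasoning -- you have merely relocated it into the already-proved lemma. Your differentiation in the $(R,\theta)$ chart is carried out correctly, with the $B$-terms cancelling via (\ref{W1}) and the $A$-terms combining via (\ref{W2}), and the observation that $\Phi^*\Lambda_A = A(R,\Theta)\,d\Theta$ regardless of chart is the right thing to keep in mind. The final assertion about fixed points is immediate, as you say.
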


\begin{proof}
Using (\ref{W1}), (\ref{W2}) and (\ref{derh}) we compute
\[
\begin{split}
\Phi^* \Lambda_A &- \Lambda_A =   A(R,\Theta) \, d\Theta - A(r,\theta) \, d\theta = A(R,\Theta)\, d\Theta - A(R,\theta)\, d\theta + D_2 W(R,\theta)\, d\theta \\ &=  A(R,\Theta)\, d\Theta - A(R,\theta)\, d\theta + dW(R,\theta) - D_1 W(R,\theta)\, dR \\ &= dW(R,\theta) +  A(R,\Theta)\, d\Theta - A(R,\theta)\, d\theta + B(R,\Theta)\, dR - B(R,\theta)\, dR \\ &=  dW(R,\theta) + D_2 G(R,\Theta)\, d\Theta - D_2 G(R,\theta)\, d\theta + D_1 G(R,\Theta)\, dR - D_1 G(R,\theta)\, dR \\ &= dW(R,\theta) + dG(R,\Theta) - dG(R,\theta).
\end{split}
\]
This shows that the function which appears on the right hand side of (\ref{forSigma}) satisfies the first condition in (\ref{detSigma}). Moreover, by the normalisation condition (\ref{norm}) we find that this function takes the following value at $(1,\theta)$:
\[
W(1,\theta) + G(1,\Theta) - G(1,\theta) = \int_{\theta}^{\Theta} D_2 G(1,\vartheta)\, d\vartheta = \int_{\theta}^{\Theta} A(1,\vartheta)\, d\vartheta = \int_{\alpha_1} \Lambda_A,
\]
where $\alpha_1: [0,1] \rightarrow \partial S$ is the affine path going from $(1,\theta)$ to $(1,\Theta)$. Since $\alpha_1$ is homotopic to $t\mapsto \Phi_t(1,\theta)$ within $\partial S$, the projected path $p\circ \alpha_1$ is homotopic to the path $t\mapsto \varphi_t(e^{i\theta})$ within $\partial \D$, and hence
\[
\int_{\alpha_1} \Lambda_A = \int_{\alpha_1} p^* \lambda_a = \int_{p\circ \alpha_1} \lambda_a = \sigma_{\tilde{\varphi},\lambda_a} (e^{i\theta}).
\]
This shows that the function which appears on the right hand side of (\ref{forSigma}) satisfies also the second condition in (\ref{detSigma}), and hence it coincides with the function $\Sigma$.
\end{proof}

\subsection{The Calabi invariant in terms of the generating function}

Throughout this section we keep the same assumptions and notation of the last one. Our aim is to  compute the Calabi invariant of $\tilde{\varphi}$ in terms of the generating function $W$.
 
Denote by 
\[
Q:= [0,1]\times [0,2\pi]
\]
a fundamental domain in $S$. Then we have
\begin{equation}
\label{calfor}
\mathrm{CAL}(\tilde{\varphi},\omega) = \int_{\mathbb{D}} \sigma_{\tilde{\varphi},\lambda_a}\, \omega = \int_Q p^* \bigl( \sigma_{\tilde{\varphi},\lambda_a}\, \omega \bigr) =
\int_Q \Sigma\, \Omega.
\end{equation}
We shall compute the last integral by using formula (\ref{forSigma}) in Proposition \ref{forac}.  In the next two Lemmata, we compute the integral of the second and third term on the right hand side of (\ref{forSigma}). In order to simplify the formulas, we introduce the function
\[
\tilde{\Theta} = \Theta \circ \Psi^{-1},
\]
where $\Psi$ is the diffeomorphism which is defined in (\ref{lapsi}). In other words, $\tilde{\Theta}$ is the second component of $\Phi$ seen as a function of $(R,\theta)$.

\begin{lem}
\label{L1}
The following formula holds
\[
\int_Q G(\Phi(r,\theta))\, \Omega(r,\theta) = \int_0^1 G(R,2\pi) B(R,\tilde{\Theta}(R,2\pi))\, dR - \int_Q A(r,\theta)B(r,\theta)\, dr\wedge d\theta.
\]
\end{lem}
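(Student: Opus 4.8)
The plan is to transport the integral to the image region $\Phi(Q)$, where the invariance $\Phi^*\Omega=\Omega$ lets us integrate by parts using the primitive $\Lambda_B=-B\,dr$ of $\Omega$ from (\ref{ab}), and then to reconcile the difference between integrating over $\Phi(Q)$ and over the rectangle $Q$ by exploiting the quasi-periodicity of $B$ and $G$ in the $\theta$-variable.

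First, since $\Phi$ preserves $\Omega$ and is orientation preserving,
\[
\int_Q G(\Phi(r,\theta))\,\Omega(r,\theta) = \int_Q \Phi^*(G\,\Omega) = \int_{\Phi(Q)} G\,\Omega .
\]
Next I would record the pointwise identity $GF = D_2(GB)-AB$, which is immediate from $D_2G=A$ (see (\ref{derh})) and $D_2B=F$; equivalently $G\,\Omega = -\,d(GB\,dr) - AB\,dr\wedge d\theta$, since $d(GB\,dr)=-D_2(GB)\,dr\wedge d\theta$. Applying Stokes' theorem on $\Phi(Q)$ gives
\[
\int_{\Phi(Q)} G\,\Omega = -\int_{\partial\Phi(Q)} GB\,dr - \int_{\Phi(Q)} AB\,dr\wedge d\theta .
\]
The choice of $\Lambda_B$ rather than $\Lambda_A$ is the point: the two pieces of $\partial\Phi(Q)$ lying on $\{0\}\times\R$ and $\{1\}\times\R$ carry $dr\equiv 0$, so they contribute nothing to the boundary integral.

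It then remains to describe $\Phi(Q)$ precisely. Monotonicity makes $r\mapsto R(r,0)$ an increasing bijection of $[0,1]$ onto itself (it fixes $0$ and $1$ by condition (b)), so $\Phi$ sends the bottom edge of $Q$ onto the graph $\{(r,\tilde\Theta(r,0)):0\le r\le 1\}$; by condition (a) the top edge is sent to the same graph translated by $T$, i.e.\ shifted up by $2\pi$, and by condition (b) the vertical edges go into $\{0\}\times\R$ and $\{1\}\times\R$. Hence $\Phi(Q)=\{(r,\theta):0\le r\le 1,\ \tilde\Theta(r,0)\le\theta\le\tilde\Theta(r,0)+2\pi\}$: the boundary integral reduces to the difference of $GB$ on the top and bottom graphs (with the sign dictated by the induced orientation), and the double integral over $\Phi(Q)$ becomes an iterated integral. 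Using $G(r,\theta+2\pi)=G(r,\theta)+G(r,2\pi)$ (see (\ref{translH})), $B(r,\theta+2\pi)=B(r,\theta)+B(r,2\pi)$ and $A(r,\theta+2\pi)=A(r,\theta)$ (see (\ref{perio})), together again with $D_2G=A$, one expands $\int_{\partial\Phi(Q)}GB\,dr$ and finds the discrepancy $\int_{\Phi(Q)}AB\,dr\wedge d\theta-\int_Q AB\,dr\wedge d\theta=\int_0^1 B(r,2\pi)\,G(r,\tilde\Theta(r,0))\,dr$. Substituting both into the displayed identity, the cross terms proportional to $G(r,\tilde\Theta(r,0))B(r,2\pi)$ cancel, and—via $B(r,\tilde\Theta(r,0))+B(r,2\pi)=B(r,\tilde\Theta(r,2\pi))$—the remainder collapses to $\int_0^1 G(r,2\pi)B(r,\tilde\Theta(r,2\pi))\,dr-\int_Q AB\,dr\wedge d\theta$, which is the assertion after renaming the integration variable.

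The only genuine difficulty is bookkeeping: pinning down $\Phi(Q)$ and its induced boundary orientation, and tracking the non-periodic shift terms $G(r,2\pi)$ and $B(r,2\pi)$ that appear precisely because $B$ and $G$ are merely quasi-periodic in $\theta$. In particular one must not forget that $\int_{\Phi(Q)}AB$ and $\int_Q AB$ differ—that correction is exactly what annihilates the spurious part of the boundary term. Beyond that, the proof is a routine application of Stokes' theorem and Fubini.
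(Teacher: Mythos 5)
Your proof is correct, and it takes a genuinely different route from the paper's. The paper never works directly with the region $\Phi(Q)$: it introduces the $2$-chain $\Gamma$ spanning $\gamma+\alpha_1-\Phi\circ\gamma-\alpha_0$, uses the chain identity $\Phi(Q)-Q=T(\Gamma)-\Gamma$ to split the periodicity defect off up front, writes $\int_{\Phi(Q)}G\,\Omega=\int_Q G\,\Omega+\int_\Gamma(T^*G-G)\,\Omega$, and then treats the two pieces separately — an elementary integration by parts in $\theta$ on the rectangle $Q$, and a single Stokes argument on $\Gamma$ using $d\bigl(G(r,2\pi)B(r,\theta)\,dr\bigr)=-G(r,2\pi)\,\Omega$. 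You instead apply Stokes once on the curvilinear region $\Phi(Q)$ itself, which forces you to describe $\Phi(Q)$ explicitly as the strip between the graphs of $\tilde\Theta(\cdot,0)$ and $\tilde\Theta(\cdot,2\pi)$ (this uses monotonicity and conditions (a), (b)), and then to pay for the choice with a separate correction term $\int_{\Phi(Q)}AB-\int_Q AB=\int_0^1 B(r,2\pi)G(r,\tilde\Theta(r,0))\,dr$. I checked both that correction and the boundary expansion: the cross terms proportional to $G(r,\tilde\Theta(r,0))B(r,2\pi)$ do cancel exactly as you assert, and the $B(R,\tilde\Theta(R,0))+B(R,2\pi)=B(R,\tilde\Theta(R,2\pi))$ identity closes the argument. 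What the paper's approach buys is that it never needs the explicit geometric description of $\Phi(Q)$ — the chain decomposition does all the work abstractly, which is slightly cleaner. What yours buys is a more concrete picture of where the quasi-periodicity of $B$ and $G$ enters and why the lone surviving term is $\int_0^1 G(R,2\pi)B(R,\tilde\Theta(R,2\pi))\,dR$. The bookkeeping burden is comparable; your writeup of the final expansion is compressed (it conflates the boundary-integral expansion with the $AB$-discrepancy into one sentence), and in a polished version you would want to display both computations and the cancellation explicitly.
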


\begin{proof}
Let $\gamma:[0,1] \rightarrow S$ be the path $\gamma(r)=(r,0)$, and let $\alpha_0$ and $\alpha_1$ be the affine paths in $\partial S$ going from $\gamma(0)$ and $\gamma(1)$ to $\Phi(\gamma(0))$ and $\Phi(\gamma(1))$, respectively. Let $\Gamma$ be the 2-chain in $S$ whose boundary is $\gamma+\alpha_1-\Phi\circ \gamma-\alpha_0$. Then we have
\[
\Phi(Q) - Q = T(\Gamma) - \Gamma,
\]
where $T$ is the map defined in (\ref{deckT}).
Therefore, using the fact that $\Phi$ preserves the $T$-invariant 2-form $\Omega$ together with (\ref{translH}), we find
\begin{equation}
\label{lem1}
\begin{split}
\int_Q G\circ \Phi\, \Omega &=
\int_{\Phi(Q)} G \, \Omega = \int_{Q+T(\Gamma) - \Gamma} G\, \Omega = \int_Q  G \, \Omega + \int_{\Gamma} \bigl(T^* (G\, \Omega) - G \, \Omega\bigr) \\ &= 
\int_Q  G \, \Omega + \int_{\Gamma} (T^* G-G) \, \Omega =  \int_Q  G \, \Omega + \int_{\Gamma} G(r,2\pi) \, \Omega(r,\theta).
\end{split}
\end{equation}
We compute the last two integrals separately. The first one can be manipulated using (\ref{derh}) as follows:
\begin{equation}
\label{lem2}
\begin{split}
 \int_Q G(r,\theta) F(r,\theta) \, dr\wedge d\theta & = \int_0^1 \left( \int_0^{2\pi} G(r,\theta) F(r,\theta) \, d\theta \right) \, dr \\ &= \int_0^1 \left( \Bigl[ G(r,\theta) B(r,\theta) \Bigr]_{\theta=0}^{\theta=2\pi} - \int_0^{2\pi} A(r,\theta) B(r,\theta) \, d\theta \right) \, dr \\ &= \int_0^1 G(r,2\pi) B(r,2\pi)\, dr - \int_Q A(r,\theta) B(r,\theta)\, dr\wedge d\theta.
\end{split}
\end{equation}
Since
\[
d\bigl( G(r,2\pi)B(r,\theta)\, dr \bigr)= G(r,2\pi) F(r,\theta) \, d\theta\wedge dr = - G(r,2\pi) \, \Omega(r,\theta),
\]
the second integral can be computed using Stokes theorem:
\begin{equation}
\label{lem3}
\begin{split}
\int_{\Gamma} G(r,2\pi) \, \Omega(r,\theta) &= - \int_{\partial \Gamma}  G(r,2\pi)B(r,\theta)\, dr = -\int_{\gamma+\alpha_1-\Phi\circ \gamma-\alpha_0}  G(r,2\pi)B(r,\theta)\, dr \\ &= \int_{\Phi\circ \gamma} G(r,2\pi)B(r,\theta)\, dr \\&= \int_0^1 G(R(r,0),2\pi) B(R(r,0),\Theta(r,0)) D_1 R (r,0)\, dr \\ &= \int_0^1 G(R,2\pi) B(R,\tilde{\Theta}(R,0))\, dR.
\end{split}
\end{equation}
The desired formula follows from (\ref{lem1}), (\ref{lem2}) and (\ref{lem3}) together with the identity
\[
B(R,2\pi) + B(R,\tilde{\Theta}(R,0)) = B(R,\tilde{\Theta}(R,0)+2\pi) = B(R,\tilde{\Theta}(R,2\pi)),
\]
which follows from (\ref{perio}).
\end{proof}

\begin{lem}
\label{L2}
If the generating function $W$ is normalised by (\ref{norm}) then we have the formula
\[
\begin{split}
\int_Q G(R(r,\theta),\theta) \, \Omega(r,\theta)  &=  - \int_Q W(r,\theta)\, \Omega(r,\theta) - \int_Q  A(r,\theta) B(r,\theta)\, dr \wedge d\theta \\ &+ \int_0^1 G(R,2\pi) B(R,\tilde{\Theta}(R,2\pi)) \, dR.
\end{split}
\]
\end{lem}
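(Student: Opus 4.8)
The plan is to reduce the asserted identity, by successive integrations by parts, to a purely one--dimensional identity involving only the ``monodromy'' quantities $G(R,2\pi)$ and $B(R,2\pi)$, and then to check that identity directly. First I would write $\Omega(r,\theta)=F(r,\theta)\,dr\wedge d\theta$ and compute, by Fubini on $Q=[0,1]\times[0,2\pi]$,
\[
\int_Q G(R(r,\theta),\theta)\,\Omega(r,\theta)=\int_0^{2\pi}\!\int_0^1 G(R(r,\theta),\theta)\,F(r,\theta)\,dr\,d\theta .
\]
For fixed $\theta$ I integrate by parts in $r$, using that $A(\cdot,\theta)$ is the primitive of $F(\cdot,\theta)$ vanishing at $r=0$, and that $\Phi$ preserves the two boundary components of $S$, so that $R(0,\theta)=0$ and $R(1,\theta)=1$. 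Since $D_1G=B$ by (\ref{derh}), this produces the boundary term $G(1,\theta)A(1,\theta)$ and a bulk integral with integrand $A(r,\theta)\,B(R(r,\theta),\theta)\,D_1R(r,\theta)$.

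Next I would substitute $R=R(r,\theta)$ in the bulk integral, which is legitimate since $r\mapsto R(r,\theta)$ is a diffeomorphism of $[0,1]$ by monotonicity, and invoke (\ref{W2}) in the form $A(r(R,\theta),\theta)=A(R,\theta)-D_2W(R,\theta)$. Integrating over $\theta\in[0,2\pi]$ then writes $\int_Q G(R(r,\theta),\theta)\,\Omega$ as the sum of three terms: $\int_0^{2\pi}G(1,\theta)A(1,\theta)\,d\theta$; the term $-\int_Q A(r,\theta)B(r,\theta)\,dr\wedge d\theta$, which is already the second term in the claimed formula; and $\int_0^{2\pi}\!\int_0^1 B(R,\theta)D_2W(R,\theta)\,dR\,d\theta$. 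The first of these simplifies immediately: $A(1,\theta)=D_2G(1,\theta)$ and $G(1,0)=0$ give $\int_0^{2\pi}G(1,\theta)A(1,\theta)\,d\theta=\tfrac12 G(1,2\pi)^2$. For the third, I integrate by parts in $\theta$ using $D_2B=F$ and $B(R,0)=0$, obtaining $\int_0^1 B(R,2\pi)W(R,2\pi)\,dR-\int_Q W\,\Omega$; here $-\int_Q W\,\Omega$ is the first term in the claimed formula.

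After these manipulations the claimed formula is equivalent to the one--dimensional identity
\[
\tfrac12 G(1,2\pi)^2+\int_0^1 B(R,2\pi)\,W(R,2\pi)\,dR=\int_0^1 G(R,2\pi)\,B(R,\tilde{\Theta}(R,2\pi))\,dR ,
\]
which I would establish as follows. Using $\tilde{\Theta}(R,2\pi)=\tilde{\Theta}(R,0)+2\pi$ (a consequence of the $T$--equivariance of $\Phi$) together with the quasi--periodicity (\ref{perio}) of $B$, split $B(R,\tilde{\Theta}(R,2\pi))=B(R,\tilde{\Theta}(R,0))+B(R,2\pi)$. The summand $B(R,2\pi)$ contributes $\int_0^1 G(R,2\pi)B(R,2\pi)\,dR=\tfrac12 G(1,2\pi)^2$, since $D_1G(R,2\pi)=B(R,2\pi)$ and $G(0,2\pi)=0$. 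For the summand $B(R,\tilde{\Theta}(R,0))$, use (\ref{W1}) at $\theta=0$: in the $(R,\theta)$--coordinates it reads $D_1W(R,0)=B(R,0)-B(R,\tilde{\Theta}(R,0))=-B(R,\tilde{\Theta}(R,0))$ because $B(R,0)=0$; hence $\int_0^1 G(R,2\pi)B(R,\tilde{\Theta}(R,0))\,dR=-\int_0^1 G(R,2\pi)D_1W(R,0)\,dR$, and integrating by parts in $R$ the boundary term vanishes since $W(1,\cdot)=0$ by the normalisation (\ref{norm}) and $G(0,\cdot)=0$, leaving $\int_0^1 B(R,2\pi)W(R,0)\,dR=\int_0^1 B(R,2\pi)W(R,2\pi)\,dR$ by periodicity of $W$. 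Adding the two contributions recovers the right--hand side.

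The proof is entirely elementary; the real difficulty is bookkeeping. One must keep track of which variable is independent after the substitution $R=R(r,\theta)$, handle the many boundary terms correctly, and carefully distinguish the genuinely $2\pi$--periodic objects ($A$, $W$, $\Omega$, $F$) from the quasi--periodic ones ($B$, $G$, $\Theta$), whose monodromies $B(R,2\pi)$ and $G(R,2\pi)$ are precisely what survives in the final formula. The only point that uses the specific structure of the problem rather than routine calculus is the reduction to the displayed one--dimensional identity and its verification, which rest on the relation $D_1W(R,0)=-B(R,\tilde{\Theta}(R,0))$ extracted from (\ref{W1}) and on the normalisation $W(1,\cdot)=0$.
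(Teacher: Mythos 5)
Your proof is correct but takes a genuinely different route from the paper's. The paper begins by exploiting $\Phi^*\Omega = \Omega$ to rewrite $\int_Q G(R,\theta)\,\Omega$ as an integral of $G(R,\theta)\,F(R,\tilde{\Theta})\,dR\wedge d\tilde{\Theta}$ in the $(R,\theta)$-coordinates via the diffeomorphism $\Psi$; a $\theta$-integration by parts using $D_2 G = A$ then produces the boundary integral $\int_0^1 G(R,2\pi)\,B(R,\tilde{\Theta}(R,2\pi))\,dR$ immediately, and the remaining bulk term is converted into $-\int_Q W\,\Omega - \int_Q A\,B\,dr\wedge d\theta$ by relation (\ref{W1}) followed by an $R$-integration by parts. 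You instead start with an $r$-integration by parts using $D_1 G = B$, substitute $R = R(r,\theta)$, apply (\ref{W2}) to pull out $D_2 W$, and then integrate by parts in $\theta$; this yields the two bulk terms but replaces the boundary integral by $\frac{1}{2}\,G(1,2\pi)^2 + \int_0^1 B(R,2\pi)\,W(R,2\pi)\,dR$, and you close the gap with an auxiliary one-dimensional identity whose verification uses (\ref{W1}) at $\theta = 0$, the $T$-equivariance $\tilde{\Theta}(R,2\pi)=\tilde{\Theta}(R,0)+2\pi$, the quasi-periodicities of $B$ and $G$, and the normalisation $W(1,\cdot)=0$. Both arguments are elementary and draw on the same ingredients, but in a different order: the paper uses (\ref{W1}) in the bulk and needs no auxiliary identity, while you use (\ref{W2}) in the bulk and (\ref{W1}) only on the boundary. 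The paper's route is more compact because the $\Phi$-invariance of $\Omega$ is exploited directly in the two-dimensional integral, whereas your approach has the merit of isolating exactly which contributions come from the bulk and which from the monodromies $G(R,2\pi)$, $B(R,2\pi)$, and $\tilde{\Theta}(R,2\pi)$.
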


\begin{proof}
Using the fact that $\Phi$ preserves $\Omega$, we can manipulate the integral of $G(R,\theta)\Omega(r,\theta)$ as follows:
\begin{equation}
\label{cal2}
\begin{split}
\int_Q G(R(r,\theta),\theta) \, \Omega(r,\theta) &=  \int_Q G(R(r,\theta),\theta) \, \Phi^* \Omega(r,\theta) \\ &= \int_Q G(R(r,\theta),\theta) F(R(r,\theta),\Theta(r,\theta)) \, dR(r,\theta) \wedge d\Theta(r,\theta) \\ &= \int_{\Psi(Q)} G(R,\theta) F(R,\tilde{\Theta}(R,\theta)) \, d R\wedge d\tilde{\Theta}(R,\theta) \\ &= \int_{Q}G(R,\theta) F(R,\tilde{\Theta}(R,\theta)) \, d R\wedge d\tilde{\Theta}(R,\theta).
\end{split}
\end{equation}
In the last line we have used the fact that the diffeomorphism $\Psi$ which is defined in (\ref{lapsi}) satisfies $\Psi(Q)=Q$. By plugging in the identity
\[
d\tilde{\Theta}(R,\theta) = D_1 \tilde{\Theta}(R,\theta) \, dR + D_2 \tilde{\Theta}(R,\theta) \, d\theta,
\]
we can manipulate the latter integral further and obtain
\begin{equation}
\label{cal3}
\begin{split}
\int_{Q} & G(R,\theta)  F(R,\tilde{\Theta}(R,\theta)) \, d R\wedge d\tilde{\Theta}(R,\theta) \\ &= \int_Q G(R,\theta)  F(R,\tilde{\Theta}(R,\theta)) D_2 \tilde{\Theta}(R,\theta) \, dR\wedge d\theta \\ &= \int_Q G(R,\theta)  \frac{\partial}{\partial \theta} B(R,\tilde{\Theta}(R,\theta))\, dR \wedge d\theta \\ &= \int_0^1 \left( \int_0^{2\pi} G(R,\theta)  \frac{\partial}{\partial \theta} B(R,\tilde{\Theta}(R,\theta))\, d\theta \right)\, dR \\ & =\int_0^1 \left( \Bigl[G(R,\theta) B(R,\tilde{\Theta}(R,\theta)) \Bigr]_{\theta=0}^{\theta=2\pi} - \int_0^{2\pi} A(R,\theta) B(R,\tilde{\Theta}(R,\theta)) \, d\theta \right) \, dR \\ &= \int_0^1 G(R,2\pi) B(R,\tilde{\Theta}(R,2\pi)) \, dR - \int_Q A(R,\theta) B(R,\tilde{\Theta}(R,\theta)) \, dR \wedge d\theta.
\end{split}
\end{equation}
The latter integral can be manipulated using (\ref{W1}):
\begin{equation}
\label{cal4}
\begin{split}
&\int_Q A(R,\theta) B(R,\tilde{\Theta}(R,\theta))\, dR \wedge d\theta \\&= \int_Q A(R,\theta) \bigl( B(R,\tilde{\Theta}(R,\theta)) - B(R,\theta) \bigr)\, dR \wedge d\theta + \int_Q  A(R,\theta) B(R,\theta)\, dR \wedge d\theta \\ &= -\int_Q A(R,\theta) D_1 W(R,\theta) \, dR \wedge d\theta + \int_Q  A(r,\theta) B(r,\theta)\, dr \wedge d\theta \\ &= -\int_0^{2\pi} \left( \int_0^1 A(R,\theta) D_1 W(R,\theta) \, dR \right) \, d\theta + \int_Q  A(r,\theta) B(r,\theta)\, dr \wedge d\theta  \\ &= - \int_0^{2\pi} \left( \Bigl[ A(R,\theta) W(R,\theta) \Bigr]_{R=0}^{R=1} - \int_0^1 F(R,\theta) W(R,\theta)\, dR \right) \, d\theta \\ &\qquad + \int_Q  A(r,\theta) B(r,\theta)\, dr \wedge d\theta  \\ &=  \int_Q W(r,\theta)\, \Omega(r,\theta) + \int_Q  A(r,\theta) B(r,\theta)\, dr \wedge d\theta.
\end{split}
\end{equation}
The desired formula follows from (\ref{cal2}), (\ref{cal3}) and (\ref{cal4}).
\end{proof}

From (\ref{calfor}), Proposition \ref{forac}, Lemma \ref{L1} and Lemma \ref{L2} we immediately deduce the following:

\begin{prop}
\label{forcal}
The Calabi invariant of $\tilde{\varphi}$ can be expressed by the formula.
\[
\mathrm{CAL}(\tilde{\varphi},\omega) = \int_Q (W + W\circ \Psi) \, \Omega = \int_Q W \, (\Omega + (\Psi^{-1})^*\Omega),
\]
where $\Psi: Q \rightarrow Q$ is the diffeomorphism defined by $\Psi(r,\theta) = (R(r,\theta),\theta)$, with
$\Phi=(R,\Theta)$.
\end{prop}

\subsection{The proof of the fixed point theorem and a counterexample}
\label{proofsec}

We can finally prove Theorem \ref{fixpointmon} from the introduction.

\begin{proof}[Proof of Theorem \ref{fixpointmon}] 
Let $\Phi: S \rightarrow S$ be the lift of $\varphi$ which is determined at the beginning of Section \ref{actionsec}. By Lemma \ref{monlem}, $\Phi$ is monotone. Let
$W$ be the generating function of $\Phi$, normalised by (\ref{norm}), i.e.\ $W=0$ on $\{1\}\times \R$. 

From the fact that $\tilde\varphi$ is not the identity we deduce that $\Phi$ is not the identity and hence $W$ is not identically zero. Then the condition $\mathrm{CAL}(\tilde{\varphi},\omega)\leq 0$ and the formula of Proposition \ref{forcal} imply that $W$ is somewhere negative. Being periodic, $W$ achieves its minimum at a point $(R_0,\theta_0)\in Q$, where $W(R_0,\theta_0)<0$. Since $W$ vanishes on $\{1\}\times \R$, $R_0$ belongs to $[0,1)$. Since $W$ is not constant, its value at its minimum is strictly smaller than its average over $Q$ with respect to the area form $\Omega + (\Psi^{-1})^* \Omega$, and from the formula of Proposition \ref{forcal} we obtain
\begin{equation}
\label{sotto}
W(R_0,\theta_0) < \frac{\int_Q W (\Omega + (\Psi^{-1})^* \Omega)}{\int_Q (\Omega + (\Psi^{-1})^* \Omega)} = \frac{\mathrm{CAL}(\tilde{\varphi},\omega)}{2 \int_Q \Omega} = \frac{\mathrm{CAL}(\tilde{\varphi},\omega)}{2 \int_{\D} \omega},
\end{equation}
where we have used the fact that 
\[
\int_Q (\Psi^{-1})^* \Omega = \int_{\Psi^{-1}(Q)} \Omega = \int_Q \Omega.
\]
Set 
\[
z_0:= p(R_0,\theta_0) = R_0 e^{i\theta_0} \in \mathrm{int}(\mathbb{D}).
\]
If $R_0=0$, then $z_0=0$ and, by Lemma \ref{Wsotto}, $W(R_0,\theta_0)=W(0,\theta_0)$ coincides with $\sigma_{\tilde{\varphi}}(z_0)$, the action of the fixed point at the origin. If $R_0>0$, then $(R_0,\theta_0)$ is a critical point of $W$, and hence a fixed point of $\Phi$. It follows that $z_0$ is a fixed point of $\varphi$, which by Proposition \ref{forac} has action
\[
\sigma_{\tilde{\varphi}}(z_0) = \Sigma(R_0,\theta_0) = W(R_0,\theta_0).
\]
In both cases, $z_0$ is an interior fixed point of $\varphi$ of action $W(R_0,\theta_0)$, and by (\ref{sotto}) we have 
\[
\sigma_{\varphi}(z_0) = W(R_0,\theta_0) < \frac{1}{2} \frac{\mathrm{CAL}(\tilde{\varphi},\omega)}{\int_{\D} \omega} \leq 0.
\]
This concludes the proof of Theorem \ref{fixpointmon}. 
\end{proof}

\begin{rem}
\label{optimality}
The coefficient $1/2$ is optimal in the upper bound for the action given by Theorem \ref{fixpointmon}, meaning that for every $\eta>1/2$ we can find $\tilde{\varphi}\in \widetilde{\mathrm{Diff}}(\D,\omega)$ such that $\varphi:= \pi(\tilde{\varphi})$ is radially monotone, $\mathrm{CAL}(\tilde{\varphi},\omega)<0$, and for all fixed points $z_0$ of $\varphi$ there holds
\begin{equation}
\label{dadimo}
\sigma_{\tilde{\varphi}}(z_0) \geq \eta \frac{\mathrm{CAL}(\tilde{\varphi},\omega)}{\int_{\D} \omega} \quad \mbox{or, equivalently, } \quad \frac{\sigma_{\tilde{\varphi}}(z_0)}{\mathrm{CAL}(\tilde{\varphi},\omega)} \int_{\D} \omega \leq \eta.
\end{equation}
In order to prove this fact, fix some $\epsilon\in (0,1)$ and notice that the non-positive function
\[
\hat{h}:[0,1] \rightarrow \R, \qquad \hat{h}(s) := \max \{-\epsilon, \pi(s-1)\}
\]
satisfies
\[
\int_0^1 r \hat{h}(r^2)\, dr < \int_0^{1-\epsilon/\pi} r \hat{h}(r^2)\, dr = - \frac{\epsilon}{2} \left( 1 - \frac{\epsilon}{\pi} \right)^2.
\]
By approximating $\hat{h}$ from above, we can find a smooth function $h:[0,1]\rightarrow \R$ such that 
\[
h(0)=-\epsilon, \quad h(1) = 0, \quad 0 < h' < \pi,
\]
and
\[
\int_0^1 r h(r^2)\, dr \leq - \frac{\epsilon}{2} \left( 1 - \frac{\epsilon}{\pi} \right)^2.
\]
Let $\tilde{\varphi}$ be the element of $\widetilde{\mathrm{Diff}}(\D,\omega_0)$ which is generated by the radial autonomous Hamiltonian $H(z):= h(|z|^2)$ and by the standard area form $\omega:=dx\wedge dy$ on $\D$. By Lemma \ref{radialham}, the time-one map $\varphi:= \pi(\tilde{\varphi})$ is
\[
\varphi(z) = e^{-2h'(|z|^2)i} z,
\]
and from the bounds on $h'$ we deduce that the only fixed point of $\varphi$ is the origin. By the same Lemma, this fixed point has action
\[
\sigma_{\tilde{\varphi}}(0) =h(0)= -\epsilon,
\]
while the Calabi invariant of $\tilde{\varphi}$ has the upper bound
\[
\mathrm{CAL}(\tilde{\varphi},\omega) = 4 \pi \int_0^1 r h(r^2)\, dr \leq - 2 \pi \epsilon \left( 1 - \frac{\epsilon}{\pi} \right)^2.
\]
Therefore, for the unique fixed point $z_0=0$ of $\varphi$ we have
\[
\frac{\sigma_{\tilde{\varphi}}(z_0)}{\mathrm{CAL}(\tilde{\varphi},\omega)}\int_{\D} \omega = - \frac{\epsilon}{\mathrm{CAL}(\tilde{\varphi},\omega)} \pi \leq \frac{\epsilon}{2\pi \epsilon (1-\epsilon/\pi)^2} \pi = \frac{1}{2} \left(1 - \frac{\epsilon}{\pi}\right)^{-2}.
\]
This shows that $\tilde\varphi$ satisfies (\ref{dadimo}) with $\eta:= 1/2 (1-\epsilon/\pi)^{-2}$, and the claim follows from the fact that by choosing $\epsilon$ small the number $\eta$ can be made arbitrarily close to $1/2$.
\end{rem}

We conclude this section by constructing an example which shows that the conclusion of Theorem \ref{fixpointmon} does not hold if we remove the radial monotonicity assumption. In this example, the map $\varphi= \pi(\tilde{\varphi})$ fixes the origin and is $C^0$-close but not $C^1$-close to the identity and not radially monotone.

We work with the standard area form $\omega := dx\wedge dy$ on $\D$. We fix a natural number $n\geq 2$ and consider the smooth radial autonomous Hamiltonian
\[
H_+: \D \rightarrow \R, \qquad H_+(z) = \frac{\pi}{n} ( 1 - |z|^2).
\]
By Lemma \ref{radialham}, the corresponding Hamiltonian flow is the path of rotations
\[
\varphi^+_t(z) = e^{2\pi i t/n} z.
\]
Let $\tilde{\varphi}^+$ be the element of $\widetilde{\mathrm{Diff}}(\D,dx\wedge dy)$ which is represented by the path $\{\varphi_t^+\}$, so that
\[
\varphi^+(z) := \pi(\tilde{\varphi}^+)(z) = e^{2\pi i /n} z
\]
is the counterclockwise rotation by the angle $2\pi/n$. Since $H_+$ vanishes on the boundary of $\D$, Lemma \ref{radialham} implies that the Calabi invariant of $\tilde{\varphi}^+$ is
\[
\mathrm{CAL}(\tilde{\varphi}^+,dx\wedge dy) = 4\pi \int_0^1 r \frac{\pi}{n} (1-r^2)\, dr = \frac{\pi^2}{n}.
\]
For each $k=1,\dots,n$, let $D_k$ be a closed disk which is contained in the sector
\begin{equation}
\label{sector0}
\left\{r e^{i\theta}  \mid 0< r< 1, \; \frac{2\pi (k-1)}{n} < \theta < \frac{2\pi k}{n} \right\}.
\end{equation}
Now let $H_-: \D \rightarrow \R$ be a smooth autonomous Hamiltonian whose support  is contained in the union of the disks $D_k$ and such that
\begin{equation}
\label{condH^-}
\int_{\D} H_-(z) \, dx\wedge dy < - \frac{\pi^2}{2n}.
\end{equation}
Let $\varphi_t^-$ be the Hamiltonian flow of $H_-$, $\tilde{\varphi}^-=[\{\varphi_t^-\}]$ the corresponding element of $\widetilde{\mathrm{Diff}}(\D,dx\wedge dy)$, and $\varphi^- = \pi(\tilde{\varphi}^-) = \varphi^-_1$ the corresponding area-preserving diffeomorphism. By Proposition \ref{calham} we have
\[
\mathrm{CAL}(\tilde{\varphi}^-,dx\wedge dy) = 2 \int_{\D} H_-(z) \, dx \wedge dy < -\frac{\pi^2}{n}.
\]
Let $\tilde{\varphi} := \tilde{\varphi}^+ \circ \tilde{\varphi}^-$. Since the Calabi invariant is a homomorphism, we find
\[
\mathrm{CAL}(\tilde{\varphi},dx\wedge dy) = \mathrm{CAL}(\tilde{\varphi}^+,dx\wedge dy) + \mathrm{CAL}(\tilde{\varphi}^-,dx\wedge dy) < 0.
\]
It is easy to see that $\varphi:= \pi(\tilde{\varphi}) = \varphi^+ \circ \varphi^-$ has a unique fixed point in the origin. Indeed, this follows from the fact that $\varphi^-$ leaves each sector (\ref{sector0}) invariant, while the $2\pi/n$-rotation $\varphi^+$ permutes these sectors cyclically. By Lemma \ref{formule} (ii) and Proposition \ref{actham}, the action of the origin with respect to $\tilde{\varphi}$ is
\[
\sigma_{\tilde{\varphi},\lambda}(0) = \sigma_{\tilde{\varphi}^+,\lambda}(\varphi^-(0)) + \sigma_{\tilde{\varphi}^-,\lambda}(0) = \sigma_{\tilde{\varphi}^+,\lambda}(0) + \sigma_{\tilde{\varphi}^-,\lambda}(0) = H^+(0) + H^-(0) = \frac{\pi}{n},
\]
where we have used the fact that both paths $\varphi_t^+$ and $\varphi_t^-$ fix the origin.
The maps $\varphi^+$ and $\varphi^-$ satisfy
\[
|\varphi^+(z) - z| \leq \frac{2\pi}{n}, \qquad |\varphi^-(z) - z| \leq \max_{k=1,\dots,n}\mathrm{diam}(D_k) \leq \frac{2\pi}{n},
\]
for every $z\in \D$. It follows that
\begin{equation}
\label{closetoid}
\|\varphi-\mathrm{id}\|_{\infty} \leq \frac{4\pi}{n},
\end{equation}
so $\varphi$ can be made arbitrarily $C^0$-close to the identity by choosing the natural number $n$ to be large.

We conclude that $\tilde{\varphi}$ is an element of $\widetilde{\mathrm{Diff}}(\D,dx\wedge dy)$ with negative Calabi invariant such that $\varphi=\pi(\tilde{\varphi})$ satisfies (\ref{closetoid}) and has a  unique fixed point with positive action. It satisfies all the assumptions of Theorem \ref{fixpointmon} except for the radial monotonicity, and all the assumptions of Corollary \ref{fixpoint} except for the $C^1$-closeness to the identity.

\begin{rem}
It is easy to modify this class of examples so that $\varphi$ is compactly supported in the interior of $\D$. Indeed, it is enough to slow down the rotation $\varphi^+$ outside from the support of $\varphi^-$ and make it agree with the identity on circles which are close to the boundary (see also Section \ref{controsec}). By embedding $\Z \times \D$ periodically into the strip $\R \times [0,\pi]$, one finds an area preserving diffeomorphism of the strip with zero flux, negative Calabi invariant, but no fixed points with negative action. Up to the modification of the area form by a suitable conjugacy, this shows that the conclusion of Theorem 1.12 in \cite{abhs17} does not hold if one removes the monotonicity assumption.
\end{rem} 

\subsection{Moving a fixed point into the origin}
\label{movingsec}

In order to deduce Corollary \ref{fixpoint} from Theorem \ref{fixpointmon}, we have to show that any diffeomorphism of the disk which is $C^1$-close to the identity and has an interior fixed point can be conjugated to a radially monotone diffeomorphism. This section is devoted to the proof of this result.

Let $\mathrm{ang}(L,L')\in [0,\pi/2]$ denote the angle between two lines $L,L'$ in the complex plane $\C$. We shall make use of the following elementary result, of which we give an analytic proof.

\begin{lem}
\label{cerchi}
For every $\epsilon>0$ there exists $\delta>0$ with the following property: if $C$ and $C'$ are circles in the plane with radius $r$ and $r'$, respectively, which are either disjoint or somewhere tangent, then for every $z\in C$ and $z'\in C'$ with
\[
|z-z'| < \delta \min\{ r,r'\}
\]
there holds
\[
\mathrm{ang}(T_z C, T_{z'} C') < \epsilon.
\]
\end{lem}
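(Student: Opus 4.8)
The statement is a purely Euclidean-geometry fact, so the plan is to set it up with a rigid motion and then estimate the tangent directions directly. First I would reduce to a normal position: by applying a rotation and translation of the plane (which preserves all radii, distances, and angles between lines) I may assume that the point $z\in C$ is the origin and that the tangent line $T_zC$ is the real axis, so the circle $C$ has centre $(0,r)$ or $(0,-r)$; without loss of generality take the centre at $(0,r)$. The tangent direction $T_{z'}C'$ at a point $z'$ of $C'$ is orthogonal to the radius of $C'$ through $z'$, so $\mathrm{ang}(T_zC,T_{z'}C')$ equals the angle between the imaginary axis (the radial direction of $C$ at $z$) and the segment from the centre $c'$ of $C'$ to $z'$. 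Thus the whole problem becomes: control the direction of $c'-z'$ knowing that $|z'| < \delta\min\{r,r'\}$ and that $C$ and $C'$ are disjoint or tangent.

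The key step is to exploit the non-overlapping condition. Since $C$ and $C'$ do not cross, the centres satisfy either $|c-c'|\geq r+r'$ (externally disjoint or externally tangent) or $|c-c'|\geq |r-r'|$ with one disk containing the other (internally disjoint or internally tangent); in all cases one gets a lower bound on how far $c'$ must lie from the region near $z$. Concretely, writing $c=(0,r)$, I would show that the second coordinate of $c'-z'$ has absolute value bounded below by a definite fraction of $\max\{r,r'\}$ once $|z'|$ is small compared to $\min\{r,r'\}$: the point is that $z'$ is close to $z$, the circle $C'$ passes through $z'$ hence comes near the real axis there, and a circle of radius $r'$ passing near a line must have its centre at distance roughly $r'$ from that line on one side. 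Combining this with the trivial bound $|c'-z'|=r'$ shows that $c'-z'$ makes a small angle with the vertical, i.e.\ with the radial direction of $C$ at $z$; and a symmetric estimate (or a direct computation using that $T_zC$ is already horizontal) then forces $\mathrm{ang}(T_zC,T_{z'}C')<\epsilon$ provided $\delta$ is chosen small enough depending only on $\epsilon$.

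To make the dependence clean I would parametrise: let $\theta$ be the angle between $c'-z'$ and the downward vertical, so the radius of $C$ at $z$ points along $e_2$ and $\tan$ of the relevant angle is comparable to the horizontal component of $(c'-z')/r'$; then an elementary computation, using $|z'-z|<\delta\min\{r,r'\}$ and the fact that $C'$ stays on one side of the near-horizontal tangent line of $C$ at $z$, yields $|\text{horizontal component of }c'-z'|\leq (\text{const})\,\delta\, r'$, hence the angle is at most $\arcsin(\text{const}\cdot\delta)$, which we make $<\epsilon$ by shrinking $\delta$. The main obstacle is bookkeeping the several cases (external vs.\ internal position, $r\leq r'$ vs.\ $r\geq r'$, and the sign ambiguity in which side of the tangent line $C'$ lies on) and checking that a single $\delta$ works uniformly in all of them and is independent of the actual radii — only of their ratio, which the hypothesis $|z-z'|<\delta\min\{r,r'\}$ conveniently normalises away by scaling. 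Scaling the whole picture by $1/\min\{r,r'\}$ reduces to the case $\min\{r,r'\}=1$, after which compactness-type reasoning or an explicit estimate closes the argument.
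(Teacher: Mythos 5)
Your route --- put $z=0$, $T_zC=\R$, centre of $C$ at $(0,r)$, write $c'=z'+r'u$ with $u$ a unit vector, and constrain $u$ using the inequalities relating $|c-c'|$ to $r,r'$ --- is genuinely different from the paper's proof. The paper instead normalises $C'$ to the unit circle centred at $0$ with $z'=1$, parametrises $C$ by arc-length, and works with the everywhere-nonnegative function $f(s)=\frac{1}{2}(|\gamma(s)|^2-1)$, extracting $|f'(0)|=O(\sqrt{\delta})$ from Taylor's formula together with the discriminant criterion for a nonnegative quadratic. Your approach can in fact be completed: expanding $|c-c'|^2$ with $c'=z'+r'u$ and using $|c-c'|\geq r+r'$ (discs disjoint or externally tangent) or $|c-c'|\leq|r-r'|$ (one disc contained in the other) does yield $|u_1|=O(\sqrt{\delta})$ uniformly in the radii. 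But the plan as you have written it has concrete errors that would derail that computation.

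The most serious is the assertion that ``a circle of radius $r'$ passing near a line must have its centre at distance roughly $r'$ from that line on one side.'' This is simply false: the unit circle centred at $(1,0)$ passes through the origin, yet its centre lies \emph{on} the real axis. What makes the needed statement true here is the non-crossing condition with $C$, which forces $C'$ to stay roughly on one side of $T_zC$ near $z'$; but that is precisely the content of the lemma, and your plan never indicates how disjointness is to be fed into this step --- this is exactly where the paper's $f\geq0$ argument does its real work. Compounding this, the proposed fallback to ``compactness-type reasoning'' after rescaling $\min\{r,r'\}$ to $1$ cannot close the proof, because the other radius then ranges over the unbounded interval $[1,\infty)$, so the configuration space is not compact and the explicit estimate is unavoidable. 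Two further slips: for one disc inside the other the centres satisfy $|c-c'|\leq|r-r'|$, not $|c-c'|\geq|r-r'|$; and the claim that the second component of $c'-z'$ is bounded below by a fraction of $\max\{r,r'\}$ is impossible when $r'\ll r$, since $|c'-z'|=r'$ exactly --- the relevant comparison is with $r'$, not with $\max\{r,r'\}$.
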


\begin{proof}
Assume without loss of generality that $r'\leq r$.  Since the problem is invariant with respect to translations and homotheties, we can assume that $C'$ is the unit circle centered at $0$, i.e. $C'=\partial \mathbb{D}$, and that $z'=1$. Since $r\geq r'=1$, the circle $C$ is contained in $\C \setminus \mathrm{int}(\mathbb{D})$.  Given $\delta>0$, let $z\in C$ be such that
\[
|z-1| = |z-z'| < \delta \min\{ r,r'\} = \delta.
\]
Let $w\in \C$ be the center of $C$. Then $C$ has the arc-length parametrisation
\[
\gamma: \R \rightarrow \C, \qquad \gamma(s) := w + (z-w) e^{is/r},
\]
which satisfies
\[
\gamma(0) = z, \qquad \gamma'(0) = i \frac{z-w}{r}, \qquad |\gamma''(s)| = \frac{1}{r} \qquad \forall s\in \R.
\]
The fact that $C$ is contained in $\C \setminus  \mathrm{int}(\mathbb{D})$ implies that the function
\[
f:\R \rightarrow \R, \qquad f(s) := \frac{1}{2} \bigl( |\gamma(s)|^2 - 1 \bigr),
\]
is everywhere non-negative. Notice that
\begin{equation}
\label{cer1}
f(0) = \frac{1}{2} ( |z|^2 - 1) = \frac{1}{2} (|z|+1)(|z|-1) \leq \frac{1}{2} (|z-1|+2)|z-1| < \frac{1}{2} (2+\delta) \delta.
\end{equation}
The first two derivatives of this functions are
\[
f'(s) = \re \bigl( \gamma'(s) \overline{\gamma(s)} \bigr), \qquad f''(s) = \re \bigl( \gamma''(s) \overline{\gamma(s)} + |\gamma'(s)|^2 \bigr) = \re \bigl( \gamma''(s) \overline{\gamma(s)} \bigr) + 1.
\]
Since the tangent line $T_z C$ is $i(z-w)\R$ we have
\[
\begin{split}
|f'(0)| &= \left| \re\Bigl( i \frac{z-w}{r} \overline{z} \Bigr)  \right| = |z| \, \cos \mathrm{ang} ( i(z-w)\R, z\R) \\ &= |z| \, \cos \mathrm{ang} ( T_z C, z\R) = |z| \, \sin \mathrm{ang} ( T_z C, iz\R),
\end{split}
\]
from which we get
\begin{equation}
\label{cer2}
\sin \mathrm{ang} ( T_z C, iz\R) = \frac{|f'(0)|}{|z|} \leq |f'(0)|.
\end{equation}
From the expression of $f''$ we find
\[
\begin{split}
f''(s) &\leq 1 + |\gamma''(s)| |\gamma(s)| = 1 + \frac{|\gamma(s)|}{r} \leq 1 + \frac{|\gamma(s)-w| + |w-z| + |z-1| + 1}{r} \\ &= 3 + \frac{|z-1|+1}{r} < 3 + \frac{\delta+1}{r} < 4 + \delta. 
\end{split}
\]
Let $s\in \R$. From Taylor's formula we find
\[
f(s) = f(0) + f'(0)s + \frac{1}{2} f''(\theta s) s^2 \leq f(0) + f'(0) s + \left( 2 + \frac{\delta}{2} \right) s^2,
\]
where $\theta\in (0,1)$. Since the function $f$ is everywhere non-negative, so is the polynomial on the righthand side of the above expression. Therefore, the discriminant of this polynomial is non-positive and hence
\begin{equation}
\label{cer3}
f'(0)^2 \leq 4  \left( 2 + \frac{\delta}{2} \right) f(0) < 4  \left( 2 + \frac{\delta}{2} \right) \frac{1}{2} (2+\delta) \delta = (4+\delta)(2+\delta) \delta < (4+\delta)^2 \delta,
\end{equation}
where we have used also (\ref{cer1}). Since
\[
|\arcsin t| \leq \frac{\pi}{2} |t| \qquad \forall t\in [-1,1],
\]
(\ref{cer2}) and (\ref{cer3}) imply that
\[
\mathrm{ang} ( T_z C, iz\R) < \frac{\pi}{2} (4+\delta) \sqrt{\delta}.
\]
Moreover, using the expression for the length of a chord in a unit circle in terms of the underlying angle, we have
\[
\begin{split}
\mathrm{ang}( iz\R, T_{z'} C') &= \mathrm{ang}( iz\R, T_{1} \partial \mathbb{D}) = \mathrm{ang}( iz\R, i \R) = \mathrm{ang}( z\R, \R) \\ &= 2 \arcsin \left( \frac{1}{2} \Bigl| \frac{z}{|z|} - 1 \Bigr| \right) \leq 2 \arcsin \frac{|z-1|}{2} \leq \frac{\pi}{2} |z-1| < \frac{\pi}{2} \delta.
\end{split}
\]
It follows that
\[
\mathrm{ang}(T_z C, T_{z'} C') \leq \mathrm{ang} ( T_z C, iz\R) + \mathrm{ang}( iz\R, T_{z'} C') < \frac{\pi}{2} \bigl( (4+\delta) \sqrt{\delta} + \delta \bigr).
\]
We have shown that if $z\in C$ and $z'\in C'$ are such that
\[
|z-z'| < \delta \min \{r,r'\}
\]
then 
\[
\mathrm{ang}(T_z C, T_{z'} C') < \frac{\pi}{2} \bigl( (4+\delta) \sqrt{\delta} + \delta \bigr),
\]
and the thesis follows from the fact that the latter function is infinitesimal for $\delta\rightarrow 0$.
\end{proof}

We recall that if $z_0$ is an interior point of the disk $\mathbb{D}$ then the M\"obius transformation
\[
h(z) = \frac{z+z_0}{\overline{z_0} z + 1}
\]
is a smooth diffeomorphism of the disk $\mathbb{D}$ onto itself mapping $0$ into $z_0$. Here is the precise statement of the result which is mentioned at the beginning of this section.

\begin{prop}
\label{moebius}
There exists a $C^1$-neighborhood $\mathscr{U}$ of the identity in the space of smooth diffeomorphisms of $\mathbb{D}$ with the following property: if $\varphi\in \mathscr{U}$ has an interior fixed point $z_0$ and $h\in \mathrm{Diff}(\mathbb{D})$ is a M\"obius transformation mapping $0$ into $z_0$, then the diffeomorphism $h^{-1} \circ \varphi \circ h$ is radially monotone.
\end{prop}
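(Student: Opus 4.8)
The plan is to conjugate the problem back to the origin and to exploit that the M\"obius transformation $h$ is a \emph{conformal} automorphism of $\D$, which carries the two foliations appearing in Definition \ref{radmono} to a pair of foliations attached to $z_0$ whose geometry stays uniformly good no matter how close $z_0$ sits to $\partial\D$.

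\emph{Step 1 (conformal reformulation).} Set $\psi:=h^{-1}\circ\varphi\circ h$; since $h(0)=z_0$ and $\varphi(z_0)=z_0$, the diffeomorphism $\psi$ fixes the origin. Because $h$ is a conformal diffeomorphism of $\D$ and M\"obius maps send lines-and-circles to lines-and-circles, the image under $h$ of the foliation of $\D\setminus\{0\}$ by rays from $0$ is a foliation of $\D\setminus\{z_0\}$ by arcs of circles or lines through $z_0$, while the image of the foliation by concentric circles is a foliation $\mathscr{C}$ of $\D\setminus\{z_0\}$ by genuine Euclidean circles (no concentric circle meets $h^{-1}(\infty)=-1/\bar z_0$, which lies outside $\D$). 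The two families are orthogonal at each point (conformality), any two circles of $\mathscr{C}$ are disjoint or coincide (homeomorphic images of concentric circles), and, since $h^{-1}$ maps each circle of $\mathscr{C}$ to a circle centred at $0$ and maps the Euclidean disk it bounds to the one that circle bounds, the point $z_0=h(0)$ lies in the Euclidean interior of the disk bounded by every circle of $\mathscr{C}$. For $w\in\D\setminus\{z_0\}$ let $G_w$ be the leaf of the first family through $w$ and $C_w\in\mathscr{C}$ the circle through $w$, with radius $r_w$. Pulling Definition \ref{radmono} for $\psi$ back through $h$ and using that $Dh^{-1}$ is conformal, hence angle preserving, one finds that $\psi$ is radially monotone if and only if
\[
\mathrm{ang}\bigl(D\varphi(w)\,T_wG_w,\ T_{\varphi(w)}C_{\varphi(w)}\bigr)>0\qquad\forall\,w\in\D\setminus\{z_0\}.
\]

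\emph{Step 2 (two estimates).} Let $\epsilon_0>0$ bound $\|D\varphi-\mathrm{Id}\|_{C^0(\D)}$. First, writing $D\varphi(w)=\mathrm{Id}+(D\varphi(w)-\mathrm{Id})$ with the second summand of operator norm $\le\epsilon_0$, the line $D\varphi(w)\,T_wG_w$ makes with $T_wG_w$ an angle at most $\eta(\epsilon_0):=\arctan\frac{\epsilon_0}{1-\epsilon_0}$, which tends to $0$ as $\epsilon_0\to0$; and $T_wG_w$ is orthogonal to $T_wC_w$. Second, put $g:=\varphi-\mathrm{id}$, so $g(z_0)=0$ and $\|Dg\|_{C^0}\le\epsilon_0$; integrating $Dg$ along the segment from $z_0$ to $w$ (which lies in $\D$ by convexity) gives $|\varphi(w)-w|=|g(w)|\le\epsilon_0|w-z_0|$, and also $|\varphi(w)-z_0|=|\varphi(w)-\varphi(z_0)|\ge(1-\epsilon_0)|w-z_0|$. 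Since $z_0$ lies inside the disk bounded by $C_w$ and $w\in C_w$ we get $|w-z_0|\le2r_w$, and likewise $|\varphi(w)-z_0|\le2r_{\varphi(w)}$, whence $\min\{r_w,r_{\varphi(w)}\}\ge\tfrac12(1-\epsilon_0)|w-z_0|$ and therefore
\[
|\varphi(w)-w|\le\frac{2\epsilon_0}{1-\epsilon_0}\,\min\{r_w,r_{\varphi(w)}\}.
\]
As $C_w$ and $C_{\varphi(w)}$ are circles of $\mathscr{C}$ (hence disjoint or equal), Lemma \ref{cerchi} applies: given $\epsilon>0$, if $\frac{2\epsilon_0}{1-\epsilon_0}<\delta(\epsilon)$ then $\mathrm{ang}(T_wC_w,T_{\varphi(w)}C_{\varphi(w)})<\epsilon$.

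\emph{Step 3 (conclusion and the main obstacle).} By the triangle inequality for the angle metric between lines through the origin,
\[
\mathrm{ang}\bigl(D\varphi(w)\,T_wG_w,\ T_{\varphi(w)}C_{\varphi(w)}\bigr)\ \ge\ \mathrm{ang}(T_wG_w,T_wC_w)-\eta(\epsilon_0)-\mathrm{ang}(T_wC_w,T_{\varphi(w)}C_{\varphi(w)}),
\]
and since $\mathrm{ang}(T_wG_w,T_wC_w)=\pi/2$ this is $\ge\pi/2-\eta(\epsilon_0)-\epsilon$. Fixing $\epsilon=\pi/8$, letting $\delta=\delta(\pi/8)$ be given by Lemma \ref{cerchi}, and taking $\mathscr{U}:=\{\varphi\in\mathrm{Diff}^+(\D):\|\varphi-\mathrm{id}\|_{C^1(\D)}<\epsilon_0\}$ with $\epsilon_0$ so small that $\eta(\epsilon_0)<\pi/8$ and $\frac{2\epsilon_0}{1-\epsilon_0}<\delta$ (only the bound $\|D\varphi-\mathrm{Id}\|_{C^0}<\epsilon_0$ is actually used), the displayed quantity is $\ge\pi/4>0$ for every $w$, so $h^{-1}\circ\varphi\circ h=\psi$ is radially monotone; crucially, $\mathscr{U}$ depends neither on $z_0$ nor on $h$. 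The only genuine difficulty is exactly this uniformity as $z_0\to\partial\D$, when $h$, hence $\psi$, is $C^1$-far from the identity and cannot be treated as a small perturbation of it: Step 1 disposes of it, because the conformal naturality of the two foliations means the comparison demanded by Lemma \ref{cerchi} only ever pits the displacement $|\varphi(w)-w|$ against the radius $r_w$ of the circle $C_w$ through $w$, and the elementary fact that the ``centre'' $z_0$ always lies inside $C_w$ bounds $|w-z_0|/r_w$ by $2$ with no dependence on $z_0$, while the $C^1$-smallness of $\varphi$ near its fixed point $z_0$ supplies the small factor $\epsilon_0$.
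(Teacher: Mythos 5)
Your proof is correct and follows essentially the same approach as the paper: conformally transport the foliations by rays and concentric circles to a pair of orthogonal foliations attached to $z_0$, estimate the displacement $|\varphi(w)-w|$ against $\min\{|w-z_0|,|\varphi(w)-z_0|\}$ using the $C^1$-bound, observe that $z_0$ lying inside every circle of $\mathscr{C}$ gives the comparison with the radii needed for Lemma \ref{cerchi}, and close with a triangle inequality for angles. The only cosmetic difference is that you control $|\varphi(w)-z_0|$ from below via $\|D\varphi-I\|_\infty$ and a reverse Lipschitz estimate, whereas the paper invokes $\|D\varphi^{-1}-I\|_\infty$ directly.
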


\begin{proof}
The M\"obius transformation $h$ maps the foliation by circles centred at $0$ into a foliation $\{C_z\}_{z\in \mathbb{D}\setminus \{z_0\}}$ of $\mathbb{D} \setminus \{z_0\}$ which consists of circles which enclose $z_0$. Being a conformal mapping, $h$ maps the foliation by rays
emanating from $0$ into a foliation $\{R_z\}_{z\in \mathbb{D}\setminus \{z_0\}}$ of $\mathbb{D} \setminus \{z_0\}$ consisting of circle arcs or segments which are orthogonal to the foliation $\{C_z\}_{z\in \mathbb{D}\setminus \{z_0\}}$. By the conformality of $h$, it is enough to show that if the diffeomorphism $\varphi$ belongs to a suitable $C^1$-neighborhood of the identity then
\[
\mathrm{ang}( D \varphi(z) T_z R_z , T_{\varphi(z)} C_{\varphi(z)}) >0 \qquad \forall z\in \mathbb{D} \setminus \{z_0\}.
\]
Equivalently, we must show that
\begin{equation}
\label{toprove}
\mathrm{ang}( i D \varphi(z) T_z R_z , T_{\varphi(z)} C_{\varphi(z)}) < \frac{\pi}{2} \qquad \forall z\in \mathbb{D} \setminus \{z_0\}.
\end{equation}
From the identity
\[
\begin{split}
\varphi(z) &= \varphi(z_0) + \int_0^1 D\varphi(z_0 + t(z-z_0))[z-z_0]\, dt \\ &= z_0 + \int_0^1 \bigl( D\varphi(z_0 + t(z-z_0)) - I \bigr) [z-z_0]\, dt + z - z_0 \\ &= z + \int_0^1 \bigl( D\varphi(z_0 + t(z-z_0)) - I \bigr) [z-z_0]\, dt,
\end{split}
\]
we deduce the estimate
\begin{equation}
\label{vic1}
|\varphi(z) - z| \leq \|D\varphi - I\|_{\infty} |z-z_0| \qquad \forall z\in \mathbb{D}.
\end{equation}
Similarly,
\[
|\varphi^{-1}(w) - w| \leq \|D\varphi^{-1} - I\|_{\infty} |w-z_0| \qquad \forall w\in \mathbb{D},
\]
and applying this to $w=\varphi(z)$ we find
\begin{equation}
\label{vic2}
|z-\varphi(z)| \leq \|D\varphi^{-1} - I\|_{\infty} |\varphi(z)-z_0| \qquad \forall z\in \mathbb{D}.
\end{equation}
Putting together (\ref{vic1}) and (\ref{vic2}) we obtain
\begin{equation}
\label{vicini}
|\varphi(z) - z| \leq \max\{ \|D\varphi - I\|_{\infty}, \|D\varphi^{-1} - I\|_{\infty} \} \min \{ |z-z_0|, |\varphi(z)-z_0| \}  \qquad \forall z\in \mathbb{D}.
\end{equation}
Let $\delta$ be the positive number which is given by Lemma \ref{cerchi} with $\epsilon = \pi/4$: if $C$ and $C'$ are circles in the plane which are either disjoint or somewhere tangent and have radius $r$ and $r'$, respectively, then there holds
\begin{equation}
\label{vicini2}
z\in C, \; z'\in C' , \;|z-z'| < \delta \min\{ r,r'\} \qquad \Rightarrow \qquad
\mathrm{ang}(T_z C, T_z C') < \frac{\pi}{4}.
\end{equation}
Let $\delta'>0$ be such that
\begin{equation}
\label{nonruota}
T\in GL(2,\R) , \; \|T-I\| < \delta' \qquad \Rightarrow \qquad \mathrm{ang}(TL,L) < \frac{\pi}{4}
\end{equation}
for every 1-dimensional subspace $L \subset \R^2$. We consider the following $C^1$-neighborhood of the identity in $\mathrm{Diff}(\mathbb{D})$:
\[
\mathscr{U} := \Bigl\{ \varphi\in \mathrm{Diff}(\mathbb{D}) \mid \|D\varphi - I\|_{\infty} < \frac{\delta}{2}, \;  \|D\varphi^{-1} - I\|_{\infty} < \frac{\delta}{2}, \; \|D\varphi - I\|_{\infty} < \delta' \Bigr\}.
\]
Now let $\varphi\in \mathscr{U}$ be a diffeomorphism with interior fixed point $z_0$. Let $z\in \mathbb{D} \setminus \{z_0\}$. Then we have
\begin{equation}
\label{ang1}
\begin{split}
\mathrm{ang}( i D \varphi(z) T_z R_z, T_{\varphi(z)} C_{\varphi(z)} ) &\leq \mathrm{ang}( i D \varphi(z) T_z R_z, i T_z R_z) + \mathrm{ang} (  i T_z R_z, T_{\varphi(z)} C_{\varphi(z)} ) \\ &= \mathrm{ang}( D \varphi(z) T_z R_z, T_z R_z) + \mathrm{ang} (  T_z C_z, T_{\varphi(z)} C_{\varphi(z)} ).
\end{split}
\end{equation}
Since $\|D\varphi(z) - I\|< \delta'$, (\ref{nonruota}) implies that
\begin{equation}
\label{ang2}
\mathrm{ang}( D \varphi(z) T_z R_z, T_z R_z) < \frac{\pi}{4}.
\end{equation}
The circles $C_z$ and $C_{\varphi(z)}$ are either disjoint or they coincide, and in the latter case they are in particular somewhere tangent. Since both $C_z$ and $C_{\varphi(z)}$ enclose $z_0$, their radii $r$ and $r'$ are such that
\[
r > \frac{1}{2} |z-z_0|, \qquad r' > \frac{1}{2} |\varphi(z)-z_0|.
\]
By (\ref{vicini}) we have
\[
|\varphi(z) - z| < \frac{\delta}{2} \min\{ |z-z_0|,|\varphi(z)-z_0| \} < \delta \min \{r,r'\}.
\]
Then (\ref{vicini2}) implies that
\begin{equation}
\label{ang3}
\mathrm{ang}(T_z C_z, T_{\varphi(z)} C_{\varphi(z)}) < \frac{\pi}{4}.
\end{equation}
By (\ref{ang1}), (\ref{ang2}) and (\ref{ang3}) we conclude that
\[
\mathrm{ang}( i D \varphi(z) T_z R_z, T_{\varphi(z)} C_{\varphi(z)} ) < \frac{\pi}{2},
\]
which proves (\ref{toprove}).
\end{proof}

Now we can easily deduce Corollary \ref{fixpoint} from Theorem \ref{fixpointmon}:

\begin{proof}[Proof of Corollary \ref{fixpoint}.]
Let $\tilde{\varphi}\in \widetilde{\mathrm{Diff}}(\mathbb{D},\omega)$ and $\varphi=\pi(\tilde{\varphi})$ be as in the assumptions of Corollary \ref{fixpoint}, with the $C^1$-neighborhood $\mathscr{U}$ being given by Proposition \ref{moebius}. Since $\varphi$ preserves an area form on $\mathrm{int}(\mathbb{D})$, by Brouwer's translation theorem it has an interior fixed point $z_0$. Let $h\in \mathrm{Diff}^+(\mathbb{D})$ be a M\"obius transformation mapping $0$ into $z_0$. By Proposition \ref{moebius} the diffeomorphism $h^{-1} \circ \varphi \circ h$ is radially monotone.  

The element $h^{-1} \circ \tilde{\varphi} \circ h$ belongs to $\widetilde{\mathrm{Diff}} (\mathbb{D},h^* \omega)$ and satisfies assumptions (i) and (ii) of Theorem \ref{fixpointmon}. By Proposition \ref{naturality}, its Calabi invariant is
\[
\mathrm{CAL}(h^{-1} \circ \tilde{\varphi} \circ h, h^* \omega) = \mathrm{CAL}(\tilde{\varphi},\omega) \leq 0,
\]
so $h^{-1} \circ \tilde{\varphi} \circ h$ satisfies also assumption (iii).  Therefore, Theorem \ref{fixpointmon} implies that $h^{-1} \circ \tilde{\varphi} \circ h$ has an interior fixed point $z_0$ with negative action, and more precisely
\[
\sigma_{h^{-1} \circ \tilde{\varphi} \circ h} (z_0) < \frac{1}{2} \frac{\mathrm{CAL}(h^{-1} \circ \tilde{\varphi} \circ h, h^* \omega)}{\int_{\D} h^* \omega} = \frac{1}{2} \frac{\mathrm{CAL}(\tilde{\varphi} , \omega)}{\int_{\D} \omega}.
\]
Then $h(z_0)$ is an interior fixed point of $\varphi$ with the same action:
\[
\sigma_{\tilde{\varphi}}(h(z_0))=\sigma_{\tilde{\varphi},\lambda}(h(z_0)) = \sigma_{h^{-1} \circ \tilde{\varphi} \circ h,h^* \lambda} (z_0)= \sigma_{h^{-1} \circ \tilde{\varphi} \circ h} (z_0),
\]
see again Proposition \ref{naturality}. The proof is complete.
\end{proof}

\begin{rem}
If one replaces condition (iii) in either Theorem \ref{fixpointmon} or Corollary \ref{fixpoint} by the assumption that the Calabi invariant is non-negative, then one gets the existence of an interior fixed point with positive action, and more precisely with action strictly larger than half of the Calabi invariant averaged by the total area of the disk.  This follows from  Theorem \ref{fixpointmon} or Corollary \ref{fixpoint} applied to $\tilde{\varphi}^{-1}$.
\end{rem}

\subsection{A refined example}
\label{controsec}

The aim of this section is to modify the example of Section \ref{proofsec}, in order to make it compactly supported and to have good bounds on the action and on the Calabi invariant. The first requirement is easy to achieve, the latter requires a more careful analysis. We will actually construct two examples, which are the building stones in the proof of Theorem \ref{main2}.

Throughout the whole section, we work with the standard area form $\omega=dx\wedge dy$ and with its standard primitive
\[
\lambda_0 := \frac{1}{2} ( x\, dy - y \, dx),
\]
whose expression in polar coordinates $(r,\theta)$ is
\[
\lambda_0 = \frac{r^2}{2} \, d\theta.
\]
The area-preserving diffeomorphisms  that we will construct in this section are compactly supported in the interior of $\D$ and hence, as explained in Remark \ref{compsuppdiff}, the action and the Calabi invariant of these maps are well defined.

Given $\delta\in (0,1/2)$, let $\chi_{\delta}: [0,+\infty) \rightarrow \R$ be a smooth convex function which is supported in $[0,1)$ and satisfies
\begin{equation}
\label{chi}
\chi_{\delta}(s) = 1 - \delta - s \qquad \forall s\in [0,1-2\delta].
\end{equation}
It follows that $\chi_{\delta}$ is monotonically decreasing, non-negative, and satisfies
\begin{eqnarray}
\label{chi1}
& \max\{1-\delta-s,0\} \leq \chi_{\delta}(s) \leq  \max\{ (1 - \delta)(1 - s),0\}, &\\
\label{chi2}
&-1 \leq \chi_{\delta}'(s) \leq 0, &\\
\label{chi3}
& 0\leq \chi_{\delta}(s) - s \chi_{\delta}'(s) \leq 1-\delta,&
\end{eqnarray}
for every $s\in [0,+\infty)$,
where the last inequalities follow from the fact that the function $\chi_{\delta}(s) - s \chi_{\delta}'(s)$ is monotonically decreasing, having derivative $-s\chi''_{\delta}(s)\leq 0$, and takes the value $1-\delta$ for $s=0$ and $0$ for $s\geq 1$. 

Let $n$ be a large positive integer and let $\delta\in (0,1/2)$ be a small real number. The sizes of $n$ and $\delta$ will be determined in due time.
The autonomous Hamiltonian 
\[
H_+: \D \rightarrow \R, \qquad H_+(z) = \frac{\pi}{n} \chi_{\delta}(|z|^2),
\]
is supported in $\mathrm{int}(\D)$. We denote by $\varphi^+_t$ the flow of $X_{H_+}$ and by $\varphi^+:= \varphi^+_1$ the time-1 map. Each map $\varphi^+_t$ belongs to $\mathrm{Diff}_c(\D,dx\wedge dy)$. The diffeomorphism $\varphi^+_t$ restricts to the counterclockwise rotation of angle $2\pi t/n$ on the disk of radius $\sqrt{1-2\delta}$ about the origin. Outside from this disk, the map $\varphi^+_t$ rotates each circle about the origin counterclockwise by an angle which does not exceed $2\pi t/n $ (because of (\ref{chi2})), and which becomes zero near the boundary. In particular, we have
\begin{equation}
\label{vicid1}
\|\varphi^+_t - \mathrm{id}\|_{\infty} \leq \frac{2\pi}{n} \qquad \forall t\in [0,1].
\end{equation} 
By Lemma \ref{radialham}, the action of $\varphi^+$ with respect to $\lambda_0$ is the radial function
\[
\sigma_{\varphi^+,\lambda_0}(z) = \frac{\pi}{n} \bigl( \chi_{\delta}(|z|^2) -  |z|^2\chi_{\delta}'(|z|^2) \bigr).
\]
By (\ref{chi}) and (\ref{chi3}), this function satisfies
\begin{equation}
\label{azphi+}
\begin{split}
\sigma_{\varphi^+,\lambda_0}(z) &= \frac{\pi}{n} (1-\delta) \qquad \mbox{for } |z| \leq \sqrt{1-2\delta}, \\
0 \leq \sigma_{\varphi^+,\lambda_0}(z) &\leq \frac{\pi}{n} (1-\delta)  \leq \frac{\pi}{n}  \qquad   \mbox{for }  z\in \D,
\end{split}
\end{equation}
for every $t\in [0,1]$. Using also Lemma \ref{radialham} and (\ref{chi1}), we find that the Calabi invariant of $\varphi^+$ has the upper bound
\begin{equation}
\label{calH+}
\begin{split}
\mathrm{CAL}(\varphi^+,dx\wedge dy ) &= 4\pi \int_0^1 r\, \frac{\pi}{n} \chi_{\delta}(r^2)\, dr \leq \frac{4\pi^2}{n} \int_0^1 r (1-\delta)(1 - r^2) \, dr \\ &= \frac{\pi^2}{n} (1- \delta) \leq \frac{\pi^2}{n}.
\end{split}
\end{equation}

Given any $\rho\in (0,1)$, we can find a finite number of pairwise non-overlapping closed disks in the sector
\[
\left\{ r e^{i\theta} \mid 0 < r < 1, \; 0 < \theta < \frac{2\pi}{n} \right\}
\]
such that the ratio between their total area and the area of the sector is at least $\rho$. The existence of such a disk packing is a direct consequence of Vitali's covering theorem, see e.g.\ \cite[Section 3.2, Lemma 3.9]{ss05}. By acting on these disks by the rotation of angle $2\pi/n$, we obtain a finite family 
\[
\{D_j \mid j\in J\}
\]
of pairwise non overlapping disks in $\D$, each of which is contained in some sector
\begin{equation}
\label{sector}
\left\{ r e^{i\theta} \mid 0 < r < 1, \; \frac{2\pi (k-1)}{n} < \theta < \frac{2\pi k}{n} \right\},
\end{equation}
for some integer $k$, and such that
\begin{equation}
\label{denso}
\sum_{j\in J} \mathrm{area}(D_j) \geq \rho \, \mathrm{area} (\D) = \rho \pi.
\end{equation}
The constant $\rho$ can be chosen to be arbitrarily close to 1 and will be determined later. We denote by $z_j$ and $r_j$ the center and the radius of the disk $D_j$. By construction,
\[
r_j \leq \frac{\pi}{n} \qquad \forall j\in J.
\]
Fix some small number $\epsilon\in (0,1/2)$, whose size will be determined along the way. For every $j\in J$ we consider the autonomous Hamiltonian
\[
K_j: \D \rightarrow \R, \qquad K_j(z) = - c \, \chi_{\epsilon} \left( \frac{|z-z_j|^2}{r_j^2} \right),
\]
where the positive number $c$ is to be fixed. This function is supported in the interior part of $D_j$. The flow of the Hamiltonian vector field $X_{K_j}$ is denoted by $\psi_{j,t}$ and the time 1-map by $\psi_j := \psi_{j,1}$. The maps $\psi_{j,t}$ are supported in the interior of  $D_j$. Inside the disk of radius $\sqrt{1-\epsilon} r_j$ about $z_j$, the map $\psi_j$ is the clockwise rotation of angle $2c/r_j^2$.
In the next lemma, we estimate the action and the Calabi invariant of $\psi_j$. 

\begin{lem}
\label{psij}
The following properties hold:
\begin{enumerate}[(i)]
\item $\sigma_{\psi_j,\lambda_0}$ is supported in the interior of $D_j$ for every $t\in [0,1]$;
\item $- c  - \pi/n  \leq \sigma_{\psi_j,\lambda_0} \leq \pi/n$ on $\D$ for every $t\in [0,1]$;
\item $\mathrm{CAL}(\psi_j,dx\wedge dy) \leq - c r_j^2 (1-\epsilon)^2 \pi$.
\end{enumerate}
\end{lem}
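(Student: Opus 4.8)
The plan is to obtain (i) and (ii) from an explicit formula for the action $\sigma_{\psi_{j,t},\lambda_0}$, and (iii) directly from the Hamiltonian formula for the Calabi invariant in Proposition \ref{calham}.

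First I would record two elementary facts. Since $K_j$ is autonomous and supported in $\mathrm{int}(D_j)$, its flow $\psi_{j,t}$ fixes $\D\setminus\mathrm{int}(D_j)$ pointwise; and since $K_j$ is constant on the circles about $z_j$, these circles are $\psi_{j,t}$-invariant, so $\psi_{j,t}$ leaves $D_j$ invariant. From the first fact, $\psi_{j,t}^*\lambda_0-\lambda_0$ vanishes on $\D\setminus\mathrm{int}(D_j)$, so $\sigma_{\psi_{j,t},\lambda_0}$ is locally constant there; as that region is connected and meets $\partial\D$, where the action of the compactly supported map $\psi_{j,t}$ vanishes (the canonical compactly supported isotopy fixes every boundary point), it vanishes identically on $\D\setminus\mathrm{int}(D_j)$. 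This gives (i).

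For (ii) I would move to the coordinate $u=z-z_j$, in which $K_j$ becomes the radial Hamiltonian $u\mapsto-c\,\chi_\epsilon(|u|^2/r_j^2)$ on the disc of radius $r_j$. Writing $\lambda_0=\mu_j+d\ell_j$, where $\mu_j=\tfrac12\bigl((x-a_j)\,dy-(y-b_j)\,dx\bigr)$ is the primitive of $dx\wedge dy$ centred at $z_j=a_j+ib_j$ and $\ell_j=\tfrac12\,\mathrm{Im}\bigl(\overline{z_j}(z-z_j)\bigr)$ is affine, Lemma \ref{formule}(i) gives $\sigma_{\psi_{j,t},\lambda_0}=\sigma_{\psi_{j,t},\mu_j}+\ell_j\circ\psi_{j,t}-\ell_j$. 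Rerunning the computation in the proof of Lemma \ref{radialham} on the disc $\{|u|\le r_j\}$, with the Hamiltonian $s\mapsto-ct\,\chi_\epsilon(s/r_j^2)$ in place of the one used there, I get
\[
\sigma_{\psi_{j,t},\mu_j}(z)=-c\,t\,\bigl(\chi_\epsilon(s)-s\,\chi_\epsilon'(s)\bigr),\qquad s:=\frac{|z-z_j|^2}{r_j^2},
\]
which by (\ref{chi3}) (with $\epsilon$ for $\delta$) and $t\in[0,1]$ lies in $[-c(1-\epsilon),0]\subseteq[-c,0]$. For the remaining term, $\psi_{j,t}$ maps $D_j$ to itself and fixes the complement, while $|\ell_j|\le\tfrac12|z_j|\,|z-z_j|\le\tfrac12 r_j$ on $\overline{D_j}$, so $\|\ell_j\circ\psi_{j,t}-\ell_j\|_\infty\le r_j\le\pi/n$. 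Adding the two contributions yields $-c-\pi/n\le\sigma_{\psi_{j,t},\lambda_0}\le\pi/n$, which is (ii); note this formula re-proves (i) as well, since the first term vanishes for $s\ge1$ and the second off $D_j$.

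For (iii), since $\psi_j$ is the time-$1$ map of the autonomous Hamiltonian $K_j$, which vanishes on $\partial\D$, Proposition \ref{calham} gives $\mathrm{CAL}(\psi_j,dx\wedge dy)=2\int_{\D}K_j\,dx\wedge dy$. Passing to polar coordinates centred at $z_j$ and substituting $s=\rho^2/r_j^2$,
\[
\mathrm{CAL}(\psi_j,dx\wedge dy)=-2c\int_0^{2\pi}\!\!\int_0^{r_j}\chi_\epsilon\bigl(\tfrac{\rho^2}{r_j^2}\bigr)\,\rho\,d\rho\,d\phi=-2\pi c\,r_j^2\int_0^1\chi_\epsilon(s)\,ds,
\]
and by the lower bound in (\ref{chi1}), $\int_0^1\chi_\epsilon(s)\,ds\ge\int_0^{1-\epsilon}(1-\epsilon-s)\,ds=\tfrac12(1-\epsilon)^2$, whence $\mathrm{CAL}(\psi_j,dx\wedge dy)\le-\pi c\,r_j^2(1-\epsilon)^2$. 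The only real subtlety is in (ii): because $\lambda_0$ is centred at the origin and not at $z_j$, the action picks up the correction $\ell_j\circ\psi_{j,t}-\ell_j$, which produces the $\pi/n$ in the bounds; and the sharp constant $-c$ in the lower bound comes not from estimating the line-integral and the potential part of the action separately but from the cancellation between them, which is precisely what inequality (\ref{chi3}) (equivalently, Lemma \ref{radialham}) encodes. Everything else is a routine computation.
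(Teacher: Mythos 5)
Your proof is correct. The key computation — an explicit formula for the action of a Hamiltonian rotation centred at $z_j$, and the correction for the fact that $\lambda_0$ is centred at the origin — is the same as the paper's, but you organise the change of centre differently and you shortcut part (iii). The paper conjugates $\psi_{j,t}$ to a radial map $\tilde{\psi}_{j,t}$ centred at the origin; since the translation $z\mapsto z+z_j$ does not map $\D$ to itself, it must first construct an area-preserving extension $\tau_j:\D\to\D$ of that translation and then push everything through Proposition~\ref{naturality}. You instead keep the map fixed and change the \emph{primitive}: writing $\lambda_0=\mu_j+d\ell_j$ with $\mu_j$ the primitive of $dx\wedge dy$ centred at $z_j$, Lemma~\ref{formule}(i) gives the correction term $\ell_j\circ\psi_{j,t}-\ell_j$ directly, with no auxiliary diffeomorphism. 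This is dual to the paper's move and removes the need to construct $\tau_j$; the bound $\|\ell_j\circ\psi_{j,t}-\ell_j\|_\infty\le r_j\le\pi/n$ is the same number that the paper obtains as $r_j|z_j|\le r_j$. For (iii), the paper integrates its action formula over $\D$ and bounds it from below on the inner disk $r_j\sqrt{1-\epsilon}\,\D$; you appeal to Proposition~\ref{calham} and evaluate $2\int_{\D}K_j\,dx\wedge dy$ exactly as $-2\pi c r_j^2\int_0^1\chi_\epsilon(s)\,ds$, then use the lower bound in~(\ref{chi1}). Both give exactly $-\pi c r_j^2(1-\epsilon)^2$. One small point worth spelling out: when you "rerun" Lemma~\ref{radialham} with centre $z_j$, the ambient disk $\D$ is not rotationally symmetric about $z_j$, so the lemma does not apply verbatim; the correct statement is that the candidate function $-ct(\chi_\epsilon(s)-s\chi_\epsilon'(s))$ solves $d\sigma=\psi_{j,t}^*\mu_j-\mu_j$ on $D_j$ by the same calculation, both sides vanish on $\D\setminus\mathrm{int}(D_j)$, and the boundary normalisation on $\partial\D$ is automatic because $\psi_{j,t}$ is compactly supported. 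You clearly have this in mind (your remark that the formula "re-proves (i)" presupposes it), but it is the one place where the argument leans on an unstated step.
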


\begin{proof}
Since $\psi_j$ is supported in the interior of the disk $D_j$, so is its action, and we just have to prove the bounds (ii) and (iii).
Let $\tilde{K}_j$ be the autonomous Hamiltonian
\[
\tilde{K}_j: \D \rightarrow \R, \qquad \tilde{K}_j(z) = - c\,  \chi_{\epsilon} \left( \frac{|z|^2}{r_j^2} \right),
\]
and let $\tilde{\psi}_j$ be the time-one map of the vector field $X_{\tilde{K}_j}$.  By Lemma \ref{radialham}, the action of $\tilde{\psi}_j$ with respect to $\lambda_0$ is the function
\[
\sigma_{\tilde{\psi}_j,\lambda_0}(z) = - c \left( \chi_{\epsilon} \Bigl( \frac{|z|^2}{r_j^2} \Bigr) - \frac{|z|^2}{r_j^2} \chi_{\epsilon}' \Bigl( \frac{|z|^2}{r_j^2} \Bigr) \right).
\]
By (\ref{chi}), this function satisfies
\begin{equation}
\label{dentro}
\sigma_{\tilde{\psi}_{j},\lambda_0} = - c (1-\epsilon) \qquad \mbox{on } r_j \sqrt{1-\epsilon} \, \D,
\end{equation}
is supported in the interior of $r_j \D$ and, by (\ref{chi3}), has the bounds
\begin{equation}
\label{tutto}
-c \leq - c (1-\epsilon) \leq \sigma_{\tilde{\psi}_j,\lambda_0}(z) \leq 0 \qquad \mbox{on } \D.
\end{equation}
The diffeomorphisms $\psi_j$ and $\tilde{\psi}_j$ are conjugated by the translation $z\mapsto z+z_j$. In order to work on the disk $\D$, it is convenient to fix an area-preserving diffeomorphism $\tau_j: \D \rightarrow \D$ which on the disk $r_j \D$ coincides with the translation $z\mapsto z+z_j$. Then $\tilde{\psi}_j = \tau_j^{-1} \circ \psi_j \circ \tau_j$ and by Proposition \ref{naturality} we have
\[
\sigma_{\psi_j, \lambda_0} \circ \tau_j = \sigma_{ \tau_j^{-1} \circ \psi_j \circ \tau_j, \tau_j^* \lambda_0} = \sigma_{\tilde{\psi}_j,  \tau_j^* \lambda_0},
\]
and
\[
\mathrm{CAL}(\psi_j,dx\wedge dy) = \mathrm{CAL}(\tilde{\psi}_j,dx\wedge dy).
\]
By (\ref{dentro}) and (\ref{tutto}) we find
\[
\mathrm{CAL}(\tilde{\psi}_j,dx\wedge dy) = \int_{\D} \sigma_{\tilde{\psi}_j,\lambda_0} \, dx\wedge dy \leq - \int_{r_j \sqrt{1-\epsilon} \D} c (1-\epsilon) \, dx\wedge dy = - c r_j^2 (1-\epsilon)^2 \pi,
\]
and the bound (iii) on the Calabi invariant of $\psi_j$ follows.

Let $(x_j,y_j)$ be the coordinates of the center $z_j$ of the disk $D_j$. Since on $r_j \D$ we have
\[
\tau_j^* \lambda_0 - \lambda_0 = \frac{1}{2} \bigl( (x+x_j)\, dy - (y+y_j)\, dx - x\, dy + y\, dx \bigr) = \frac{1}{2} (x_j\, dy - y_j \, dx),
\]
the 1-form $\tau_j^* \lambda_0$ differs from $\lambda_0$ by the differential of a smooth function $u: \D \rightarrow \R$ such that
\[
u(x,y) = \frac{1}{2} ( x_j y - y_j x) \qquad \forall (x,y)\in r_j \D.
\]
By Lemma \ref{formule} (i), we find
\[
\sigma_{\tilde{\psi}_j,\tau^*_j \lambda_0} = \sigma_{\tilde{\psi}_j, \lambda_0} + u \circ \tilde{\psi}_j - u.
\]
Since the diffeomorphism $\tilde{\psi}_j$ is supported in the interior of $r_j \D$, so is its action $\sigma_{\tilde{\psi}_j,\tau^*_j \lambda_0}$, which by the above identity satisfies
\[
| \sigma_{\tilde{\psi}_j,\tau^*_j \lambda_0} - \sigma_{\tilde{\psi}_j, \lambda_0}| \leq \sup_{r_j \D} u - \inf_{r_j \D} u = r_j |z_j|\leq r_j \leq \frac{\pi}{n}.
\]
The estimate (ii) follows from the above inequality, together with (\ref{tutto}).
\end{proof}

Fix some $t\in [0,1]$. Since the diffeomorphisms $\psi_{j,t}$ have pairwise disjoint support, they pairwise commute. Their composition is denoted by $\varphi^-_t$: $\{\varphi^-_t\}_{t\in [0,1]}$ is a smooth path of area-preserving compactly supported diffeomorphisms starting at the identity and ending at $\varphi^-:= \varphi_1^-$. This path of diffeomorphisms is the flow of the autonomous Hamiltonian
\[
H_- := \sum_{j\in J} K_j.
\]
The diffeomorphism $\varphi^-_t$ is supported in the union of the $D_j$'s and maps each $D_j$ into itself. Since the diameter of these disks is at most $2\pi/n$, we have
\begin{equation}
\label{vicid2}
\|\varphi^-_t - \mathrm{id}\|_{\infty} \leq \frac{2\pi}{n} \qquad \forall t\in [0,1].
\end{equation}
By Lemma \ref{formule} (ii), we get
\[
\sigma_{\varphi^-,\lambda_0} = \sum_{j\in J} \sigma_{\psi_j,\lambda_0},
\]
where the summands have pairwise disjoint support by Lemma \ref{psij} (i). By Lemma \ref{psij} (ii) we get
\begin{equation}
\label{tutto2}
-c  - \frac{\pi}{n} \leq \sigma_{\varphi^-,\lambda_0} \leq \frac{\pi}{n} \qquad \mbox{on } \D.
\end{equation}
By Lemma \ref{psij} (iii) and (\ref{denso}) we obtain
\begin{equation}
\label{calest2}
\begin{split}
\mathrm{CAL}(\varphi^-,dx\wedge dy) &= \sum_{j\in J} \mathrm{CAL}(\psi_j,dx\wedge dy) \leq - c (1-\epsilon)^2 \sum_{j\in J} \pi r_j^2 \\ &= - c (1-\epsilon)^2 \sum_{j\in J} \mathrm{area} (D_j) \leq - c (1-\epsilon)^2 \rho \pi.
\end{split}
\end{equation}

From now on, we assume that $\delta$ is so small that the union of the disks $D_j$ is contained in the disk of radius $\sqrt{1-2\delta}$ about the origin, where $\varphi^+$ is a $2\pi/n$-rotation. We consider the smooth path of diffeomorphisms
\[
\varphi_t:= \varphi^+_t \circ \varphi^-_t \in \mathrm{Diff}_c(\D,dx\wedge dy)
\]
which connects the identity to $\varphi:= \varphi_1 = \varphi^+ \circ \varphi^-$.
From (\ref{vicid1}) and (\ref{vicid2}) we obtain
\begin{equation}
\label{vicid3}
\|\varphi_t - \mathrm{id}\|_{\infty} \leq \frac{4\pi}{n} \qquad \forall t\in [0,1].
\end{equation}
Since $\varphi^+$ is a rotation by the angle $2\pi/n$ on the disk of radius $\sqrt{1-2\delta}$ about the origin and $\varphi^-$ leaves each of the sectors (\ref{sector}) invariant, the only fixed points of $\varphi$ are the origin and points forming a neighborhood of $\partial \D$ outside from this disk. By using the formula
\begin{equation}
\label{laformula}
\sigma_{\varphi,\lambda_0} = \sigma_{\varphi^+,\lambda_0} \circ \varphi^- + \sigma_{\varphi^-,\lambda_0}
\end{equation}
from Lemma \ref{formule} (ii), together with the fact that the fixed points of $\varphi$ are outside of the support of $\varphi^-$ and $\sigma_{\varphi^-,\lambda_0}$ vanishes on them, (\ref{laformula}) implies that the origin has action
\[
\sigma_{\varphi,\lambda_0}(0) = \sigma_{\varphi^+,\lambda_0}(0) = \frac{\pi}{n} (1-\delta),
\]
and all other fixed points $z$ of $\varphi$ have action
\[
\sigma_{\varphi,\lambda_0}(z) =  \sigma_{\varphi^+,\lambda_0}(z) \geq 0.
\]
Therefore, all the fixed points of $\varphi$ have non-negative action.
Furthermore, all the other periodic points of $\varphi$ have period at least $n$. Indeed, periodic points which belong to the support of $\varphi^-$ have period which is a multiple of $n$, again by the invariance under $\varphi^-$ of the sectors (\ref{sector}) and by the fact that the rotation of angle $2\pi/n$ permutes these sectors cyclically. Periodic points outside of the disk of radius $\sqrt{1-2\delta}$ about the origin which are not fixed points have period at least $n$, because on the corresponding circle $\varphi=\varphi^+$ is a rotation of an angle not exceeding $2\pi/n$.

By using again (\ref{laformula}), the inequalities (\ref{azphi+}) and (\ref{tutto2}) imply the bound
\begin{equation}
\label{tutto3}
-c  - \frac{\pi}{n} \leq \sigma_{\varphi,\lambda_0} \leq \frac{2\pi}{n} \qquad \mbox{on } \D.
\end{equation}
On the other hand, (\ref{calH+}) and (\ref{calest2}) give us the following upper bound on the Calabi invariant of $\varphi$
\begin{equation}
\label{calest3}
\mathrm{CAL}(\varphi,dx\wedge dy) = \mathrm{CAL}(\varphi^+,dx\wedge dy) + \mathrm{CAL}(\varphi^-,dx\wedge dy) \leq \frac{\pi^2}{n} - c(1-\epsilon)^2 \rho \pi.
\end{equation}

We can finally fix the free parameters $n$, $\epsilon$, and $c$. In our first example, we fix an arbitrary positive number $L$ and we take
\[
c:= L - \frac{L+\pi}{n}.
\]
With this choice, the first estimate in (\ref{tutto3}) gives us
\[
\sigma_{\varphi,\lambda_0} \geq  - L + \frac{L}{n} \qquad \mbox{on } \D.
\]
On the other hand, (\ref{calest3}) implies 
\[
\begin{split}
\mathrm{CAL}(\varphi,dx\wedge dy) &\leq \frac{\pi^2}{n} - \left( L - \frac{L+\pi}{n}  \right) (1-\epsilon)^2 \rho \pi \\ & = - L\pi  (1-\epsilon)^2 \rho  + \frac{\pi}{n} \bigl( \pi + (L+\pi)(1-\epsilon)^2 \rho \bigr).
\end{split}
\]
By choosing $n$ large, $\epsilon$ small, and $\rho$ sufficiently close to 1, the latter quantity can be made as close as we wish to $-L\pi$. Therefore, taking into account also (\ref{vicid3}) and our previous discussion about the fixed and the periodic points of $\varphi$, we have proved the following fact.

\begin{prop}
\label{sysgra}
For every $\epsilon>0$ and $L>0$ there exists a natural number $n$, which can be chosen to be arbitrarily large, and a smooth isotopy $\varphi_t\in \mathrm{Diff}_c(\D,dx\wedge dy)$, $t\in [0,1]$, with $\varphi_0=\mathrm{id}$ such that, setting $\varphi:= \varphi_1$, the following properties hold:
\begin{enumerate}[(i)]
\item $\|\varphi_t - \mathrm{id}\|_{\infty} < \epsilon$ for every $t\in [0,1]$;
\item $\sigma_{\varphi,\lambda_0} \geq - L + L/n$ on $\D$;
\item $\mathrm{CAL}(\varphi,dx\wedge dy)\leq - L\pi + \epsilon$;
\item all the fixed points of $\varphi$ have non-negative action;
\item all the periodic points of $\varphi$ which are not fixed points have period at least $n$.
\end{enumerate}
\end{prop}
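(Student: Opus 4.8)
The proposition is essentially a summary of the construction carried out above, so the plan is to fix the remaining free parameters $c,\epsilon,n,\rho,\delta$ appropriately and then collect the estimates already established.

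First I would recall the two building blocks. The map $\varphi^+$ is the time-one flow of the radial Hamiltonian $H_+(z)=(\pi/n)\chi_\delta(|z|^2)$; by Lemma \ref{radialham} it is compactly supported, equals the counterclockwise $2\pi/n$-rotation on the disk $\sqrt{1-2\delta}\,\D$, satisfies the $C^0$-bound (\ref{vicid1}), has action controlled by (\ref{azphi+}), and Calabi invariant bounded by (\ref{calH+}). The map $\varphi^-$ is the time-one flow of $H_-=\sum_{j\in J}K_j$, where the disks $D_j$ arise from a Vitali disk-packing of a single sector of angle $2\pi/n$ with relative area at least $\rho$, spread cyclically over all the sectors (\ref{sector}); by Lemma \ref{psij}, (\ref{vicid2}), (\ref{tutto2}) and (\ref{calest2}) this map is supported in $\bigcup_{j\in J}D_j$, leaves each such sector invariant, is $C^0$-small, and has action and Calabi invariant controlled by $c,\epsilon,n,\rho$.

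Next I would set $\varphi_t:=\varphi^+_t\circ\varphi^-_t$ and verify the five conclusions. Conclusion (i) is exactly (\ref{vicid3}). For (iv) and (v) I would use that $\delta$ is chosen small enough that $\bigcup_j D_j\subset\sqrt{1-2\delta}\,\D$, so that the fixed-point set of $\varphi$ consists of the origin together with a neighbourhood of $\partial\D$; the composition formula (\ref{laformula}) from Lemma \ref{formule}(ii), the vanishing of $\sigma_{\varphi^-,\lambda_0}$ off $\bigcup_j D_j$, and (\ref{azphi+}) give non-negativity of the action at every fixed point, while the cyclic permutation of the sectors (\ref{sector}) by $\varphi^+$ forces every non-fixed periodic point to have period at least $n$. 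Conclusions (ii) and (iii) are then read off from (\ref{tutto3}) and (\ref{calest3}) once $c$ is fixed.

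The only genuine point is the balancing of the parameters, since the two quantitative requirements compete. Forcing the lower bound $-\pi+\pi/n$ in (ii) requires $c\le\pi-2\pi/n$, and taking $c:=\pi-2\pi/n$ makes (\ref{tutto3}) read $\sigma_{\varphi_t,\lambda_0}\ge-\pi+\pi/n$ on $\D$ and turns (\ref{calest3}) into $\mathrm{CAL}(\varphi,dx\wedge dy)\le-(1-\epsilon)^2\rho\,\pi^2+\frac{\pi^2}{n}\bigl(1+2(1-\epsilon)^2\rho\bigr)$. The (mild) obstacle is then to check that this last expression can be made at most $-\pi^2+\epsilon$, which holds by choosing first $\epsilon$ small, then $\rho$ close to $1$, then $n$ large, and finally $\delta$ small enough for the inclusion used above, since the expression tends to $-\pi^2$ as $n\to\infty$, $\epsilon\to0$ and $\rho\to1$.
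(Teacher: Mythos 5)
Your proposal is correct and follows essentially the same approach as the paper: you use the same decomposition $\varphi_t=\varphi^+_t\circ\varphi^-_t$, the same choice $c=\pi-2\pi/n$, and read off conclusions (i)--(v) from the estimates (\ref{vicid3}), (\ref{tutto3}), (\ref{calest3}), (\ref{azphi+}) and the sector-permutation argument, exactly as the paper does. The only cosmetic remark is that you use the same letter $\epsilon$ for the parameter in the statement and for the parameter in $\chi_\epsilon$; the paper has the same notational overload, but it would be cleaner to rename one of them, and to note that once $n$ is fixed the disk packing (and hence the admissible $\delta$) depends on $\rho$, so the parameter choices are not entirely independent even though any reasonable ordering works.
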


In our second example, we choose
\[
c:= \frac{2\pi}{n}.
\]
Now (\ref{tutto3}) becomes
\[
- \frac{3\pi}{n}  \leq \sigma_{\varphi,\lambda_0} \leq  \frac{2\pi}{n} \qquad \mbox{on } \D,
\]
which shows that the action $\sigma_{\varphi,\lambda_0}$ can be chosen to be arbitrarily close to the zero function in the uniform norm, by choosing a large integer $n$. The bound (\ref{calest3}) is now
\[
\mathrm{CAL}(\varphi,dx\wedge dy) \leq \frac{\pi^2}{n} - \frac{2\pi}{n} (1-\epsilon)^2 \rho \pi = - \frac{\pi^2}{n} ( 2\rho - 1) + \epsilon (2-\epsilon) \frac{2\pi^2}{n} \rho.
\]
If $\rho>1/2$ and $\epsilon$ is small enough, the latter number is negative. Therefore, we have proved the following fact.

\begin{prop}
\label{calpic}
For every $\epsilon>0$ there exists a smooth isotopy $\varphi_t\in \mathrm{Diff}_c(\D,dx\wedge dy)$, $t\in [0,1]$, with $\varphi_0=\mathrm{id}$ such that, setting $\varphi:= \varphi_1$, the following properties hold:
\begin{enumerate}[(i)]
\item $\|\varphi_t - \mathrm{id}\|_{\infty} < \epsilon$ for every $t\in [0,1]$;
\item $|\sigma_{\varphi,\lambda_0}| \leq \epsilon$ on $\D$ for every $t\in [0,1]$;
\item $\mathrm{CAL}(\varphi,dx\wedge dy)<0$;
\item all the fixed points of $\varphi$ have non-negative action.
\end{enumerate}
\end{prop}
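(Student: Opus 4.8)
The plan is to reuse verbatim the isotopy $\varphi_t = \varphi^+_t \circ \varphi^-_t \in \mathrm{Diff}_c(\D,dx\wedge dy)$ constructed in the discussion immediately preceding the statement, and to verify that the particular choice $c := 2\pi/n$ of the depth of the negative bumps $K_j$ makes all four conclusions hold once the free parameters are fixed in the right order. (Note that the parameter called $\epsilon$ in the construction, i.e.\ the one entering $\chi_\epsilon$ and hence the factor $(1-\epsilon)^2$ in \eqref{calest3}, must be kept logically distinct from the tolerance $\epsilon$ appearing in the statement; below ``$\epsilon$'' always denotes the tolerance.) Concretely, I would first choose a packing density $\rho \in (1/2,1)$ — this is possible because, by Vitali's covering theorem as used above, $\rho$ may be taken arbitrarily close to $1$ — then take the construction parameter small, then choose $\delta$ small enough that $\bigcup_{j} D_j$ is contained in the disk of radius $\sqrt{1-2\delta}$ about the origin (so that $\varphi^+$ is an honest $2\pi/n$–rotation there), and finally let $n$ be large.

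Properties (i), (ii) and (iv) then require no new work. Property (i) is immediate from the bound $\|\varphi_t - \mathrm{id}\|_\infty \le 4\pi/n$ in \eqref{vicid3}: take $n > 4\pi/\epsilon$. Property (ii) follows by substituting $c = 2\pi/n$ into the general estimate \eqref{tutto3}, which then reads $-3\pi/n \le \sigma_{\varphi_t,\lambda_0} \le 2\pi/n$ on $\D$ for all $t\in[0,1]$, so $n$ large gives $|\sigma_{\varphi_t,\lambda_0}| \le \epsilon$. Property (iv) was already established above the statement: the only fixed points of $\varphi$ are the origin, whose action is $\tfrac{\pi}{n}(1-\delta) \ge 0$, and the points of a neighbourhood of $\partial\D$ lying outside the disk of radius $\sqrt{1-2\delta}$, where \eqref{laformula} together with the vanishing of $\sigma_{\varphi^-,\lambda_0}$ there and the nonnegativity in \eqref{azphi+} give $\sigma_{\varphi,\lambda_0}(z) = \sigma_{\varphi^+,\lambda_0}(z) \ge 0$.

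The only genuinely quantitative point is (iii), and it is the step I expect to be the main (albeit mild) obstacle, since it is what forces $\rho > 1/2$ and, through the packing, the use of many small disks and hence a large $n$. Plugging $c = 2\pi/n$ into \eqref{calest3} yields
\[
\mathrm{CAL}(\varphi,dx\wedge dy) \le \frac{\pi^2}{n} - \frac{2\pi}{n}(1-\epsilon)^2 \rho\,\pi = -\frac{\pi^2}{n}(2\rho-1) + \frac{2\pi^2}{n}\,\rho\,\epsilon(2-\epsilon),
\]
where here $\epsilon$ is the construction parameter. Since $\rho > 1/2$, the leading term $-\tfrac{\pi^2}{n}(2\rho-1)$ is strictly negative and dominates the error term once the construction parameter is small enough, so the right-hand side is $< 0$; this is (iii). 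It then only remains to check that the finitely many constraints on $\rho$, the construction parameter, $\delta$ and $n$ are mutually compatible — which they are, precisely because they are imposed in the order listed above — and to set $\varphi := \varphi_1$.
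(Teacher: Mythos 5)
Your proposal is correct and follows the paper's proof essentially verbatim: the paper also fixes $c=2\pi/n$, reads off (i) from the $C^0$ bound \eqref{vicid3}, (ii) from the substituted \eqref{tutto3}, (iii) from the substituted \eqref{calest3} under the constraints $\rho>1/2$ and small construction parameter, and relies on the previously established non-negativity of the action at fixed points for (iv). The one place you are more scrupulous than the paper — distinguishing the tolerance $\epsilon$ from the construction parameter entering $\chi_\epsilon$, and spelling out the order in which $\rho$, the construction parameter, $\delta$ and $n$ are chosen — is a clarity improvement rather than a mathematical difference, since the inequality $2\rho-1 > 2\rho\,\epsilon(2-\epsilon)$ forced by (iii) is indeed independent of $n$, so the stated ordering is consistent.
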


\begin{rem} 
\label{newrem1}
An inspection to the proofs of the two propositions above shows that we could strengthen statement (ii) in both propositions by stating that $\sigma_{\varphi_t,\lambda_0} \geq - L + L/n$ for all $t\in [0,1]$ (Proposition \ref{sysgra}) and $|\sigma_{\varphi_t,\lambda_0}| \leq \epsilon$ for every $t\in [0,1]$ (Proposition \ref{calpic}). We will not make use of this here, but see Remark \ref{newrem2} below.
\end{rem}

\section{Global surfaces of section for Reeb flows} 

\subsection{Lift of disk maps to Reeb flows}
\label{liftsec}

The strategy of the proof of Theorem \ref{main2} is to construct a suitable contact form on $S^3$ whose Reeb flow has a disk-like global surface of section, such that the first return map is conjugated to one of the diffeomorphisms $\varphi\in \mathrm{Diff}_c(\D,dx\wedge dy)$ which are constructed in Section \ref{controsec}.

In this section, we prove two general statements about lifting compactly supported area-preserving diffeomorphisms of the disk to Reeb flows on the solid torus and on the three-sphere. The following result is a variation of \cite[Proposition 4.2]{bra08}.

\begin{prop}
\label{barney}
Let $\varphi\in \mathrm{Diff}_c(\D,dx\wedge dy)$, let $\lambda$ be a primitive of $dx\wedge dy$ on $\D$, and let $L$ be a positive number such that the function
\[
\tau:= \sigma_{\varphi,\lambda} + L
\]
is positive on $\D$. Then there exists a smooth contact form $\beta$ on the solid torus $\D \times \R/L \Z$ with the following properties:
\begin{enumerate}[(i)]
\item $\beta = \lambda + ds$ in a neighborhood of $\partial \D \times \R/L \Z$, where $s$ denotes the coordinate on $\R/L \Z$; in particular, the Reeb vector field $R_{\beta}$ of $\beta$ coincides with $\partial/\partial s$ near the boundary of $\D \times \R/L \Z$, and its flow is globally well-defined;
\item for all $s\in \R/L \Z$ we have $(\imath_s)^* d\beta = dx\wedge dy$, where $\imath_s: \D \rightarrow \D \times \R/L\Z$ denotes the inclusion $z\mapsto (z,s)$;
\item each surface $\D \times \{s\}$ is transverse to the flow of $R_{\beta}$, and the orbit of every point in $\D \times \R/L \Z$ intersects $\D \times \{s\}$ both in the future and in the past;
\item the first return map and the first return time of the flow of $R_{\beta}$ associated to the surface $\D\times \{0\} \cong \D$ are the map $\varphi$ and the function $\tau$;
\item $\mathrm{vol}(\D \times \R/L \Z, \beta\wedge d\beta) = L \pi + \mathrm{CAL}(\varphi,dx\wedge dy)$.
\end{enumerate}
Furthermore:
\begin{enumerate}[(i)]
\setcounter{enumi}{5}
\item if $\sigma_{\varphi,\lambda}$ is uniformly close to the zero function and $\varphi$ is isotopic to the identity through a path taking values in a small $C^0$-neighborhood of the identity in $\mathrm{Diff}_c(\D,dx\wedge dy)$, then the flow of $R_{\beta}$ is close to the flow $(t,(z,s)) \mapsto (z,s+t)$ in the $C^0_{\mathrm{loc}}$-topology of maps from $\R \times \D \times \R/L\Z$ to $\D \times \R/L\Z$;
\item if $\lambda(\partial/\partial \theta)> 0$ on $\partial \D$, then $\beta$ is smoothly isotopic to $\lambda + ds$ by a path of contact forms on $\D \times \R/L\Z$ which satisfy (i), (ii) and (iii).
\end{enumerate}
\end{prop}

\begin{proof}
The fact that the minimum of $\tau$ is positive implies that the diffeomorphism
\[
f: \D \times \R \rightarrow \D \times \R, \qquad f(z,s) := \bigl(\varphi(z),s - \tau(z) \bigr), 
\]
defines a free action of $\Z$ on $\D \times \R$. We denote by $M$ the quotient by this action and by 
\[
p: \D \times \R \rightarrow M
\]
the quotient projection. Then $M$ is a smooth 3-manifold with boundary. Since
\[
f^*(\lambda + ds) = \varphi^* \lambda + f^* ds = \lambda + d\sigma_{\varphi,\lambda} + d (s\circ f) =  \lambda + d \tau + ds - d\tau = \lambda + ds,
\]
the contact form $\lambda + ds$ on $\D \times \R$ induces a contact form on the quotient manifold $M$, which we denote by $\eta$. Thus we have $p^* \eta = \lambda + ds$.

Since $f(z,\tau(z)) = (\varphi(z),0)$, $f$ maps the graph of $\tau$ onto $\D \times \{0\}$, so the region
\[
M_0 := \{(z,s) \in \D \times \R \mid 0 \leq s \leq \tau(z) \}
\]
is a fundamental domain for the quotient projection $p$, and $M$ can be seen as the space which is obtained from $M_0$ by identifying $(z,\tau(z))$ with $(\varphi(z),0)$. Therefore,
\[
\begin{split}
\mathrm{vol}(M,\eta\wedge d\eta) &= \mathrm{vol}(M_0,(\lambda + ds) \wedge d(\lambda + ds)) =  \mathrm{vol}(M_0,ds \wedge d\lambda)  \\ & = \mathrm{vol}(M_0,dx\wedge dy \wedge ds) = \int_{\D} \tau \, dx\wedge dy = L \pi + \int_{\D} \sigma_{\varphi,\lambda} \, dx\wedge dy \\ & = L \pi + \mathrm{CAL}(\varphi,dx\wedge dy).
\end{split}
\]
If $U$ is a small neighbourhood of $\partial \D$ in $\D$, then $\varphi|_U = \mathrm{id}$ and $\tau|_U = L$, and hence
\[
f(z,s) = (z,s-L) \qquad \forall (z,s)\in U \times \R.
\]
Therefore, $p(U \times \R)$ can be identified with $U \times \R/L\Z$, and the contact form $\eta$ equals $\lambda + ds$ on this neighbourhood of the boundary of $M$.

The Reeb vector field of the contact form $\lambda + ds$ is $\partial/\partial s$, with flow
\begin{equation}
\label{flow}
(t,(z,s)) \mapsto (z,s+t).
\end{equation}
The surfaces $\D \times \{s\}$ are transverse to this flow, and the orbit of the point $(z,0)$ reaches the graph of $\tau$ at time $\tau(z)$, and precisely at the point $(z,\tau(z))$, which is identified with $(\varphi(z),0)$ by the quotient projection. Instead, $p(z,t)$ does not belong to $p(\D \times \{0\})$ when $0<t<\tau(z)$. 
Since $p_*(\partial/\partial s) = R_{\eta}$, the above facts imply that each $p(\D \times \{s\})$ is a global surface of section for the flow of $R_{\eta}$, and that $\varphi$ and $\tau$ are the first return map and the first return time associated to the surface of section $p(\D \times \{0\})$.

The final step is to pull back the contact form $\eta$ to the solid torus $\D \times \R/L\Z$ by a suitable diffeomorphism onto $M$. In order to do this, we consider the diffeomorphism
\[
g: \D \times \R \rightarrow \D \times \R, \qquad g(z,s):= (z,s-L),
\]
and we claim that there exists a diffeomorphism $h: \D \times \R \rightarrow \D \times \R$ with the following properties:
\begin{enumerate}[(a)]
\item the diagram
\[
\begin{CD}
 \D \times \R @>{h}>> \D \times \R \\
 @V{g}VV @VV{f}V \\
 \D\times \R @>{h}>> \D \times \R
 \end{CD}
 \]
 commutes;
 \item $h(z,s)=(z,s)$ for every $(z,s)\in U \times \R$, where $U$ is a neighbourhood of $\partial \D$ in $\D$;
 \item $h(z,0)=(z,0)$ for every $z\in \D$;
 \item for all $s\in \R$ we have $(\jmath_s)^* dh^*(\lambda+ds) = dx\wedge dy$, where $\jmath_s: \D \rightarrow \D \times \R$ denotes the inclusion $z\mapsto (z,s)$;
 \end{enumerate}

Postponing for a moment the proof of the existence of a diffeomorphism $h$ with the above properties, we show how this implies the conclusions (i)-(v) of the proposition. By (a), the map $h$ induces a diffeomorphism
\[
\tilde{h} : \D \times \R/L\Z \rightarrow M
\]
which by (b) and (c) restricts to the identity on $U \times \R/L\Z$ and on $\D\times \{0\}$. By the properties of $\eta$ which are discussed above, the contact form $\beta:= \tilde{h}^* \eta$ satisfies (i), (iii), (iv) and (v). Since the contact form $\beta$ is obtained from $h^*(\lambda+ds)$ by passing to the quotient induced by the map $g$, property (ii) follows from (d).

There remains to construct a diffeomorphism $h: \D \times \R \rightarrow \D \times \R$ satisfying (a), (b), (c) and (d). Up to the replacement of $U$ with a smaller neighborhood of $\partial \D$, we can see the diffeomorphism $\varphi$ as the time-1 map of a Hamiltonian flow $\varphi_t$ on $\D$, which is defined by a Hamiltonian which has support in $\D \setminus U$ and depends 1-periodically on time. Therefore, $\varphi=\varphi_1$, $\varphi_t|_U = \mathrm{id}$ for every $t\in \R$, and
\begin{equation}
\label{gruppo}
\varphi_{t+1} = \varphi_t \circ \varphi \qquad \forall t\in \R,
\end{equation}
where the above identity is a property of the flow of any time-periodic vector field. We shall construct a map $h$ of the form
\begin{equation}
\label{ansatz}
h(z,s) = \bigl( \varphi_{s/L}^{-1}(z), S(\varphi_{s/L}^{-1}(z),s) \bigr) \qquad \forall (z,s)\in \D\times \R,
\end{equation}
where the smooth function $S: \D \times \R \rightarrow \R$ is to be determined. With this Ansatz, we compute:
\[
\begin{split}
f\circ h(z,s) &= \bigl( \varphi \circ \varphi_{s/L}^{-1}(z), S( \varphi_{s/L}^{-1}(z),s) - \tau(\varphi_{s/L}^{-1}(z)) \bigr), \\
h\circ g (z,s) &= \bigl( \varphi_{s/L -1}^{-1} (z) , S( \varphi_{s/L -1}^{-1} (z), s-L) \bigr).
\end{split}
\]
By inverting the identity (\ref{gruppo})  we find $\varphi\circ \varphi_{t+1}^{-1} = \varphi_t^{-1}$, which for $t=s/L-1$ implies the equality of the first components of $f\circ h$ and $h\circ g$. Therefore, condition (a) is equivalent to the equality of the second components of these two compositions, which is equivalent to
\begin{equation}
\label{cociclo}
S(z,s) - \tau(z) = S(\varphi(z),s-L) \qquad \forall (z,s)\in \D \times \R.
\end{equation}
We now show how to construct a solution $S$ of the above functional equation. Since the minimum of $\tau$ is positive, we can find a number $\delta\in (0,1)$ such that
\begin{equation}
\label{boundontau}
\tau \geq \delta L \qquad \mbox{on } \D.
\end{equation}
Let $\chi: [0,L] \rightarrow \R$ be a smooth function which takes the value $0$ in a neighbourhood of $0$, the value $1$ in a neighbourhood of $L$, and satisfies
\begin{equation}
\label{boundonchi}
0 \leq \chi' \leq \frac{1}{L\sqrt{1-\delta}} \qquad \mbox{on } [0,L].
\end{equation}
Such a function exists because $1/(L\sqrt{1-\delta})>1/L$. We define
\[
\tilde{S} :\D \times [0,L] \rightarrow \R, \qquad \tilde{S}(z,s) := s + \chi(s) (\tau(z) - L),
\]
and extend $\tilde{S}$ to the whole of $\D \times \R$ by setting
\[
S(z,s) := \left\{ \begin{array}{ll} \tilde{S}(\varphi^k(z),s-kL) - \sum_{j=k}^{-1} \tau(\varphi^j(z)) & \mbox{if } kL \leq s < (k+1)L, \; k\leq -1, \\
\tilde{S}(z,s) & \mbox{if } 0 \leq s < L, \\
\tilde{S}(\varphi^k(z),s-kL) + \sum_{j=0}^{k-1} \tau(\varphi^j(z)) & \mbox{if } kL \leq s < (k+1) L, \; k \geq 1, \end{array} \right.
\]
where the $k$'s vary among all integers. By construction, $S$ satisfies the functional equation (\ref{cociclo}). Moreover, $S$ is smooth. Indeed, it suffices to check the smoothness of $S$ near each slice $\D \times \{kL\}$, $k\in \Z$, and by (\ref{cociclo}) it suffices to check this near $\D \times \{L\}$. If $s<L$ and $L-s$ is small enough, then $\chi(s)=1$ and hence
\[
S(z,s) = \tilde{S}(z,s) = s + \tau(z) - L.
\]
On the other hand, if $s\geq L$ and $s-L$ is small enough, then $\chi(s-L)=0$ and hence
\[
S(z,s) = \tilde{S}(\varphi(z),s-L) + \tau(z) = s - L + \tau(z).
\]
The equality of the above two expressions shows that $S$ is smooth. Finally, notice that by (\ref{boundontau}) and (\ref{boundonchi}) we have
\[
\begin{split}
D_2 \tilde{S}(z,s) &= 1 + \chi'(s) (\tau(z) - L) \geq 1 + \chi'(s) (\delta L - L) = 1 - L\, \chi'(s) (1-\delta)\\ &  \geq 1 - \frac{1-\delta}{\sqrt{1-\delta}} = 1 - \sqrt{1-\delta} =: \epsilon > 0,
\end{split}
\] 
and hence
\begin{equation}
\label{derS}
D_2 S(z,s) \geq \epsilon >0 \qquad \forall (z,s) \in \D \times \R,
\end{equation}
by the definition of $S$. The inequality (\ref{derS}) implies that $h: \D \times \R \rightarrow \D \times \R$ is a diffeomorphism. Indeed, $h$ factorises as $h = h_2 \circ h_1$, where
\[
h_1: \D \times \R \rightarrow \D \times \R \qquad h_1(z,s) := \bigl( \varphi_{s/L}^{-1}(z),s \bigr)
\]
is clearly a diffeomorphism and
\[
h_2: \D \times \R \rightarrow \D \times \R \qquad h_2(z,s) := \bigl( z,S(z,s) \bigr) 
\]
is a diffeomorphism because of (\ref{derS}). 

Therefore, $h$ is a smooth diffeomorphism and satisfies (a). If $z$ belongs to the neighbourhood $U$ of $\partial \D$, then $\varphi(z)=z$ and $\tau(z)=L$, so, for every $s\in [kL,(k+1)L)$ with $k\in \Z$ we find
\[
S(z,s) = \tilde{S}(z,s-kL) + k \tau(z) = s-kL + kL = s.
\]
Thus, $S(z,s)=s$ on $U \times \R$. Since $\varphi_t|_U = \mathrm{id}$ for every $t\in \R$, we deduce that $h$ satisfies (b). Moreover, $h$ satisfies (c) because $\varphi_0 = \mathrm{id}$ and $S(\cdot,0)=0$ on $\D$. Finally, the form (\ref{ansatz}) of $h$ and the fact that the diffeomorphisms $\varphi_t$ preserve the area form $dx\wedge dy$ imply that (d) holds:
\[
(\jmath_s)^* dh^*(\lambda+ds) = (\jmath_s)^* h^* (d\lambda) = (\jmath_s)^* h^* (dx\wedge dy) = (\varphi_{s/L}^{-1})^*(dx\wedge dy) = dx\wedge dy.
\]

We now check that the flow of $R_{\beta}$ is $C^0_{\mathrm{loc}}$-close to the flow (\ref{flow}) on $\D \times \R/L\Z$ when the assumptions of (vi) are fulfilled. In this case, the isotopy $\varphi_t$ is uniformly close to the identity. Using also the fact that $\tau= \sigma_{\varphi,\lambda} + L$ is uniformly close to the constant function $L$, the function $S$ constructed above is easily seen to be $C^0_{\mathrm{loc}}$-close to the function $(z,s) \mapsto s$. Therefore, the map $h$ is $C^0_{\mathrm{loc}}$-close to the identity on $\D \times \R$. Since $h$ conjugates the Reeb flow of $h^* (\lambda + ds)$ and the flow (\ref{flow}) on $\D \times \R$, we deduce that these two flows are $C^0_{\mathrm{loc}}$-close. Being obtained from the Reeb flow of $h^* (\lambda + ds)$ by the quotient projection
\[
\D \times \R \rightarrow \D \times \R/L\Z,
\] 
the flow of $R_{\beta}$ is therefore $C^0_{\mathrm{loc}}$-close to the flow (\ref{flow}) on $\D \times \R/L\Z$, and (vi) is proved.

Statement (vii) follows from Lemma \ref{newlem} below.
\end{proof}

The next lemma is a version with boundary of a result of Giroux \cite[Proposition 2]{gir02} about the uniqueness up to isotopy of contact structures supported by an open book decomposition (see also \cite[Proposition 3.18]{etn06}).

\begin{lem}
\label{newlem}
Let $\lambda$ be a smooth 1-form on $\D$ such that $d\lambda>0$ on $\D$ and $\lambda(\partial/\partial \theta)> 0$ on $\partial \D$. Suppose that $\beta$ is a smooth contact form on $\D\times \R/\Z$ satisfying:
\begin{enumerate}[(i)]
\item $\beta=\lambda+ds$ near $\partial \D\times \R/\Z$;
\item the Reeb vector field $R_{\beta}$ is positively transverse to the disks $\D\times \{s\}$ for all $s\in \R/\Z$.
\end{enumerate}
Then $\beta$ is smoothly isotopic to $\lambda+ds$ by a path of contact forms which satisfy (i) and (ii). If moreover 
\begin{enumerate}[(i)]
\setcounter{enumi}{2}
\item $(\imath_s)^*d\beta=d\lambda$ for all $s\in \R/\Z$, where $\imath_s: \D \rightarrow \D\times \R/\Z$ denotes the  inclusion $z\mapsto (z,s)$, 
\end{enumerate}
then the contact forms of the isotopy satisfy (iii).
\end{lem}

\begin{proof}
By assumption (i) and by the positiveness of $d\lambda$, the non-vanishing 3-form $\beta\wedge d\beta$ is positive on $\D\times \R/\Z$. Assumption (ii) is equivalent to the fact that $d\beta$ restricts to a positive 2-form on each disk $\D\times \{s\}$. In particular, $d\beta \wedge ds$ is positive on $\D\times \R/\Z$.

By the assumption on $\lambda|_{\partial \D}$ and by (i) there exists $\epsilon>0$ such that
\begin{equation}
\label{nl1}
\lambda \left( \frac{\partial}{\partial \theta} \right) > 0 \qquad \mbox{on } \D \setminus (1-\epsilon) \D,
\end{equation}
and
\begin{equation}
\label{nl2}
\beta = \lambda + ds \qquad \mbox{on } \D \setminus (1-\epsilon) \D.
\end{equation}
Choose a smooth cut-off function $\chi: [0,1]\rightarrow [0,1]$ such that $\chi=1$ on $[0,1-\epsilon]$, $\chi=0$ near 1 and $\chi'\leq 0$ everywhere.

\medskip

\noindent{\em Claim 1. For every $R\geq 0$ the 1-forms
\[
\beta_R := \beta + R\chi(r)\, ds
\]
are contact forms on $\D\times \R/\Z$ satisfying (i) and (ii). They satisfy also (iii) if $\beta$ does.}

\medskip

By differentiating $\beta_R$ we find
\[
d\beta_R = d\beta + R\chi'(r) dr\wedge ds,
\]
and hence
\[
\beta_R \wedge d\beta_R = \beta\wedge d\beta + R \chi(r) \,d\beta \wedge ds- R \chi'(r)  \,dr \wedge \beta \wedge ds.
\]
Let us determine the sign of the three 3-forms in the latter expression. The first one is positive everywhere, as shown above. The function $R\chi$ is non negative and the 3-form $d\beta \wedge ds$ is positive, so the second term is non-negative. The last 3-form is supported in $\D \setminus (1-\epsilon)\D$, and there it is non-negative because $\chi'\leq 0$ and by (\ref{nl2})  the 3-form $dr \wedge \beta \wedge ds$ coincides with $dr \wedge \lambda \wedge ds$ on $\D \setminus (1-\epsilon)\D$, which is positive thanks to (\ref{nl1}). We conclude that $\beta_R \wedge d\beta_R$ is positive, and hence $\beta_R$ is a contact form.

The 1-form $\beta_R$ satisfies (i) by the corresponding property of $\beta$ and because $\chi$ vanishes near 1. It satisfies also (ii) because the restriction of $d\beta_R$ to each disk $\D\times \{s\}$ coincides with the restriction of $d\beta$. For the same reason, it satisfies (iii) if $\beta$ does.
This concludes the proof of Claim 1.

\medskip

We denote by $\alpha$ the contact form
\[
\alpha := \lambda + ds.
\]

\noindent{\em Claim 2. For every $R\geq 0$ the 1-forms
\[
\alpha_R := \alpha + R\chi(r)\, ds
\]
are contact forms on $\D\times \R/\Z$ satisfying (i), (ii) and (iii).}

\medskip

This is actually a particular case of Claim 1, with a slightly simpler proof, due to the special form of $\alpha$.

\medskip

\noindent{\em Claim 3. There exists $R_0>0$ such that for all $R\geq R_0$ the path of 1-forms
\begin{equation}
\label{nl3}
t \beta_R + (1-t) \alpha_R, \qquad t\in [0,1],
\end{equation}
consists of contact forms on $\D\times \R/\Z$ satisfying (i) and (ii). These contact forms satisfy also (iii) if $\beta$ does.}

\medskip

As $d\lambda\wedge ds>0$ and $\D\times \R/\Z$ is compact, there exists $R_0>0$ so that
\begin{equation}
\label{nl4}
R \,d\lambda \wedge ds + \beta \wedge d\alpha \geq 0 
\qquad \forall R\geq R_0.
\end{equation}
Similarly, as $d\beta\wedge ds>0$, possibly by taking $R_0$ larger we may also assume that
\begin{equation}
\label{nl5}
R \,d\beta \wedge ds + \alpha \wedge d\beta \geq 0\qquad \forall R\geq R_0.
\end{equation}
Now we compute the wedge product of the form (\ref{nl3}) by its differential:
\begin{equation}
\label{nl6}
\begin{split} 
\bigl(t\beta_R &+ (1-t) \alpha_R \bigr) \wedge d \bigl(t\beta_R + (1-t) \alpha_R \bigr) \\ &= t^2 \beta_R \wedge d\beta_R + (1-t)^2 \alpha_R \wedge d\alpha_R + t(1-t) \beta_R \wedge d\alpha_R + t(1-t) \alpha_R \wedge d\beta_R.
\end{split}
\end{equation}
By Claims 1 and 2, the sum of the first two terms in the last expression defines a positive 3-form for every $R>0$. Let us check that the other two 3-forms are non-negative when $R\geq R_0$. We compute:
\[
\beta_R \wedge d\alpha_R = \beta \wedge d\alpha - R\chi'(r) \, dr\wedge \beta \wedge ds + R\chi(r)\, d\lambda \wedge ds.
\]
The middle term is non-negative, as shown in the proof of Claim 1. Therefore,
\[
\beta_R \wedge d\alpha_R \geq \beta \wedge d\alpha + R\chi(r)\, d\lambda \wedge ds.
\]
The sum of the two 3-forms on the right-hand side coincides with the form (\ref{nl4}) on $(1-\epsilon)\D$, and hence it is non-negative there. On the complement of this set it coincides with
\[
\alpha\wedge d\alpha + R \chi(r) d\lambda \wedge ds,
\]
and since $\alpha\wedge d\alpha$ and $d\lambda \wedge ds$ are positive, the above form is also positive. We conclude that the third term in (\ref{nl6}) is non-negative for every $R\geq R_0$.

The last mixed term in (\ref{nl6}) is
\[
\alpha_R \wedge d\beta_R = \alpha\wedge d\beta - R \chi'(r) \,dr\wedge \alpha \wedge ds + R\chi(r)\, d\beta\wedge ds.
\]
The middle term is non-negative because of (\ref{nl1}) and (\ref{nl2}), as in the proof of Claim 1. Therefore,
\[
\alpha_R \wedge d\beta_R \geq \alpha\wedge d\beta+ R\chi(r)\, d\beta\wedge ds.
\]
The 3-form on the right-hand side coincides with the form (\ref{nl5}) on $(1-\epsilon)\D$, and hence it is non-negative there. On the complement of this set it coincides with
\[
\alpha\wedge d\alpha + R\chi(r) d\lambda\wedge ds,
\]
which is positive. We conclude that the last term in (\ref{nl6}) is non-negative for every $R\geq R_0$. This shows that if $R\geq R_0$ then the 1-forms defined in (\ref{nl3}) are contact forms.

These 1-forms satisfy properties (i) and (ii), being convex combinations of 1-forms satisfying the same properties. For the same reason, they satisfy (iii) if $\beta$, and hence $\beta_R$, does. This concludes the proof of Claim 3.

\medskip 

We can finally prove the Lemma: Claim 1 gives us a path of contact forms from $\beta=\beta_0$ to $\beta_{R_0}$, Claim 3 a path of contact forms from $\beta_{R_0}$ to $\alpha_{R_0}$ and Claim 2 a path of contact forms from $\alpha_{R_0}$ to $\alpha_0=\alpha$. All these contact forms satisfy (i) and (ii), and also (iii) if $\beta$ does.
\end{proof}

\begin{rem}
\label{newrem2}
The conclusion of statement (vii) in Proposition \ref{barney} holds also if we replace the assumption on $\lambda|_{\partial \D}$ by the assumption that $\varphi$ is isotopic to the identity through a path $\{\varphi_t\}_{t\in[0,1]} \subset \mathrm{Diff}_c (\D, dx \wedge dy)$ such that the function $\sigma_{\varphi_t,\lambda} + L$ is positive on $\D$ for every $t \in [0,1]$. The proof is actually more direct than the proof of the lemma above. This alternative form of (vii) could also be used in our arguments below, thanks to Remark \ref{newrem1}.
\end{rem}

Thanks to Proposition \ref{barney}, it is now easy to show that compactly supported area-preserving diffeomorphisms of the disk can be lifted to Reeb flows on $S^3$. We consider the smooth family of closed disks in $S^3$:
\begin{equation}
\label{open_book}
\Sigma_{\varsigma} := \{(z_1,z_2)\in S^3 \mid \mbox{either } z_2 = 0 \mbox{ or } z_2 \neq 0 \mbox{ and } \arg z_2=\varsigma\}, \qquad \varsigma\in \R/2\pi \Z.
\end{equation}
These disks have the same boundary: $\partial \Sigma_{\varsigma} = \Gamma$ for all $\varsigma\in \R/2\pi \Z$, where
\begin{equation}
\label{great_circle}
\Gamma:= S^3 \cap (\C \times \{0\}) = \{(z_1,z_2)\in S^3 \mid |z_1|=1, \; z_2 = 0\}
\end{equation}
is the image of a Reeb orbit of $R_{\alpha_0}$. Their interiors $\Sigma_{\varsigma}\setminus \Gamma$ define a smooth foliation of $S^3\setminus \Gamma$ by open disks. We single out one of these discs
\[
\Sigma:= \Sigma_0 = \{ (x_1,y_1,x_2,y_2)\in S^3 \mid x_2 \geq 0, \; y_2=0\}
\]
and we parametrise it by the map
\[
u: \D \rightarrow \Sigma, \qquad u(x,y) := (x,y,\sqrt{1-x^2-y^2},0).
\]
This map is a homeomorphism, and its restriction to the interior of $\D$ is a smooth embedding into $S^3$.

\begin{prop}
\label{liftS3}
Let $\lambda$ be a smooth primitive of $dx\wedge dy$ on $\D$ which coincides with $\lambda_0$ on a neighborhood of $\partial \D$.
Let $\varphi\in \mathrm{Diff}_c(\D,dx\wedge dy)$ be such that the function
\[
\tau:= \sigma_{\varphi,\lambda} + \pi
\]
is positive on $\D$. Then there exists a smooth contact form $\alpha$ on $S^3$ with the following properties:
\begin{enumerate}[(i)]
\item $\alpha$ coincides with $\alpha_0$ in a neighbourhood of $\Gamma$ in $S^3$, and in particular $\Gamma$ is a closed orbit of $R_{\alpha}$;
\item for every $\varsigma \in \R/2\pi \Z$, the restrictions of $d\alpha$ and of $d\alpha_0$ to $\Sigma_{\varsigma}$ coincide;
\item the flow of $R_{\alpha}$ is transverse to the interior of each $\Sigma_{\varsigma}$, and the orbit of every point in $S^3\setminus \Gamma$ intersects the interior of $\Sigma_{\varsigma}$ both in the future and in the past;
\item the first return map and the first return time associated to $\Sigma$ are the map $u\circ \varphi \circ u^{-1}$ and the function $\tau\circ u^{-1}$;
\item $\mathrm{vol}(S^3, \alpha\wedge d\alpha) = \pi^2 + \mathrm{CAL}(\varphi,dx\wedge dy)$;
\end{enumerate}
Furthermore:
\begin{enumerate}[(i)]
\setcounter{enumi}{5}
\item if $\sigma_{\varphi,\lambda}$ is uniformly close to the zero function and $\varphi$ is isotopic to the identity through a path taking values in a small $C^0$-neighborhood of the identity in $\mathrm{Diff}_c(\D,dx\wedge dy)$, then the flow of $R_{\alpha}$ is close to the flow of $R_{\alpha_0}$ in the $C^0_{\mathrm{loc}}$-topology of maps from $\R \times S^3$ to $S^3$;
\item the contact form $\alpha$ is smoothly isotopic to $\alpha_0$ on $S^3$ through a path of contact forms which satisfy (i), (ii) and (iii).
\end{enumerate}
\end{prop}

\begin{proof}
Consider the map
\[
f: \D \times \R/\pi \Z \rightarrow S^3
\]
whose expression in polar coordinates $(r,\theta)$ on $\D$  is
\[
f(r,\theta,s) := \bigl(r e^{i(\theta+2s)},\sqrt{1-r^2} e^{2is} \bigr).
\]
This map is continuous, maps $\mathrm{int}(\D)\times \R/\pi \Z$ diffeomorphically onto $S^3\setminus \Gamma$ and $\partial \D \times \R/\pi \Z$ onto $\Gamma$.  Moreover, the image of $\D \times \{s\}$ by $f$ is the closed disk $\Sigma_{2s}$, and the restriction of $f$ to $\D\times \{0\}$ coincides with the map $u$.

The standard contact form $\alpha_0$ can be written in polar coordinates $(r_1,\theta_1,r_2,\theta_2)$ on $S^3\subset \R^2 \times \R^2$ as
\[
\alpha_0 = \frac{1}{2} (r_1^2 \,d\theta_1 + r_2^2 \, d\theta_2 ) ,
\]
so its pull-back by $f$ is the contact form
\[
f^* \alpha_0 = \frac{1}{2} \bigl( r^2 (d\theta + 2 \,ds) + 2 (1-r^2)\, ds \bigr) = \frac{1}{2} r^2\, d\theta + ds = \lambda_0 + ds
\]
on $\D \times \R/\pi \Z$.

Let $\beta$ be the contact form on $\D \times \R/\pi \Z$ which is induced by $\varphi$ thanks to Proposition \ref{barney} with  $L=\pi$. Since $\beta$ coincides with $\lambda_0 + ds= f^* \alpha_0$ on a neighbourhood of $\partial \D \times \R/\pi \Z$, there is a unique smooth 1-form $\alpha$ on $S^3$ such that $f^* \alpha = \beta$. This 1-form coincides with $\alpha_0$ in a neighbourhood of $\Gamma$ and is a contact form such that
\[
\mathrm{vol}(S^3,\alpha\wedge d\alpha) = \mathrm{vol}(\D \times \R/\pi \Z, \beta\wedge d\beta) = \pi^2 + \mathrm{CAL}(\varphi,dx\wedge dy).
\]
Therefore, $\alpha$ satisfies conditions (i) and (v). It satisfies also condition (ii), thanks to the corresponding assertion in Proposition \ref{barney}. From the corresponding properties of the Reeb flow of $\beta$ and since 
\[
f|_{\D \times \{0\}}: \D \times \{0\} \rightarrow \Sigma
\]
agrees with $u$, we conclude that also (iii) and (iv) hold. Assertions (vi) and (vii) follow immediately from the corresponding assertions in Proposition \ref{barney}, since 
\[
\lambda \left( \frac{\partial}{\partial \theta} \right) = \lambda_0 \left( \frac{\partial}{\partial \theta} \right) = \frac{1}{2} 
\]
on $\partial \D$.
\end{proof}

\subsection{Proof of Theorem \ref{main2}}

We are now ready to prove Theorem \ref{main2}. We begin with the construction of a contact form $\alpha$ on $S^3$ whose Reeb flow belongs to a given $C^0_{\mathrm{loc}}$-neighborhood $\mathscr{R}$ of the flow of $R_{\alpha_0}$ and such that $T_{\min}(\alpha)^2$ is larger than $\mathrm{vol}(S^3,\alpha\wedge d\alpha)$.

Let $\varphi\in \mathrm{Diff}_c(\D,dx\wedge dy)$ be a diffeomorphism which satisfies the conditions of Proposition \ref{calpic} with $\epsilon\leq \pi/2$. In particular, the action of $\varphi$ with respect to $\lambda_0$ is not smaller than $-\pi/2$, so
\begin{equation}
\label{taubig}
\tau:= \sigma_{\varphi,\lambda_0} + \pi \geq  \frac{\pi}{2} \qquad \mbox{on } \D,
\end{equation}
and hence $\varphi$ induces a contact form on $S^3$ which satisfies the conditions of Proposition \ref{liftS3} with $\lambda=\lambda_0$. By reducing if necessary the size of $\epsilon$, properties (i) and (ii) of Proposition \ref{calpic} and Proposition \ref{liftS3} (vi) guarantee that the Reeb flow of $\alpha$ belongs to $\mathscr{R}$. Moreover, by Proposition \ref{calpic} (ii) and Proposition \ref{liftS3} (vii) the contact form $\alpha$ is smoothly isotopic to $\alpha_0$.

Since $\varphi$ has negative Calabi invariant (Proposition \ref{calpic} (iii)), we have by Proposition \ref{liftS3} (v)
\begin{equation}
\label{volpic}
\mathrm{vol}(S^3,\alpha\wedge d\alpha) = \pi^2 + \mathrm{CAL}(\varphi,dx\wedge dy) < \pi^2.
\end{equation}
Since $R_{\alpha}$ coincides with $R_{\alpha_0}$ in a neighbourhood of $\partial \Sigma$,  the circle $\partial \Sigma$ is a closed orbit of $R_{\alpha}$ with period $\pi$. By Proposition \ref{liftS3} (iii) and (iv), all other orbits of $R_{\alpha}$ correspond to interior periodic points of $\varphi$, and if $z\in \mathrm{int}(\D)$ is a $k$-periodic point of $\varphi$ then the corresponding closed orbit of $R_{\alpha}$ has period
\[
T(z):= \sum_{j=0}^{k-1} \tau(\varphi^j(z)).
\]
Consider first the case $k=1$, that is, the case in which $z$ is a fixed point of $\varphi$. By condition  (iv) in Proposition \ref{calpic}, the action of $z$ is non-negative, and hence the period of the corresponding closed orbit of $R_{\alpha}$ is not smaller than $\pi$:
\[
T(z) = \tau(z) = \sigma_{\varphi,\lambda_0}(z) + \pi \geq \pi.
\] 
Consider now the case $k\geq 2$. By (\ref{taubig}),  we find that the period of the closed orbit of $R_{\alpha}$ which corresponds to the $k$-periodic point $z$ satisfies
\[
T(z) = \sum_{j=0}^{k-1} \tau(\varphi^j(z)) \geq k \frac{\pi}{2} \geq \pi.
\]
Therefore, all the closed orbits of $R_{\alpha}$ have period at least $\pi$ and, since $\partial \Sigma$ is a closed orbit of period $\pi$, we obtain
\[
T_{\min}(\alpha) = \pi.
\]
By (\ref{volpic}) we conclude that $\alpha$ is a contact form on $S^3$ which satisfies
\[
T_{\min}(\alpha)^2 > \mathrm{vol}(S^3,\alpha\wedge d\alpha),
\]
and the first part of Theorem \ref{main2} is proved. 

The construction of a contact form $\alpha$ on $S^3$ isotopic to $\alpha_0$ and such that
\begin{equation}
\label{conclusion}
T_{\min}(\alpha)^2 \geq c \, \mathrm{vol}(S^3,\alpha\wedge d\alpha)
\end{equation}
uses the map $\varphi$ of Proposition \ref{sysgra} instead of that of Proposition \ref{calpic}. Indeed, let $\varphi\in \mathrm{Diff}_c(\D,dx\wedge dy)$ and $n$ be a diffeomorphism and a natural number satisfying the conditions of Proposition \ref{sysgra} with $\epsilon=\pi^2/c$ and $L=\pi$. By condition (ii) in this proposition, the function $\tau:= \sigma_{\varphi,\lambda_0} + \pi$ satisfies
\begin{equation}
\label{taugra}
\tau \geq \frac{\pi}{n} \qquad \mbox{on } \D,
\end{equation}
and hence $\varphi$ induces a contact form $\alpha$ on $S^3$ which satisfies the conditions of Proposition \ref{liftS3}. This contact form is isotopic to $\alpha_0$ because of Proposition \ref{liftS3} (vii). By Proposition \ref{sysgra} (iii), the contact volume of $S^3$ with respect to $\alpha$ satisfies
\begin{equation}
\label{ilvolume}
\mathrm{vol}(S^3,\alpha\wedge d\alpha) = \pi^2 + \mathrm{CAL}(\varphi,dx\wedge dy) \leq \epsilon = \pi^2/c.
\end{equation}
The orbit $\partial \Sigma$ has period $\pi$, and by Proposition \ref{sysgra} (iv) all the fixed points $z$ of $\varphi$ correspond to periodic orbits of $R_{\alpha}$ of period not smaller than $\pi$:
\[
\tau(z) = \pi + \sigma_{\varphi,\lambda_0}(z) \geq \pi.
\]
Now let $z$ be a $k$-periodic point of $\varphi$ with $k\geq 2$. By Proposition \ref{sysgra} (v), $k$ is at least $n$ and by (\ref{taugra}) the period $T(z)$ of the corresponding closed orbit of $R_{\alpha}$ satisfies
\[
T(z) = \sum_{j=0}^{k-1} \tau(\varphi^j(z)) \geq k \frac{\pi}{n} \geq \pi.
\]
We conclude that all the closed orbits of $R_{\alpha}$ have period at least $\pi$, and hence $T_{\min}(\alpha) = \pi$. Therefore, (\ref{conclusion}) follows from (\ref{ilvolume}).

\begin{rem}
\label{howclose}
The first return map $\varphi$ of the Reeb flow given by the contact form $\alpha$ which satisfies (\ref{conclusion}) can be chosen to be $C^0$-close to the identity, because of condition (i) in Proposition \ref{sysgra}, and the same is true for the isotopy which connects $\varphi$ to the identity. This implies that the flow of $R_{\alpha}$ becomes $C^0_{\mathrm{loc}}$-close to the flow of $R_{\alpha_0}$ after a suitable time reparametrization of its orbits. This reparametrization is clearly necessary, because the first return time $\tau$ is not uniformly close to $\pi$ (its integral on $\D$ is small). Therefore, one can find tight contact forms on $S^3$ with arbitrarily high systolic ratio and which are close to the standard one in the following weak sense: after a time reparametrization, their Reeb flow becomes $C^0_{\mathrm{loc}}$-close to the standard flow.
\end{rem}

\subsection{From Reeb flows on the three-sphere to diffeomorphisms of the disk}
\label{surfsecsec}

In Section \ref{liftsec}, we have seen how compactly supported area-preserving disk diffeomorphisms can be lifted to Reeb flows first on the solid torus $\D \times \R/\pi \Z$ and then on the sphere $S^3$. On $S^3$, the lifted flow coincides with the Reeb flow of the standard contact form $\alpha_0$ in a neighborhood of the great circle $\Gamma$, which is defined in (\ref{great_circle}).

In this section, we would like to perform the opposite construction, and go from a Reeb flow on $S^3$ having the great circle $\Gamma$ as closed Reeb orbit first to a Reeb flow on the solid torus, and then to an area-preserving diffeomorphism of the disk.  Since we do not want to assume that the Reeb flow coincides with the standard one on a whole neighborhood of $\Gamma$, we will not obtain a compactly supported diffeomorphism of the disk, and special care is needed to control the boundary behaviour. Because of this fact, it is convenient to work with a map from the solid torus to the 3-sphere which is smooth up to the boundary, instead of the map which is used in the proof of Proposition \ref{liftS3}. Indeed, we shall use the map
\[
f: \D \times \R/\pi \Z \rightarrow S^3, \qquad f(re^{i\theta},s) := \left( \sin \Bigl( \frac{\pi}{2} r \Bigr) e^{i(\theta + 2s)}, \cos \Bigl( \frac{\pi}{2} r \Bigr) e^{2is} \right),
\]
where $(r,\theta)$ are polar coordinates on the disk $\D$.
It is easy to check that $f$ is smooth up to the boundary and that its restriction to $\mathrm{int}(\D)\times \R/\pi \Z$ is a diffeomorphism onto $S^3\setminus \Gamma$. Moreover, $f$ maps $\partial \D \times \R/\pi \Z$ onto $\Gamma$, and for every $s\in \R/\pi \Z$ the restriction
\[
f(\cdot,s) : \D \rightarrow S^3
\]
is a smooth embedding with image the closed disk $\Sigma_{2s}$, which is defined in (\ref{open_book}).

The pull-back of the standard contact form $\alpha_0$ by $f$ is the 1-form
\[
\beta_0 := f^*(\alpha_0) = \frac{1}{2} \left( \sin^2 \Bigl( \frac{\pi}{2} r \Bigr) \, d(\theta+2s) +  \cos^2 \Bigl( \frac{\pi}{2} r \Bigr) \, d(2s) \right) = \frac{1}{2} \sin^2 \left( \frac{\pi}{2} r \right) \, d\theta + ds
\]
on $\D \times \R/\pi \Z$. Its differential is
\[
d\beta_0 = \frac{\pi}{2} \sin \left( \frac{\pi}{2} r \right) \cos \left( \frac{\pi}{2} r \right) \, dr\wedge d\theta = \frac{\pi}{4} \sin (\pi r) \, dr\wedge d\theta = \frac{ \pi\sin (\pi r)}{4r} \, dx\wedge dy.
\]
Notice that $\beta_0$ is a contact form on $\mathrm{int}(\D)\times \R/\pi \Z$, but its differential vanishes on $\partial \D \times \R/\pi \Z$. The Reeb vector field of $\beta_0$ on $\mathrm{int}(\D) \times \R/\pi \Z$ is
\[
R_{\beta_0} = \frac{\partial}{\partial s},
\]
which trivially extends smoothly to the closed solid torus $\D \times \R/\pi \Z$.

\begin{prop} 
\label{solidtorus}
Let $\alpha$ be a smooth contact form on $S^3$ such that
\[
R_{\alpha}= R_{\alpha_0}  \qquad \mbox{on } \Gamma,
\]
and consider the smooth 1-form  $\beta := f^* \alpha$ on $\D \times \R/\pi \Z$,
which is a contact form on $\mathrm{int}(\D) \times \R/\pi \Z$.  Then:
\begin{enumerate}[(i)]
\item $\beta(z,s)[ \partial/\partial \theta] = 1/2$, $\beta(z,s)[ \partial/\partial s] = 1$, and $d\beta(z,s)=0$ for every $(z,s)\in \partial \D\times \R/\pi \Z$;
\item the Reeb vector field $R_{\beta}$ of the contact form $\beta$ on $\mathrm{int}(\D) \times \R/\pi \Z$ extends smoothly to the closed solid torus $\D \times \R/\pi \Z$ to a vector field which is tangent to the boundary; 
\item for every integer $k\geq 0$ and every real number $\epsilon>0$ there exists a positive number $\sigma=\sigma(k,\epsilon)$ such that if $\|R_{\alpha}-R_{\alpha_0}\|_{C^{k+1}(S^3)} < \sigma$ then $\|R_{\beta} - \partial/\partial s\|_{C^k(\D\times \R/\pi \Z)} < \epsilon$.
\end{enumerate}
\end{prop}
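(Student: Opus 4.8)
The plan is to prove (i) by a direct computation on $\partial\D\times\R/\pi\Z$, and (ii)--(iii) by exhibiting $f$, near the boundary torus, as a composition $F\circ P\circ\Phi$ in which $\Phi$ and $F$ are honest diffeomorphisms and $P$ is a polar-coordinate projection, and then analysing how vector fields behave under $P$.

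For (i): with $f(re^{i\theta},s)=(\sin(\tfrac{\pi}{2}r)e^{i(\theta+2s)},\cos(\tfrac{\pi}{2}r)e^{2is})$ one finds at $r=1$ that $df[\partial/\partial\theta]=(ie^{i(\theta+2s)},0)=\tfrac12 R_{\alpha_0}(f)$, $df[\partial/\partial s]=(2ie^{i(\theta+2s)},0)=R_{\alpha_0}(f)$, and $df[\partial/\partial r]=(0,-\tfrac{\pi}{2}e^{2is})$, a vector transverse to $\Gamma$ in the $z_2$-plane. Hence the image of $df$ at a boundary point is the $2$-plane spanned by $R_{\alpha_0}(f)$ and $df[\partial/\partial r]$. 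Since $R_\alpha=R_{\alpha_0}$ on $\Gamma$ and $\alpha(R_\alpha)=1$, this gives $\beta[\partial/\partial\theta]=\tfrac12$ and $\beta[\partial/\partial s]=1$ on $\partial\D\times\R/\pi\Z$; and since $\iota_{R_\alpha}d\alpha=0$ and $R_\alpha(f)$ spans a line in the $2$-dimensional image of $df$, the $2$-form $d\beta=f^*d\alpha$ vanishes on that image, i.e.\ on $\partial\D\times\R/\pi\Z$.

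For (ii) and (iii): on a neighbourhood of $\Gamma$ in $S^3$ the pair $(\phi_1,\zeta)$ with $\phi_1=\arg z_1$ and $\zeta=z_2$ is a smooth coordinate system, with inverse $F(\phi_1,\zeta)=(\sqrt{1-|\zeta|^2}\,e^{i\phi_1},\zeta)$, so $\Gamma=F(\{\zeta=0\})$. On a collar $\{1-\varepsilon<r\le 1\}$ one computes $F^{-1}\circ f(r,\theta,s)=(\theta+2s,\cos(\tfrac{\pi}{2}r)e^{2is})=P(\Phi(r,\theta,s))$, where $\Phi(r,\theta,s)=(\theta+2s,\cos(\tfrac{\pi}{2}r),2s)$ is a diffeomorphism of the collar onto $(\R/2\pi\Z)\times[0,\varepsilon')\times(\R/2\pi\Z)$ taking $\{r=1\}$ to $\{\rho=0\}$, and $P(\phi_1,\rho,\phi_2)=(\phi_1,\rho e^{i\phi_2})$ is polar coordinates in the $\zeta$-variable (times the identity in $\phi_1$). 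Thus $f=F\circ P\circ\Phi$ near the boundary, and on $\mathrm{int}(\D)\times\R/\pi\Z$, where $f$ is a diffeomorphism onto $S^3\setminus\Gamma$, the Reeb field of $\beta=f^*\alpha$ is $R_\beta=f^*R_\alpha=\Phi^*P^*F^*R_\alpha$ (pullbacks of a vector field, all legitimate off $\{\rho=0\}$). Since $\Phi$ and $F$ are diffeomorphisms, everything reduces to: the smooth vector field $\hat R:=F^*R_\alpha$ near $\{\zeta=0\}$ pulls back under $P$ to a vector field extending smoothly across $\{\rho=0\}$ and tangent there, with $C^k$-norm controlled by the $C^{k+1}$-norm of $\hat R$.

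The hypothesis $R_\alpha=R_{\alpha_0}$ on $\Gamma$ is exactly what makes this work: since $F^*R_{\alpha_0}=2\,\partial_{\phi_1}+2i\zeta\,\partial_\zeta$, the field $\hat R$ agrees with it on $\{\zeta=0\}$, so the $\zeta$-component of $\hat R$ vanishes there. Writing $\zeta=\xi+i\eta$ and $\hat R=\hat R_\xi\,\partial_\xi+\hat R_\eta\,\partial_\eta+\hat R_{\phi_1}\,\partial_{\phi_1}$, Hadamard's lemma gives $\hat R_\xi=p\,\xi+q\,\eta$ and $\hat R_\eta=p'\,\xi+q'\,\eta$ with $p,q,p',q'$ smooth (and only one derivative less regular than $\hat R$); consequently $\hat R_\xi\circ P$ and $\hat R_\eta\circ P$ are divisible by $\rho$ with smooth quotients in $(\phi_1,\rho,\phi_2)$. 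Substituting $\partial_\xi=\cos\phi_2\,\partial_\rho-\rho^{-1}\sin\phi_2\,\partial_{\phi_2}$ and $\partial_\eta=\sin\phi_2\,\partial_\rho+\rho^{-1}\cos\phi_2\,\partial_{\phi_2}$, every factor $\rho^{-1}$ is absorbed by that factor $\rho$, so $P^*\hat R$ has smooth coefficients, its $\partial_\rho$-component is $O(\rho)$ (hence $P^*\hat R$ is tangent to $\{\rho=0\}$), and $\|P^*\hat R\|_{C^k}\lesssim\|\hat R\|_{C^{k+1}}$. Transporting back via $\Phi$ gives (ii): $R_\beta$ extends smoothly across the boundary torus and is tangent to it (it is smooth in the interior because $\beta$ is a contact form there). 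Tracking the routine estimates through $F^*$, $P^*$ and $\Phi^*$, together with the single loss of derivative in Hadamard's lemma, yields (iii) with a suitable $\sigma=\sigma(k,\varepsilon)$; away from the boundary the bound is immediate since $f$ is there a diffeomorphism with uniformly controlled derivatives. I expect the only delicate point to be this boundary bookkeeping --- checking that the $\rho^{-1}$ singularities of the polar change of frame are compensated exactly, uniformly in $(\phi_1,\phi_2)$ and with one derivative to spare --- which is precisely where $R_\alpha|_\Gamma=R_{\alpha_0}|_\Gamma$ enters.
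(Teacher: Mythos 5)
Your argument for (i) is the same as the paper's. For (ii) and (iii) you take a genuinely different, and arguably cleaner, route. The paper uses the chart $g(z,s) = (\sqrt{1-|z|^2}e^{2is}, e^{2is}z)$ near $\Gamma$, chosen so that $g^*\alpha_0 = \lambda_0 + ds$ and hence $g^*R_{\alpha_0} = \partial/\partial s$; it then writes $h := g^{-1}\circ f$ in closed form, pushes $g^*R_\alpha = (1+a_0)\partial_s + a_1\partial_x + a_2\partial_y$ through $h^{-1}$ coordinate-by-coordinate, and absorbs the resulting $\cos(\pi r/2)^{-1}$ singularity by writing $a_j\circ h = \cos(\pi r/2)\,\tilde a_j$ (a Hadamard-type division, since $a_j$ vanishes on $\{0\}\times\R/\pi\Z$). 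You instead pick the naive chart $F(\phi_1,\zeta)=(\sqrt{1-|\zeta|^2}e^{i\phi_1},\zeta)$ — which does not straighten $R_{\alpha_0}$ completely — and exploit the exact factorization $f = F\circ P\circ\Phi$ with $P$ the polar-coordinate map in the $\zeta$-slot and $\Phi$ a genuine diffeomorphism of the collar. That isolates the single source of singular behaviour in the map $P$, and the problem becomes a general statement about pulling back under polar coordinates a vector field whose radial components vanish along the centre circle, handled by one application of Hadamard's lemma and the explicit polar frame identities; the diffeomorphisms $F$ and $\Phi$ cost nothing. Both versions lose exactly one derivative for the same reason. Your decomposition makes it clearer that the only hypothesis really used for smooth extension and tangency is that $R_\alpha$ is tangent to $\Gamma$; the full condition $R_\alpha = R_{\alpha_0}$ on $\Gamma$ is needed elsewhere (boundary return time $\pi$, normalization of $\beta$ in (i)) but not for (ii). One small point worth spelling out in a final write-up of (iii): apply the Hadamard analysis to the difference $\hat R - F^*R_{\alpha_0}$ (all of whose components, not just the $\zeta$-ones, vanish on $\{\zeta=0\}$), using that $f^*R_{\alpha_0}=\partial/\partial s$, so that the resulting bound is on $\|R_\beta - \partial/\partial s\|_{C^k}$ rather than just on $\|R_\beta\|_{C^k}$.
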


\begin{proof}
Let $(z,s)\in \partial \D \times \R/\pi \Z$ and set $u:= f(z,s)\in \Gamma$. From the definition of $f$ we obtain that
\[
df(z,s)\left[ \frac{\partial}{\partial \theta} \right] = \frac{1}{2} R_{\alpha_0} (u), \qquad df(z,s) \left[ \frac{\partial}{\partial s} \right]  = R_{\alpha_0} (u).
\]
Together with the fact that $R_{\alpha_0}(u) = R_{\alpha}(u)$ is in the kernel of $d\alpha(u)$, we deduce that $d\beta=f^* (d\alpha)$ vanishes at $(z,s)$. Moreover,
\[
\begin{split}
\beta (z,s)\left[ \frac{\partial}{\partial \theta} \right] &= \alpha(u) \circ df(z,s) \left[ \frac{\partial}{\partial \theta} \right] = \frac{1}{2} \alpha(u) \bigl[R_{\alpha_0}(u)\bigr] = \frac{1}{2} \alpha(u) \bigl[R_{\alpha}(u)\bigr] = \frac{1}{2}, \\ 
\beta (z,s)\left[ \frac{\partial}{\partial s} \right] &= \alpha(u) \circ df(z,s) \left[ \frac{\partial}{\partial s} \right] = \alpha(u) \bigl[R_{\alpha_0}(u)\bigr] = \alpha(u) \bigl[R_{\alpha}(u)\bigr] = 1,
\end{split}
\]
and (i) is proved.

The smooth map
\[
g: \mathrm{int}(\D) \times \R/\pi \Z \rightarrow S^3, \qquad g(z,s) := \bigl( \sqrt{1-|z|^2} e^{2is}, e^{2is} z \bigr),
\]
is a diffeomorphism onto a neighborhood of $\Gamma$, maps $\{0\}\times \R/\pi \Z$ onto $\Gamma$ and satisfies
\[
g^* \alpha_0 = \frac{1}{2} (x\, dy - y \, dx) + ds, 
\]
where $(x,y,s)$ are the standard coordinates on $\D \times \R/\pi \Z$. Therefore, 
\[
g^* R_{\alpha_0} = R_{g^* \alpha_0} = \frac{\partial}{\partial s}.
\]
By the assumptions on $R_{\alpha}$, we can write
\[
g^* R_{\alpha} = (1+a_0) \frac{\partial}{\partial s} + a_1 \frac{\partial}{\partial x} + a_2 \frac{\partial}{\partial y},
\]
where $a_0$, $a_1$, and $a_2$ are smooth real functions on $\mathrm{int}(\D) \times \R/\pi \Z$ such that
\[
a_0(0,s) = a_1(0,s) = a_2(0,s) = 0 \qquad \forall s\in \R/\pi \Z.
\]
Moreover, for every non-negative integer $k$ we have the bounds
\begin{equation}
\label{pippo0}
\|a_j\|_{C^k(\sqrt{2}/2 \D \times \R/\pi \Z)} \leq c_k \left\|R_{\alpha} - R_{\alpha_0} \right\|_{C^k(S^3)}, \qquad j=0,1,2,
\end{equation}
where $c_k$ is a suitable positive number. Thanks to the identities
\[
\frac{\partial}{\partial x} = \frac{x}{r} \frac{\partial}{\partial r} - \frac{y}{r^2} \frac{\partial}{\partial \theta}, \qquad \frac{\partial}{\partial y} = \frac{y}{r} \frac{\partial}{\partial r} + \frac{x}{r^2} \frac{\partial}{\partial \theta},
\]
we find the formula
\begin{equation}
\label{formula}
g^* R_{\alpha} = (1+a_0) \frac{\partial}{\partial s} + \frac{x a_1 + y a_2}{r} \frac{\partial}{\partial r} + \frac{x a_2 - y a_1}{r^2} \frac{\partial}{\partial \theta} \qquad \mbox{on } \bigl( \mathrm{int}(\D) \setminus \{0\} \bigr) \times \R/\pi \Z.
\end{equation}

The map
\[
h: \bigl( \D \setminus \{0\} \bigr) \times \R/\pi \Z \rightarrow \mathrm{int}(\D) \times \R/\pi \Z, \qquad h(re^{i\theta},s) := \left( \cos \Bigl( \frac{\pi}{2} r \Bigr) e^{-i\theta}, \frac{\theta}{2} + s \right),
\]
is smooth, maps $\partial \D \times \R/\pi \Z$ onto $\{0\} \times \R/\pi \Z$ and fits into the commutative diagram
\[
\xymatrix{ \bigl( \D \setminus \{0\} \bigr)\times \R/\pi \Z \ar[dd]_h \ar[rd]^f & \\ & S^3 \\ \mathrm{int}(\D) \times \R/\pi \Z \ar[ur]_g & }
\]
The map $h$ is bijective, and its inverse is the map
\[
h^{-1}(re^{i\theta},s) = \left( \frac{2}{\pi} \arccos (r) e^{-i\theta}, \frac{\theta}{2} + s \right),
\]
which is smooth on $(\mathrm{int}(\D) \setminus \{0\}) \times \R/\pi \Z$. Moreover,
\[
\begin{split}
dh^{-1}(re^{i\theta},s) \left[ \frac{\partial}{\partial s} \right] &= \frac{\partial}{\partial s}, \\
dh^{-1}(re^{i\theta},s) \left[ \frac{\partial}{\partial r} \right] &= - \frac{2}{\pi} \frac{1}{\sqrt{1-r^2}} \frac{\partial}{\partial r}, \\
dh^{-1}(re^{i\theta},s) \left[ \frac{\partial}{\partial \theta} \right] &= -  \frac{\partial}{\partial \theta} + \frac{1}{2}  \frac{\partial}{\partial s}.
\end{split}
\]
By these identities and by the commutativity of the above diagram, (\ref{formula}) implies the following formula for the Reeb vector field $R_{\beta}$ in the point  $(re^{i\theta},s)\in (\mathrm{int}(\D) \setminus \{0\}) \times \R/\pi \Z$:
\begin{equation}
\label{pippo}
\begin{split}
R_{\beta} &= f^* R_{\alpha} = (g\circ h)^* R_{\alpha} = h^* (g^* R_{\alpha}) = dh^{-1}(h)\bigl[ g^* R_{\alpha} (h) \bigr] =
\\ &= (1+a_0 \circ h) dh^{-1}(h) \left[ \frac{\partial}{\partial s} \right] + \bigl( a_1 \circ h \cos \theta - a_2 \circ h \sin \theta \bigr)  dh^{-1}(h) \left[ \frac{\partial}{\partial r} \right] \\ & \quad +  \frac{a_2 \circ h \cos \theta + a_1 \circ h \sin \theta}{\cos \left( \frac{\pi}{2} r \right)} dh^{-1}(h) \left[ \frac{\partial}{\partial \theta} \right] 
\\ &=  (1+a_0\circ h) \frac{\partial}{\partial s} - \frac{2}{\pi} \frac{a_1 \circ h \, \cos \theta - a_2 \circ h \, \sin \theta}{\sin \left( \frac{\pi}{2} r\right)} \frac{\partial}{\partial r}  \\ &\quad + \frac{a_2 \circ h \, \cos \theta + a_1 \circ h \, \sin \theta}{\cos \left( \frac{\pi}{2} r \right)}  \left( - \frac{\partial}{\partial \theta} + \frac{1}{2} \frac{\partial}{\partial s} \right) \\ &= \frac{2}{\pi} \frac{a_2 \circ h \, \sin \theta - a_1 \circ h \, \cos \theta}{\sin \left( \frac{\pi}{2} r\right)} \frac{\partial}{\partial r} - \frac{a_1 \circ h \, \sin \theta + a_2 \circ h\, \cos \theta}{\cos \left( \frac{\pi}{2} r \right)}  \frac{\partial}{\partial \theta} \\ &\quad + \left( 1 + a_0 \circ h + \frac{a_1 \circ h \, \sin \theta + a_2 \circ h\, \cos \theta}{2 \cos \left( \frac{\pi}{2} r \right)} \right) \frac{\partial}{\partial s}.
\end{split}
\end{equation}
From the fact that $a_j$ vanishes on $\{0\} \times \R/\pi \Z$ we deduce that $a_j\circ h$ vanishes on the boundary of the solid torus $\D \times \R/\pi \Z$. Using the fact that the function $r\mapsto \cos(\pi r/2)$ vanishes with order $1$ at $r=1$, we can write
\[
a_j \circ h(re^{i\theta},s) = \cos \left( \frac{\pi}{2} r \right) \tilde{a}_j (r e^{i\theta},s), \qquad \mbox{on } \bigl( \D \setminus \{0\} \bigr) \times \R/\pi \Z, \; j=0,1,2,
\]
where the functions $\tilde{a}_j$ are smooth on $(\D \setminus \{0\}) \times \R/\pi \Z$. An explicit expression for $\tilde{a}_j$ is
\[
\tilde{a}_j(re^{i\theta},s) = \frac{r-1}{\cos \left( \frac{\pi}{2} r \right)} \int_0^1 \partial_r a_j\circ h \bigl( (r+\rho(1-r))e^{i\theta},s \bigr) \, d\rho,
\]
where $\partial_r$ denotes the radial derivative and the function $(r-1)/\cos(\pi r/2)$ has a smooth extension at $r=1$. From the above formula and (\ref{pippo0}) we obtain the bounds
\begin{equation}
\label{pippo1}
\|\tilde{a}_j\|_{C^k((\D \setminus 1/2 \D)\times \R/\pi \Z)} \leq \tilde{c}_k \left\|R_{\alpha} - R_{\alpha_0} \right\|_{C^{k+1}(S^3)}, \qquad j=0,1,2,
\end{equation}
for suitable numbers $\tilde{c}_k$. Then (\ref{pippo}) can be rewritten as
\[
\begin{split}
R_{\beta} = &\frac{2}{\pi} \frac{a_2 \circ h \, \sin \theta - a_1 \circ h \, \cos \theta}{\sin \left( \frac{\pi}{2} r\right)} \frac{\partial}{\partial r} - \bigl( \tilde{a}_1  \sin \theta + \tilde{a}_2 \cos \theta \bigr) \frac{\partial}{\partial \theta} \\ &+ \left( 1 + a_0 \circ h + \frac{1}{2} \tilde{a}_1   \sin \theta + \frac{1}{2} \tilde{a}_2  \cos \theta \right) \frac{\partial}{\partial s},
\end{split}
\]
on $(\mathrm{int}(\D) \setminus \{0\}) \times \R/\pi \Z$. The above formula shows that $R_{\beta}$ has a smooth extension to the closed solid torus $\D \times \R/\pi \Z$, which is tangent to the boundary. This proves (ii).

Together with (\ref{pippo0}) and (\ref{pippo1}), the above formula implies that $R_{\beta}$ is $C^k$-close to $\partial/\partial s$ on $ (\D \setminus 1/2 \D) \times \R/\pi \Z$ when $R_{\alpha}$ is $C^{k+1}$-close to $R_{\alpha_0}$.
On the other hand, since $f$ is a diffeomorphism in the interior of $\D\times \R/\pi \Z$, the restriction of $R_{\beta} = f^* R_{\alpha}$ to $1/2 \D \times \R/\pi \Z$ is $C^k$-close to $\partial/\partial s = f^* R_{\alpha_0}$ when $R_{\alpha}$ is $C^k$-close to $R_{\alpha_0}$ on $S^3$. We conclude that if $R_{\alpha}$ is $C^{k+1}$-close $R_{\alpha_0}$, then $R_{\beta}$ is $C^k$-close to $\partial/\partial s$. This proves (iii). 
\end{proof}

Let $\alpha$ be a contact form on $S^3$ as in the above proposition. If  $R_{\alpha}$ is $C^1$-close to $R_{\alpha_0}$ then the smooth extension of the vector field $R_{\beta} = R_{f^* \alpha}$ to $\D \times \R/\pi \Z$ - which is still denoted by $R_{\beta}$ - is $C^0$-close to $\partial/\partial s$, and in particular it is transverse to the foliation $\{\D \times \{s\} \}_{s\in \R/\pi \Z}$.

Denote by $\phi_t$ the flow of $R_{\beta}$ on $\D\times \R/\pi \Z$. From the fact that the flow of $R_{\beta}$ is transverse to the foliation $\{\D \times \{s\} \}_{s\in \R/\pi \Z}$, we get the existence and smoothness of the first return time 
\[
\tau: \D \rightarrow \R, \qquad \tau(z) := \inf\{ t>0 \mid \phi_t(z,0) \in \D \times \{0\} \},
\]
and of the first return map
\[
\varphi: \D \rightarrow \D, \qquad \varphi(z) = p_1 ( \phi_{\tau(z)} (z,0)),
\]
where $p_1: \D \times \R/\pi \Z \rightarrow \D$ denotes the projection onto the first factor. The diffeomorphism $\varphi$ is isotopic to the identity through the smooth path of diffeomorphisms
\[
\varphi_s : \D \rightarrow \D, \qquad \varphi(z) = p_1 ( \phi_{\tau_s(z)} (z,0)),
\]
where $\tau_0$ is identically zero and
\[
\tau_s(z) := \inf\{ t>0 \mid \phi_t(z,0) \in \D \times \{\pi s\} \} \qquad \forall s\in (0,1].
\]

We denote by $\lambda$ and $\omega=d\lambda$ the smooth 1-form and 2-form on $\D$ which are obtained as pull-backs of $\beta$ and $d\beta$ by the smooth embedding
\[
\D \hookrightarrow \D \times \R/\pi \Z, \qquad z \mapsto (z,0).
\]
The fact that $\beta$ is a contact form in the interior of $\D\times \R/\pi \Z$ and the fact that $R_{\beta}$ is transverse to $\D \times \{0\}$ imply that $\omega$ does not vanish in the interior of $\D$. However, $\omega$ vanishes on the boundary of $\D$ because $d\beta$ vanishes on $\partial \D \times \R/\pi \Z$. From Proposition \ref{solidtorus} (i), we 
deduce that
\[
\int_{\D} \omega = \int_{\partial \D} \lambda =   \int_{\partial \D \times \{0\}} \beta = \pi.
\]
From the fact that the flow of $R_{\beta}$ preserves $\beta$ we deduce the identity
\begin{equation}
\label{taulambda}
\varphi^* \lambda = \phi_{\tau}^* \lambda + \lambda(R_{\beta}) \, d\tau = \lambda + d\tau,
\end{equation}
which by differentiation implies that $\varphi^* \omega = \omega$. Therefore, $\varphi$ belongs to $\mathrm{Diff}(\D,\omega)$ and $\tilde{\varphi}:= [\{\varphi_s\}]$ is a lift of $\varphi$ to $\widetilde{\mathrm{Diff}}(\D,\omega)$.

\begin{lem}
\label{volume}
Assume, as above, that $\alpha$ coincides with $\alpha_0$ on $\Gamma$ and that $R_{\alpha}$ is $C^1$-close enough to $R_{\alpha_0}$.
Then the contact volume of $(S^3,\alpha)$ and the first return time $\tau$ are related by the identity
\[
\mathrm{vol}(S^3,\alpha \wedge d\alpha) = \mathrm{vol}(\D \times \R/\pi \Z, \beta\wedge d\beta) = \int_{\D} \tau\, \omega.
\]
\end{lem}

\begin{proof}
Since $\beta =f^* \alpha$, the first identity follows from the fact that $f$ is a diffeomorphism from the interior of the solid torus $\D\times \R/\pi \Z$ onto $S^3\setminus \Gamma$. The smooth map
\[
\psi: [0,1] \times \D \rightarrow \D \times \R/\pi \Z, \qquad \psi(s,z) = \phi_{s\tau(z)}(z,0),
\]
is bijective from $[0,1) \times \D$ onto $\D \times \R/\pi \Z$. Therefore,
\[
\mathrm{vol}(\D \times \R/\pi \Z, \beta\wedge d\beta) = \int_{[0,1]\times \D} \psi^*(\beta\wedge d\beta).
\]
Since
\[
\psi^* \beta(s,z) \left[ \frac{\partial}{\partial s} \right] = \beta(\phi_{s\tau(z)}(z,0))\bigl[ \tau(z) R_{\beta}(\phi_{s\tau(z)}(z,0)) \bigr] = \tau(z),
\]
and, for $\zeta \in T_z \D$,
\[
\begin{split}
\psi^* \beta(s,z)[(0,\zeta)] &= \beta( \phi_{s\tau(z)}(z,0)) \bigl[ D \phi_{s\tau(z)}(z,0)[(\zeta,0)] + s R_{\beta} (\phi_{s\tau(z)}(z,0)) d\tau(z) [\zeta] \bigr] \\ &= \beta(z,0) [(\zeta,0)] + s \, d\tau(z)[\zeta],
\end{split}
\]
there holds
\[
\psi^* \beta = \tau\, ds + \lambda + s\, d\tau = d (s\tau) + \lambda.
\]
Therefore,
\[
\psi^*(d\beta) = d\lambda,
\]
and
\[
\psi^*(\beta\wedge d\beta) = \bigl( d (s\tau) + \lambda ) \wedge d\lambda = ( \tau\, ds + s \, d\tau ) \wedge d\lambda = \tau\, ds\wedge d\lambda.
\]
Integration over $[0,1]\times \D$ gives
\[
\int_{[0,1]\times \D} \psi^*(\beta\wedge d\beta) = \int_{[0,1]\times \D} \tau\, ds\wedge d\lambda = \int_{\D} \tau \, d\lambda = \int_{\D} \tau \, \omega.
\]
which proves the second identity.
\end{proof}

Let $z\in \partial \D$ and let 
\[
\gamma_z : [0,\tau(z)] \rightarrow \partial \D \times \R/\pi \Z, \qquad \gamma_z(t) := \phi_t(z,0),
\]
be the portion of the orbit of $(z,0)$ up to the first return time. By Proposition \ref{solidtorus} (i), the pull-back of $\beta$ to $\partial \D \times \R/\pi \Z$ by the inclusion is $d\theta/2 + ds$, and we find
\[
\tau(z) = \int_{\gamma_z} \beta = \int_{\gamma_z} \left( \frac{1}{2} d\theta + ds \right) = \int_{\gamma_z} \frac{1}{2} d\theta + \pi = \int_{p_1\circ \gamma_z} \lambda + \pi = \int_{\{s\mapsto \varphi_s(z)\}} \lambda + \pi,
\]
where we have used the fact that the path $s\mapsto \varphi_s(z)$ is a reparametrization of $p_1 \circ \gamma_z$. Together with (\ref{taulambda}), this implies the following  identity relating the first return time $\tau$ and the action of $\tilde{\varphi}$ with respect to the primitive $\lambda$ of $\omega$:
\begin{equation}
\label{tausigma}
\tau = \sigma_{\tilde{\varphi},\lambda} + \pi.
\end{equation}
In particular, a fixed point $z\in \mathrm{int}(\D)$ of $\varphi$ corresponds to a periodic orbit $t\mapsto f\circ \phi_t(z,0)$ of $R_{\alpha}$ of period
\[
\tau(z) = \sigma_{\tilde{\varphi},\lambda}(z) + \pi.
\]
By integrating (\ref{tausigma}) over $\D$ with respect to $\omega$ we find, thanks to Lemma \ref{volume},
\[
\mathrm{vol}(S^3,\alpha\wedge d\alpha) = \mathrm{CAL}(\tilde{\varphi},\omega) + \pi^2.
\]

When the Reeb vector field $R_{\alpha}$ is $C^{k+1}$-close to $R_{\alpha_0}$ on $S^3$, $k\geq 0$, Proposition \ref{solidtorus} implies that the vector field $R_{\beta}$ is $C^k$-close to $\partial/\partial s$, and hence its flow $\phi_t$ is close to the flow $(t,(z,s)) \mapsto (z,s+t)$ in the $C^k_{\mathrm{loc}}$-topology of maps from $\R \times \D\times \R/\pi \Z$ to $\D \times \R/\Z$. It follows that $\tau$ is $C^k$-close to the constant function $\pi$ and $\varphi$ is $C^k$-close to the identity mapping. Moreover, the maps $\varphi_s$ are $C^k$-close to the identity mapping, uniformly for $s\in [0,1]$. 

We can summarize the above discussion in the following:

\begin{prop}
\label{surfsec}
Let $\alpha$ be a smooth contact form on $S^3$ such that
\[
R_{\alpha} = R_{\alpha_0}  \qquad \mbox{on } \Gamma.
\]
If $R_{\alpha}$ is sufficiently $C^1$-close to $R_{\alpha_0}$, then there are a smooth 2-form $\omega$ on $\D$ which is positive on $\mathrm{int}(\D)$, a smooth primitive $\lambda$ of $\omega$ on $\D$, and an element $\tilde{\varphi}\in \widetilde{\mathrm{Diff}}(\D,\omega)$ such that:
\begin{enumerate}[(i)]
\item $\mathrm{vol}(S^3,\alpha\wedge d\alpha) = \pi^2 + \mathrm{CAL}(\tilde{\varphi},\omega)$;
\item every interior fixed point $z$ of $\varphi:= \pi(\tilde{\varphi})$ corresponds to a closed orbit of $R_{\alpha}$ of period
\[
\tau(z) = \pi + \sigma_{\tilde{\varphi},\lambda}(z);
\]
\item for every integer $k\geq 0$ and every real number $\epsilon>0$ there exists a positive number $\rho=\rho(k,\epsilon)$ such that if $\|R_{\alpha}-R_{\alpha_0}\|_{C^{k+1}(S^3)} < \rho$ then $\|\varphi - \mathrm{id}\|_{C^k(\D,\D)} < \epsilon$.
\end{enumerate}
\end{prop}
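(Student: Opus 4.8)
The plan is to assemble the identities derived in the discussion preceding the statement, organising them around the passage from $\alpha$ on $S^3$ to the 1-form $\beta = f^*\alpha$ on the solid torus $\D \times \R/\pi\Z$, where $f$ is the map fixed at the beginning of this section — chosen precisely because it is smooth up to the boundary. First I would invoke Proposition \ref{solidtorus}: it gives that $\beta$ is a contact form on $\mathrm{int}(\D) \times \R/\pi\Z$ whose Reeb vector field $R_\beta$ extends smoothly to the closed solid torus, is tangent to $\partial\D \times \R/\pi\Z$, and — this is the key quantitative input — is $C^0$-close to $\partial/\partial s$ once $R_\alpha$ is $C^1$-close to $R_{\alpha_0}$. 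Hence $R_\beta$ is transverse to the foliation $\{\D \times \{s\}\}_{s \in \R/\pi\Z}$, and this transversality yields a well-defined smooth first return time $\tau : \D \to \R$ and first return map $\varphi \in \mathrm{Diff}(\D)$, together with the isotopy $\{\varphi_s\}$ obtained from the hitting times of the intermediate slices $\D \times \{\pi s\}$ (with $\varphi_0 = \mathrm{id}$); I set $\tilde\varphi := [\{\varphi_s\}]$.

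Next I would introduce $\lambda$ and $\omega = d\lambda$ as the pull-backs of $\beta$ and $d\beta$ under the embedding $z \mapsto (z,0)$. That $\omega$ is positive on $\mathrm{int}(\D)$ follows from $\beta$ being contact there and $R_\beta$ being transverse to $\D \times \{0\}$; that $\int_\D \omega = \pi$ and that $\omega$ vanishes on $\partial\D$ come from Proposition \ref{solidtorus}(i). Since the flow of $R_\beta$ preserves $\beta$ we get $\varphi^*\lambda - \lambda = d\tau$ (equation (\ref{taulambda})), so $\varphi$ preserves $\omega$ and $\tilde\varphi \in \widetilde{\mathrm{Diff}}(\D,\omega)$. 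The heart of the bookkeeping is the boundary identity: for $z \in \partial\D$ the orbit of $(z,0)$ stays on the boundary torus, where $\beta$ restricts to $\frac{1}{2}\,d\theta + ds$, so the first return contributes $\int_{\gamma_z} ds = \pi$ from one turn around the $s$-circle and $\int_{\gamma_z}\frac{1}{2}\,d\theta = \int_{\{s\mapsto\varphi_s(z)\}}\lambda$ from the angular part; comparing this with $d\tau = \varphi^*\lambda - \lambda$ and the uniqueness characterisation of the action (conditions (\ref{uno})--(\ref{due})) gives $\tau = \sigma_{\tilde\varphi,\lambda} + \pi$ (equation (\ref{tausigma})). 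Assertion (ii) is then immediate, since an interior fixed point of $\varphi$ lies on a closed $R_\alpha$-orbit whose period is exactly its first return time. Assertion (i) follows by combining Lemma \ref{volume}, which states $\mathrm{vol}(S^3,\alpha\wedge d\alpha) = \int_\D \tau\,\omega$, with the relation $\tau = \sigma_{\tilde\varphi,\lambda}+\pi$, the normalisation $\int_\D \omega = \pi$, and the definition of the Calabi invariant.

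For (iii) and (iv) I would feed the higher-order part of Proposition \ref{solidtorus}(iii) back in: if $R_\alpha$ is $C^{k+1}$-close to $R_{\alpha_0}$, then $R_\beta$ is $C^k$-close to $\partial/\partial s$, so by continuous dependence of flows on the vector field its flow is $C^k_{\mathrm{loc}}$-close to $(t,(z,s))\mapsto(z,s+t)$; consequently $\tau$ is $C^k$-close to the constant $\pi$ and $\varphi$, together with each $\varphi_s$ uniformly in $s$, is $C^k$-close to the identity. Taking $k=0$ forces the image of $s\mapsto\varphi_s(z)$ to miss a point of $\partial\D$ for every $z\in\partial\D$, which is (iii); the general estimate is (iv).

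I expect the only real difficulty to be the boundary analysis, which is exactly why Proposition \ref{solidtorus} was proved first: one must check that $\varphi$ and $\tau$ are genuinely smooth up to $\partial\D$ and not merely on the interior, that the orbit of a boundary point returns to the boundary and winds exactly once around the $s$-circle, and that the 2-form $\omega$ — which degenerates on $\partial\D$, so that $\varphi$ is not compactly supported — is still covered by the action and Calabi formalism of Sections \ref{calsec1} and \ref{calsec2}, which was deliberately set up at this level of generality. Everything else is a direct transcription of the computations already carried out above.
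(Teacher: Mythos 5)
Your proposal is correct and follows the paper's own argument essentially verbatim: Proposition \ref{surfsec} is explicitly stated as a summary of the discussion preceding it, and you have faithfully reconstructed that discussion — using Proposition \ref{solidtorus} to pass to $\beta = f^*\alpha$ on the solid torus and get regularity and $C^k$-closeness of $R_\beta$, defining $\tau$, $\varphi$, $\{\varphi_s\}$, $\lambda$, $\omega$ as the paper does, deriving $\varphi^*\lambda - \lambda = d\tau$ and the boundary identity $\tau = \sigma_{\tilde\varphi,\lambda} + \pi$, and then deducing (i) from Lemma \ref{volume} and (iii)–(iv) from the quantitative part of Proposition \ref{solidtorus}. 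No gaps.
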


\subsection{Proof of Theorem \ref{main1}}

The proof of Theorem \ref{main1} consists in applying Corollary \ref{fixpoint} from the Introduction to the element $\tilde{\varphi}\in \widetilde{\mathrm{Diff}}(\D,\omega)$ which is produced by Proposition \ref{surfsec}. In order to use this proposition, we first have to reduce the case of an arbitrary contact form close to a Zoll one to the case in which the great circle $\Gamma$ defined by (\ref{great_circle}) is a $\pi$-periodic Reeb orbit.

The first step of this reduction is the fact that, up to rescaling, all Zoll contact forms are strictly contactomorphic to the standard one. 

\begin{prop}
\label{zoll}
Let $\alpha$ be a Zoll contact form on $S^3$ and let $T$ be the common period of the orbits of the corresponding Reeb flow. Then there exists a smooth diffeomorphism $\varphi: S^3 \rightarrow S^3$ such that 
\[
\varphi^* \alpha = \frac{T}{\pi} \alpha_0.
\]
In particular,
\[
\mathrm{vol}(S^3,\alpha\wedge d\alpha) = T^2.
\]
\end{prop}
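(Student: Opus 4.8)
The plan is to recognise a Zoll contact form as a connection form of a principal circle bundle, to identify that bundle with the Hopf fibration, and then to pass from one contact form to the other by a Moser deformation through connection forms. First I would normalise: since $R_{c\alpha}=c^{-1}R_\alpha$ for $c>0$, the Reeb flow of $\tfrac{\pi}{T}\alpha$ has all orbits of period $\pi$, so after replacing $\alpha$ by $\tfrac{\pi}{T}\alpha$ it suffices to treat the case $T=\pi$ and to produce $\varphi\in\Diff(S^3)$ with $\varphi^*\alpha=\alpha_0$; applying this to $\tfrac{\pi}{T}\alpha$ gives $\varphi^*\alpha=\tfrac{T}{\pi}\alpha_0$ in general, and the volume identity then follows at once, because $\varphi^*$ commutes with $d$ and with $\wedge$ and preserves integrals of $3$-forms, whence $\mathrm{vol}(S^3,\alpha\wedge d\alpha)=(T/\pi)^2\,\mathrm{vol}(S^3,\alpha_0\wedge d\alpha_0)=(T/\pi)^2\pi^2=T^2$.

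Assume $T=\pi$. The Reeb flow of $\alpha$ is then a smooth periodic flow of minimal period $\pi$ at every point, hence a free action of $S^1=\R/\pi\Z$ on $S^3$ with infinitesimal generator $R_\alpha$. By the structure theory of free smooth actions of compact Lie groups, $B:=S^3/S^1$ is a closed surface and $\pi_\alpha\colon S^3\to B$ is a principal $S^1$-bundle; its homotopy exact sequence together with $\pi_1(S^3)=0$ gives $\pi_1(B)=0$, so $B\cong S^2$. Since $\mathcal L_{R_\alpha}\alpha=0$ and $\alpha(R_\alpha)=1$, $\alpha$ is a connection $1$-form, and $d\alpha=\pi_\alpha^*\kappa$ for a nowhere-vanishing $2$-form $\kappa$ on $B$, i.e.\ an area form; orient $B$ by $\kappa$. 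By the classification of principal $S^1$-bundles over $S^2$ (the one of Euler number $k$ has total space $S^2\times S^1$ for $k=0$ and the lens space $L(|k|,1)$ otherwise), the total space being $S^3$ forces $|e(\pi_\alpha)|=1$; the same analysis applied to $\alpha_0$ exhibits the Hopf fibration $\pi_0\colon S^3\to\CP^1$, of period $\pi$, with curvature a positive multiple of $\omega_{FS}$ and $|e(\pi_0)|=1$. Comparing $\mathrm{vol}(S^3,\alpha\wedge d\alpha)=\big(\int_{\mathrm{fiber}}\alpha\big)\big(\int_B\kappa\big)=\pi\int_B\kappa$ with the Chern--Weil relation between $\int_B\kappa$ and $e(\pi_\alpha)$ shows that $e(\pi_\alpha)$ and $e(\pi_0)$ agree for the $\kappa$- and $\omega_{FS}$-orientations. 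Hence there is an $S^1$-equivariant diffeomorphism $F\colon S^3\to S^3$ covering an orientation-preserving diffeomorphism $\CP^1\to B$, and $F^*\alpha$ is again a connection form for $\pi_0$ with Reeb field $R_{\alpha_0}$; consequently $F^*\alpha-\alpha_0=\pi_0^*\gamma$ for a unique $1$-form $\gamma$ on $\CP^1$, and $d(F^*\alpha)$ descends to a positive area form on $\CP^1$.

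Next I would run a Moser argument inside the connection forms of $\pi_0$. Set $\beta_t:=\alpha_0+t\,\pi_0^*\gamma$ for $t\in[0,1]$; then $\beta_0=\alpha_0$, $\beta_1=F^*\alpha$, $\beta_t(R_{\alpha_0})=1$, and $d\beta_t=\pi_0^*\kappa_t$ with $\kappa_t$ a convex combination of two positive area forms on $\CP^1$, hence positive, so each $\beta_t$ is a contact form with Reeb field $R_{\alpha_0}$. Let $Y_t$ be the unique vector field on $\CP^1$ with $\iota_{Y_t}\kappa_t=-\gamma$ (using nondegeneracy of $\kappa_t$ on the surface $\CP^1$), and let $X_t$ be the $S^1$-invariant vector field on $S^3$ lying in $\ker\beta_t$ and projecting to $Y_t$. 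Then $\mathcal L_{X_t}\beta_t=\iota_{X_t}d\beta_t=\pi_0^*(\iota_{Y_t}\kappa_t)=-\pi_0^*\gamma=-\dot\beta_t$, so the flow $\psi_t$ of $X_t$ (complete, since $S^3$ is compact) satisfies $\tfrac{d}{dt}\psi_t^*\beta_t=0$, whence $\psi_1^*(F^*\alpha)=\alpha_0$. Therefore $\varphi:=F\circ\psi_1$ has $\varphi^*\alpha=\alpha_0$, which finishes the case $T=\pi$ and, by the first step, the proposition.

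I expect the main obstacle to be the orientation/co-orientation bookkeeping in the second paragraph: one must ensure that the identification with the Hopf bundle can be taken equivariant for the \emph{same} identification of $S^1=\R/\pi\Z$ (equivalently $F_*R_{\alpha_0}=R_\alpha$, not $-R_\alpha$) and covering an \emph{orientation-preserving} base diffeomorphism, which is exactly what makes the $\beta_t$ contact. This amounts to matching $e(\pi_\alpha)$ with $e(\pi_0)$ and using that both $\alpha$ and $\alpha_0$ induce the standard orientation of $S^3$ via $\wedge\,d$; should the Euler numbers come out with opposite signs one instead obtains a strict contactomorphism to $-\alpha_0$, which is itself strictly contactomorphic to $\alpha_0$ through the orientation-reversing involution $(z_1,z_2)\mapsto(\bar z_1,\bar z_2)$, so the conclusion holds in either case. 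The remaining ingredients — the smooth-quotient theorem for free $S^1$-actions (alternatively the Boothby--Wang theorem) and the Moser isotopy — are routine.
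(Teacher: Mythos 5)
Your proof is correct and follows the standard Boothby--Wang plus Moser argument, which is precisely what the paper has in mind: the paper does not write out a proof of this proposition but defers to Theorem~B.2 of \cite{abhs14}, whose proof proceeds exactly as you do --- recognise the Reeb flow of a Zoll contact form as a free circle action, identify the resulting principal bundle with the Hopf fibration via the Euler number, and deform the given contact form to the standard one through connection forms by a Moser isotopy. The sign of the Euler number is indeed forced by positivity of the curvature integral as you note, so the fallback via complex conjugation in your closing paragraph is not actually needed, though it does no harm.
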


The proof of the above proposition is a simple modification of the proof of Theorem B.2 in \cite{abhs17}, which deals with Zoll contact forms on $SO(3)$ instead of $S^3$.

The second step of the reduction is the following result.  

\begin{prop}
\label{reduction}
For every integer $k\geq 0$ and every $\epsilon>0$ there exists $\delta=\delta(k,\epsilon)>0$ such that if $\alpha$ is a smooth contact form whose Reeb vector field $R_{\alpha}$ admits  a closed orbit of period $\pi$ and satisfies $\|R_{\alpha}-R_{\alpha_0}\|_{C^k} < \delta$, then there exists a smooth diffeomorphism $\psi: S^3 \rightarrow S^3$ such that the Reeb vector field of the contact form $\psi^* \alpha$ satisfies
\[
R_{\psi^* \alpha} = R_{\alpha_0} \qquad \mbox{on } \Gamma,
\]
and 
\[
\|R_{\psi^* \alpha} - R_{\alpha_0}\|_{C^k} < \epsilon.
\]
\end{prop}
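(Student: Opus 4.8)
The plan is to produce $\psi$ as a composition of three conjugacies, realising the idea of first bringing the period-$\pi$ Reeb orbit into standard position and then reparametrising it so that the Reeb vector field becomes exactly $R_{\alpha_0}$ along it. First I would locate that orbit and show it is close to a Hopf circle. Let $\gamma$ be a closed orbit of $R_\alpha$ of period $\pi$, fix $p\in\gamma$, and write $\gamma(t)=\phi^\alpha_t(p)$ for $t\in[0,\pi]$, where $\phi^\alpha$ is the flow of $R_\alpha$. Comparing $\gamma$ with the Hopf circle $H_p(t):=\phi^{\alpha_0}_t(p)$ through $p$ — both are solutions with the same initial value of $\dot x=R_\alpha(x)$, resp.\ $\dot x=R_{\alpha_0}(x)$ — a standard induction on the order of the derivative (Gronwall's inequality for the $C^0$ bound, differentiation of the equation for the higher ones, using that a flow line is one degree smoother than its generating vector field) gives $\|\gamma-H_p\|_{C^{k+1}([0,\pi])}\le C_k\|R_\alpha-R_{\alpha_0}\|_{C^k}$ with $C_k$ depending only on $k$. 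Hence, for $\delta$ small, $\gamma$ is an embedded circle $C^{k+1}$-close to the Hopf circle $H_p$.

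Next I would straighten $\gamma$ onto $\Gamma$. Since $U(2)$ acts on $S^3$ preserving $\alpha_0$ (hence $R_{\alpha_0}$) and acts transitively on the set of Hopf circles, choose $U_1\in U(2)$ with $U_1(\Gamma)=H_p$. Then $R_{U_1^*\alpha}-R_{\alpha_0}=U_1^*(R_\alpha-R_{\alpha_0})$ is still $C^k$-small, with a constant uniform over the compact group $U(2)$, and $U_1^{-1}(\gamma)$ is a $\pi$-periodic orbit of $R_{U_1^*\alpha}$ that is $C^{k+1}$-close to $\Gamma$. Replacing $\alpha$ by $U_1^*\alpha$, we may thus assume $\gamma$ itself is a $\pi$-periodic orbit of $R_\alpha$ that is $C^{k+1}$-close to $\Gamma$. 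A standard ambient-isotopy argument — write $\gamma$ as a $C^{k+1}$-small perturbation of $\Gamma\times\{0\}$ inside a fixed tubular neighbourhood $\Gamma\times\D$ and realise it by a diffeomorphism supported there — produces $\psi_1\in\mathrm{Diff}(S^3)$ with $\psi_1(\Gamma)=\gamma$ and $\|\psi_1-\mathrm{id}\|_{C^{k+1}}$ small. Then $\beta:=\psi_1^*\alpha$ has $\Gamma$ as a $\pi$-periodic orbit of $R_\beta$, and $R_\beta-R_{\alpha_0}=\psi_1^*(R_\alpha-R_{\alpha_0})+(\psi_1^*R_{\alpha_0}-R_{\alpha_0})$ is $C^k$-small.

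Finally I would reparametrise $\Gamma$. Since $\Gamma$ is an orbit of $R_\beta$, along $\Gamma$ we have $R_\beta=c\,R_{\alpha_0}$ for a positive function $c\colon\Gamma\to\R$ with $\|c-1\|_{C^k(\Gamma)}$ small. Parametrise $\Gamma$ by $\sigma\colon\R/\pi\Z\to\Gamma$ with $\sigma'=R_{\alpha_0}(\sigma)$ and set $\tilde c:=c\circ\sigma$; the hypothesis that $\Gamma$ has period $\pi$ for $R_\beta$ is exactly the relation $\int_0^\pi\tilde c(s)^{-1}\,ds=\pi$. Define $\bar g$ by $\int_0^{\bar g(t)}\tilde c(s)^{-1}\,ds=t$, so that $\bar g'=\tilde c\circ\bar g$ and $\bar g(0)=0$; that relation forces $\bar g(t+\pi)=\bar g(t)+\pi$, so $\bar g$ descends to a diffeomorphism of $\R/\pi\Z$ that is $C^{k+1}$-close to the identity (again gaining one derivative from the ODE). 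Let $g:=\sigma\circ\bar g\circ\sigma^{-1}\colon\Gamma\to\Gamma$ and extend $g$ to $\psi_2\in\mathrm{Diff}(S^3)$ which preserves $\Gamma$, restricts to $g$ there, equals the identity outside a fixed tubular neighbourhood of $\Gamma$, and is $C^{k+1}$-close to the identity (a product construction in the tube plus a cut-off). Computing $(\psi_2^{-1})_*R_\beta$ at $\sigma(t)$, using $\psi_2|_\Gamma=g$ and $D\psi_2(\sigma(t))[\sigma'(t)]=\bar g'(t)\,\sigma'(\bar g(t))$, one finds $R_{\psi_2^*\beta}(\sigma(t))=\frac{\tilde c(\bar g(t))}{\bar g'(t)}\,R_{\alpha_0}(\sigma(t))=R_{\alpha_0}(\sigma(t))$, i.e.\ $R_{\psi_2^*\beta}=R_{\alpha_0}$ on $\Gamma$. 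Taking $\psi:=U_1\circ\psi_1\circ\psi_2$ gives $\psi^*\alpha=\psi_2^*\beta$, so $R_{\psi^*\alpha}=R_{\alpha_0}$ on $\Gamma$, and $\|R_{\psi^*\alpha}-R_{\alpha_0}\|_{C^k}<\epsilon$ provided $\delta=\delta(k,\epsilon)$ is chosen small enough to dominate the finitely many $k$-dependent constants above.

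The only genuinely conceptual point is in the last step: the period-$\pi$ hypothesis is precisely the closing condition $\int_0^\pi\tilde c^{-1}=\pi$ that lets the reparametrising ODE descend to the circle, and without it one could arrange only that $R_{\psi^*\alpha}$ be a \emph{positive multiple} of $R_{\alpha_0}$ on $\Gamma$. I therefore expect the main labour to be purely quantitative bookkeeping — checking that each auxiliary diffeomorphism is $C^{k+1}$-close to the identity with smallness governed by $\delta$, so that $R_{\psi^*\alpha}$ ends up $C^k$-close to $R_{\alpha_0}$ — which works exactly because of the two ``gain a derivative'' facts used above: a flow line is one degree smoother than its vector field, and the solution of the reparametrisation ODE is one degree smoother than $\tilde c$.
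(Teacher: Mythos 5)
Your proof is correct, and the overall strategy coincides with the paper's: conjugate by an element of $U(2)$ to bring the given period-$\pi$ orbit into standard position, and then apply a $C^{k+1}$-small diffeomorphism to finish the job. The difference lies in how that final small diffeomorphism is built. The paper produces in one step a diffeomorphism $\tilde\psi$ that is $C^{k+1}$-close to the identity and matches the \emph{parametrized} curves $U\gamma_0$ and $\gamma$, i.e.\ $\tilde\psi(U\gamma_0(t))=\gamma(t)$ for all $t\in\R/\pi\Z$ (with the Gronwall estimate $\|\gamma-U\gamma_0\|_{C^{k+1}}$ small providing the justification); then $\psi=\tilde\psi\circ U$ automatically conjugates the flow along $\gamma$ to the flow along $\gamma_0$, and $R_{\psi^*\alpha}=R_{\alpha_0}$ on $\Gamma$ follows immediately. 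You instead split this into an image-only matcher $\psi_1$ and a reparametrizer $\psi_2$ supported near $\Gamma$, with the reparametrization governed by the ODE $\bar g'=\tilde c\circ\bar g$, and the period-$\pi$ hypothesis appearing as the closing condition $\int_0^\pi \tilde c^{-1}\,ds=\pi$. Both decompositions use the period-$\pi$ hypothesis at the same logical juncture — it is precisely what allows the parametrizations to be matched on $\R/\pi\Z$ — but your version makes its role more explicit as an integral constraint, at the cost of one more intermediate diffeomorphism and the associated $C^k$-bookkeeping. The paper's route is more compact; yours is arguably more transparent about where the hypothesis is actually consumed and would generalise verbatim to a hypersurface where no ``parametrization-matching'' diffeomorphism is available in closed form.
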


\begin{proof}
We denote by
\[
\gamma_0: \R/\pi \Z \rightarrow S^3, \qquad \gamma_0(t) = (e^{2it},0),
\]
the closed Reeb orbit of $\alpha_0$ whose image is the great circle $\Gamma$ and by $\gamma: \R/\pi \Z \rightarrow S^3$ the closed orbit of period $\pi$ of $R_{\alpha}$. 
We denote by $\mathrm{U}(2)$ the unitary group  of $\C^2$, which acts on $S^3$ and leaves the standard contact form $\alpha_0$ invariant. Let $U\in \mathrm{U}(2)$ be such that
\[
\gamma(0) = U \gamma_0(0).
\]
The orbit of $R_{\alpha_0}$ starting at $\gamma(0)$ is $U \gamma_0$.
From the fact that $\|R_{\alpha} - R_{\alpha_0}\|_{C^{k}}$ is small, a standard argument involving Gronwall's Lemma implies that $\gamma$ is $C^{k+1}$-close to $U\gamma_0$.
Then it is easy to find a diffeomorphism $\tilde{\psi}: S^3 \rightarrow S^3$ which is $C^{k+1}$-close to the identity and satisfies $\tilde{\psi}\circ U \gamma_0 = \gamma$. 
Consider the diffeomorphism $\psi:= \tilde{\psi} \circ U$. Notice that the $C^{k+1}$-closeness of $\tilde{\psi}$ to the identity  and the compactness of $U(2)$ imply that $\psi$ and $\psi^{-1}$ are $C^{k+1}$-uniformly bounded. Since $\gamma$ is a $\pi$-periodic orbit of $R_{\alpha}$, $\gamma_0 = \psi^{-1}\circ \gamma$ is a $\pi$-periodic orbit of $R_{\psi^* \alpha}$, and hence
\[
R_{\psi^* \alpha} = R_{\alpha_0} \qquad \mbox{on } \Gamma.
\] 
Moreover, the $C^k$-distance of $R_{\psi^* \alpha}$ from $R_{\alpha_0}$ can be estimated as follows:
\begin{equation}
\label{erre}
\begin{split}
\|R_{\psi^* \alpha} - R_{\alpha_0}\|_{C^k} & \leq  \|R_{\psi^* \alpha} - R_{\psi^* \alpha_0}\|_{C^k} + \| R_{\psi^* \alpha_0} - R_{\alpha_0}\|_{C^k} \\ & = \|\psi^* R_{\alpha} - \psi^* R_{\alpha_0}\|_{C^k} + \|R_{U^* \tilde{\psi}^* \alpha_0} - R_{U^* \alpha_0}\|_{C^k} \\ &  = \|\psi^* (R_{\alpha} - R_{\alpha_0}) \|_{C^k} + \|U^* R_{\tilde\psi^* \alpha_0} - U^* R_{\alpha_0}\|_{C^k}\\ & = \|\psi^* (R_{\alpha} - R_{\alpha_0}) \|_{C^k}+ \|R_{\tilde\psi^* \alpha_0} -  R_{\alpha_0} \|_{C^k}.
\end{split}
\end{equation}
The uniform $C^{k+1}$-bound on $\psi$ and $\psi^{-1}$ and the $C^k$-smallness of $R_{\alpha}-R_{\alpha_0}$ imply that
\begin{equation}
\label{piccolo}
\|\psi^* (R_{\alpha} - R_{\alpha_0}) \|_{C^k}
\end{equation}
is small. The fact that $\tilde{\psi}$ is $C^{k+1}$-close to the identity implies that the forms
\[
\tilde{\psi}^* \alpha_0-\alpha_0 \qquad \mbox{and} \qquad d\tilde{\psi}^* \alpha_0 - d\alpha_0= \tilde{\psi}^* d\alpha_0 - d\alpha_0
\]
are $C^k$-small. This in turn implies that $R_{\tilde{\psi}^*\alpha_0}$ is $C^k$-close to $R_{\alpha_0}$. Together with the smallness of (\ref{piccolo}), this fact and 
(\ref{erre}) imply that $R_{\psi^* \alpha}$ is $C^k$-close to $R_{\alpha_0}$.
\end{proof}

The last ingredient of the reduction argument is the following result.

\begin{prop}
\label{contTmin} 
There is a $C^3$-neighborhood of $\alpha_0$ in the space of smooth contact forms on $S^3$ on which the function $T_{\min}$ is $C^3$-continuous.
\end{prop}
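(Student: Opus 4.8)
\emph{Strategy.} The plan is to combine a general lower-semicontinuity property of $T_{\min}$ with a reduction, valid near $\alpha_0$, to symplectic capacities of convex domains in $\R^4$; the latter will pin down $T_{\min}$ exactly and hence establish continuity on a whole $C^3$-neighborhood. As a first, soft step I would record that $T_{\min}$ is lower semicontinuous everywhere. On any $C^2$-bounded set of contact forms the Reeb fields $R_\alpha$ are uniformly bounded in $C^1$ and satisfy $|R_\alpha|\ge m>0$ (since $\alpha(R_\alpha)=1$), so the Reeb flow obeys $|\phi^\alpha_t(z)-z|\ge \tfrac{m}{2}\,t$ for $0\le t\le t_1$ with $t_1>0$ uniform; hence $T_{\min}\ge t_1$ on such a set. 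Then, if $\alpha_n\to\alpha$ in $C^2$ and $T_{\min}(\alpha_n)\to L$ along a subsequence (so $L\ge t_1$), closed Reeb orbits of $R_{\alpha_n}$ realising $T_{\min}(\alpha_n)$, reparametrised on $\R/\Z$, have uniformly bounded derivatives, so by Arzel\`a--Ascoli a subsequence converges in $C^0$ to a non-constant loop solving $\dot\gamma=L\,R_\alpha(\gamma)$; thus $T_{\min}(\alpha)\le L$. This gives $\liminf_n T_{\min}(\alpha_n)\ge T_{\min}(\alpha)$, i.e.\ lower semicontinuity of $T_{\min}$ in the $C^2$-topology, a fortiori in the $C^3$-topology.

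\emph{Pinning down $T_{\min}$ near $\alpha_0$.} For the upper bound I would move into $\R^4$. If $\alpha$ is $C^3$-close to $\alpha_0$, then $\ker\alpha$ is $C^3$-close to the standard contact structure $\ker\alpha_0$, and Gray's stability theorem yields a diffeomorphism $\varphi_\alpha$ of $S^3$, close to the identity and depending continuously on $\alpha$, with $\varphi_\alpha^*\alpha=\mu_\alpha\alpha_0$ for a positive function $\mu_\alpha$ close to $1$. Since $\varphi_\alpha$ conjugates the Reeb flow of $\mu_\alpha\alpha_0$ to that of $\alpha$, one has $T_{\min}(\alpha)=T_{\min}(\mu_\alpha\alpha_0)$. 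Being a contact form for the standard structure, $\mu_\alpha\alpha_0$ is the pull-back under radial projection of the restriction of the Liouville form $\lambda_0$ of $\R^4$ to the star-shaped hypersurface $\Sigma_\alpha:=\{\sqrt{\mu_\alpha(z)}\,z\mid z\in S^3\}$; moreover $\Sigma_\alpha$ is $C^2$-close to $S^3$, hence bounds a smooth \emph{convex} domain $C_\alpha\subset\R^4$, whose closed characteristics are the closed Reeb orbits of $\lambda_0|_{\Sigma_\alpha}=\mu_\alpha\alpha_0$ with the same periods. By the identity recalled in the Introduction, $c_{\mathrm{HZ}}(C_\alpha)=T_{\min}(\lambda_0|_{\Sigma_\alpha})=T_{\min}(\alpha)$. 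Since $\alpha\mapsto C_\alpha$ is continuous into the Hausdorff topology, and since $c_{\mathrm{HZ}}$ is continuous on convex domains (by monotonicity and $2$-homogeneity, using that all the $C_\alpha$ contain a fixed ball), it follows that $\alpha\mapsto T_{\min}(\alpha)$ is continuous on a $C^3$-neighborhood of $\alpha_0$.

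\emph{Main obstacle.} The delicate point is the regularity bookkeeping in the last step: Gray stability costs one derivative and forming $\varphi_\alpha^*\alpha=\mu_\alpha\alpha_0$ brings in $D\varphi_\alpha$, so a crude count only places $\mu_\alpha$ (hence $\Sigma_\alpha$) close to $1$ (resp.\ $S^3$) in $C^{k-2}$ when $\alpha$ is $C^k$-close to $\alpha_0$, whereas convexity of $C_\alpha$ needs $C^2$-control. One must therefore either sharpen this reduction, or bypass convexity by invoking the equality of $c_{\mathrm{HZ}}$ with the minimal characteristic period for the (merely star-shaped, $C^1$-small) hypersurfaces that actually occur. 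As a robust substitute for the upper bound \emph{at} $\alpha_0$, one can instead use perturbation theory of the periodic Reeb flow of $\alpha_0$: averaging $R_\alpha$ along the Hopf flow yields a vector field descending to a $C^0$-small vector field on $S^2=S^3/S^1$, whose (at least two, since $\chi(S^2)\ne0$) zeros correspond, via a Lyapunov--Schmidt reduction, to closed Reeb orbits of $R_\alpha$ of period near $\pi$, so that $\limsup_{\alpha\to\alpha_0}T_{\min}(\alpha)\le\pi=T_{\min}(\alpha_0)$; combined with lower semicontinuity this settles continuity at $\alpha_0$. The remaining non-elementary inputs — the identity $c_{\mathrm{EH}}=c_{\mathrm{HZ}}=T_{\min}$ on convex domains and the continuity of symplectic capacities — are standard.
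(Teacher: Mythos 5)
Your main plan --- use Gray stability to reduce to a form $f\alpha_0$, realize it as $\lambda_0$ restricted to the boundary of a convex domain in $\R^4$, and then use Hausdorff-continuity of the Ekeland--Hofer/Hofer--Zehnder capacity --- is sound, and it is essentially what the paper does in the Remark immediately after this proposition (where it is used to deduce the sharper fact that $T_{\min}$ is $C^1$-continuous on a $C^3$-neighborhood). The paper's proof of the proposition itself finishes differently: after the Gray reduction it cites Theorem 3.6 of \cite{apb14}, which directly gives $C^2$-continuity of $f \mapsto T_{\min}(f\alpha_0)$ near $f\equiv 1$. Your route is a valid alternative; note though that once the identity $T_{\min}(\alpha) = c_{\mathrm{HZ}}(C_\alpha)$ and Hausdorff-continuity of the capacity are in place on a whole neighborhood, the preliminary lower-semicontinuity argument (correct as it stands) is redundant, since the capacity argument already gives full continuity.

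The obstacle you flag about regularity bookkeeping is not a genuine one, and clearing it up is what is missing from your write-up. Your pessimistic count, from $C^k$ to $C^{k-2}$, arises from computing $f$ as the quotient of $\psi^*\alpha$ by $\alpha_0$, which appears to drag in $D\psi$. But that is the wrong way to estimate $f$. In Moser's trick for Gray's theorem one has $\psi_t^*\alpha_t = f_t\,\alpha_0$ where $f_t$ solves the scalar transport equation
\[
\frac{d}{dt}\log f_t = \mu_t\circ\psi_t, \qquad \mu_t := \dot\alpha_t(R_{\alpha_t}),
\]
and $\psi_t$ is the flow of the vector field $X_t \in \ker\alpha_t$ determined pointwise by $\imath_{X_t}d\alpha_t = \mu_t\,\alpha_t - \dot\alpha_t$. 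Taking $\alpha_t = \alpha_0 + t(\alpha-\alpha_0)$ one has $\dot\alpha_t = \alpha-\alpha_0$, which costs no derivatives, while $R_{\alpha_t}$ and $X_t$ are pointwise linear-algebraic in $\alpha_t$ and $d\alpha_t$ and hence cost exactly one derivative. Thus if $\alpha$ is $C^{k+1}$-close to $\alpha_0$, then $\mu_t$ is $C^k$-small and $\psi_t$ is $C^k$-close to the identity; composing and integrating in $t$ shows $f-1$ is $C^k$-small. This is precisely the one-derivative loss asserted in the paper's proof, with reference to the proof of Gray's theorem in \cite{gei08}: the map $\alpha\mapsto f$ is continuous from the $C^{k+1}$-topology to the $C^k$-topology. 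In particular $\alpha$ being $C^3$-close to $\alpha_0$ gives $f$ $C^2$-close to $1$, which is exactly what is needed for the convexity of your $C_\alpha$, so the argument closes.

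Your fallback --- averaging along the Hopf flow and a Lyapunov--Schmidt reduction producing closed orbits of period near $\pi$ --- cannot substitute for this. It establishes continuity of $T_{\min}$ only at the single form $\alpha_0$, whereas the proposition asserts continuity on a whole $C^3$-neighborhood; moreover it relies on $\alpha_0$ being Zoll, so it does not even address continuity at a generic nearby $\alpha$.
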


\begin{proof}
Since the contact condition is $C^1$-open, we can find a $C^1$-neighborhood $\mathscr{A}_0$ of $\alpha_0$ such that for every $\alpha\in \mathscr{A}_0$ the path
\[
\alpha_t := \alpha_0 + t(\alpha-\alpha_0), \qquad t\in [0,1],
\]
consists of contact forms. Then Gray's stability theorem allows us to associate to any $\alpha$ in $\mathscr{A}_0$ a diffeomorphism $\psi:S^3 \rightarrow S^3$ such that $\psi^* \alpha =f \alpha_0$, where $f$ is a smooth positive function on $S^3$. The proof of Gray's stability theorem, see e.g.\ \cite[Theorem 2.2.2]{gei08}, shows that the map $\alpha\mapsto f$ is continuous from the $C^{k+1}$-topology on the space of 1-forms to the $C^k$-topology on the space of real functions, for every integer $k\geq 0$. Since $T_{\min}(\psi^* \alpha) = T_{\min}(\alpha)$, we are reduced to showing that the function
\[
f \mapsto T_{\min}(f \alpha_0)
\]
is $C^2$-continuous on some $C^2$-neighborhood of the constant function $1$ on $S^3$. This fact follows from the more general Theorem 3.6 in \cite{apb14}. 
\end{proof}

\begin{rem}
Actually, one can show that $T_{\min}$ is $C^1$-continuous on a suitable $C^3$-neighborhood of $\alpha_0$. Indeed, by arguing as in the above proof, it is enough to show that the function $f\mapsto T_{\min}(f \alpha_0)$ is $C^0$-continuous on some $C^2$-neighborhood of the constant function $1$ on $S^3$. If the function $f$ is $C^2$-close enough to 1, then the flow of $R_{f \alpha_0}$ is conjugated to the characteristic flow on the boundary of a smooth convex bounded domain in $\R^4$, and the $C^0$-continuity of $f\mapsto T_{\min}(f \alpha_0)$ follows from Proposition \ref{cont} below.
\end{rem}

We are finally ready to prove Theorem \ref{main1}.

\begin{proof}[Proof of Theorem \ref{main1}]
It is enough to show that the standard contact form $\alpha_0$ has a $C^3$-neighborhood $\mathscr{A}_0$ such that for every $\alpha\in \mathscr{A}_0$ there holds
\[
T_{\min}(\alpha)^2 \leq \mathrm{vol}(S^3,\alpha\wedge d\alpha),
\]
with equality holding if and only if $\alpha$ is Zoll. Indeed, in this case Proposition \ref{zoll} implies that the set
\[
\mathscr{A} := \{c \, \psi^* \alpha \mid c\in \R\setminus \{0\}, \alpha\in \mathscr{A}_0, \; \psi\in \mathrm{Diff}(S^3)\}
\]
is a $C^3$-neighborhood of the space of smooth Zoll contact forms on $S^3$ which has the required property.

Let $\mathscr{U}$ be the $C^1$-neighborhood of the identity in $\mathrm{Diff}^+(\D)$ which is given by Theorem \ref{fixpoint} in the introduction, and let $\epsilon>0$ be such that any $\varphi\in \mathrm{Diff}^+(\D)$ with $\|\varphi - \mathrm{id}\|_{C^1} < \epsilon$ belongs to $\mathscr{U}$. Let $\rho=\rho(1,\epsilon)$ be the positive number which is given by Proposition \ref{surfsec}. Let $\delta=\delta(2,\rho)$ be the positive number which is given by Proposition \ref{reduction}. 

If $\alpha$ is an arbitrary contact form on $S^3$, the rescaled contact form $\alpha_1 := (\pi/T_{\min}(\alpha)) \alpha$ satisfies $T_{\min}(\alpha_1)=\pi$. From the bound
\[
\|\alpha_1 - \alpha_0\|_{C^3} \leq \frac{\pi}{T_{\min}(\alpha)} \|\alpha - \alpha_0\|_{C^3} + \left| \frac{\pi}{T_{\min}(\alpha)} - 1 \right| \|\alpha_0\|_{C^3}
\]
and from the continuity of $T_{\min}$ stated in Proposition \ref{contTmin}, we deduce that $\alpha_1$ is $C^3$-close to $\alpha_0$ when $\alpha$ is $C^3$-close to $\alpha_0$. Using also the fact that $R_{\alpha_1}$ is $C^2$-close to $R_{\alpha_0}$ when $\alpha_1$ is $C^3$-close to $\alpha_0$, we deduce the existence of a $C^3$-neighborhood $\mathscr{A}_0$ of $\alpha_0$ such that for every $\alpha\in \mathscr{A}_0$ the rescaled contact form $\alpha_1 = (\pi/T_{\min}(\alpha)) \alpha$ satisfies
\[
\|R_{\alpha_1} - R_{\alpha_0}\|_{C^2} < \delta.
\]

We wish to check that this $C^3$-neighborhood $\mathscr{A}_0$ has the desired property. Let $\alpha\in \mathscr{A}_0$. Then $\alpha_1 = (\pi/T_{\min}(\alpha)) \alpha$ satisfies 
\[
T_{\min}(\alpha_1) = \pi \qquad \mbox{and} \qquad \|R_{\alpha_1} - R_{\alpha_0}\|_{C^2} < \delta.
\] 
By Proposition \ref{reduction} and by the choice of $\delta$, there is a diffeomorphism $\psi:S^3 \rightarrow S^3$ such that the contact form $\alpha_2:= \psi^* \alpha_1$ satisfies
\[
R_{\alpha_2} = R_{\alpha_0} \qquad \mbox{on } \Gamma
\]
and 
\[
\|R_{\alpha_2} - R_{\alpha_0}\|_{C^2} < \rho.
\]
Since
\[
\frac{T_{\min}(\alpha)^2}{\mathrm{vol}(S^3,\alpha\wedge d\alpha)} = \frac{T_{\min}(\alpha_1)^2}{\mathrm{vol}(S^3,\alpha_1\wedge d\alpha_1)} = \frac{T_{\min}(\alpha_2)^2}{\mathrm{vol}(S^3,\alpha_2\wedge d\alpha_2)} = \frac{\pi^2}{\mathrm{vol}(S^3,\alpha_2\wedge d\alpha_2)},
\]
and since $\alpha$ is Zoll if and only if $\alpha_2$ is Zoll, it is enough to prove that
\[
\mathrm{vol}(S^3,\alpha_2\wedge d\alpha_2) \geq \pi^2,
\]
with equality holding if and only if $\alpha_2$ is Zoll. Since we already know that the equality holds when $\alpha_2$ is Zoll (see Proposition \ref{zoll}), we must show that if $\alpha_2$ is not Zoll then the strict inequality 
\begin{equation}
\label{DADIM}
\mathrm{vol}(S^3,\alpha_2\wedge d\alpha_2) > \pi^2
\end{equation}
holds.

By Proposition \ref{surfsec} and by the choice of $\rho$, there are a smooth 2-form $\omega$ on $\D$ which is positive on the interior, a smooth primitive $\lambda$ of $\omega$ on $\D$, and an element $\tilde{\varphi}\in \widetilde{\mathrm{Diff}}(\D,\omega)$ such that:
\begin{enumerate}[(i)]
\item $\mathrm{vol}(S^3,\alpha_2\wedge d\alpha_2) = \pi^2 + \mathrm{CAL}(\tilde{\varphi},\omega)$; 
\item every interior fixed point $z$ of $\varphi:= p(\tilde{\varphi})$ corresponds to a closed orbit of $R_{\alpha_2}$ of period
\[
\tau(z) = \pi + \sigma_{\tilde{\varphi},\lambda}(z);
\]
\item $\|\varphi - \mathrm{id}\|_{C^1} < \epsilon$.
\end{enumerate}

The fact that $\alpha_2$ is not Zoll implies that $\tilde{\varphi}$ is not the identity in $\widetilde{\mathrm{Diff}}(\D,\omega)$: if $\tilde{\varphi}$ is the identity, then all the points of $\D$ are fixed points with zero action, and  (ii) implies that all the orbits of $R_{\alpha_2}$ are closed and have period $\pi$.

If by contradiction (\ref{DADIM}) does not hold, then (i) implies that the Calabi invariant of $\tilde{\varphi}$ is non-positive.
By (iii) and by the choice of $\epsilon$, the diffeomorphism $\varphi$ belongs to $\mathscr{U}$. Therefore, we can apply Corollary \ref{fixpoint}, which implies that $\varphi$ has an interior fixed point $z$ with negative action. By (ii), this fixed point corresponds to a closed Reeb orbit of $R_{\alpha_2}$ with period less than $\pi$. This contradicts the fact that $T_{\min}(\alpha_2)=\pi$ and proves the theorem. 
\end{proof}

\section{Proof of the corollaries to Theorem \ref{main1}}
\label{proofs}

\subsection{General facts about starshaped domains and capacities}

In this section, we fix some notation and recall some general facts which are used in the proof of the corollaries of the main theorem.  

The characteristic distribution on a smooth hypersurface $S$ in $\R^{2n}$ is the 1-dimensional smooth distribution given by the kernel of the restriction of the symplectic form $\omega_0$ to $S$. Curves on $S$ which are tangent to the characteristic distribution are called characteristic curves. When $S$ is co-oriented, meaning that the normal bundle of $S$ is oriented, the characteristic distribution is oriented. Boundaries of smooth domains are always co-oriented by declaring the outer normal direction to be positive.
The action of a closed characteristic $\gamma$ on the co-oriented hypersurface $S$ is the number
\[
A(\gamma) := \int_{\tilde{\gamma}} \omega_0,
\]
where $\tilde{\gamma}$ is any parametrised disk in $\R^{2n}$ whose boundary spans $\gamma$.

A smooth bounded domain $A\subset \R^{2n}$ which is starshaped with respect to the origin is said to be strictly starshaped if the radial vector field is transverse to $\partial A$. Equivalently, $A$ has the form
\[
A = \{rz \in \R^{2n} \mid z\in S^{2n-1}, \; 0\leq r < a(z) \},
\]
where $a: S^{2n-1} \rightarrow \R$ is a smooth positive function. The identification $a \mapsto A$ allows us to push the $C^k$ topology to the space of bounded strictly starshaped smooth domains. If $A$ is as above, then the Liouville form $\lambda_0$ which is defined in (\ref{liouville-n}) restricts to a contact form on $\partial A$, and the radial diffeomorphism
\begin{equation}
\label{rad}
\rho: S^{2n-1} \rightarrow \partial A, \qquad \rho(z) = a(z) z,
\end{equation}
satisfies
\begin{equation}
\label{radice}
\rho^*(\lambda_0|_{\partial A}) = a^2  \lambda_0|_{S^{2n-1}}.
\end{equation}
Therefore, $\lambda_0|_{\partial A}$ pulls back to a contact form on $S^{2n-1}$ which induces the standard contact structure $\xi_0 := \ker \lambda_0|_{S^{2n-1}}$. Conversely, all contact forms on $S^{2n-1}$ inducing the standard contact structure $\xi_0$ are obtained in this way.

The fact that $\lambda_0$ is a primitive of $\omega_0$ implies that the 1-dimensional oriented distribution on $\partial A$ which is determined by the Reeb vector field of $\lambda_0|_{\partial A}$ coincides with the characteristic distribution of $\partial A$.  Moreover, the action of a closed orbit of the Reeb flow coincides with its period: if $\gamma: \R/T\Z \rightarrow \partial A$ is a $T$-periodic orbit of the Reeb flow of $\lambda_0|_{\partial A}$ and $\tilde{\gamma}$ is a parametrised disk in $\R^{2n}$ whose boundary spans $\gamma$, then by Stokes theorem
\[
A(\gamma) = \int_{\tilde{\gamma}} \omega_0 = \int_{\gamma} \lambda_0 = T,
\]
because $\lambda_0$ equals 1 on the Reeb vector field.

Now let $C$ be a bounded smooth convex domain. Let $c$ be either the Ekeland-Hofer (see \cite{eh89}) or the Hofer-Zehnder capacity (see \cite{hz94}). Then $c(C)$ coincides with the minimal action of a closed characteristic on $\partial C$:
\[
c(C) = \min \{ A(\gamma) \mid \gamma \mbox{ closed characteristic of } \partial C \}.
\]
In the case of the Hofer-Zehnder capacity, this is proved in  \cite[Proposition 4]{hz90}. In the case of the Ekeland-Hofer capacity, this is stated in \cite[Proposition 3.10]{vit89}.
Up to a translation, which does not affect the characteristic distribution, we may assume that $C$ is a neighbourhood of the origin, and hence a strictly starshaped domain. Then the action of a closed characteristic on $\partial C$ agrees with its period, when this curve is seen as a periodic orbit of the Reeb vector field of $\lambda_0|_{\partial C}$, and we have
\[
c(C) = T_{\min} (\lambda_0|_{\partial C}).
\]
In the special case of the ball of radius $r$, we find
\[
c(rB) = T_{\min} (\lambda_0|_{\partial (rB)}) = T_{\min} ( r^2 \lambda_0|_{\partial B}) = \pi r^2.
\]
The next result follows from Clarke's dual characterisation of the minimal action of a closed characteristic on the boundary of a convex set, see \cite[Theorem 4.1 (iv)]{ama15}.

\begin{prop}
\label{projection}
Let $c$ be a symplectic capacity on symplectic vector spaces whose value on the boundary of any bounded smooth convex domain $C$ coincides with the minimal action of a closed characteristic on $\partial C$. Let $P: \R^{2n} \rightarrow \R^{2n}$ be the symplectic projector onto a $2k$-dimensional symplectic subspace $V\subset \R^{2n}$. Then for every bounded convex smooth domain $C$ there holds 
\[
c(P(C)) \geq c(C),
\]
where $c(P(C))$ denotes the symplectic capacity of $P(C)$ as a convex domain in $V$.
\end{prop}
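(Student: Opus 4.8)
The plan is to deduce the inequality from Clarke's dual variational characterisation of the minimal action of a closed characteristic, as formulated in \cite[Theorem 4.1 (iv)]{ama15}. Since $c$ is a symplectic invariant and every $2k$-dimensional symplectic subspace of $(\R^{2n},\omega_0)$ is carried to the standard one by a linear symplectomorphism of $\R^{2n}$ (extend a symplectic basis of $V$ by a symplectic basis of $V^{\omega_0}$), I would first reduce to the case $V = \R^{2k}\times\{0\}\subset\R^{2k}\times\R^{2(n-k)}=\R^{2n}$, with the symplectic form splitting accordingly and $P$ equal to the standard projection $P(x',x'')=(x',0)$, which in these coordinates is also the Euclidean orthogonal projection. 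Writing $H_C$ for the convex, positively $2$-homogeneous Hamiltonian given by the square of the Minkowski gauge of $C$ (so that $\partial C$ is a level set of $H_C$ and the characteristic flow on $\partial C$ is a reparametrisation of the Hamiltonian flow of $H_C$), Clarke's principle expresses $c(C)$ as the minimum of the dual action functional $u\mapsto\int_0^1 H_C^*(u(t))\,dt$ over the set $\mathcal{C}$ of $W^{1,2}$ loops $u\colon\R/\Z\to\R^{2n}$ subject to a normalisation (mean zero together with a prescribed positive value of the quadratic ``enclosed symplectic area'' functional) that depends only on the ambient linear symplectic structure, not on $C$; likewise $c(P(C))$ is the analogous minimum over loops with values in $(\R^{2k},\omega_0|_{\R^{2k}})$.

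The second step is the elementary observation that in these coordinates the shadow $P(C)$ is related to $C$ at the level of Hamiltonians by inf-projection, $H_{P(C)}(x') = \inf_{x''}H_C(x',x'')$, and hence at the level of Legendre conjugates by restriction, $(H_{P(C)})^*(\xi') = H_C^*\big((\xi',0)\big)$ --- i.e.\ the conjugate Hamiltonian of the projection is the restriction to $V$ of the conjugate Hamiltonian of $C$. Given any admissible loop $u'$ for the $c(P(C))$-problem, I would lift it to $\tilde u := (u',0)\colon\R/\Z\to\R^{2n}$: because both the mean-zero condition and the enclosed-area constraint in dimension $2n$ restrict along $V$ precisely to the corresponding conditions in dimension $2k$, the loop $\tilde u$ is admissible for the $c(C)$-problem, and $\int_0^1 H_C^*(\tilde u)\,dt = \int_0^1 H_C^*\big((u',0)\big)\,dt = \int_0^1 (H_{P(C)})^*(u')\,dt$. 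Choosing $u'$ to realise the minimum for $P(C)$ then gives $c(C) \le \int_0^1 H_C^*(\tilde u)\,dt = c(P(C))$, which is the assertion.

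The main point to nail down is the exact form of Clarke's principle imported from \cite{ama15}: one must check that the constraint set over which one minimises genuinely depends only on the linear symplectic structure, so that the inclusion $V\hookrightarrow\R^{2n}$ really does send admissible loops to admissible loops with the same functional value, and that the characterisation applies to $P(C)$, which is convex with $0$ in its interior but need not have smooth boundary --- here one invokes that $c$ is a genuine capacity defined on all subsets and that the dual characterisation holds for arbitrary convex bodies, or, alternatively, approximates $P(C)$ from inside by smooth convex bodies and uses monotonicity of $c$. Everything else is the bookkeeping above, namely the inf-projection formula for the gauge and the dual restriction formula for its Legendre transform, which is standard convex analysis.
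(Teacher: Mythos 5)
Your proposal is correct and fills in precisely the argument the paper delegates to the citation \cite[Theorem 4.1 (iv)]{ama15}: reduction to the standard splitting $V=\R^{2k}\times\{0\}$ by a linear symplectomorphism, the conjugate identity $(H_{P(C)})^{*}(\xi')=H_{C}^{*}(\xi',0)$ coming from the inf-projection formula for the gauge, and the observation that the admissibility constraints in Clarke's dual principle depend only on the ambient linear symplectic structure and are therefore preserved by the lift $u'\mapsto(u',0)$. Your remark about handling the possible non-smoothness of $P(C)$, either by noting that the dual characterisation extends to arbitrary convex bodies or by inner approximation together with monotonicity of $c$, correctly closes the only remaining gap.
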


The following result is well known (see e.g.\ \cite[Theorem 4.1 (v)]{ama15}).

\begin{prop}
\label{cont}
The function $C \mapsto T_{\min}(\lambda_0|_{\partial C})$ is continuous with respect to the Hausdorff distance on the space of smooth bounded convex neighbourhoods of the origin.
\end{prop}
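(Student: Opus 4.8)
The plan is to reduce the statement to three facts that are all available from the discussion preceding the proposition: for a smooth bounded convex neighbourhood $C$ of the origin one has $T_{\min}(\lambda_0|_{\partial C}) = c(C)$, where $c$ denotes either the Ekeland--Hofer or the Hofer--Zehnder capacity; the capacity $c$ is monotone under inclusion; and $c$ is homogeneous of degree two, $c(rC) = r^2 c(C)$ for $r>0$. The only genuinely new ingredient is the elementary remark that Hausdorff convergence of convex bodies having the origin in their interior can be squeezed between two dilations.

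First I would encode the convex bodies by their support functions $h_C(u) := \sup_{x\in C}\langle x,u\rangle$, $u\in S^{2n-1}$. It is classical convex geometry that for convex bodies the Hausdorff distance equals the sup-norm distance of the support functions, $d_H(C,C') = \|h_C - h_{C'}\|_{C^0(S^{2n-1})}$, and that $C\subseteq C'$ if and only if $h_C \le h_{C'}$, with $h_{rC} = r\, h_C$. Since $C$ is a neighbourhood of the origin, $h_C$ is bounded below by some $\delta>0$ on $S^{2n-1}$. Hence, if $C_k \to C$ in the Hausdorff distance, then for any $\epsilon\in(0,1)$ and all $k$ large we have $\|h_{C_k} - h_C\|_{C^0(S^{2n-1})} < \epsilon\delta \le \epsilon\, h_C$ on $S^{2n-1}$, so that $(1-\epsilon)h_C \le h_{C_k} \le (1+\epsilon)h_C$ on $S^{2n-1}$, and therefore, by positive homogeneity of support functions, $(1-\epsilon)C \subseteq C_k \subseteq (1+\epsilon)C$.

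Then I would invoke monotonicity and homogeneity of $c$: the dilates $(1\pm\epsilon)C$ are again smooth bounded convex neighbourhoods of the origin, so the inclusions above, together with $c = T_{\min}(\lambda_0|_{\partial\,\cdot})$ on such domains and $c(rC)=r^2 c(C)$, give
\[
(1-\epsilon)^2\, T_{\min}(\lambda_0|_{\partial C}) \;\le\; T_{\min}(\lambda_0|_{\partial C_k}) \;\le\; (1+\epsilon)^2\, T_{\min}(\lambda_0|_{\partial C})
\]
for $k$ large. Letting $k\to\infty$ and then $\epsilon\to 0$ yields $\lim_k T_{\min}(\lambda_0|_{\partial C_k}) = T_{\min}(\lambda_0|_{\partial C})$, which is the asserted continuity. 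There is essentially no obstacle; the only two points deserving a word of care are the hypothesis that $C$ be a \emph{neighbourhood} of the origin, used to bound $h_C$ away from zero and thereby convert Hausdorff closeness into a two-sided dilation estimate, and the availability of the identity $T_{\min}(\lambda_0|_{\partial\,\cdot}) = c(\cdot)$ together with the capacity axioms on the class of domains considered, both of which are recalled above. One could equivalently run the same squeezing argument directly on Clarke's dual action functional in the spirit of \cite{ama15}, but the capacity formulation is the shortest.
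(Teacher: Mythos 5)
Your proof is correct. Note that the paper itself does not prove this proposition but cites it as well known from \cite[Theorem 4.1(v)]{ama15}, where the continuity is obtained from Clarke's dual variational characterization of the minimal closed-characteristic action on a convex boundary. Your route achieves the same by funnelling the two-sided dilation estimate $(1-\epsilon)C \subseteq C_k \subseteq (1+\epsilon)C$ --- which is exactly where the hypothesis that $C$ be a \emph{neighbourhood of the origin} is used, via the lower bound on the support function --- through the capacity axioms of monotonicity and degree-two homogeneity, together with the identity $c_{\mathrm{EH}}(\cdot)=c_{\mathrm{HZ}}(\cdot)=T_{\min}(\lambda_0|_{\partial\,\cdot})$ on smooth bounded convex domains that the paper recalls just before the proposition. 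The trade-off is that this identity is itself established via Clarke duality, so nothing is gained in absolute elementariness, but your argument is shorter, more structural, and self-contained modulo facts the paper already states. One minor point of precision: the support-function identification of the Hausdorff distance and the equivalence $C \subseteq C' \Leftrightarrow h_C \leq h_{C'}$ concern compact convex sets, so one should formally argue with the closures $\overline{C_k}$, $\overline{C}$ and then pass back; this is harmless since $T_{\min}(\lambda_0|_{\partial C})$ depends only on $\partial C = \partial\overline{C}$.
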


\subsection{Proof of Corollary \ref{cor1}}

Let $A\subset \R^4$ be a smooth domain whose closure is symplectomorphic to the closed ball of radius $r$. Since symplectomorphisms preserve the characteristic foliation, all the characteristic curves on $\partial A$ are closed and have action $\pi r^2$. If moreover $A$ is strictly starshaped, then the contact form $\lambda_0|_{\partial A}$ is Zoll and all the corresponding Reeb orbits have period $\pi r^2$. The next result says that the converse is also true.

\begin{prop}
\label{isaball}
Let $A$ be a bounded strictly starshaped smooth domain in $\R^4$. If the restriction of $\lambda_0$ to $\partial A$ is Zoll and $T$ is the common period of the orbits of the corresponding Reeb flow, then the closure of $A$ is symplectomorphic to a closed ball of radius $\sqrt{T/\pi}$.
\end{prop}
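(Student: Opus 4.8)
The plan is to recognize $(\overline A,\omega_0)$ as a Liouville domain whose contact boundary is \emph{standard} in the strongest possible sense, and then to appeal to the classification of symplectic fillings of the tight three‑sphere that goes back to Gromov and was sharpened by McDuff.

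First I would translate the hypotheses into symplectic language. Since $A$ is bounded and strictly starshaped, the radial vector field is a Liouville vector field for $\omega_0=d\lambda_0$ transverse to $\partial A$; hence $(\overline A,\lambda_0|_{\overline A})$ is a Liouville domain, $\partial A$ is of contact type, and by (\ref{radice}) the contact structure it induces on $S^{3}\cong\partial A$ is the standard $\xi_0$. Moreover $\lambda_0$ is an exact primitive of $\omega_0$ on $\overline A$, so $\overline A$ carries no closed symplectic surface. Now the Zoll hypothesis enters: writing $\lambda:=\lambda_0|_{\partial A}$, Proposition \ref{zoll} applied to the Zoll contact form obtained by pulling $\lambda$ back to $S^3$ produces a diffeomorphism $S^3\to\partial A$ pulling $\lambda$ back to $\tfrac{T}{\pi}\alpha_0$, so $(\partial A,\lambda)$ is \emph{strictly} contactomorphic to the contact boundary of a round ball; in particular every closed characteristic on $\partial A$ has action $T$, exactly as on the boundary of the round ball whose radius is asserted in the statement. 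I would also record the volume identity $\int_{\overline A}\tfrac{1}{2}\omega_0^{2}=\tfrac{1}{2}\int_{\partial A}\lambda_0\wedge d\lambda_0=\tfrac{1}{2}T^{2}$, by Stokes' theorem and the volume identity in Proposition \ref{zoll}; this is the volume of the round ball in question.

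Next I would cap and classify. Glue to $\overline A$, along the strict contactomorphism just produced, a suitably scaled closed neighbourhood $N$ of a complex line in $\CP^2$ (the disc bundle of $\mathcal O(1)\to\CP^1$) whose concave boundary is $(\partial A,\lambda)$; this gives a closed symplectic four‑manifold $X$ containing the line $C$ as an embedded symplectic sphere with $C\cdot C=+1$. A Mayer–Vietoris computation (the overlap is homotopy equivalent to $S^3$) gives $H_2(X;\Z)\cong H_2(\overline A;\Z)\oplus H_2(N;\Z)=\Z\langle C\rangle$, so $b_2(X)=1$. By the theorem of Gromov and McDuff on closed symplectic four‑manifolds carrying an embedded symplectic sphere of self‑intersection $+1$ — proved by studying pseudoholomorphic spheres in the class $[C]$ — such an $X$ is a blow‑up of $\CP^2$, and $b_2(X)=1$ forces $X$ to be $\CP^2$ with a Fubini–Study form in which $C$ is the line. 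After moving $C$ to the standard line and invoking uniqueness of symplectic tubular neighbourhoods, together with the exactness (no symplectic spheres inside $\overline A$) and the volume identity above, one identifies $\overline A=X\setminus\mathrm{int}(N)$ with the round ball of the stated radius. (Equivalently, one may quote McDuff's classification of symplectic fillings of $(S^3,\xi_0)$ directly: the content is that the Zoll hypothesis, via Proposition \ref{zoll}, is precisely what places $\partial A$ in the standard model — which is why, in contrast to an arbitrary strictly starshaped domain such as a thin ellipsoid, a Zoll boundary forces $\overline A$ to be a ball.)

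The main obstacle is the final step, passing from "$X\cong\CP^2$" to "$\overline A$ \emph{is} the round ball": the symplectomorphism provided by the classification need not carry $N$ to a standard line‑neighbourhood, and two abstractly symplectomorphic closed neighbourhoods of a line in $\CP^2$ may have non‑symplectomorphic complements. Overcoming this is exactly where Gromov's curve techniques are used once more — moving the sphere $[C]$ onto the standard line by a symplectomorphism of $\CP^2$, realizing $\overline A$ inside an open ball of capacity arbitrarily close to $T$, and then using McDuff's rigidity for volume‑filling ball embeddings, in conjunction with $\int_{\overline A}\tfrac12\omega_0^2=\tfrac12 T^2$, to upgrade these embeddings to a symplectomorphism of $\overline A$ onto the round ball of capacity $T$.
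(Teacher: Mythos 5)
Your outline diverges from the paper's proof and, as you yourself flag, the final step is where it breaks down. The paper does \emph{not} cap $\overline A$ to a closed manifold and then classify; instead it observes that the strict contactomorphism $\varphi\colon S^3\to\partial A$ produced by Proposition~\ref{zoll} (after rescaling so that $T=\pi$) admits a positively $1$-homogeneous extension
$\tilde\varphi(r\hat z)=r\,\varphi(\hat z)$, which is already a symplectomorphism of $\overline B\setminus\{0\}$ onto $\overline A\setminus\{0\}$. At that point the only remaining issue is a single point, and the Gromov--McDuff theorem invoked is not a classification theorem but McDuff--Salamon Theorem~9.4.2 (interpolation of a symplectomorphism defined outside a starshaped compact set), applied with $M=\R^4$, $K_1=K_2=\{0\}$. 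This produces the global symplectomorphism $\overline B\to\overline A$ in one step.

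Your capping argument gets you to ``$X$ is symplectomorphic to $\CP^2$ with $C$ a symplectic line,'' which is fine, but the passage from there to ``$\overline A=X\setminus\mathrm{int}(N)$ is a round ball'' is a genuine gap, and the fix you propose does not close it. A symplectomorphism $X\to\CP^2$ carrying $C$ to the standard line need not carry $N$ to a \emph{standard} tubular neighbourhood, and the complement of an arbitrary symplectic tubular neighbourhood of the line is in general not a round ball. Your proposed escape via ``McDuff's rigidity for volume-filling ball embeddings'' is circular in this context: that rigidity statement is about symplectic embeddings of the standard ball, whereas here the source $\overline A$ is precisely the object you are trying to identify with a ball; and exhibiting embeddings of $\overline A$ into balls of capacity $T+\epsilon$ for all $\epsilon>0$ does not, without further argument, yield a symplectomorphism onto $B(T)$ (the naive limit of such embeddings need not exist). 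The paper's route sidesteps all of this because the candidate symplectomorphism is constructed explicitly from the Zoll hypothesis and only needs to be corrected on an arbitrarily small neighbourhood of one interior point. If you want to avoid McDuff--Salamon 9.4.2 altogether, the Remark following the proposition gives an alternative fix of the same local defect at the origin, using the connectedness of the group of co-orientation-preserving contactomorphisms of the tight $S^3$ to upgrade $\tilde\varphi$ to a Hamiltonian isotopy and cut it off near $0$; but again, the essential idea you missed is that Proposition~\ref{zoll} already gives a symplectomorphism away from a point.
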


\begin{proof}
Notice that the restriction of $\lambda_0$ to the boundary of the homothetic domain $rA$ is a Zoll contact form with Reeb orbits having common period $Tr^2$. By replacing the domain $A$ by the homothetic domain $\sqrt{\pi/T} A$, we may assume that $T=\pi$. In this case, we have to show that the closure of $A$ is symplectomorphic to the closed unit ball $\overline{B}$.

By Propositon \ref{zoll}, there is a diffeomorphism
\[
\varphi: S^3 = \partial B \rightarrow \partial A
\]
such that
\[
\varphi^*(\lambda_0|_{\partial A}) = \alpha_0 = \lambda_0|_{\partial B}.
\]
Then the positively 1-homogeneous extension of $\varphi$, i.e.\ the smooth map 
\[
\tilde{\varphi} : \R^4 \setminus \{0\} \rightarrow \R^4 \setminus \{0\}, \qquad \tilde{\varphi}(r\hat{z}) = r \varphi(\hat{z}), \qquad \forall \hat{z}\in S^3, \; r>0,
\]
is a symplectomorphism mapping $\overline{B}\setminus \{0\}$ onto $\overline{A}\setminus \{0\}$. This map extends continuously in 0 by setting $\tilde{\varphi}(0)=0$, but this extension is in general not differentiable at 0. However, it is possible to modify $\tilde{\varphi}$ in an arbitrarily small neighbourhood of $0$ and make it a smooth symplectomorphism of $\R^4$ onto itself. This follows from the following theorem of Gromov and McDuff (see \cite[Theorem 9.4.2]{ms94}): Let $(M,\omega)$ be a connected symplectic 4-manifold which  has no symplectically embedded 2-sphere with self-intersection number $-1$, let $K_1\subset \R^4$ and $K_2\subset M$ be compact subsets with $K_1$ starshaped, and let $\tilde{\psi}: \R^4 \setminus K_1 \rightarrow M \setminus K_2$ be a symplectomorphism. Then for every neighbourhood $U$ of $K_2$ there exists a symplectomorphism $\psi: \R^4 \rightarrow M$ which coincides with $\tilde{\psi}$ on $\psi^{-1}(M\setminus U)$. Indeed, if we apply this theorem to $(M,\omega) = (\R^4,\omega_0)$, $K_1=K_2=\{0\}$, $\tilde{\psi}=\tilde{\varphi}$ and $U$ any neighborhood of $0$ which is contained in $A$, we obtain a symplectomorphism $\psi: \R^4 \rightarrow \R^4$ which maps the closure of $B$ onto the closure of $A$.
\end{proof} 

\begin{rem}
There is an alternative way of smoothing the symplectomorphism $\tilde{\varphi}$ near the origin, which does not make use of Gromov's and McDuff's theorem. It is based on the following fact: the space of contactomorphisms of the tight contact three-sphere which preserve the orientation of the contact distribution is connected (see \cite[Corollary 2.4.3]{eli92} or \cite[Theorem 4]{cs16}). Indeed, using this fact one can show that the diffeomorphism $\varphi: S^3 \rightarrow \partial A$ of the above proof can be connected to the identity by a path of diffeomorphisms $\{\varphi_t : S^3 \rightarrow \partial A_t\}_{t\in [0,1]}$, where the $A_t$ are bounded strictly starshaped smooth domains and $\varphi^*_t(\lambda_0|_{\partial A_t})= \lambda_0|_{S^3}$ for every $t\in [0,1]$. By considering the positively 1-homogeneous extension of these diffeomorphisms, we find  that the symplectomorphism $\tilde{\varphi} : \R^4 \setminus \{0\} \rightarrow \R^4 \setminus \{0\}$ is symplectically isotopic to the identity. Since $\R^4 \setminus \{0\}$ is simply connected, the symplectic isotopy from the identity to $\tilde{\varphi}$ is induced by a time-dependent Hamiltonian on $\R^4 \setminus \{0\}$. By multiplying this Hamiltonian by a smooth function on $\R^4$ which is supported in $\R^4 \setminus\{0\}$ and takes the value 1 outside of a small neighbourhood of $0$, we obtain a smooth time-dependent Hamiltonian on $\R^4$, whose time-1 map is a symplectomorphism mapping the closure of $B$ onto the closure of $A$.
\end{rem} 

\begin{rem}
An argument similar to the proof of Proposition \ref{isaball} allows one to prove the following fact: if the restrictions of $\lambda_0$ to the smooth boundaries of two bounded strictly starshaped domains $A_1,A_2\subset \R^4$ are strictly contactomorphic, then the closures of $A_1$ and $A_2$ are symplectomorphic.
\end{rem}

\begin{proof}[Proof of Corollary \ref{cor1}] We denote by $z_C\in \R^4$ the baricenter of the bounded convex domain $C\subset \R^4$. By Proposition \ref{isaball}, the bounded convex smooth domain $C$ belongs to the set $\mathscr{B}$ of all convex domains in $\R^4$ whose closure is symplectomorphic to a closed ball if and only if the restriction of $\lambda_0$ to the boundary of $C-z_C$ is Zoll.

By the identity (\ref{radice}), we can find a $C^3$-neighborhood $\mathscr{C}$ of $\mathscr{B}$ in the set of all smooth bounded convex domains in $\R^4$ such that for every $C\in \mathscr{C}$ the restriction $\alpha_C$ of $\lambda_0$ to the boundary of $C-z_C$ pulls back by the radial diffeomorphism $\rho: S^3 \rightarrow \partial(C-z_C)$ to a contact form on $S^3$ which belongs to the $C^3$-open set $\mathscr{A}$ of Theorem \ref{main1}. By Theorem \ref{main1} we obtain
\[
\begin{split}
c(C)^2 &= c(C-z_C)^2 = T_{\min}(\alpha_C)^2 = T_{\min}( \rho^* \alpha_C)^2 \leq \mathrm{vol} (S^3, \rho^* \alpha_C \wedge d(\rho^* \alpha_C)) \\ &= \mathrm{vol} (\partial (C-z_C), \alpha_C \wedge d\alpha_C) = \mathrm{vol} (\partial (C-z_C), (\lambda_0 \wedge d\lambda_0)|_{\partial (C-z_C)}) \\ &= \mathrm{vol} (C-z_C, \omega_0 \wedge \omega_0) = \mathrm{vol} (C, \omega_0 \wedge \omega_0) = 2 \, \mathrm{vol}(C),
\end{split}
\]
where the last volume is in terms of the standard volume form $dx_1 \wedge dy_1 \wedge dx_2 \wedge dy_2$ on $\R^4$, which coincides with $(1/2) \omega_0\wedge \omega_0$. Moreover, the equality holds if and only if $\alpha_C$ is Zoll, that is, if and only if $C-z_C$, or equivalently $C$, belongs to $\mathscr{B}$. This concludes the proof of Corollary \ref{cor1}.
\end{proof}

\subsection{Proof of Corollaries \ref{cor2} and \ref{cor3}}

The Euclidean inner product $(\cdot,\cdot)$ of $\R^{2n}$ is compatible with the symplectic form $\omega_0$, meaning that the isomorphims $J : \R^{2n} \rightarrow \R^{2n}$ which is defined by the identity
\[
\omega_0(u,v) = (Ju,v) \qquad \forall u,v\in \R^{2n},
\]
is a complex structure on $\R^{2n}$: $J^2=-I$. This $J$ is actually the standard complex structure of $\R^{2n}$, namely
\[
J (x_1,y_1,\dots,x_n,y_n) = (-y_1,x_1,\dots,-y_n,x_n), \qquad \forall (x_1,y_1,\dots,x_n,y_n) \in \R^{2n}.
\]
The next result is a simple generalisation of Theorem 1 in \cite{am13}. For the sake of completeness, we explain in detail how this statement can be deduced from \cite[Theorem 1]{am13}.

\begin{prop}
\label{linear}
Let $V$ be a $2k$-dimensional symplectic subspace of $\R^{2n}$ and let $P: \R^{2n} \rightarrow \R^{2n}$ be the symplectic projector onto $V$. Then for every linear symplectomorphism $\Phi: \R^{2n} \rightarrow \R^{2n}$ there holds
\begin{equation}
\label{inez}
\mathrm{vol}(P\Phi(B), \omega^k_0|_V) \geq \pi^k,
\end{equation}
with equality holding if and only if the subspace $\Phi^{-1}V$ is $J$-invariant. In the latter case, we have
\begin{equation}
\label{ide}
P \Phi(B) = \Phi(B \cap \Phi^{-1}V).
\end{equation}
\end{prop}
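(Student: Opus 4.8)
The plan is to deduce Proposition~\ref{linear} from \cite[Theorem~1]{am13} by a linear symplectic change of coordinates that brings $V$ into standard position. Let $V_0\subset\R^{2n}$ be the symplectic coordinate subspace spanned by $\partial_{x_1},\partial_{y_1},\dots,\partial_{x_k},\partial_{y_k}$, and let $P_0$ be the symplectic projector onto $V_0$. Since the symplectic orthogonal complement $V_0^{\omega_0}$ coincides with the Euclidean orthogonal complement of $V_0$, the projector $P_0$ is simply the Euclidean orthogonal projection onto $V_0$, which is precisely the situation treated in \cite[Theorem~1]{am13}. By completing a symplectic basis of $V$ to a symplectic basis of $\R^{2n}$ (the linear symplectic analogue of Gram--Schmidt), one obtains a linear symplectomorphism $\Psi\colon\R^{2n}\to\R^{2n}$ with $\Psi(V)=V_0$. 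A linear symplectomorphism preserves symplectic orthogonal complements, so also $\Psi(V^{\omega_0})=V_0^{\omega_0}$; hence $\Psi$ carries the splitting $\R^{2n}=V\oplus V^{\omega_0}$ onto $\R^{2n}=V_0\oplus V_0^{\omega_0}$ and therefore intertwines the projectors, $\Psi\circ P=P_0\circ\Psi$. Moreover $\Psi^*\omega_0=\omega_0$, so $\Psi$ restricts to a volume-preserving isomorphism from $(V,\omega_0^k|_V)$ onto $(V_0,\omega_0^k|_{V_0})$.

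Using these facts I would write
\[
\mathrm{vol}\bigl(P\Phi(B),\omega_0^k|_V\bigr)=\mathrm{vol}\bigl(\Psi P\Phi(B),\omega_0^k|_{V_0}\bigr)=\mathrm{vol}\bigl(P_0(\Psi\Phi)(B),\omega_0^k|_{V_0}\bigr),
\]
and apply \cite[Theorem~1]{am13} to the linear symplectomorphism $\Psi\Phi$ and the coordinate subspace $V_0$ (with $\omega_0^k|_{V_0}$ a fixed constant multiple of the Euclidean volume, the constant accounting for the value $\pi^k$). This yields the inequality~(\ref{inez}) at once, and says that equality holds exactly when $(\Psi\Phi)^{-1}(V_0)$ is a $J$-invariant (complex) subspace. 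As $\Psi^{-1}(V_0)=V$, that subspace equals $\Phi^{-1}(V)$, which is the claimed equality characterisation.

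Finally, to prove the identity~(\ref{ide}), assume $W:=\Phi^{-1}(V)$ is $J$-invariant. Then $W$ is a symplectic subspace whose symplectic orthogonal complement $W^{\omega_0}$ agrees with its Euclidean orthogonal complement, so the symplectic projector $P_W$ onto $W$ is the Euclidean orthogonal projection, and hence $P_W(B)=B\cap W$. Since $\Phi$ is symplectic, $\Phi(W)=V$ and $\Phi(W^{\omega_0})=V^{\omega_0}$, so for every $v\in\R^{2n}$ the decomposition $v=P_Wv+(v-P_Wv)$ along $W\oplus W^{\omega_0}$ is mapped by $\Phi$ onto the decomposition of $\Phi v$ along $V\oplus V^{\omega_0}$; thus $P\circ\Phi=\Phi\circ P_W$, and evaluating on $B$ gives $P\Phi(B)=\Phi(B\cap\Phi^{-1}(V))$.

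I do not expect a genuine obstacle here: all the substance is imported from \cite[Theorem~1]{am13}, and the rest is routine linear symplectic algebra. The one point that requires care is the interplay between the symplectic and the Euclidean orthogonal complements, since that is what makes $P_0$ and $P_W$ honest orthogonal projections, lets \cite[Theorem~1]{am13} apply in its stated form, and produces the identity~(\ref{ide}).
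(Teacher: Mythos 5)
Your proof is correct, but it takes a genuinely different route to the $J$-invariant reduction than the paper does. The paper keeps $V$ fixed and replaces the complex structure and inner product by a compatible pair $(J',(\cdot,\cdot)')$ with $J'V=V$, then applies \cite[Theorem~1]{am13} (implicitly generalized to an arbitrary compatible triple $(\omega_0,J',B')$) to the symplectomorphism $\Phi\Psi$ and the modified ball $B'$. You instead keep $J$ and the Euclidean ball $B$ fixed and transport $V$ to the $J$-invariant coordinate subspace $V_0$ by a linear symplectomorphism $\Psi$, using the intertwining $\Psi\circ P=P_0\circ\Psi$ and the fact that $\Psi|_V$ is an isomorphism $(V,\omega_0^k|_V)\to(V_0,\omega_0^k|_{V_0})$; this has the advantage of applying \cite[Theorem~1]{am13} literally as stated, with no implicit generalization to nonstandard complex structures. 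One detail you suppress that the paper makes explicit: as the paper reads it, \cite[Theorem~1]{am13} characterizes equality by $(\Psi\Phi)^T V_0$ being $J$-invariant, and one still has to note that for a symplectomorphism $\Lambda$ and a $J$-invariant subspace $W$, $\Lambda^T W$ is $J$-invariant if and only if $\Lambda^{-1}W$ is (one has $\Lambda^T W=J\Lambda^{-1}W$, so the two $J$-invariance conditions coincide even though the subspaces themselves need not); without this remark your assertion of the equality condition is not quite a literal citation. Finally, your proof of the identity (\ref{ide}) via $P\circ\Phi=\Phi\circ P_W$ and $P_W(B)=B\cap W$ is cleaner and more direct than the paper's, which argues through the self-adjointness of $P$ and the formula $A(B)=A(B\cap\mathrm{ran}\,A^T)$; both are valid.
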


\begin{proof}
Theorem 1 in \cite{am13} has the extra assumption that $V$ is $J$-invariant. Its conclusion is that in this case (\ref{inez}) holds, with equality holding if and only if $\Phi^T V$ is $J$-invariant. The fact that the linear mapping $\Phi$ is symplectic and the $J$-invariance of $V$ imply that $\Phi^T V = \Phi^{-1} V$. Therefore, the equality holds in (\ref{inez}) if and only if $\Phi^{-1} V$ is $J$-invariant, as stated above. Assume that we are in this case: both $V$ and $\Phi^{-1} V$ are $J$-invariant. The first fact implies that the symplectic projector $P$ is an orthogonal projector, and hence self-adjoint: $P^T=P$. Moreover, using the fact that the image of the unit ball by a linear transformation $A: \R^{2n} \rightarrow V$ is $A(B\cap \ran A^T)$, where $\ran A^T$ denotes the image of the adjoint mapping $A^T: V \rightarrow \R^{2n}$, we find
\[
\begin{split}
P \Phi(B) &= P \Phi ( B \cap \ran (P\Phi)^T ) = P \Phi (B \cap \ran(\Phi^T P) ) \\ &= P\Phi (B \cap \Phi^T V) = P\Phi (B \cap \Phi^{-1} V) = \Phi (B \cap \Phi^{-1} V),
\end{split} 
\]
proving (\ref{ide}). This concludes the proof of the proposition in the case in which $V$ is $J$-invariant.

The general case can be deduced from the above one as follows. Let $(\cdot,\cdot)'$ be an $\omega_0$-compatible inner product on $\R^{2n}$ such that the projector $P$ is orthogonal. Let $B'$ be the corresponding unit ball, and let $J'$ be the corresponding  complex structure, which satisfies $J'V=V$. Let $\Psi: (\R^{2n},\omega_0,J') \rightarrow (\R^{2n},\omega_0,J)$ be a symplectic and complex linear isomorphism. Then $\Psi$ is an isometry from $(\R^{2n}, (\cdot,\cdot)')$ to $(\R^{2n}, (\cdot,\cdot))$, and hence $\Psi(B') = B$. By applying the previous case to the symplectic isomorphism $\Phi\Psi$ we obtain
\[
\mathrm{vol}(P\Phi(B), \omega^k_0|_V) = \mathrm{vol}(P\Phi\Psi(B'), \omega^k_0|_V) \geq \pi^k, 
\]
with equality holding if and only if $\Psi^{-1} \Phi^{-1} V$ is $J'$-invariant. From the identity $J'\Psi^{-1} = \Psi^{-1} J$ we obtain that the latter condition is equivalent to the fact that $\Phi^{-1} V$ is $J$-invariant:
\[
J' \Psi^{-1} \Phi^{-1} V = \Psi^{-1} \Phi^{-1} V \quad \iff \quad \Psi^{-1} J \Phi^{-1} V = \Psi^{-1} \Phi^{-1} V \quad \iff \quad J \Phi^{-1} V = \Phi^{-1} V.
\]
Now assume that the equality holds in (\ref{inez}). Then (\ref{ide}) holds for the symplectomorphism $\Phi\Psi$, and we deduce that
\[
\begin{split}
P \Phi(B) &= P \Phi \Psi(B') = \Phi \Psi (B' \cap (\Phi \Psi)^{-1} V) = \Phi \Psi ( \Psi^{-1} (B) \cap \Psi^{-1}( \Phi^{-1} V)) \\ &= \Phi \Psi ( \Psi^{-1} (B \cap \Phi^{-1} V)) = \Phi ( B \cap \Phi^{-1} V),
\end{split}
\]
proving (\ref{ide}) in the general case.
\end{proof}

\begin{proof}[Proof of Corollary \ref{cor2}]
Let $\Phi: \R^{2n} \rightarrow \R^{2n}$ be a linear symplectomorphism. We must show that if $\varphi: \overline{B} \rightarrow \R^{2n}$ is a smooth symplectomorphism with $\|\varphi-\Phi\|_{C^3(B)}$ small enough, then
\begin{equation}
\label{diadem}
\mathrm{vol}(P \varphi(B),\omega^2_0|V)\geq \pi^2.
\end{equation}
If 
\[
\mathrm{vol}(P \Phi(B),\omega^2_0|V)> \pi^2, 
\]
then the inequality (\ref{diadem}) trivially holds for any symplectomorphism $\varphi$ which is $C^0$-close to $\Phi$. Therefore, we can can assume that
\[
\mathrm{vol}(P \Phi(B),\omega^2_0|V)= \pi^2.
\]
By Proposition \ref{linear}, the convex set $P\Phi(B)$ is symplectomorphic to the unit ball of $V$ by a linear symplectomorphism. Up to the identification of $(V,\omega_0|_V)$ with $(\R^4,\omega_0)$ by means of a linear symplectomorphism mapping the unit ball of $V$ onto a ball in $\R^4$, we deduce that $P\Phi(B)$ belongs to the set $\mathscr{B}$. If $\varphi$ is $C^3$-close enough to $\Phi$, then the sets $\varphi(B)$ and $P\varphi(B)$ are smooth convex domains in $\R^{2n}$ and $V\cong \R^4$, respectively, and $P\varphi(B)$ belongs to the set $\mathscr{C}$ of Corollary \ref{cor1}. By using this corollary together with Proposition \ref{projection}, we find the chain of inequalities
\[
\pi^2 = c(B)^2 = c(\varphi(B))^2 \leq c(P \varphi(B))^2 \leq 2\, \mathrm{vol}(P \varphi(B)),
\]
where $\mathrm{vol}$ is the volume on $V$ which is given by the volume form $(1/2) \omega_0^2|_V$. The inequality (\ref{diadem}) follows.
\end{proof}

\begin{proof}[Proof of Corollary \ref{cor3}]
Let $\varphi: A \rightarrow \R^{2n}$ be a smooth symplectomorphism from an open subset $A\subset \R^{2n}$.
Corollary \ref{cor3} is an immediate consequence of Corollary \ref{cor2}, because for every $z\in A$ the path of symplectomorphisms 
\[
\varphi_r (\zeta) := \frac{1}{r} \varphi( z + r \zeta), \qquad \zeta\in \overline{B},
\]
converges to the linear symplectomorphism $D\varphi(z)$ for $r\downarrow 0$ in the norm of $C^k(\overline{B},\R^{2n})$, for every $k\in \N$, and this convergence is uniform for $z$ varying in a compact subset of $A$.
\end{proof}

\subsection{Proof of Corollary \ref{cor4}}
\label{proofcor4}

Let $F$ be a Finsler metric on $S^2$, that is, a fiberwise positively 1-homogeneous function $F: TS^2 \rightarrow [0,+\infty)$ such that $F$ is smooth away from the zero-section and the fiberwise second differential of $F^2$ is positive definite away from the zero section. Denote by $F^*$ the dual metric and by
\[
D^*(S^2,F) := \{\xi \in T^* S^2 \mid F^*(\xi) \leq 1\}, \qquad
S^*(S^2,F) := \{ \xi \in T^* S^2 \mid F^*(\xi) = 1\}
\]
the induced unit cotangent disk bundle and unit cotangent sphere bundle. Denote by $\lambda$ the standard Liouville 1-form on $T^* S^2$, which in local coordinates has the expression
\[
\lambda = p_1 \, dq_1 + p_2 \, dq_2,
\]
and by $\omega := d\lambda$ the standard symplectic form on $T^* S^2$. The restriction of $\lambda$ to $S^*(S^2,F)$ is a contact form, which we denote by $\lambda_F$.
The Holmes-Thompson area of $(S^2,F)$ is the positive number
\[
\mathrm{area} (S^2,F) := \frac{1}{2\pi} \, \mathrm{vol}( D^*(S^2,F), \omega\wedge \omega ) = 
\frac{1}{2\pi} \, \mathrm{vol}( S^*(S^2,F),\lambda_F \wedge d\lambda_F).
\]
The Reeb flow of $\lambda_F$ on $S^*(S^2,F)$ corresponds via Legendre duality to the geodesic flow on the unit tangent sphere bundle of $(S^2,F)$. In particular,
\[
\ell_{\min}(F) = T_{\min}(\lambda_F).
\]
The 3-manifold $S^*(S^2,F)$ is diffeomorphic to $SO(3)$, and we have a non-trivial double cover
\[
p_F : S^3 \rightarrow S^*(S^2,F).
\]
This double cover can be obtained by composing the double cover
\[
S^3 \rightarrow SO(3) \cong S^*(S^2,F_{\mathrm{round}})
\]
given by the round Riemannian metric $F_{\mathrm{round}}$ with the radial diffeomorphism 
\[
S^*(S^2,F_{\mathrm{round}}) \rightarrow S^*(S^2,F).
\] 
With this construction, the map $F\mapsto p_F$ is continuous, when the space of Finsler metrics and the space $C^{\infty}(S^3,T^*S^2)$ are equipped with their $C^k$ topologies.

For any Finsler metric $F$, we set $\alpha_F := p_F^* \lambda_F$. Then we have
\begin{equation}
\label{volcov}
\mathrm{vol}(S^3,\alpha_F \wedge d\alpha_F) = 2 \, \mathrm{vol}(S^*(S^2,F),\lambda_F \wedge d\lambda_F) = 4\pi \, \mathrm{area}(S^2,F).
\end{equation}
The proof of Corollary \ref{cor4} makes use of the following result of  Bangert.

\begin{prop}[{\cite[Corollary 1]{ban86}}] 
\label{bangert}
Let $\phi: \R \times M \rightarrow M$ be a $C^1$ flow  on a compact manifold $M$, such that all the orbits of $\phi$ are $T$-periodic. For every $\epsilon>0$ there exists a neighbourhood $\mathscr{V}$ of $\phi$ in the $C^1_{\mathrm{loc}}$-topology of $C^1(\R\times M,M)$ such that the period of every closed orbit of any flow in $\mathscr{V}$ either belongs to the interval $(T-\epsilon,T+\epsilon)$ or is larger than $1/\epsilon$.
\end{prop}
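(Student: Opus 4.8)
The starting point is that, since every orbit of $\phi$ is $T$-periodic, the time-$T$ map $\phi_T$ is the identity, and—because flows are one-parameter groups—$\phi_{t+T}=\phi_t$ for all $t$, so in particular $\phi_{mT}=\mathrm{id}$ for every integer $m$. Since in the situations of interest $\phi$ is non-singular (e.g.\ a Reeb or geodesic flow), its generator has norm bounded below, hence so does the generator of any flow $C^1$-close to $\phi$; a flow whose generator is close to a non-vanishing one has no small closed orbits, so there is $T_0\in(0,T)$ such that every closed orbit of every $\psi$ in some fixed $C^1$-neighbourhood of $\phi$ has minimal period $>T_0$. As enlarging $\epsilon$ only weakens the conclusion, we may assume $\epsilon<T_0$. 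Let $\mathscr V$ consist of the flows $\psi$ that are $C^1$-close to $\phi$ on the compact set $[0,1/\epsilon+T]\times M$ (a $C^1_{\mathrm{loc}}$-neighbourhood of $\phi$), with the required degree of closeness fixed along the way. Take $\psi\in\mathscr V$, let $\Gamma$ be a closed orbit of $\psi$ of minimal period $S$, and suppose $S\le 1/\epsilon$ (otherwise we are done). Then $m:=\lfloor S/T+\tfrac12\rfloor$ is bounded, $m\le M:=1/(\epsilon T)+1$, and $S=mT+r$ with $|r|\le T/2$.

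First I would show that $S$ lies within $\epsilon$ of a multiple of $T$. Fix $x\in\Gamma$. From $\psi_S=\psi_r\circ\psi_{mT}$, the $C^0$-closeness of $\psi$ to $\phi$ on $[0,1/\epsilon]$, and $\phi_{mT}=\mathrm{id}$, we get that $\psi_{mT}(x)$ is close to $x$; composing with $\psi_r$ (which is close to $\phi_r$ with a uniformly bounded Lipschitz constant) and using $\psi_S(x)=x$, we obtain $d(\phi_r(x),x)<\delta$, where $\delta\to0$ as $\mathscr V$ shrinks. If $|r|\ge\epsilon$ this contradicts a compactness bound: when $\phi$ has no periodic orbit of period $<T$—which is the case in the Zoll applications of this paper, where all closed orbits have minimal period exactly $T$—the continuous function $(y,s)\mapsto d(\phi_s(y),y)$ is bounded below by some $c>0$ on the compact set $M\times[\epsilon,T-\epsilon]$, and choosing $\mathscr V$ so small that $\delta<c$ rules this out. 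Hence $|r|<\epsilon$, i.e.\ $S\in(mT-\epsilon,mT+\epsilon)$; and since $S>T_0>\epsilon$ we have $m\ge1$, so it remains to exclude $m\ge2$.

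To exclude $m\ge2$ I would pass to a Poincaré section. If $m\ge2$ then $S\ge(m-\tfrac12)T>T$, and since $\psi$ shadows $\phi$ on $[0,mT]$ the closed curve $\Gamma$ winds $m$ times around the embedded, period-$T$ orbit $\gamma_x$ of $\phi$ through $x$. Pick a small disk $D\ni x$ transverse to the generator; since every $\phi$-orbit near $\gamma_x$ is closed of period $T$ and meets $D$ exactly once per period, the first-return map of $\phi$ to $D$ is the identity, so the first-return map $P$ of $\psi$ to $D$ is defined near $x$, is $C^1$-close to $\mathrm{id}_D$, and has $x$ as a periodic point of minimal period exactly $m$ (the $m$ crossings of $\Gamma$ with $D$). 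But a diffeomorphism $C^1$-close to the identity has no periodic orbit of minimal period $m$ with $2\le m\le M$: writing it as $\mathrm{id}+w$ in a chart around $x$, the cycle $\xi_0,\dots,\xi_{m-1},\xi_m=\xi_0$ satisfies $\sum_{i=0}^{m-1}w(\xi_i)=\xi_m-\xi_0=0$, while $|w(\xi_{i+1})-w(\xi_i)|\le\|Dw\|_\infty\,|w(\xi_i)|$; if $\|Dw\|_\infty\,M<\tfrac14$ these two facts force $m\,|w(\xi_0)|\le 2\|Dw\|_\infty\,m^2\,|w(\xi_0)|$, hence $w(\xi_0)=0$ and the orbit is a fixed point, a contradiction. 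Therefore $m=1$ and $S\in(T-\epsilon,T+\epsilon)$, as desired.

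The main obstacle to carrying this out in full generality—and the reason one invokes Bangert's theorem rather than reproving it—is the possibility that $\phi$ has exceptional periodic orbits of period $T/k$ strictly smaller than $T$ (multiple fibres of a Seifert-type structure on a connected $M$, or center-type equilibria). Near such an orbit both the compactness step and the Poincaré step break down: the local first-return map is $C^1$-close not to the identity but to a finite-order rotation $P_\phi$ with $P_\phi^k=\mathrm{id}$, and one must apply the elementary estimate of the previous paragraph to the iterate $P^k$, which is $C^1$-close to the identity, to conclude that the minimal period of a nearby closed orbit passing near that fibre is again forced to lie near $T$ or above $1/\epsilon$. Carrying this out uniformly over all exceptional fibres and all $k$ is the technical core of \cite{ban86}; the situation relevant to the present paper—a Zoll Reeb flow on a connected manifold, all of whose closed orbits are simple with the same minimal period $T$—is exactly the case in which the argument above applies directly.
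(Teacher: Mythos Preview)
The paper does not prove this proposition at all: it is simply quoted as Corollary~1 of Bangert \cite{ban86} and invoked as a black box in the proof of Corollary~\ref{cor4}. There is therefore no ``paper's own proof'' to compare your attempt against.

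Your sketch is a reasonable outline for the special case the paper actually needs (a fixed-point-free flow all of whose closed orbits have \emph{minimal} period exactly $T$, as for a Zoll Reeb flow), and you are honest about where the argument breaks down in general. Two technical points would need tightening to make the special case self-contained. First, the transverse disk $D$ and the first-return map $P$ depend on the unknown point $x\in\Gamma$, so to get the uniform bound $\|P-\mathrm{id}\|_{C^1}<1/(4M)$ you must invoke compactness of $M$ (a finite cover by flow boxes with uniform constants); as written, the smallness of $P-\mathrm{id}$ is local and could in principle degenerate as $x$ varies. Second, the claim that $\Gamma$ meets $D$ in exactly $m$ points, all lying in the domain on which $P$ is defined and controlled, is true but deserves a sentence: it follows from the $C^0$-shadowing of $\phi$ by $\psi$ on $[0,S]$ together with the fact that $\gamma_x$ meets a small enough $D$ exactly once per period. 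Your closing elementary lemma (a map $C^1$-close to the identity has no cycle of length $2\le m\le M$) is correct in spirit; the constants in your final display are slightly off, but the mechanism---summing the increments to zero and bounding their oscillation by $\|Dw\|_\infty$---is the right one.
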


\begin{proof}[Proof of Corollary \ref{cor4}]
Let $F_0$ be a Finsler Zoll metric on $S^2$. We must find a $C^3$-neighborhood $\mathscr{F}_0$ of $F_0$ in the space of Finsler metrics such that
\begin{equation}
\label{ddaddim}
\ell_{\min}(F)^2 \leq \pi  \, \mathrm{area}(S^2,F), \qquad \forall F\in \mathscr{F}_0,
\end{equation}
with equality holding if and only if the metric $F$ is Zoll. Without loss of generality, we may assume that the common length of the closed geodesics of $F_0$ is $2\pi$, and in particular
\[
T_{\min}(\lambda_{F_0}) = \ell_{\min}(F_0) = 2\pi.
\]
By Theorem B.2 in \cite{abhs17}, the Reeb flow of $\lambda_{F_0}$ is smoothly conjugate to the Reeb flow of $\lambda_{F_{\mathrm{round}}}$. Since the prime periodic orbits of the latter flow are not contractible in $S^*(S^2,F_{\mathrm{round}})\cong SO(3)$, also the prime periodic orbits of the Reeb flow of $\lambda_{F_0}$ are not contractible in $S^*(S^2,F_0)$. Therefore, $\alpha_{F_0}$ is a Zoll contact form on $S^3$ all of whose orbits have period $2\, T_{\min}(\lambda_{F_0}) = 4\pi$.

We claim that $F_0$ has  a $C^3$-neighborhood $\mathscr{F}_0$ in the space of Finsler metrics on $S^2$ such that for every $F\in \mathscr{F}_0$ the following facts hold:
\begin{enumerate}[(i)]
\item $\alpha_F$ belongs to the neighbourhood $\mathscr{A}$ which is given by Theorem \ref{main1};
\item $|T_{\min}(\alpha_F) - T_{\min}(\alpha_{F_0})| = |T_{\min}(\alpha_F) - 4\pi| < \pi$;
\item the period of every closed Reeb orbit of $\lambda_F$ is either in the interval $(\pi,3\pi)$ or larger than $5\pi$.
\end{enumerate}
Indeed, the possibility of obtaining (i) follows from the fact that the map $F \mapsto \alpha_F$ is continuous with respect to the $C^3$ topology on both sides. By Proposition \ref{contTmin}, the real function $\alpha\mapsto T_{\min}(\alpha)$ is continuous with respect to the $C^3$ topology on a  $C^3$-neighbourhood of the Zoll contact form $\alpha_{F_0}$. This implies that we can achieve (ii). Finally, the map sending $F$ into the Reeb vector field of $\lambda_F$ is continuous from the $C^2$ topology to the $C^1$ topology, so we obtain (iii) 
from Proposition \ref{bangert}.

Thanks to (i) together with Theorem \ref{main1} and to (\ref{volcov}), every $F\in \mathscr{F}_0$ satisfies
\begin{equation}
\label{dolman}
T_{\min}(\alpha_F)^2 \leq \mathrm{vol}(S^3, \alpha_F \wedge d\alpha_F) = 4\pi \, \mathrm{area}(S^2,F),
\end{equation}
with equality holding if and only if $\alpha_F$ is a Zoll contact form. Let $z$ be a closed Reeb orbit of $\alpha_F$ with period $T_{\min}(\alpha_F)$. The projected curve $p_F\circ z$ is a periodic orbit of the Reeb flow of $\lambda_F$ whose period is either $T_{\min}(\alpha_F)$ or $T_{\min}(\alpha_F)/2$. Since by (ii) we have
\begin{equation}
\label{dovealpha}
3\pi < T_{\min}(\alpha_F) < 5 \pi,
\end{equation}
condition (iii) implies that the period of $p_F\circ z$ cannot be $T_{\min}(\alpha_F)$. So the period of $p_F\circ z$ is $T_{\min}(\alpha_F)/2$ and (\ref{dolman}) implies that
\begin{equation}
\label{ultima}
\ell_{\min}(F)^2 = T_{\min}(\lambda_F)^2 \leq \frac{T_{\min}(\alpha_F)^2}{4} \leq \pi \, \mathrm{area}(S^2,F),
\end{equation}
proving (\ref{ddaddim}). If the metric $F$ is Zoll, then $\lambda_F$ is a Zoll contact form on $S^*(S^2,F)\cong SO(3)$, and the equality in (\ref{ddaddim}) holds by Theorem B.2 in \cite{abhs17} and identity (\ref{volcov}).
Conversely, assume that the equality holds in (\ref{ddaddim}), or equivalently in (\ref{ultima}). The equality holds a fortiori in
(\ref{dolman}), and hence $\alpha_F$ is Zoll. Then every Reeb orbit of $\lambda_F$ is closed and has period either $T_{\min}(\alpha_F)$ or $T_{\min}(\alpha_F)/2$. But by (\ref{dovealpha}) and (iii) no closed Reeb orbit of $\lambda_F$ can have period $T_{\min}(\alpha_F)$. Therefore, all the Reeb orbits of $\lambda_F$ have common period $T_{\min}(\alpha_F)/2$, so $\lambda_F$ is a Zoll contact form.
Therefore, $F$ is a Zoll Finsler metric. This concludes the proof of Corollary \ref{cor4}. 
\end{proof}


\providecommand{\bysame}{\leavevmode\hbox to3em{\hrulefill}\thinspace}
\providecommand{\MR}{\relax\ifhmode\unskip\space\fi MR }
\providecommand{\MRhref}[2]{%
  \href{http://www.ams.org/mathscinet-getitem?mr=#1}{#2}
}
\providecommand{\href}[2]{#2}

\end{document}